\pgfplotsset{compat=newest}
\renewcommand{\=}{\,=\,}
\renewcommand{\vec}{\bm}
\newcommand{\tensor}{\bm}
\newcommand{\dQ}{d_\mathrm{Q}}
\newcommand{\x}{x}
\newcommand{\stress}{\tensor{\sigma}}
\newcommand{\I}{\mathbf{I}}
\newcommand{\g}{\vec{g}}
\newcommand{\f}{\vec{f}}
\renewcommand{\u}{\vec{u}}
\renewcommand{\v}{\vec{v}}
\newcommand{\flux}{\vec{q}}
\newcommand{\n}{\vec{n}}
\newcommand{\q}{q}
\newcommand{\lambdavisco}{\lambda_\mathrm{v}}
\newcommand{\vangenuchtenalpha}{\alpha_\mathrm{vG}}
\newcommand{\vangenuchtenm}{{m_\mathrm{vG}}}
\newcommand{\vangenuchtenn}{{n_\mathrm{vG}}}
\newcommand{\NABLA}{\vec{\nabla}}
\newcommand{\DIV}{\NABLA\cdot}
\newcommand{\GRAD}{\NABLA}
\newcommand{\eps}[1]{\tensor{\varepsilon}\!\left(#1\right)\hspace{-0.025cm}}
\newcommand{\V}{\bm{V}}
\newcommand{\Q}{Q}
\newcommand{\Vh}{\bm{V}_h}
\newcommand{\Qh}{Q_h}
\newcommand{\VQh}{\Vh \times \Qh}
\newcommand{\qh}{q_h}
\newcommand{\uh}{\vec{u}_h}
\newcommand{\vh}{\vec{v}_h}
\newcommand{\chih}{\chi_h}
\newcommand{\uhtau}{\bar{\vec{u}}_{h\tau}}
\newcommand{\chihtau}{\bar{\chi}_{h\tau}}
\newcommand{\uhtauHat}{\hat{\vec{u}}_{h\tau}}
\newcommand{\vhtauHat}{\hat{\vec{u}}_{t,h\tau}}
\newcommand{\vhtauHatTest}{\hat{\vec{v}}_{h\tau}}
\newcommand{\chihtauHat}{\hat{\chi}_{h\tau}}
\newcommand{\porepressurehtauHat}{\hat{p}_{\mathrm{pore},h\tau}}
\newcommand{\uveta}{\vec{u}_{\varepsilon\eta}}
\newcommand{\chiveta}{\chi_{\varepsilon\eta}}
\newcommand{\uv}{\vec{u}_{\varepsilon\eta}}
\newcommand{\chiv}{\chi_{\varepsilon\eta}}
\newcommand{\ueta}{\vec{u}_{\eta}}
\newcommand{\chieta}{\chi_{\eta}}
\newcommand{\vhi}{\vec{v}_{h,i}}
\newcommand{\qhj}{q_{h,j}}
\newcommand{\absolutepermeability}{{\kappa}_\mathrm{abs}}
\newcommand{\absolutepermeabilitymin}{{\kappa}_\mathrm{m,abs}}
\newcommand{\absolutepermeabilitymax}{{\kappa}_\mathrm{M,abs}}
\newcommand{\relativepermeability}{{\kappa}_\mathrm{rel}}
\newcommand{\sw}{s_\mathrm{w}}
\newcommand{\pw}{p_\mathrm{w}}
\newcommand{\kirchhoff}{\chi}
\newcommand{\porepressure}{p_\mathrm{pore}}
\newcommand{\bk}{\hat{b}}
\newcommand{\Bk}{\hat{B}}
\newcommand{\bke}{\hat{b}_\eta}
\newcommand{\Bke}{\hat{B}_\eta}
\newcommand{\pwk}{\hat{p}_\mathrm{w}}
\newcommand{\swk}{\hat{s}_\mathrm{w}}
\newcommand{\pek}{\hat{p}_\mathrm{pore}}
\newcommand{\relativepermeabilityk}{\hat{\kappa}_\mathrm{rel}}
\newcommand{\rhow}{\rho_\mathrm{w}}
\newcommand{\fext}{\f_{\mathrm{ext}}}
\newcommand{\hext}{h_\mathrm{ext}}
\newcommand{\FlowDirichletBoundary}{\Gamma_\mathrm{D}^\mathrm{f}}
\newcommand{\FlowNeumannBoundary}{\Gamma_\mathrm{N}^\mathrm{f}}
\newcommand{\MechanicsDirichletBoundary}{\Gamma_\mathrm{D}^\mathrm{m}}
\newcommand{\MechanicsNeumannBoundary}{\Gamma_\mathrm{N}^\mathrm{m}}
\newcommand{\llangle}{\left\langle}
\newcommand{\rrangle}{\right\rangle}
\def\mathcolor#1#{\@mathcolor{#1}}
\def\@mathcolor#1#2#3{%
  \protect\leavevmode
  \begingroup
    \color#1{#2}#3%
  \endgroup
}
\newtheorem{theorem}{Theorem}
\newtheorem{lemma}[theorem]{Lemma}
\newtheorem{definition}[theorem]{Definition}
\newtheorem{remark}[theorem]{Remark}
\numberwithin{equation}{section}
\numberwithin{theorem}{section}
\newcommand{\htauStabilityUChi}{C^{(1)}}
\newcommand{\htauStabilityDchiDt}{C^{(2)}_{\zeta\eta}}
\newcommand{\htauStabilityLegendre}{C^{(3)}_{\zeta}}
\newcommand{\htauStabilityPpore}{C^{(4)}}
\newcommand{\htauStabilityDbDt}{C^{(5)}_{\zeta}}
\newcommand{\htauStabilityDuDt}{C^{(6)}_{\zeta\eta}}
\newcommand{\htauStabilityDuDtt}{C^{(7)}_{\zeta\eta}}
\newcommand{\epsStabilityDuDt}{C^{(8)}}
\newcommand{\epsStabilityLegendre}{C^{(9)}}
\newcommand{\epsStabilityDbDt}{C^{(10)}}
\newcommand{\epsStabilityDchiDt}{C^{(11)}_\eta}
\newcommand{\etaStabilityDchiDt}{C^{(12)}}
\newcommand{\COmegaPoincare}{C_{\Omega,\mathrm{P}}}
\newcommand{\COmegaDiscretePoincare}{C_{\Omega,\mathrm{DP}}}
\newcommand{\COmegaInfSup}{C_{\Omega,\mathrm{is}}}
\newcommand{\CDiscreteTrace}{C_{\mathrm{tr}}}
\newcommand{\COmegaPoincareKorn}{C_{\Omega,\mathrm{PK}}}
\author{Jakub W.\ Both\thanks{Department of Mathematics, University of Bergen, Bergen, Norway; $\{$\texttt{jakub.both@uib.no}$\}$} \and
       Iuliu Sorin Pop\thanks{Faculty of Science, Hasselt University, Hasselt, Belgium; and \newline \textcolor{white}{.}\hspace{0.75cm} Department of Mathematics, University of Bergen, Bergen, Norway; $\{$\texttt{sorin.pop@uhasselt.be}$\}$}  \and 
       Ivan Yotov\thanks{Department of Mathematics, University of Pittsburgh, Pittsburgh, PA, USA; $\{$\texttt{yotov@math.pitt.edu}$\}$}
}
\title{Global existence of a weak solution to unsaturated poroelasticity}
\begin{document}
  
\maketitle


\begin{abstract}
 In this paper, we consider unsaturated poroelasticity, i.e., coupled hydro-mechanical processes in unsaturated porous media, modelled by a non-linear extension of Biot's quasi-static consolidation model. The coupled, elliptic-parabolic system of partial differential equations is a simplified version of the general model for multi-phase flow in deformable porous media obtained under similar assumptions as usually considered for Richards' equation. In this work, the existence of a weak solution is established using regularization techniques, the Galerkin method, and compactness arguments.  The final result holds under non-degeneracy conditions and natural continuity properties for the non-linearities. The assumptions are demonstrated to be reasonable in view of geotechnical applications.
\end{abstract}


\section{Introduction}

Strongly coupled hydro-mechanical processes in porous media are occurring in various applications of societal relevance within, e.g., geotechnical, structural, and biomechanical engineering. Examples for instance are soil subsidence due to
groundwater withdrawal, geothermal energy storage in fractured rocks, swelling and drying shrinkage of concrete, and deformation of soft, biological tissue components.

In the field of porous media, such microscopically complex processes are typically modelled by a continuum mechanics approach~\cite{deBoer2000}. The multi-phasic solid-fluid mixture is considered a homogenized continuum,  and both geometry, skeleton, and fluid properties are averaged over representative elementary volumes, consisting of a mixture of solid and fluid particles. Ultimately, the microscopic interaction of the different constituents is described by macroscopic, effective equations. The simplest, macroscopic model accounting for the coupling of single-phase flow and elastic deformation in a porous medium is \textit{Biot's linear, quasi-static consolidation model}. Its phenomenological derivation dates back to the seminal works by Terzaghi~\cite{Terzaghi1936b} and Biot~\cite{Biot1941}. In the course of the last century, many more advanced models have been developed, accounting, e.g., for the presence of different interacting fluids, thermal effects, or chemical reactions. We refer to the textbooks~\cite{Lewis1998,Coussy2004} for an introduction and their derivation. 

In this paper, we consider a non-linear, coupled system of partial differential equations, modelling the quasi-static consolidation of variably saturated porous media, also called \textit{unsaturated poroelasticity} -- in particular relevant in soil mechanics. The model can be obtained by simplifying the more general model for two-phase flow in deformable porous media, founded on macroscopic momentum and mass balances combined with constitutive relations~\cite{Lewis1998}. It is assumed that one fluid phase can be simply neglected. This is a common practice for fluids with high viscosity ratios if the negligible fluid phase is continuous and connected to the atmosphere, i.e., the same hypotheses as for Richards' equation~\cite{Szymkiewicz2012,Nordbotten2011}. Finally, the resulting model generalizes Biot's quasi-static, linear consolidation model, combining Richards' equation and linear elasticity equations with non-linear coupling. It is highly non-linear, potentially strongly coupled, and potentially degenerate, which makes its analysis complicated.     
 
Regarding the mathematical theory of poroelasticity, in particular Biot's quasi-static, linear consolidation model has been well-studied. Well-posedness including the existence, uniqueness, and regularity of solutions, has been established~\cite{Auriault1977,Zenisek1984,Showalter2000}; recent advances in the numerical analysis include, e.g., stable finite  discretizations~\cite{Ferronato2010,Haga2012,Wheeler2014,Nordbotten2016,Rodrigo2016,White2016,Castelletto2017,Lee2017}, efficient numerical iterative solvers~\cite{Kim2011,Mikelic2013,Both2017,Gaspar2017,Borregales2019,Storvik2019}, and a posteriori error estimates~\cite{Kumar2018,Ahmed2019,Ahmed2020}.
Lately, linear and non-linear extensions have become of increased interest. Well-posedness and the efficient numerical solution have been analyzed for the dynamic Biot-Allard system~\cite{Mikelic2012}, Biot-Stokes systems~\cite{Showalter2005,Ambartsumyan2018,Ambartsumyan2018c}, the Biot model with deformation dependent permeability~\cite{Tavakoli2013,Bociu2016}, 
poroelasticity in fractured media~\cite{Mikelic2015,Girault2016,Berge2019,Ucar2018}, poroelasticity with non-linear solid and fluid compressibility~\cite{Both2019b,Borregales2018}, general non-linear single-phase poroelasticity~\cite{VanDuijn2019b}, poro-visco-elasticity~\cite{Both2019b,Bociu2016}, thermoporoelasticity~\cite{VanDujin2019,Brun2019,Brun2019b,Kim2018,Both2019b}, poroelasticity from a gradient flow perspective~\cite{Both2019b}, and multiple-permeability poroelasticity systems~\cite{Hong2019,Hong2018,Lee2019}, among others. In all problems, the coupling is linear. 

Despite the large interest, rather few theoretical results have been established for unsaturated or multi-phase poroelasticity. We highlight~\cite{Showalter2001}, in which the first ever mathematical analysis of the consolidation of a variably saturated, porous medium has been presented.
In the aforementioned work, the existence of a weak solution is established under two strict model assumptions: (i) the coupling term in the fluid flow equation is linear; and (ii) after introducing a new pressure variable by applying the Kirchhoff transformation the coupling and the diffusion terms in the mass balance simultaneously become linear. The second assumption implies a specific, artificial form of the so-called pore pressure, a non-linearity arising in the linear momentum balance. Ultimately, the result does not apply to the general model for unsaturated poroelasticity. On the other hand, the analysis accounts for non-linearly variable densities and porosities, and allows for degenerate situations. In addition, we mention efforts on studying the efficient numerical solution for unsaturated poroelasticity~\cite{Both2019} and multi-phase poroelasticity~\cite{Kim2013,Jha2014,Bui2019}. 
 
In this paper, the existence of weak solutions for the general model of unsaturated poroelasticity is established. In order to deal with the non-linear character, the problem is first transformed utilizing the Kirchhoff transformation, a technique commonly used for the analysis of non-linear diffusion problems~\cite{Alt1983}. By this, the diffusion component of the mass balance becomes linear -- a fully non-linear coupling and a non-linear storage coefficient are still present. The analysis then employs regularization techniques and compactness arguments in six steps and goes as follows. First, a physically motivated double regularization is introduced, adding a non-degenerate parabolic character to both balance equations. Regularization is required in order to allow the discussion of the non-linear coupling terms. Ultimately, the regularized model accounts for primary and secondary consolidation of variably saturated, porous media with compressible grains. Second, the problem is discretized combining an implicit time stepping, the finite element method (FEM) for the mechanics equation, and the finite volume method involving a two-point flux approximation (TPFA) for the flow equation. The motivation for the chosen discretization is two-fold: (i) it is a common discretization in the field of poroelasticity~\cite{Wheeler2014,Castelletto2019}, also closely related to mixed finite element discretizations~\cite{Ferronato2010}; moreover, finite volume methods~\cite{Eymard1999b,Klausen2008,Cances2016,Ait2018} and mixed finite element methods~\cite{Arbogast1996,Radu2008} are widely used for discretizing Richards' equation. Even more importantly, (ii) the specific choice of the discretization becomes crucial for the subsequent step of the proof, allowing for straightforward cancelling of the coupling terms. In the third step of the proof, stability of the discrete solution is showed, and compactness arguments are utilized for deriving a weak solution of the doubly regularized problem. For this, on the one hand the Legendre transformation is exploited as in~\cite{Alt1983} and specific finite volume techniques are employed for discussing the limit of the spatial discretization parameters, inspired by~\cite{Saad2013,Eymard1999}. Fourth, improved regularity is showed for the weak solution of the doubly regularized problem. Fifth and finally sixth, the limit of vanishing regularization in the momentum and mass balances are discussed, respectively. 

Difficulties arise in the last steps of the proof due to a possible degenerate character of the problem for vanishing saturation. Our analysis requires an overall parabolic character of the coupled problem and natural continuity properties for the non-linearities. Those are ensured under specific material assumptions and a non-vanishing, minimal amount of fluid saturation. In the appendix, the assumptions are demonstrated to be satisfied for constitutive relations typically utilized in real-life applications. Furthermore, for simplicity, the porous material is assumed to be isotropic, gravity has been neglected and homogeneous, essential boundary conditions have been considered. The focus of this work is on the involved, non-linear, coupled character of the governing equations.

The rest of the paper is organized as follows. In Section~\ref{section:model}, the model is introduced as derived in the engineering literature, and the model is transformed using the Kirchhoff transformation. In Section~\ref{section:main-result}, the notion of a weak solution to the transformed problem is introduced, and the main result is stated: existence of a weak solution to the transformed problem under certain model assumptions and non-degeneracy conditions. The idea of the proof, consisting of six steps, is presented. The details of those six steps are the subject of the remaining Sections~\ref{section:regularization}--\ref{section:eta-regularization}. In the appendix, the feasibility of the required assumptions for the main result are discussed for widely used constitutive models from the literature. In addition, technical results from the literature used in the proof of the main result are recalled for a comprehensive presentation.

\section{Mathematical model for unsaturated poroelasticity}\label{section:model}

We consider a continuum mechanics model for unsaturated poroelasticity, a particular simplification of general multi-phase poroelasticity~\cite{Coussy2004,Lewis1998}. It is based on the fundamental principles of momentum and mass balance combined with constitutive relations. The model is valid under the assumptions of infinitesimal strains and the presence of two fluid phases, an active and a passive phase; the displacement of the passive phase does not impede the advance of the active phase and can be therefore neglected. Finally, the model couples non-linearly the Richards equation and the linear elasticity equations utilizing an effective stress approach.

In the following, we recall the mathematical model employing the mechanical displacement and fluid pressure as primary variables. Additionally, the problem is transformed by the Kirchhoff transformation, a standard tool for the analysis of non-linear diffusion problems, cf., e.g.,~\cite{Alt1983}. The latter will be subject of the subsequent analysis.

\subsection{The original formulation} 
We consider a poroelastic medium occupying the open, connected, and bounded domain $\Omega\subset\mathbb{R}^d$, $d\in\{1,2,3\}$. Let $T>0$ denote the final time and $(0,T)$ denote the time interval of interest. 
Let $Q_T:=\Omega \times (0,T)$ denote the space-time domain.

The balance equations as derived in~\cite{Lewis1998} (note, we use an arbitrary pore pressure, whereas the specific \textit{average pore pressure} has been used in the aforementioned work) reads on $Q_T$:
\begin{align} 
\label{model:original:u}
 - \DIV \left[ 2\mu \eps{\u} + \lambda \DIV \u \I - \alpha \porepressure(\pw) \I \right] &= \f, \\
\label{model:original:p}
  \phi \partial_t \sw(\pw) + \phi c_\mathrm{w} \partial_t \pw + \frac{1}{N} \sw(\pw) \partial_t \porepressure(\pw)  + \alpha \sw(\pw) \partial_t \DIV \u  + \DIV \flux &= h,
\end{align}
where $\u$ is the mechanical displacement and $\pw$ is the fluid pressure (of the active phase). Furthermore, $\flux$ is the volumetric flux described by the generalized Darcy law
\begin{align}
 \label{model:original:q}
 \flux = -\absolutepermeability \, \relativepermeability(\sw(\pw)) \left(\GRAD \pw - \rhow \g \right).
\end{align}
Constitutive laws are given for the pore pressure $\porepressure$, the fluid saturation $\sw$ and the relative permeability $\relativepermeability$; the latter two are assumed to be homogeneous, i.e., they do not vary explicitly in space. Furthermore, $\f$ and $h$ are external load and source terms; $\mu,\lambda$ are the Lam\'e parameters; $\alpha\in[0,1]$ is the Biot constant; $c_\mathrm{w}\in[0,\infty)$ is the storage coefficient associated to fluid compressibility; $N\in(0,\infty]$ is the Biot modulus associated to the compressibility of solid grains; $\absolutepermeability$ is the absolute permeability; $\rhow$ is a reference fluid density and $\g$ is the gravitational acceleration. Finally, $\phi$ is the porosity. Under the hypothesis of small perturbations of the porosity~\cite{Coussy2004}, often applied along with the assumptions of linear elasticity, we can assume that the porosity $\phi$ acting as weight is constant in time, equal to some reference porosity field $\phi_0$. 

From now on, we consider a compact form of~\eqref{model:original:u}--\eqref{model:original:q}. Specifically, we seek $(\u,\pw)$ such that on $Q_T$
\begin{align}
-\DIV \left[2\mu\eps{\u} + \lambda \DIV \u \I - \alpha \porepressure(\pw) \I \right] 		&= \f, \label{model:governing:u} \\
  \partial_t b(\pw) + \alpha \sw(\pw) \partial_t  \DIV\u - \DIV  \left(\absolutepermeability \relativepermeability (\sw(\pw)) \left( \GRAD \pw - \rhow \g \right)	\right)	&= h,\label{model:governing:p}
\end{align}
where the function $b$ is defined as
\begin{align}
\label{model:example-b}
 b(\pw) = \phi_0 \sw(\pw) + c_\mathrm{w} \phi_0 \int_0^{\pw} \sw(p) \, dp  + \frac{1}{N} \int_0^{\pw} s_\mathrm{w}(p) \porepressure'(p) \, dp.
\end{align}
We note that the subsequent analysis is not dependent on specific choices for $b$, $\sw$, $\porepressure$ and~$\relativepermeability$.

In order to close the system~\eqref{model:governing:u}--\eqref{model:governing:p}, we impose: boundary conditions
\begin{align}
  \u &= \u_\mathrm{D}												&&\text{on }\MechanicsDirichletBoundary\times(0,T), 
  \label{model:bc:mechanics-dirichlet}\\
  \left(2\mu \eps{\u} + \lambda \DIV \u \I  - \alpha \porepressure(\pw) \I \right)\n &= \stress_\mathrm{N}			&&\text{on }\MechanicsNeumannBoundary\times(0,T), 
  \label{model:bc:mechanics-neumann}\\
  \pw&=p_\mathrm{w,D}	&&\text{on }\FlowDirichletBoundary\times(0,T), \label{model:bc:fluid-dirichlet}\\
  -\absolutepermeability\,\relativepermeability(\sw\left(\pw\right))\left( \GRAD \pw -  \rhow \g \right)\cdot \n &= q_N	&&\text{on }\FlowNeumannBoundary\times(0,T), \label{model:bc:fluid-neumann}
\end{align}
for the partitions $\{\MechanicsDirichletBoundary,\MechanicsNeumannBoundary\}$ and $\{\FlowDirichletBoundary,\FlowNeumannBoundary\}$ of the boundary $\partial\Omega$, where $\MechanicsDirichletBoundary$ and $\FlowDirichletBoundary$ have positive measure; as well as initial conditions
\begin{align}
  \u  & =\u_\mathrm{0} 		&&\text{in }\Omega\times\{0\}, \label{model:initial:mechanics}\\
  \pw &= p_\mathrm{w,0}, 	&&\text{in }\Omega\times\{0\}. \label{model:initial:flow}  
\end{align}
Putting the focus on the non-linear and coupled character of the balance equations, in the subsequent, mathematical analysis, we consider a simplified setting. We neglect gravity and non-homogeneous, essential boundary conditions, which in particular simplifies notation.

\subsection{The mathematical model under the Kirchhoff transformation}\label{section:kirchhoff-transformation}
The Kirchhoff transformation defines a new pressure-like variable
\begin{align}
\label{definition:kirchhoff-pressure}
 \kirchhoff(\pw) = \int_0^{\pw} \relativepermeability(\sw(\tilde{p}))\,d\tilde{p}.
\end{align} 
Assuming the constitutive laws satisfy $\relativepermeability(\sw(p))>0$, for all $p\in\mathbb{R}$,~\eqref{definition:kirchhoff-pressure} can be inverted. We redefine all functions in $\pw$ as functions in $\kirchhoff$
\begin{align}\label{definition:transformed-functions}
 \pwk := \chi^{-1},\quad
 \bk := b \circ \chi^{-1} ,\quad \swk:=\sw \circ \chi^{-1},\quad \pek := \porepressure \circ \chi^{-1},\quad\relativepermeabilityk := \relativepermeability \circ \swk.
\end{align}
Then under the assumption of a homogeneous relative permeability and saturation, the non-linear Biot equations~\eqref{model:governing:u}--\eqref{model:governing:p} reduces to finding $(\u,\kirchhoff)$, satisfying
\begin{align}
-\DIV \left( 2\mu\eps{\u} + \lambda \DIV \u \I - \alpha \pek(\kirchhoff) \I \right) 		&= \f, \label{model:governing:kirchhoff:u} \\
\label{model:governing:kirchhoff:p}
  \partial_t \bk(\kirchhoff) + \alpha \swk(\kirchhoff) \partial_t  \DIV\u - \DIV \left( \absolutepermeability \GRAD \chi  \right) 		&= h, 
\end{align}
on $Q_T$, and subject to the adapted boundary conditions 
\begin{align}
  \u &= \bm{0}												&&\text{on }\MechanicsDirichletBoundary\times(0,T), 
  \label{model:bc:kirchhoff:mechanics-dirichlet}\\
  \left(2\mu \eps{\u} + \lambda \DIV \u \I  - \alpha \pek(\kirchhoff) \I \right)\n &= \stress_\mathrm{N}			&&\text{on }\MechanicsNeumannBoundary\times(0,T), 
  \label{model:bc:kirchhoff:mechanics-neumann} \\
  \kirchhoff&=0 										&&\text{on }\FlowDirichletBoundary\times(0,T), \label{model:bc:kirchhoff:fluid-dirichlet}\\
  - \absolutepermeability\GRAD \kirchhoff \cdot \n &= w_N	&&\text{on }\FlowNeumannBoundary\times(0,T), \label{model:bc:kirchhoff:fluid-neumann}
\end{align}
and the initial conditions
\begin{align}
  \u& =\u_\mathrm{0} 		            &&\text{in }\Omega\times\{0\}, \label{model:initial:kirchhoff:mechanics}\\
  \kirchhoff&=\kirchhoff_\mathrm{0}, 	&&\text{in }\Omega\times\{0\}. \label{model:initial:kirchhoff:flow}
\end{align}

\section{Main result -- existence of a weak solution for the unsaturated poroelasticity model}\label{section:main-result}
The main result of this work is the existence result of a weak solution for the unsaturated poroelasticity model under the Kirchhoff transformation, cf.\ Section~\ref{section:kirchhoff-transformation}. In this section, we state the main result. This includes the notion of a weak solution, required assumptions and the idea of the proof. The details of the proof are the subject of the remainder of this paper.

\subsection{Definition of a weak solution}

Let $Q_T:=\Omega \times (0,T)$ denote the space-time domain. We use the standard notation for $L^p$, Sobolev and Bochner spaces, together with their inherent norms and scalar products. Let $\llangle\cdot,\cdot\rrangle$ denote the standard $L^2(\Omega)$ scalar product for scalars, vectors and tensors. For shorter notation, we use $\| \cdot \| := \| \cdot \|_{L^2(\Omega)}$. Let
\begin{align*}
  \V 	&= \left\{ \v \in H^1(\Omega)^d \, \left| \, {\v_|}_{\MechanicsDirichletBoundary} = \bm{0} \right.\right\}, \\
  \Q 	&= \left\{ q \in H^1(\Omega) \, \left| \, {q_|}_{\FlowDirichletBoundary} = 0 \right.\right\},
\end{align*}
denote the function spaces corresponding to mechanical displacement and fluid pressure, respectively, incorporating essential boundary conditions. We abbreviate the bilinear form associated to linear elasticity
\begin{align*}
 a(\u,\v) &= 2\mu \int_\Omega \eps{\u} : \eps{\u} \, dx + \lambda \int_\Omega \DIV \u \, \DIV \v \, dx, \quad \u,\v\in\V,
\end{align*}
and define $\|\cdot \|_{\V}:=a(\cdot,\cdot)^{1/2}$, which induces a norm on $\V$ due to Korn's inequality.
Moreover, we combine the external body and surface sources as elements in $\V^\star$ and $\Q^\star$, the duals of $\V$ and $\Q$, respectively. Let $\fext=(\f,\stress_\mathrm{N})$ and $\hext=(h,w_\mathrm{N})$ be defined by
 \begin{alignat*}{2}
  \llangle \fext, \v \rrangle  &= \int_\Omega \f \cdot \v \, dx + \int_{\MechanicsNeumannBoundary} \stress_\mathrm{N} \cdot \v \, ds,&\quad &\v\in \V,\\
  \llangle \hext, q \rrangle  &= \int_\Omega h \, q \, dx + \int_{\FlowNeumannBoundary} w_\mathrm{N} \, q \, ds, && q\in\Q.
 \end{alignat*}

\begin{definition}[Weak solution of the unsaturated poroelasticity model]\label{definition:weak-solution}
A weak solution to~\eqref{model:governing:kirchhoff:u}--\eqref{model:initial:kirchhoff:flow} is a pair $(\u,\chi)\in L^2(0,T;\V) \times L^2(0,T;Q)$ satisfying the following:
\begin{enumerate}
 
 \item[$\mathrm{(W1)}$] $\pek(\chi)\in L^2(Q_T)$, $\swk(\chi)\in L^\infty(Q_T)$.
 
 \item[$\mathrm{(W2)}$] $\bk(\chi)\in L^\infty(0,T;L^1(\Omega))$ and $\partial_t \bk(\chi)\in L^2(0,T; Q^\star)$ such that
 \begin{align*}
  \int_0^T \llangle \partial_t \bk(\chi), \q \rrangle \, dt
  +
  \int_0^T \llangle \bk(\chi) - \bk(\chi_0), \partial_t \q \rrangle \, dt
  =
  0,
 \end{align*}
 for all $\q\in L^2(0,T; Q)$ with $\partial_t \q \in L^1(0,T;L^\infty(\Omega))$ and $\q(T)=0$.
 
 \item[$\mathrm{(W3)}$] $\partial_t \DIV \u \in L^2(Q_T)$ such that 
 \begin{align*}
  \int_0^T \llangle \partial_t \DIV \u, q \rrangle \, dt + \int_0^T \llangle \DIV \u - \DIV \u_0, \partial_t q \rrangle \, dt = 0,
 \end{align*}
 for all $q\in H^1(0,T;L^2(\Omega))$ with $q(T)=0$.
 
 \item[$\mathrm{(W4)}$] $(\u,\chi)$ satisfies the variational equations
 \begin{align}\label{model:weak:kirchhoff:u} 
 \int_0^T  \left[ 
 a(\u, \v)
 - \alpha \llangle \pek(\chi), \DIV \v \rrangle \right]& \, dt = \int_0^T \llangle \fext, \v \rrangle \, dt, \\
 \int_0^T \bigg[
 \llangle \partial_t \bk(\chi), \q \rrangle + \alpha \llangle \swk(\chi) \partial_t \DIV \u, \q \rrangle 
 + \llangle \absolutepermeability \GRAD \chi, \GRAD \q \rrangle
 \bigg]& \, dt = \int_0^T \llangle \hext, \q \rrangle\, dt,   \label{model:weak:kirchhoff:p} 
 \end{align}
 for all $(\v,\q)\in L^2(0,T;\V) \times L^2(0,T;Q)$.
\end{enumerate}
\end{definition}

We note that the weak formulation of the initial conditions (W3) of the mechanical displacement immediately allow for a stronger formulation. See Lemma~\ref{lemma:eta:initial-conditions-u} for more information.

\subsection{Assumptions on model and data}\label{section:assumptions}
For proving the existence of a weak solution, we require several assumptions on the model, including the constitutive laws, model parameters, source terms and initial conditions:
\begin{itemize}
 
 \item[(A0)] $\sw:\mathbb{R}\rightarrow[0,1]$ and $\relativepermeability:[0,1]\rightarrow [0,1]$ such that $\relativepermeability(\sw(p))>0$, for all $p\in\mathbb{R}$ allowing for defining $\pwk$, $\bk$, $\swk$, $\pek$, and $\relativepermeabilityk$ as in~\eqref{definition:transformed-functions}.

 \item[(A1)] $\bk: \mathbb{R} \rightarrow \mathbb{R}$ is continuous and non-decreasing, and it holds that $\bk(0)=0$.
 
 \item[(A2)] $\swk : \mathbb{R} \rightarrow (0,1]$ continuous and differentiable a.e., and $\swk(\chi)=1$ for $\chi\geq 0$.
 
 \item[(A3)] $\pek : \mathbb{R} \rightarrow \mathbb{R}$ is continuously differentiable, non-decreasing, and it holds that $\pek(0)=0$.
 
 \item[(A4)] $\tfrac{\pek}{\swk}:\mathbb{R} \rightarrow \mathbb{R}$ is invertible and uniformly increasing, i.e., there exists a constant
 $c_{{\pek}/{\swk}}>0$ satisfying $\left(\tfrac{\pek}{\swk}\right)'(x)\geq c_{{\pek}/{\swk}}$ for all $x\in\mathbb{R}$. 
 
\end{itemize}
Assumptions~(A0)--(A4) are valid for standard constitutive laws, cf.\ Appendix~\ref{appendix:feasibility-assumptions}. The assumptions on the model parameters read:
\begin{itemize}
 \item[(A5)] $\mu>0$, $\lambda\geq 0$, $\alpha\geq 0$ are constant, and define the bulk modulus $K_\mathrm{dr}:=\tfrac{2\mu}{d} + \lambda$.

 \item[(A6)] $\absolutepermeability$ is uniformly bounded from below and above, such that there exist constants $0<\absolutepermeabilitymin\leq\absolutepermeabilitymax<\infty$ with $\absolutepermeability \in [\absolutepermeabilitymin,\absolutepermeabilitymax]$ on $\Omega$.
\end{itemize}
We note,~(A5) is stated only for simplicity. The assumptions on the external load and source terms read:
\begin{itemize}
 \item[(A7)] $\fext\in H^1(0,T;\V^\star) \cap C(0,T;\V^\star)$ and $\hext\in H^1(0,T;Q^\star)\cap C(0,T;\Q^\star)$, where
 \begin{alignat*}{2}
  \| \fext \|_{L^p(0,T;\V^\star)}^2 &:= \| \f \|_{L^p(0,T;\V^\star)}^2 + \| \stress_\mathrm{N} \|_{L^p(0,T;\V^\star)}^2, &\ & p\in\{2,\infty\},\\
  \| \hext \|_{L^p(0,T;\Q^\star)}^2 &:= \| h \|_{L^p(0,T;L^2(\Omega))}^2 + \| w_\mathrm{N} \|_{L^p(0,T;L^2(\FlowNeumannBoundary))}^2, &\ & p\in\{2,\infty\},
 \end{alignat*}
 and analogously $\|\fext\|_{\V^\star}$, $\| \partial_t \fext \|_{L^2(0,T;\V^\star)}$, $\| \fext \|_{H^1(0,T;\V^\star)}$, and $\|\hext\|_{\Q^\star}$, $\| \partial_t \hext \|_{L^2(0,T;\Q^\star)}$, $\| \hext \|_{H^1(0,T;\Q^\star)}$.
\end{itemize} 
 
The assumptions on the initial data read:
\begin{itemize}
 \item[(A8)] The initial data $(\u_0,\chi_0)\in\V\times \Q$ is sufficient regular such that there exists a constant $C_0$ satisfying 
 \begin{align*}
  &\| \u_{0} \|_{\V}^2 
  + \left\| \GRAD \chi_0 \right\|^2
  +
  \left\| \bk\left(\chi_0\right) \right\|_{L^1(\Omega)}
  + \left\| \Bk \left( \chi_{0}) \right) \right\|_{L^1(\Omega)} \\
  &\qquad 
  + \left\| \bar{B}\left(\frac{\pek(\chi_{0})}{\swk(\chi_{0})}\right) \right\|_{L^1(\Omega)} 
  + \left\| \pek(\chi_0) \right\|^2
  \leq C_0,
 \end{align*}
 where $\Bk$ and $\bar{B}$ are the Legendre transformations of $\bk$ and $\bar{b}:=\bk \circ \left(\frac{\pek}{\swk}\right)^{-1}$, respectively:
 \begin{align}
 \label{legendre-transform-bk-original}
  \Bk(z) &:= \int_0^z (\bk(z) - \bk(s))\, ds \geq 0,\\
  \label{legendre-transform-b-bar-original}
  \bar{B}(z) &:= \int_0^z (\bar{b}(z) - \bar{b}(s))\, ds \geq 0.
 \end{align}
 
 \item[(A9)] The initial data $(\u_0,\chi_0)$ satisfies the compatibility condition: $\pek(\chi_0)\in Q$ and 
 \begin{align*}
  a(\u_0, \v) - \alpha \llangle \pek(\chi_0), \DIV \v \rrangle = \llangle \fext(0), \v \rrangle,\quad \text{for all } \v\in\V,
 \end{align*}
 i.e., the mechanics equation at initial time. 
 \end{itemize}
 
 \noindent 
 Additionally, the following non-degeneracy conditions are required:
 
 \begin{enumerate}
 \item[(ND1)] There exists a constant $C_\mathrm{ND,1}>0$ such that
 \begin{align*}
  \left| \frac{\pek(\chi)}{\swk(\chi)\chi} \right| \leq C_\mathrm{ND,1},\quad \text{for all } \chi\in\mathbb{R}.
 \end{align*}

 \item[(ND2)] There exists a constant $C_\mathrm{ND,2}>0$ such that
 \begin{align*}
  C_\mathrm{ND,2}^{-1}\leq \pek'(\chi) &\leq C_\mathrm{ND,2}, \quad \text{for all } \chi\in\mathbb{R}.
 \end{align*}
 
 \item[(ND3)] There exists a constant $C_\mathrm{ND,3}\in(0,1)$ such that
 \begin{align*}
  K_\mathrm{dr} - \frac{\alpha^2}{4} \left( \frac{\swk(\chi)}{\pek'(\chi)} - 1 \right)^2 \frac{\left(\pek'(\chi)\right)^2}{\bk'(\chi)} \geq C_\mathrm{ND,3} K_\mathrm{dr}, \quad \text{for all } \chi\in\mathbb{R}.
 \end{align*}
\end{enumerate}

In Appendix~\ref{appendix:feasibility-assumptions}, it is demonstrated that for the van Genuchten model for $\sw$ and $\relativepermeability$~\cite{vanGenuchten1980}, and the equivalent pore pressure model for $\porepressure$~\cite{Coussy2004}, (ND1) and (ND2) follow if the saturation takes values above a residual saturation. Thus, (ND1) and (ND2) may be implicitly satisfied assuming (ND3) holds true. Furthermore, the calculations in Appendix~\ref{appendix:feasibility-assumptions} illustrate that for materials typically present in geotechnical application, the condition (ND3) is satisfied in saturation regimes above 1 to 10 percent (depending on the material parameters). Thereby, the practical saturation regime is covered for a wide range of applications.
After all,~(ND3) is the most restrictive assumption of all assumptions. It essentially requires the mechanical system to be sufficiently stiff in relation to the saturation profile. The lower the minimal saturation value, the stiffer the system has to be.

\subsection{Existence of solutions for the unsaturated poroelasticity model}

This section is presenting the main result together with the main steps of the proof.

\begin{theorem}[Existence of a weak solution to the unsaturated poroelasticity model]\label{theorem:existence}
 Under the model assumptions~$\mathrm{(A0)}$--$\mathrm{(A9)}$ and the non-degeneracy conditions~$\mathrm{(ND1)}$--$\mathrm{(ND3)}$, there exists a weak solution of~\eqref{model:governing:kirchhoff:u}--\eqref{model:initial:kirchhoff:flow} in the sense of Definition~\ref{definition:weak-solution}.
\end{theorem}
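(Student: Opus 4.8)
The plan is to follow the six-step roadmap already outlined in the introduction, which amounts to a chain of approximations — Kirchhoff transformation (done), double regularization, full space-time discretization, limit in the discretization parameters, and finally two successive limits removing the regularizations — combined at each level with a priori energy estimates and compactness. First I would introduce the double regularization depending on parameters $\varepsilon,\eta>0$: add $\eta\,\partial_t \u$ (or rather $\eta\,\partial_t(\text{something coercive in }\V)$) to the momentum balance \eqref{model:governing:kirchhoff:u} to give it a parabolic character, and add $\varepsilon\,\partial_t\chi$ to the mass balance \eqref{model:governing:kirchhoff:p} to non-degenerate the storage term; this is what lets the nonlinear coupling terms $\alpha\pek(\chi)$ and $\alpha\swk(\chi)\partial_t\DIV\u$ be handled, since one gains enough time regularity on $\chi$ and $\u$ to pass to limits in those products. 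Then discretize in time by backward Euler with step $\tau$, in space by conforming FEM for $\u$ and a TPFA finite-volume scheme for $\chi$; the point of the TPFA/FEM pairing, as the authors stress, is that the coupling terms $\alpha\swk(\chi)\partial_t\DIV\uh$ and $\alpha\llangle\pek(\chi),\DIV\vh\rrangle$ cancel cleanly when one tests mechanics with $\partial_t\uh$ and flow with $\pek(\chi_h)$, so that the troublesome off-diagonal terms combine into a controllable term governed precisely by the non-degeneracy condition (ND3).

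The core of the argument is the a priori estimate at the fully discrete level. I would test the discrete momentum equation with the time-increment of $\uh$ and the discrete mass equation with $\pek(\chi_h)$ (and also with $\chi_h$ itself), sum over time steps, and use discrete integration by parts in time together with convexity of the Legendre transforms $\Bk$ and $\bar B$ to absorb the nonlinear storage contributions — this is exactly the Alt–Luckhaus device adapted to the coupled setting, and it explains the appearance of $\|\Bk(\chi_0)\|_{L^1}$, $\|\bar B(\pek(\chi_0)/\swk(\chi_0))\|_{L^1}$ and $\|\pek(\chi_0)\|^2$ in (A8) and the compatibility condition (A9). Condition (ND3) is precisely what makes the quadratic form in $(\partial_t\DIV\uh,\partial_t\chi_h)$ arising from the coupling terms coercive after a Young's inequality split with weight $\alpha^2/4$, so one obtains $\uh$ bounded in $L^\infty(0,T;\V)$, $\chi_h$ in $L^2(0,T;\Q)\cap L^\infty$ of some saturation-weighted space, $\partial_t\DIV\uh$, $\partial_t\chi_h$ controlled by the regularization, and $\bk(\chi_h)$ bounded in $L^\infty(0,T;L^1(\Omega))$. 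From these bounds one extracts weakly/weak-$*$ convergent subsequences; passing to the limit $h,\tau\to0$ requires strong compactness for the nonlinear terms — here one invokes an Aubin–Lions–Simon type argument (the translation-in-time estimates of Alt–Luckhaus, plus the finite-volume compactness results cited from Eymard–Gallouët and Saad) to get $\chi_h\to\chi$ strongly in $L^2(Q_T)$, hence $\pek(\chi_h)\to\pek(\chi)$, $\swk(\chi_h)\to\swk(\chi)$, $\bk(\chi_h)\to\bk(\chi)$ by continuity and dominated convergence, while $\partial_t\DIV\uh\rightharpoonup\partial_t\DIV\u$ weakly suffices for the product $\swk(\chi)\partial_t\DIV\u$ once the saturation converges strongly. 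This yields a weak solution of the doubly regularized problem.

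The last two steps remove the regularizations in the order $\varepsilon\to0$ (mass balance) after first — or after re-deriving — improved regularity, and then $\eta\to0$ (momentum balance); the a priori estimates must be shown to be uniform in $\varepsilon$ and $\eta$, which is where (ND1)–(ND3) and the minimal-saturation hypothesis really earn their keep, since for vanishing saturation the problem degenerates and one needs the lower bound $\swk>0$ together with (ND2) ($\pek'$ bounded below and above) to keep $\pek(\chi)/\swk(\chi)$ and its inverse well behaved and to keep the Legendre-transform energies finite. I expect the main obstacle to be exactly this: obtaining estimates on the time derivatives $\partial_t\DIV\u$ and $\partial_t\bk(\chi)$ that survive $\varepsilon,\eta\to0$ with constants independent of the vanishing parameters, because the natural energy estimate only controls $\sqrt{\eta}\,\partial_t\u$ and $\sqrt{\varepsilon}\,\partial_t\chi$. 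Overcoming it requires a second, "improved-regularity" estimate — differentiating the equations in time (legitimate at the regularized level), testing again, and using (A7)'s $H^1$-in-time regularity of the data and (A9)'s compatibility of the initial mechanics equation — to bootstrap $\partial_t\DIV\u\in L^2(Q_T)$ and $\partial_t\bk(\chi)\in L^2(0,T;\Q^\star)$ with $\varepsilon,\eta$-independent bounds, after which the same compactness machinery passes to the limit and delivers (W1)–(W4). A minor but genuine technical point throughout is the careful treatment of the weak formulation of the initial conditions (W2)–(W3), recovered via the integration-by-parts-in-time identities as the regularizations vanish.
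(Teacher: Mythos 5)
Your roadmap captures the overall structure (Kirchhoff transform, double regularization, backward Euler with FEM/TPFA, compactness in $h,\tau$, two further limit passages), and you correctly identify that the hard part is obtaining $\varepsilon,\eta$-uniform bounds on $\partial_t\DIV\u$ and $\partial_t\bk(\chi)$ via a time-differentiated improved-regularity estimate. However, there is a genuine gap: you propose to remove the \emph{flow} regularization first and the \emph{mechanics} regularization second, whereas the paper does the opposite, and the order is not a matter of taste. To pass the storage regularization parameter to zero one needs an estimate on $\partial_t\chi$ that is uniform in that parameter; the natural energy estimate only gives $\sqrt{\varepsilon}\,\partial_t\chi$ bounded. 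The replacement is an inf-sup argument based on the time-differentiated mechanics equation, which gives $\left\| \partial_t\pek(\chi) \right\| \lesssim \left\| \partial_t\u \right\|_{\V} + \left\| \partial_t\fext \right\|_{\V^\star}$ and then $\left\| \partial_t\chi \right\| \lesssim \left\| \partial_t\pek(\chi) \right\|$ via (ND2). But this argument only works once the viscous term has been removed from the mechanics equation: at nonzero mechanics-regularization parameter the time-differentiated mechanics equation carries the extra term $\zeta\,a(\partial_{tt}\u,\v)$, and the only available bound on $\partial_{tt}\u$ degenerates like the inverse of the flow-regularization parameter (it is inherited from the $1/b_{\chi,m}$-dependent stability of $\partial_t\chi$). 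Consequently $\zeta\,\partial_{tt}\u$ is \emph{not} uniformly bounded with respect to the flow-regularization parameter, and the inf-sup bound blows up. That is exactly why the paper removes the mechanics regularization first, while the $\eta$-dependent growth constant $b_{\chi,m}$ is still available to control $\partial_t\chi$, and only afterwards removes the flow regularization using the $\zeta=0$ version of the time-differentiated mechanics equation.

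Two smaller imprecisions worth flagging. First, in the fully discrete energy estimate one must test the flow equation with $\pek(\chih^n)/\swk(\chih^n)$, not with $\pek(\chih^n)$: only then does the coupling term $\alpha\llangle \swk(\chih^n)\DIV(\uh^n-\uh^{n-1}),\,\pek/\swk \rrangle$ reduce to $\alpha\llangle \DIV(\uh^n-\uh^{n-1}),\,\pek(\chih^n) \rrangle$ and cancel exactly against the coupling term produced by testing mechanics with $\uh^n-\uh^{n-1}$; this is also what makes the Legendre transform $\bar{B}_\eta$ of $\bke\circ(\pek/\swk)^{-1}$ appear in the a priori bound. Second, the condition (ND3) is not used at the fully discrete level (where only (ND1) is needed); it enters later, at the time-continuous level, when the time-differentiated mechanics equation and the flow equation are tested with $\partial_t\u$ and $\partial_t\chi$ and a binomial-square argument is used to absorb the coupling — as the paper itself notes, this absorption has no obvious discrete analogue, which is another reason the regularization-removal steps are performed only after the $h,\tau\to0$ limit.
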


The main idea of the proof of Theorem~\ref{theorem:existence} is to use the Galerkin method in combination with compactness arguments. The main difficulty here is the control over the non-linear coupling terms. For this a regularization approach is used. After all, the proof consists of six steps. In the following, we present the idea of each step. Details are subject of the remainder of the article and will be presented in the six, subsequent sections. 

\paragraph{Step 1: Double physical regularization.}
 
Applying the Galerkin method along with compactness arguments for the original problem~\eqref{model:weak:kirchhoff:u}--\eqref{model:weak:kirchhoff:p} is challenging due to the coupling terms. A simple way to control the term $\partial_t \DIV \u$ is to add a suitable regularization term in the mechanics equation~\eqref{model:weak:kirchhoff:u}. As the coupling terms also involve non-linearities in the Kirchhoff pressure, strong compactness is required. Therefore, we add a coercive term in the flow equation, which allows for controlling the term $\partial_t \chi$. In this way, one can control the coupling terms, and eventually leading to convergence.

From a physical point of view, the regularized model accounts for secondary consolidation and compressible solid grains. In mathematical terms, it reads as follows. For given regularization parameters  $\zeta,\eta>0$, find $(\uveta,\chiveta)$ to be the solution to the variational equations
 \begin{align}\label{idea:model:regularized:weak:kirchhoff:u} 
 \int_0^T  \Big[ \zeta a(\partial_t \uveta, \v)
 + a(\uveta, \v)
 - \alpha \llangle \pek(\chiveta), \DIV \v \rrangle \Big]& \, dt = \int_0^T \llangle \fext, \v \rrangle \, dt, \\
 \int_0^T \bigg[
 \llangle \partial_t \bke(\chiveta), \q \rrangle + \alpha \llangle \swk(\chiveta) \partial_t \DIV \uveta, \q \rrangle 
 + \llangle \absolutepermeability \GRAD \chiveta, \GRAD \q \rrangle
 \bigg]& \, dt = \int_0^T \llangle \hext, \q \rrangle\, dt,   \label{idea:model:regularized:weak:kirchhoff:p}
 \end{align}
 for all $(\v,\q)\in L^2(0,T;\V) \times L^2(0,T;Q)$, where $\bke$ is a strictly increasing regularization of $\bk$ (see (A1$^\star$) for further properties). The next two steps prove that the regularized problem has a weak solution in an analogous sense to Definition~\ref{definition:weak-solution}.

\paragraph{Step 2: Discretization in space and time.}

We employ the implicit Euler scheme and a Galerkin method based on an inf-sup stable finite element/finite volume method to obtain a fully discrete counterpart of~\eqref{idea:model:regularized:weak:kirchhoff:u}--\eqref{idea:model:regularized:weak:kirchhoff:p}. In particular, the pressure variables are discretized by piecewise constant elements, and for the diffusion term a discrete gradient $\GRAD_h$ is employed corresponding to a two-point flux approximation of the volumetric fluxes~\cite{Eymard1999,Eymard2000}. 

Given an admissible mesh $\mathcal{T}$, cf.\ Definition~\ref{definition:admissible-mesh}, the conforming and non-conforming, discrete spaces $\Vh\subset \V$ and $\Qh\not\subset \Q$, respectively, and a partition $\{t_n\}_n$ of the interval $(0,T)$, the discretization for time steps $n$ reads: given the solution at the previous time step $(\uh^{n-1},\chih^{n-1}) \in \Vh\times\Qh$, find $(\uh^n,\chih^n)\in\Vh\times\Qh$ satisfying for all $(\vh,\qh)\in\Vh\times\Qh$
\begin{align}
   \zeta\tau^{-1}a (\uh^n - \uh^{n-1},\vh) 
   +  a(\uh^n,\vh) - \alpha \langle \pek(\chih^n), \DIV \vh \rangle &\= \langle \fext^n, \vh \rangle, \label{idea:ht:kirchhoff-reduced:u}\\
   \label{idea:ht:kirchhoff-reduced:p}
  \langle \bke(\chih^n) - \bke(\chih^{n-1}), \qh \rangle + \alpha \langle \swk(\chih^n) \DIV(\uh^n - \uh^{n-1}), \qh\rangle 
  + \, \tau \langle \GRAD_h \chih^n, \GRAD_h \qh \rangle_{\absolutepermeability} &\= \tau \langle \hext^n, \qh \rangle.
\end{align}

The reason for this particular choice of a discretization is two-fold: (i) the piecewise constant approximation of the pressure allows for the simple handling of non-linearities; (ii) the discrete gradients $\GRAD_h$ retain the local character of the differential operator. This together allows for simultaneously cancelling the coupling terms and utilizing the coercivity of the diffusion term. This is required, e.g., for proving the existence of a discrete solution employing a corollary of Brouwer's fixed point theorem, or in Step~3.
 
\paragraph{Step 3: Existence of a weak solution to the regularized model.}

Based on the discrete values $\left\{\left(\uh^n,\chih^n\right)\right\}_n$, we define suitable interpolations in time, $(\u_{h\tau},\chi_{h\tau})$, yielding approximations of $(\uveta,\chiveta)$. We remark that various interpolations are in fact introduced in the course of step~3 and~4. To avoid an excess in notations and for the ease of the presentation, we use the same notation, $(\u_{h\tau},\chi_{h\tau})$, for all interpolations.

The goal is to show convergence (in a certain sense) of $\left\{(\u_{h\tau},\chi_{h\tau})\right\}_{h,\tau}$ along a monotonically decreasing sequence of pairs $(h,\tau)\rightarrow (0,0)$ (from now on denoted $h,\tau\rightarrow 0$) towards a solution of ~\eqref{idea:model:regularized:weak:kirchhoff:u}--\eqref{idea:model:regularized:weak:kirchhoff:p}. This is achieved using compactness arguments; however, given the coupled and non-linear nature of~\eqref{idea:model:regularized:weak:kirchhoff:u}--\eqref{idea:model:regularized:weak:kirchhoff:p}, several terms require careful discussion: 
\begin{itemize}
 \item Non-linearities as $\pek(\chiveta)$ or products of independent variables as $\swk(\chiveta)\partial_t \DIV \uveta$ require partially strong convergence.
 
 \item Since $\bke$ is not necessarily Lipschitz continuous, it is not sufficient to show uniform stability for $\{\partial_t \chi_{h\tau} \}$ to conclude weak convergence of $\{ \partial_t \bke(\chi_{h\tau}) \}_{h,\tau}$ towards $\partial_t \bke(\chiveta)$. Instead, we apply techniques by~\cite{Alt1983} utilizing the Legendre transformation, $\Bke$, of $\bke$, analogously defined as in~\eqref{legendre-transform-bk-original}.
 
 \item The diffusion term is discretized using discrete gradients. Thus, weak convergence $\GRAD_h \chi_{h\tau} \rightarrow \GRAD \chiveta$ is not an obvious consequence of uniform stability for $\{\GRAD_h \chi_{h,\tau}\}_{h,\tau}$. For this, we apply techniques from finite volume literature~\cite{Eymard1999,Saad2013}.
 
\end{itemize}
Motivated by that, we first derive stability estimates that are uniform wrt.\ the discretization parameters
\begin{align*}
 &\left\| \u_{h\tau} \right\|_{H^1(0,T;\V)} 
 + 
 \underset{t\in(0,T)}{\mathrm{ess\,sup}}\, \left\| \chi_{h\tau}(t) \right\|_{1,\mathcal{T}}
 +
 \left\| \pek(\chi_{h\tau}) \right\|_{L^2(Q_T)} \\
 &\quad + \left\| \Bke(\chi_{h\tau}) \right\|_{L^\infty(0,T;L^1(\Omega))}
 +
 \left\| \partial_t \bke(\chi_{h\tau}) \right\|_{L^2(0,T;H^{-1}(\Omega))}
 +
 \left\| \partial_t \chi_{h\tau} \right\|_{L^2(Q_T)}
 \leq C_{\zeta\eta}
\end{align*} 
for some constant $C_{\zeta\eta}>0$ independent of $h,\tau$. Therefore, one obtains weak convergence for subsequences (denoted the same as before) for $h,\tau\rightarrow0$
\begin{alignat*}{2}
  \u_{h\tau}            &\rightharpoonup \uveta                && \text{ in }L^2(0,T;\V), \\
  \partial_t \u_{h\tau} &\rightharpoonup \partial_t \uveta     && \text{ in }L^2(0,T;\V),\\
  \pek(\chi_{h\tau})    &\rightharpoonup \pek(\chiveta)        && \text{ in }L^2(Q_T), \\
  \partial_t \bke(\chi_{h\tau}) &\rightharpoonup \partial_t \bke(\chiveta) &&\text{ in }L^2(0,T;Q^\star),\\
  \swk(\chi_{h\tau})\partial_t \DIV \u_{h\tau} &\rightharpoonup \swk(\chiveta)\partial_t \DIV \uveta &\quad&\text{ in }L^2(Q_T),\\
  \GRAD_h \chi_{h\tau} &\rightharpoonup \GRAD \chiveta      && \text{ in }L^2(Q_T).
 \end{alignat*}
 Moreover, by employing finite volume techniques the following convergence of the discrete diffusion term can be showed
 \begin{align*}
  \int_0^T \langle \GRAD_h \chi_{h\tau}, \GRAD_h \qh \rangle_{\absolutepermeability} \, dt
  \rightarrow 
  \int_0^T \langle \GRAD \chiveta, \GRAD q \rangle_{\absolutepermeability} \, dt,
 \end{align*}
 for arbitrary discrete test functions $\qh$, which strongly converge towards continuous functions $q$.
 Finally, the limit, $(\uveta,\chiveta)$, can be identified as weak solution of the regularized problem~\eqref{idea:model:regularized:weak:kirchhoff:u}--\eqref{idea:model:regularized:weak:kirchhoff:p}. 
 
\paragraph{Step 4: Increased regularity for the weak solution of the regularized model.}
When discussing the limit $\zeta \rightarrow 0$ in step~5, it will be beneficial to have access to the derivative in time of the mechanics equation~\eqref{idea:model:regularized:weak:kirchhoff:u} . Under the additional non-degeneracy condition~(ND2), i.e., that $\pek$ is Lipschitz continuous, an increased regularity can be showed for the weak solution of the regularized model, $(\uveta,\chiveta)$. For instance, for all $\v\in L^2(0,T;\V)$ it holds that
 \begin{align}\label{idea:model:regularized:weak:kirchhoff:dtu}
  \int_0^T  \left[ \zeta a(\partial_{tt} \uveta, \v ) 
 + a(\partial_t \uveta, \v)
 - \alpha \llangle \partial_t \pek(\chiveta), \DIV \v \rrangle \right]& \, dt = \int_0^T \llangle \partial_t \fext, \v \rrangle \, dt.
 \end{align}
 The proof follows the same line of argumentation as step~3. First a fully discrete counterpart of~\eqref{idea:model:regularized:weak:kirchhoff:dtu} is constructed by considering differences of~\eqref{idea:ht:kirchhoff-reduced:u} between subsequent time steps
 \begin{align*}
 &\zeta \tau^{-1} a(\uh^n - 2\uh^{n-1} + \uh^{n-2} , \vh)
 + a(\uh^n - \uh^{n-1}, \vh)\\
 &\qquad- \alpha \llangle \pek(\chih^n) - \pek(\chih^{n-1}), \DIV \vh \rrangle = \llangle \fext^n - \fext^{n-1}, \vh \rrangle \quad\text{for all } \vh\in\Vh.
 \end{align*}
 In addition, suitable interpolations $\vhtauHat$ and $\porepressurehtauHat$ of the discrete values $\{\tau^{-1} (\uh^n - \uh^{n-1}) \}_n$ and $\{\pek(\chih^n)\}_n$, respectively, define approximations of $\partial_t \uveta$ and $\pek(\chiveta)$. The uniform stability estimate
 \begin{align*}
  \| \partial_{t} \vhtauHat \|_{L^2(0,T;\V)}^2  
  +
  \| \partial_{t} \u_{h\tau} \|_{L^2(0,T;\V)}^2  
  +
  \| \partial_t \porepressurehtauHat \|_{L^2(Q_T)}^2  
  \leq 
  C_{\zeta\eta}
 \end{align*}
 guarantee the weak convergences
 \begin{alignat*}{2}
  \partial_{t} \vhtauHat        &\rightharpoonup \partial_{tt} \uveta,     &\quad& \text{in }L^2(0,T;\V),\\
  \partial_{t} \u_{h\tau}       &\rightharpoonup \partial_{t} \uveta,      &\quad& \text{in }L^2(0,T;\V),\\
  \partial_t \pek(\chi)_{h\tau} &\rightharpoonup \partial_t \pek(\chiveta),&\quad& \text{in }L^2(Q_T)
 \end{alignat*}
 up to subsequences, for $h,\tau\rightarrow 0$. Finally, one can identify~\eqref{idea:model:regularized:weak:kirchhoff:dtu} in the limit.

\paragraph{Step 5: Vanishing regularization in the mechanics equation.}

For each $\zeta,\eta>0$, there exists a solution $(\uveta,\chiveta)$ to~\eqref{idea:model:regularized:weak:kirchhoff:u}--\eqref{idea:model:regularized:weak:kirchhoff:p}. For the limit $\zeta\rightarrow 0$, we employ compactness arguments similar to step 3. However, now the stability estimates ought to be independent of $\zeta$. We show
\begin{align}
\label{idea:eps:stability-1}
  &\left\| \uv \right\|_{H^1(0,T;\V)}^2 
  +
  \left\| \chiv \right\|_{L^\infty(0,T;\Q)}^2
  +
  \left\| \porepressure(\chiv) \right\|_{L^2(Q_T)}^2\\
\nonumber
  &\quad+
  \left\| \Bke(\chiv) \right\|_{L^\infty(0,T;L^1(\Omega))} 
  +
  \left\| \partial_t \bke(\chiv) \right\|_{L^2(0,T;H^{-1}(\Omega))}
 \leq C,
\end{align}
and 
\begin{align}
\label{idea:eps:stability-2}
\| \partial_t \chiv \|_{L^2(Q_T)}^2 \leq C_\eta.
\end{align}
For~\eqref{idea:eps:stability-1}, one can use $\v=\partial_t \uveta$ and $q=\partial_t \chiveta$ as test functions in~\eqref{idea:model:regularized:weak:kirchhoff:p} and~\eqref{idea:model:regularized:weak:kirchhoff:dtu}. The coupling terms obviously do not match; but by using a binomial identity and the non-degeneracy condition (ND3), one can show that
\begin{align}
\label{idea:coupling-expression}
  &\left\| \partial_t \uv \right\|_{L^2(0,T;\V)}^2
  +
  \int_0^T \llangle \partial_t \bke(\chiveta), \partial_t \chiv \rrangle
  +
  \alpha \int_0^T \llangle \swk \partial_t \chiv - \partial_t \pek, \partial_t \DIV \uv \rrangle \geq 0,
\end{align}
which effectively allows for dropping the coupling terms. With this, letting $\zeta \rightarrow 0$, one obtains
for subsequences (denoted the same as before)
\begin{alignat*}{2}
  \uv     &\rightharpoonup \ueta                       && \text{ in }L^2(0,T;\V), \\
  \partial_t \uv     &\rightharpoonup \partial_t \ueta                       && \text{ in }L^2(0,T;\V), \\
  \zeta \partial_t \uv     &\rightarrow     \bm{0}                       && \text{ in }L^2(0,T;\V), \\
  \chiv &\rightharpoonup \chieta           && \text{ in }L^\infty(0,T;Q), \\
  \pek(\chiv) &\rightharpoonup \pek(\chieta)       && \text{ in }L^2(Q_T), \\
  \swk(\chiv)\partial_t \DIV \uv &\rightharpoonup \swk(\chieta)\partial_t \DIV \ueta &\quad &\text{ in }L^2(Q_T),\\
  \partial_t \bke(\chiv) &\rightharpoonup \partial_t \bke(\chieta) &&\text{ in }L^2(0,T;Q^\star).
 \end{alignat*}
 Finally, it is straightforward to see that the limit $(\ueta,\chieta)$ is weak solution of~\eqref{idea:model:regularized:weak:kirchhoff:u}--\eqref{idea:model:regularized:weak:kirchhoff:p} for $\zeta=0$.

 We underline, that for showing~\eqref{idea:coupling-expression}, the time-continuous character of the variational problem is required. It is not obvious how to use a similar strategy on time-discrete level. Therefore, step 5 has been performed separately from step~3 and~4.

\paragraph{Step 6: Vanishing regularization in the flow equation.}

In the presence of fluid or solid grain compressibility in the original formulation, i.e., $c_\mathrm{w}>0$ or $\frac{1}{N}>0$, respectively, this final step is obsolete. Otherwise, we consider the limit process $\eta \rightarrow 0$ for the sequence of solutions $\{(\ueta,\chieta)\}_\eta$, derived in step~5. The overall idea is the same as in step~5, namely to obtain estimates that are uniform wrt.\ $\eta$ and to use compactness arguments. Referring to~\eqref{idea:eps:stability-1}, the following estimate is uniform in $\eta$
\begin{align}
\label{idea:eta-regularization:uniform-stability-bound}
 &\left\| \ueta \right\|_{H^1(0,T;\V)} 
 + 
 \left\| \chieta \right\|_{L^\infty(0,T;H^1_0(\Omega))}
 +
 \left\| \pek(\chieta) \right\|_{L^2(Q_T)} \\
 \nonumber
 &\quad + \left\| \Bke(\chieta) \right\|_{L^\infty(0,T;L^1(\Omega))}
 +
 \left\| \partial_t \bke(\chieta) \right\|_{L^2(0,T;H^{-1}(\Omega))}
 \leq C.
\end{align}
For estimating $\partial_t\chieta$, we first show that the time derivative of the mechanics equation~\eqref{idea:ht:kirchhoff-reduced:u} is well-defined for $\zeta=0$, i.e., it holds for all $\v\in L^2(0,T;\V)$ that
\begin{align}
 \label{idea:result:regularization-eta-dt-mechanics}
  \int_0^T a(\partial_t \ueta, \v) \, dt -  \int_0^T \alpha\llangle \partial_t \pek(\chieta), \DIV \v \rrangle \, dt = \int_0^T \llangle \partial_t \fext, \v \rrangle \, dt.
\end{align}
Since $\| \partial_t \chieta \| \lesssim \| \partial_t \pek(\chieta) \|$, the uniform stability for $\partial_t \chieta$ follows by an inf-sup argument,~\eqref{idea:result:regularization-eta-dt-mechanics}, and the stability bound~\eqref{idea:eta-regularization:uniform-stability-bound}. Due to the lack of a suitable bound on $\partial_{tt} \uv$ in step~5, this approach only works for $\zeta=0$. Standard compactness arguments allow for extracting subsequences (again denotes as before) such that for $\eta \rightarrow 0$ it holds that
\begin{alignat*}{2}
  \ueta     &\rightharpoonup \u                   && \text{ in }L^2(0,T;\V), \\
  \chieta &\rightharpoonup \chi                   && \text{ in }L^\infty(0,T;Q), \\
  \pek(\chieta) &\rightharpoonup \pek(\chi)       && \text{ in }L^2(Q_T), \\
  \swk(\chieta)\partial_t \DIV \ueta &\rightharpoonup \swk(\chi)\partial_t \DIV \u &\quad &\text{ in }L^2(Q_T),\\
  \partial_t \bke(\chieta) &\rightharpoonup \partial_t \bk(\chi) &&\text{ in }L^2(0,T;Q^\star).
\end{alignat*}
Ultimately, $(\u,\chi)$ can be identified as a weak solution to the unsaturated poroelasticity model in the sense of Definition~\ref{definition:weak-solution}. This finishes the proof of Theorem~\ref{theorem:existence}.

\section{Step 1: Physical regularization -- secondary consolidation and enhanced grain compressibility}\label{section:regularization}
  
We introduce a physical regularization of the weak formulation~\eqref{model:weak:kirchhoff:u}--\eqref{model:weak:kirchhoff:p} by enhancing both the mechanics and the flow equations. We allow for secondary consolidation, which effectively incorporates a linear visco-elasticity contribution in the mechanics equations of the form $a(\partial_t \u,\v)$.  Additionally, we assume non-vanishing grain compressibility by regularizing $\bk$. Specifically, we let $\zeta>0$ and $\eta>0$ be two regularization parameters and analyze the behavior of the solution when passing them to zero.

Motivated by the physical example~\eqref{model:example-b}, for $\eta>0$, define the regularization $\bke$ of $\bk$ by
\begin{align*}
 \bke(\chi) := \bk(\chi) + \eta \int_0^{\pwk(\chi)} s_\mathrm{w}(p) \porepressure'(p) \, dp,
\end{align*}
i.e., $\bke$ has the same structure as $\bk$, but with $\tfrac{1}{N}+\eta$ replacing $\tfrac{1}{N}$. Refering to Section~\ref{section:assumptions}, the function $\bke$ still satisfies~(A1). Additionally, a uniform growth condition holds
\begin{itemize}
 \item[(A1$^\star$)] There exists a $\bk_{\chi,\mathrm{m}}>0$ s.t.\ $\bk_{\chi,\mathrm{m}} \| \chi_1 - \chi_2 \|^2 \leq \llangle \bke(\chi_1) - \bke(\chi_2), \chi_1 - \chi_2 \rrangle \ \text{for all } \chi_1,\chi_2\in L^2(Q_T)$,
\end{itemize}
 cf.\ also Section~\ref{appendix:feasibility-assumptions}. In the subsequent discussion, a growth condition for $\bke$ (or $\bk$) of type~(A1$^\star$) will be required in order to to utilize strong compactness arguments. If $\mathrm{min}\left\{c_\mathrm{w},\tfrac{1}{N} \right\}>0$ in~\eqref{model:example-b} holds, the growth condition~(A1$^\star$) is fulfilled even for $\eta=0$, and the regularization of the flow equation actually is not necessary, cf.\ Step 6 in Section~\ref{section:eta-regularization}. In this context, we emphasize that~(ND3) also holds for $\bke$ as $\bke' \geq \bk'$.
 
 Also~(A8) can be adapted for the regularization $\bke$. With $\bar{b}_\eta:=\bke \circ \left(\frac{\pek}{\swk}\right)^{-1}$, we let $\Bke$ and $\bar{B}_\eta$ be the Legendre transformations of $\bke$ and $\bar{b}_\eta$, respectively, defined by
 \begin{align}
 \label{legendre-transform-bk}
  \Bke(z) &:= \int_0^z (\bke(z) - \bke(s))\, ds \geq 0,\\
  \label{legendre-transform-b-bar}
  \bar{B}_\eta(z) &:= \int_0^z (\bar{b}_\eta(z) - \bar{b}_\eta(s))\, ds \geq 0.
 \end{align}
 \begin{itemize}
  \item[(A8$^\star$)] There exists a $\eta_0>0$ and $C_0>0$, not depending on $\eta_0$, such that
   \begin{align*}
  \| \u_{0} \|_{\V}^2 
  + \left\| \GRAD \chi_0 \right\|^2 
  + \left\| \Bke \left( \chi_{0}) \right) \right\|_{L^1(\Omega)} 
  + \left\| \bar{B}_\eta \left(\frac{\pek(\chi_{0})}{\swk(\chi_{0})}\right) \right\|_{L^1(\Omega)} 
  \leq C_0
 \end{align*}
 for all $\eta\in(0,\eta_0)$. Without loss of generality, we assume $C_0$ in (A8) and (A8$^\star$) to be the same.
 \end{itemize}
 \noindent
 For a non-degenerate initial condition $\chi_0$, the additional terms in $\Bke$ and $\bar{B}_\eta$ can be essentially bounded by $\eta\| \chi_0 \|^2$, which itself is bounded by~(A8). 
 
 
We introduce the notion of a weak solution of the doubly regularized unsaturated poroelasticity model.

\begin{definition}[Weak solution of the doubly regularized model]\label{definition:regularization:weak-solution}

For $\zeta>0$ and $\eta>0$, we call $(\uveta,\chiveta)\in L^2(0,T;\V) \times L^2(0,T;Q)$ a weak solution of the doubly regularized unsaturated poroelasticity model if it satisfies:

\begin{enumerate}
 
 \item[$\mathrm{(W1)_{\zeta\eta}}$] $\pek(\chiveta)\in L^2(Q_T)$, $\swk(\chiveta)\in L^\infty(Q_T)$.
 
 \item[$\mathrm{(W2)_{\zeta\eta}}$] $\bke(\chiveta)\in L^\infty(0,T;L^1(\Omega))$ and $\partial_t \bke(\chiveta)\in L^2(0,T; Q^\star)$ such that
 \begin{align*}
  \int_0^T \llangle \partial_t \bke(\chiveta), \q \rrangle \, dt
  +
  \int_0^T \llangle \bke(\chiveta) - \bke(\chi_0), \partial_t \q \rrangle \, dt
  =
  0,
 \end{align*}
 for all $\q\in L^2(0,T; Q)$ with $\partial_t \q \in L^1(0,T;L^\infty(\Omega))$ and $\q(T)=0$.
 
 \item[$\mathrm{(W3)_{\zeta\eta}}$] $\partial_t \uveta\in L^2(0,T;\V)$ such that 
 \begin{align*}
  \int_0^T a(\partial_t \uveta, \v) \, dt + \int_0^T a(\uveta - \u_0, \partial_t \v) \, dt = 0,
 \end{align*}
 for all $\v\in H^1(0,T;\V)$ with $\v(T)=\bm{0}$.
 
 \item[$\mathrm{(W4)_{\zeta\eta}}$] $(\uveta,\chiveta)$ satisfies the variational equations
 \begin{align}\label{model:regularized:weak:kirchhoff:u} 
 \int_0^T  \Big[ \zeta a(\partial_t \uveta, \v)
 + a(\uveta, \v)
 - \alpha \llangle \pek(\chiveta), \DIV \v \rrangle \Big]& \, dt = \int_0^T \llangle \fext, \v \rrangle \, dt, \\
 \int_0^T \bigg[
 \llangle \partial_t \bke(\chiveta), \q \rrangle + \alpha \llangle \swk(\chiveta) \partial_t \DIV \uveta, \q \rrangle 
 + \llangle \absolutepermeability \GRAD \chiveta, \GRAD \q \rrangle
 \bigg]& \, dt = \int_0^T \llangle \hext, \q \rrangle\, dt,   \label{model:regularized:weak:kirchhoff:p}
 \end{align}
 for all $(\v,\q)\in L^2(0,T;\V) \times L^2(0,T;Q)$.
\end{enumerate}

\noindent
Furthermore, we call $(\uveta,\chiveta)$ a \textit{weak solution with increased regularity for the doubly regularized unsaturated poroelasticity model} if it satisfies $\mathrm{(W1)_{\zeta\eta}}$--$\mathrm{(W4)_{\zeta\eta}}$ and:
\begin{enumerate}

 \item[$\mathrm{(W5)_{\zeta\eta}}$] $\uveta \in H^2(0,T;\V)$ and $\partial_t \pek(\chiveta) \in L^2(Q_T)$. 
  
 \item[$\mathrm{(W6)_{\zeta\eta}}$] It holds 
 \begin{align}\label{model:regularized:weak:kirchhoff:dtu}
  \int_0^T  \Big[ \zeta a(\partial_{tt} \uveta, \v ) 
 + a(\partial_t \uveta, \v)
 - \alpha \llangle \partial_t \pek(\chiveta), \DIV \v \rrangle \Big]& \, dt = \int_0^T \llangle \partial_t \fext, \v \rrangle \, dt, 
 \end{align}
 for all $\v\in L^2(0,T;\V)$, given that $\fext \in H^1(0,T;\V^\star)$.
\end{enumerate}
\end{definition}

\noindent
We will later separately consider $\zeta\rightarrow 0$ and $\eta\rightarrow0$. Therefore, we give the definition of a weak solution for the simply regularized unsaturated poroelasticity model, obtained for $\eta>0$ and $\zeta=0$.

\begin{definition}[Weak solution of the simply regularized model]\label{definition:simply-regularized-weak-solution}
 For $\eta>0$, we call $(\ueta,\chieta)$ a \textit{weak solution of the simply regularized unsaturated poroelasticity model} if it satisfies~$\mathrm{(W1)_{\zeta\eta}}$--$\mathrm{(W4)}_{\zeta\eta}$ for $\zeta=0$. 
\end{definition}

To distinguish between the equations satisfied by the weak solution of a doubly regularized model and the one of the simply regularized one, where $\epsilon_v = 0$, we use the notations $\mathrm{(W1)_{\eta}}$--$\mathrm{(W4)}_{\eta}$.

\begin{lemma}[Existence of a weak solution to the doubly regularized model]\label{lemma:existence-doubly-regularized}
 Let $\zeta > 0$ and $\eta > 0$ be given. Under the assumptions $\mathrm{(A0)}$--$\mathrm{(A9)}$ and~$\mathrm{(ND1)}$ there exists a weak solution to the doubly regularized unsaturated poroelasticity model, in the sense of Definition~\ref{definition:regularization:weak-solution}. 
\end{lemma}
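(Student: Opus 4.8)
The plan is to carry out Steps~2 and~3 of the outline: build a fully discrete approximation, show it is solvable, derive a priori estimates uniform in the discretization parameters, and pass to the limit. Fix $\zeta,\eta>0$ once and for all, so every constant below may depend on them. First I would fix an admissible mesh $\mathcal T$ (Definition~\ref{definition:admissible-mesh}), the conforming finite-element space $\Vh\subset\V$, the piecewise-constant space $\Qh$ equipped with the two-point-flux discrete gradient $\GRAD_h$, and a uniform partition of $(0,T)$ with step $\tau$, giving the fully discrete scheme~\eqref{idea:ht:kirchhoff-reduced:u}--\eqref{idea:ht:kirchhoff-reduced:p}. The key structural point is that $\chih^n$ is piecewise constant, so $\bke(\chih^n)$, $\swk(\chih^n)$, $\pek(\chih^n)$ and $\pek(\chih^n)/\swk(\chih^n)$ are again piecewise constant, hence genuine elements of $\Qh$ usable as test functions. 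Existence of $(\uh^n,\chih^n)$ given $(\uh^{n-1},\chih^{n-1})$ I would obtain from the standard corollary of Brouwer's fixed-point theorem recalled in the appendix: the discrete residual map is continuous by~(A0)--(A3), and the sign condition for large arguments follows from coercivity of $a(\cdot,\cdot)$ and of the $\zeta$-term, the strict monotonicity~(A1$^\star$) of $\bke$, coercivity of the discrete diffusion form (using~(A6) and admissibility of $\mathcal T$), and~(ND1), which bounds $\pek(\chih^n)$ by $\|\chih^n\|$ so the coupling contribution is absorbed — reformulating, if needed, the flow unknown as $\pek(\chi)/\swk(\chi)$ (a homeomorphism by~(A4)) so that the coupling terms cancel within the fixed-point argument.

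The core of Step~3 is a priori bounds independent of $(h,\tau)$. I would test~\eqref{idea:ht:kirchhoff-reduced:u} with the increment $\uh^n-\uh^{n-1}$ and~\eqref{idea:ht:kirchhoff-reduced:p} with $\pek(\chih^n)/\swk(\chih^n)\in\Qh$; since $\swk>0$ and both factors live on the same mesh, $\swk(\chih^n)\cdot\bigl(1/\swk(\chih^n)\bigr)=1$ pointwise, and the two coupling terms cancel exactly. The storage contribution $\llangle\bke(\chih^n)-\bke(\chih^{n-1}),\pek(\chih^n)/\swk(\chih^n)\rrangle$ is bounded below by a telescoping difference of $\bar{B}_\eta$, the Legendre transform of $\bar{b}_\eta=\bke\circ(\pek/\swk)^{-1}$ — precisely the quantity controlled by~(A8$^\star$); the discrete diffusion term is nonnegative and, because $\pek/\swk$ is uniformly increasing, controls $\tau$ times the discrete $\GRAD_h$-seminorm of $\chih^n$; and the $\zeta$-term telescopes, giving control of $\|\partial_t\u_{h\tau}\|$. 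A discrete Grönwall argument together with~(A7)--(A9) then yields uniform control of $\|\u_{h\tau}\|_{H^1(0,T;\V)}$, $\mathrm{ess\,sup}_{t\in(0,T)}\|\chi_{h\tau}(t)\|_{1,\mathcal T}$, $\|\pek(\chi_{h\tau})\|_{L^2(Q_T)}$ (via~(ND1)) and $\|\Bke(\chi_{h\tau})\|_{L^\infty(0,T;L^1(\Omega))}$. Finally, testing~\eqref{idea:ht:kirchhoff-reduced:p} with the increment $\chih^n-\chih^{n-1}$ and using~(A1$^\star$) to absorb the coupling against the already-controlled $\|\uh^n-\uh^{n-1}\|_\V$ gives the decisive bound $\|\partial_t\chi_{h\tau}\|_{L^2(Q_T)}\le C_{\zeta\eta}$, after which an inf-sup/duality argument yields $\|\partial_t\bke(\chi_{h\tau})\|_{L^2(0,T;H^{-1}(\Omega))}\le C_{\zeta\eta}$.

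With these bounds I would form the in-time interpolants $(\u_{h\tau},\chi_{h\tau})$ and extract weakly convergent subsequences as $h,\tau\to0$, then upgrade $\chi_{h\tau}$ to strong convergence in $L^2(Q_T)$ from the $L^2(Q_T)$ bound on $\partial_t\chi_{h\tau}$ together with a discrete Aubin--Lions/Rellich argument for the finite-volume space~\cite{Eymard1999}. Strong convergence of $\chi_{h\tau}$ gives, by~(A2)--(A3) and dominated convergence, strong convergence of $\swk(\chi_{h\tau})$ and $\pek(\chi_{h\tau})$; hence the product $\swk(\chi_{h\tau})\partial_t\DIV\u_{h\tau}\rightharpoonup\swk(\chiveta)\partial_t\DIV\uveta$ (strong $\times$ weak), while $\partial_t\bke(\chi_{h\tau})\rightharpoonup\partial_t\bke(\chiveta)$ is identified by the monotonicity/Legendre-transform device of~\cite{Alt1983}. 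The discrete-gradient term $\int_0^T\llangle\GRAD_h\chi_{h\tau},\GRAD_h\qh\rrangle_{\absolutepermeability}\,dt\to\int_0^T\llangle\GRAD\chiveta,\GRAD q\rrangle_{\absolutepermeability}\,dt$ follows from the consistency estimates for the two-point-flux discrete gradient on admissible meshes~\cite{Eymard1999,Saad2013}, which also identify $\chiveta\in L^2(0,T;\Q)$ with $\GRAD\chiveta$ the weak limit of $\GRAD_h\chi_{h\tau}$. Passing to the limit in~\eqref{idea:ht:kirchhoff-reduced:u}--\eqref{idea:ht:kirchhoff-reduced:p} against test functions that are strong limits of discrete ones yields~$\mathrm{(W4)_{\zeta\eta}}$; $\mathrm{(W1)_{\zeta\eta}}$ is read off from the bounds, and $\mathrm{(W2)_{\zeta\eta}}$, $\mathrm{(W3)_{\zeta\eta}}$ follow by integration by parts in time in the interpolated equations, the interpolants carrying the initial data $\u_0$, $\chi_0$.

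The main obstacle is securing genuine strong convergence of $\chi_{h\tau}$ and, with it, the identification of the two nonlinear terms $\swk(\chiveta)\partial_t\DIV\uveta$ and $\partial_t\bke(\chiveta)$: the former hinges on the uniform $L^2(Q_T)$ bound on $\partial_t\chi_{h\tau}$, available only because of the $\eta$-regularization through~(A1$^\star$), combined with a discrete compactness lemma for the nonconforming space; the latter needs the Alt--Luckhaus argument, since $\bke$ is only continuous and nondecreasing, not Lipschitz. The remaining technical crux is the consistency of the two-point-flux discrete gradient under mesh refinement — where admissibility of $\mathcal T$ is essential — together with the careful bookkeeping ensuring that the various in-time interpolants (piecewise constant versus affine) all share the same limit.
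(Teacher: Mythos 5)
Your plan reproduces the paper's proof of this lemma (Sections~\ref{section:fully-discrete}--\ref{section:well-posedness:regularized-continuous-problem}) essentially step by step: Brouwer's fixed-point corollary with the pressure reduced via $(\pek/\swk)^{-1}$, the test-function pair $(\uh^n-\uh^{n-1},\,\pek(\chih^n)/\swk(\chih^n))$ so that the coupling terms cancel in the energy estimate, Legendre-transform control of the storage term, a second test with the Kirchhoff-pressure increment exploiting (A1$^\star$), the space-translation estimate of~\cite{Eymard1999} combined with the Riesz--Fr\'echet--Kolmogorov criterion for strong $L^2(Q_T)$ compactness of the nonconforming $\chi_{h\tau}$, the dual-grid consistency argument of~\cite{Saad2013} for the TPFA gradient, and the Alt--Luckhaus device for identifying $\partial_t\bke(\chiveta)$. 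The one minor imprecision is the role you assign (ND1) in the discrete existence step: in the paper's Lemma~\ref{lemma:existence-discrete-solution} it is used solely to bound the source contribution $\llangle\hext^n,\pek/\swk\rrangle$, while the coupling term $\alpha\llangle\DIV(\uh(\bm{\alpha})-\uh^{n-1}),\pek\rrangle$ is turned into coercive energy by eliminating the displacement through the discrete mechanics equation, not absorbed via $|\pek|\leq C_\mathrm{ND,1}|\chi|$ (which would leave a sign-indefinite cross term without a matching quadratic term in $\chih$), so it is your ``reformulating'' caveat, not the absorption, that captures what is actually done.
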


\begin{proof}
 The assertion follows from steps~2--3.
\end{proof}

\begin{lemma}[Existence of a weak solution with increased regularity for the doubly regularized model]\label{lemma:existence-doubly-regularized-increased-regularity}
 Let $\zeta>0$ and $\eta>0$ be given. Under the assumptions $\mathrm{(A0)}$--$\mathrm{(A9)}$ and the non-degeneracy conditions~$\mathrm{(ND1)}$--$\mathrm{(ND2)}$, the doubly regularized unsaturated poroelasticity model has a weak solution with increased regularity, in the sense of Definition~\ref{definition:regularization:weak-solution}. 
\end{lemma}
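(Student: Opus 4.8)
The plan is to follow the same discretize--estimate--pass-to-the-limit scheme used for Lemma~\ref{lemma:existence-doubly-regularized} (Steps~2--3), but now applied to the first time difference of the discrete mechanics equation. Since $\zeta,\eta>0$ are fixed throughout, Lemma~\ref{lemma:existence-doubly-regularized} already supplies the properties $\mathrm{(W1)_{\zeta\eta}}$--$\mathrm{(W4)_{\zeta\eta}}$, so that only $\mathrm{(W5)_{\zeta\eta}}$ and $\mathrm{(W6)_{\zeta\eta}}$ need to be established; I may start from the fully discrete solutions $\{(\uh^n,\chih^n)\}_n$ produced in Step~3, their time interpolants $(\u_{h\tau},\chi_{h\tau})\to(\uveta,\chiveta)$, and the uniform bound $\|\partial_t\chi_{h\tau}\|_{L^2(Q_T)}\leq C_{\zeta\eta}$. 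Subtracting~\eqref{idea:ht:kirchhoff-reduced:u} at step $n-1$ from the one at step $n$ and dividing by $\tau$ gives, with $\delta_t x^n:=\tau^{-1}(x^n-x^{n-1})$ and $\delta_t\pek(\chih^n):=\tau^{-1}(\pek(\chih^n)-\pek(\chih^{n-1}))$,
\begin{align*}
 \zeta\,\tau^{-1} a\!\left(\delta_t\uh^n-\delta_t\uh^{n-1},\vh\right) + a\!\left(\delta_t\uh^n,\vh\right) - \alpha\llangle\delta_t\pek(\chih^n),\DIV\vh\rrangle = \llangle\delta_t\fext^n,\vh\rrangle
\end{align*}
for all $\vh\in\Vh$ and $n\geq2$. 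Let $\vhtauHat$ and $\porepressurehtauHat$ be suitable time interpolants of $\{\delta_t\uh^n\}_n$ and $\{\pek(\chih^n)\}_n$, respectively; these will approximate $\partial_t\uveta$ and $\pek(\chiveta)$.

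For the a priori estimate I would test the difference equation with the second time difference $\vh=\delta_t\uh^n-\delta_t\uh^{n-1}$, multiply by $\tau$, and sum over $n$. The elasticity term is treated with the identity $a(x,x-y)=\tfrac12\Vnorm{x}^2-\tfrac12\Vnorm{y}^2+\tfrac12\Vnorm{x-y}^2$ (with $x=\delta_t\uh^n$, $y=\delta_t\uh^{n-1}$): after telescoping it contributes $\sup_n\Vnorm{\delta_t\uh^n}^2$ and, together with the nonnegative $\zeta$-term, also $\sum_n\Vnorm{\delta_t\uh^n-\delta_t\uh^{n-1}}^2=\tau\|\partial_t\vhtauHat\|_{L^2(0,T;\V)}^2$. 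The coupling term is bounded by Cauchy--Schwarz, the estimate $\|\DIV\vh\|\lesssim\Vnorm{\vh}$, and crucially the Lipschitz property~(ND2): $\|\delta_t\pek(\chih^n)\|\leq C_\mathrm{ND,2}\|\delta_t\chih^n\|$, so $\sum_n\tau\|\delta_t\pek(\chih^n)\|^2\leq C_\mathrm{ND,2}^2\|\partial_t\chi_{h\tau}\|_{L^2(Q_T)}^2\leq C_{\zeta\eta}$; the load term uses $\sum_n\tau\|\delta_t\fext^n\|_{\V^\star}^2\lesssim\|\partial_t\fext\|_{L^2(0,T;\V^\star)}^2$ from~(A7). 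A Young inequality absorbs the $\zeta$- and elasticity-contributions; the one remaining quantity is the initial difference quotient $\Vnorm{\delta_t\uh^1}$, which I would bound by subtracting the compatibility identity~(A9) from~\eqref{idea:ht:kirchhoff-reduced:u} at $n=1$ (with $\uh^0,\chih^0$ chosen as compatible projections of $\u_0,\chi_0$) and testing with $\delta_t\uh^1$, exploiting that $\zeta>0$ is fixed. Altogether this should give the $(h,\tau)$-uniform bound
\begin{align*}
 \|\partial_t\vhtauHat\|_{L^2(0,T;\V)}^2 + \|\partial_t\u_{h\tau}\|_{L^2(0,T;\V)}^2 + \|\partial_t\porepressurehtauHat\|_{L^2(Q_T)}^2 \leq C_{\zeta\eta}.
\end{align*}

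With this bound, standard weak-compactness arguments extract a further subsequence with $\partial_t\vhtauHat\rightharpoonup\partial_{tt}\uveta$ and $\partial_t\u_{h\tau}\rightharpoonup\partial_t\uveta$ in $L^2(0,T;\V)$, and $\partial_t\porepressurehtauHat\rightharpoonup\partial_t\pek(\chiveta)$ in $L^2(Q_T)$; identifying the limits is routine, since the piecewise-linear interpolant of $\{\uh^n\}$ converges to $\uveta$ (Step~3) and its time derivative is $\vhtauHat$ up to the usual piecewise-constant/piecewise-linear discrepancy, while $\porepressurehtauHat$ and the Step~3 interpolant of $\pek(\chih^n)$ share the weak limit $\pek(\chiveta)$. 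In particular $\uveta\in H^2(0,T;\V)$ and $\partial_t\pek(\chiveta)\in L^2(Q_T)$, i.e.\ $\mathrm{(W5)_{\zeta\eta}}$ holds. It then remains to pass to the limit $h,\tau\to0$ in the difference equation: the $\zeta$-, elasticity- and coupling-terms converge by the weak convergences above, the right-hand side by strong convergence in $L^2(0,T;\V^\star)$ of the interpolant of $\{\delta_t\fext^n\}$ towards $\partial_t\fext$ (using~(A7)), and the discrete test functions range over a subset dense in $L^2(0,T;\V)$ by the conformity $\Vh\subset\V$; this yields exactly~\eqref{model:regularized:weak:kirchhoff:dtu}, i.e.\ $\mathrm{(W6)_{\zeta\eta}}$, completing the proof.

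I expect the main obstacle to be the uniform-in-$(h,\tau)$ control near the initial time. Away from $t=0$ everything is driven by the fixed damping coefficient $\zeta$ and by~(ND2); but bounding the first difference quotient $\delta_t\uh^1$ forces the use of the compatibility condition~(A9) together with a careful choice of the discrete initial data $(\uh^0,\chih^0)$, so that the defect $\uh^1-\uh^0$ is $O(\sqrt\tau)$ in $\V$ and the interpolation errors of $\fext$ and of $\pek$ stay controlled; the $\zeta$-dependence of the resulting constant $C_{\zeta\eta}$ (essentially a factor $\zeta^{-1}$ entering here) is harmless for this lemma but is precisely what obstructs running the argument with $\zeta$-uniform constants, which is why Step~5 must be performed separately and directly on the time-continuous level. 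A secondary, more technical point -- inherited from Step~3 -- is the interpolant/test-function bookkeeping needed to make the identification of the weak limits with $\partial_t\uveta$, $\partial_{tt}\uveta$ and $\partial_t\pek(\chiveta)$ fully rigorous.
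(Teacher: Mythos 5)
Your overall strategy matches the paper's: take the first time difference of the discrete mechanics equation, derive $(h,\tau)$-uniform estimates, pass to the limit. The interesting difference is in the choice of test function for the energy estimate, and it buys you something. The paper (Lemma~\ref{lemma:stability-discrete-time-derivative-mechanics}) tests the differenced equation with the \emph{first} difference quotient $\tau^{-1}(\uh^n-\uh^{n-1})$, so the $\zeta$-term telescopes (yielding $\zeta\sup_n\|\delta_t\uh^n\|_{\V}^2$) and the elasticity term directly produces $\|\partial_t\u_{h\tau}\|_{L^2(0,T;\V)}^2$; however, the only control this gives on the second difference is $\zeta\sum_n\|\delta_t\uh^n-\delta_t\uh^{n-1}\|_{\V}^2$, which after normalization is $\zeta\tau\|\partial_t\vhtauHat\|^2$ --- not uniform in $\tau$ --- so a separate duality argument (Lemma~\ref{lemma:stability-d2udtt}) is needed for $\|\partial_t\vhtauHat\|_{L^2(0,T;\V)}^2$. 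Your choice of test function, the \emph{second} difference, reverses the roles: the elasticity term telescopes (giving $\sup_n\|\delta_t\uh^n\|_{\V}^2$, from which $\|\partial_t\u_{h\tau}\|_{L^2(0,T;\V)}^2$ follows trivially), and the $\zeta$-term directly produces $\zeta\|\partial_t\vhtauHat\|_{L^2(0,T;\V)}^2$, making the duality lemma unnecessary. That is a genuine simplification.

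Two cautionary points though. First, the scaling in your write-up is off: after dividing the differenced equation by $\tau$, testing with $\delta_t\uh^n-\delta_t\uh^{n-1}$, \emph{and} multiplying by $\tau$ before summing, every term carries an extra factor $\tau$ relative to what you claim --- the elasticity telescope gives $\tau\,\sup_n\|\delta_t\uh^n\|_{\V}^2$ and the $\zeta$-term gives $\zeta\tau\|\partial_t\vhtauHat\|^2$, neither of which is a uniform bound. The fix is simply to \emph{not} multiply by $\tau$ (equivalently, test with $\tau^{-1}(\delta_t\uh^n-\delta_t\uh^{n-1})$, the genuine second difference quotient); then all the quantities you list appear with the correct weights, the coupling and load terms absorb into the $\zeta$-term with a residual of order $\zeta^{-1}(\|\partial_t\chihtauHat\|^2+\|\partial_t\fext\|^2)$, and the claimed uniform bound follows. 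Second, your handling of the initial step is more cumbersome than the paper's: by declaring $\uh^{-1}:=\uh^0$ and observing that the compatibility condition $\mathrm{(A9)}$ in discrete form (\ref{compatibility:initial-conditions}) is exactly the mechanics equation at $n=0$, the paper makes $\delta_t\uh^0=\bm{0}$ hold automatically and the telescope starts from zero, so no separate argument for $\|\delta_t\uh^1\|_{\V}$ is required; your route (subtract the compatibility identity from the $n=1$ equation and test) works, but it is exactly this bookkeeping that the paper's convention eliminates. The limit-passage half of your proof (compactness, identification of $\partial_{tt}\uveta$ and $\partial_t\pek(\chiveta)$, passing to the limit in the differenced equation via conformity of $\Vh$) matches the paper's Lemmas~\ref{lemma:convergence-du-dt}--\ref{lemma:limit-satisfies-w1-w5}.
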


\begin{proof}
 The assertion follows from steps~2--4.
\end{proof}
\begin{lemma}[Existence of a weak solution for the simply regularized model]\label{lemma:existence-simply-regularized}
Let $\eta > 0$ be given. Under the assumptions $\mathrm{(A0)}$--$\mathrm{(A9)}$ and the non-degeneracy conditions~$\mathrm{(ND1)}$--$\mathrm{(ND3)}$, the doubly regularized unsaturated poroelasticity model has a weak solution with increased regularity, in the sense of Definition~\ref{definition:simply-regularized-weak-solution}. 
\end{lemma}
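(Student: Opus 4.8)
The plan is to obtain a weak solution of the simply regularized model ($\zeta=0$) as the limit, along a sequence $\zeta\to 0$, of the family $\{(\uveta,\chiveta)\}_{\zeta>0}$ of \emph{weak solutions with increased regularity} to the doubly regularized model supplied by Lemma~\ref{lemma:existence-doubly-regularized-increased-regularity} --- this is the only place where (ND1)--(ND2) enter. Each such solution satisfies $\mathrm{(W1)}_{\zeta\eta}$--$\mathrm{(W6)}_{\zeta\eta}$, and the heart of the matter is to derive a priori bounds that are uniform in $\zeta$, namely \eqref{idea:eps:stability-1}--\eqref{idea:eps:stability-2}; the passage to the limit is then by compactness.

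For the bounds I would test the time-differentiated mechanics identity $\mathrm{(W6)}_{\zeta\eta}$ with $\v=\partial_t\uveta$ and the flow identity in $\mathrm{(W4)}_{\zeta\eta}$ with $\q=\partial_t\chiveta$ (legitimate thanks to the increased regularity $\mathrm{(W5)}_{\zeta\eta}$; if preferred, the estimate can be produced on the time-discrete level of Step~4 and then passed to the limit), and add the two identities. Several terms become time derivatives of squared norms: $\zeta\int_0^T a(\partial_{tt}\uveta,\partial_t\uveta)\,dt=\tfrac{\zeta}{2}\|\partial_t\uveta(T)\|_\V^2-\tfrac{\zeta}{2}\|\partial_t\uveta(0)\|_\V^2$, and evaluating the (pointwise-in-time form of the) first line of $\mathrm{(W4)}_{\zeta\eta}$ at $t=0$ together with the compatibility condition (A9) forces $\partial_t\uveta(0)=\bm{0}$, so this term is non-negative; the diffusion term yields $\tfrac12\|\absolutepermeability^{1/2}\GRAD\chiveta(T)\|^2-\tfrac12\|\absolutepermeability^{1/2}\GRAD\chiveta(0)\|^2$, the second part bounded by (A8). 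The decisive quantity that remains is
\[
\int_0^T\Big[\,a(\partial_t\uveta,\partial_t\uveta)+\llangle\partial_t\bke(\chiveta),\partial_t\chiveta\rrangle+\alpha\llangle\swk(\chiveta)\,\partial_t\chiveta-\partial_t\pek(\chiveta),\,\partial_t\DIV\uveta\rrangle\,\Big]\,dt,
\]
whose coupling terms visibly do not cancel. Writing $\partial_t\pek(\chiveta)=\pek'(\chiveta)\partial_t\chiveta$, $\llangle\partial_t\bke(\chiveta),\partial_t\chiveta\rrangle=\int_\Omega\bke'(\chiveta)|\partial_t\chiveta|^2$, and using $a(\v,\v)\geq\Kdr\|\DIV\v\|^2$, the integrand is bounded below pointwise in $Q_T$ by the quadratic form $\Kdr\,x^2+\bke'(\chiveta)\,y^2+\alpha\big(\swk(\chiveta)-\pek'(\chiveta)\big)\,xy$ in $(x,y)=(\partial_t\DIV\uveta,\partial_t\chiveta)$; its positive definiteness is exactly the inequality $\Kdr\geq\tfrac{\alpha^2}{4}\big(\tfrac{\swk}{\pek'}-1\big)^2\tfrac{(\pek')^2}{\bke'}$, which is (ND3) --- valid for $\bke$ since $\bke'\geq\bk'$. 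Hence the displayed quantity is non-negative and in fact controls $C_\mathrm{ND,3}\Kdr\|\partial_t\DIV\uveta\|_{L^2(Q_T)}^2$ plus, via (A1$^\star$), a multiple of $\|\partial_t\chiveta\|_{L^2(Q_T)}^2$.

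Estimating the data on the right-hand side --- integrating the $\hext$-term by parts in time and invoking (A7) --- and absorbing, one obtains, uniformly in $\zeta$, the bounds on $\|\partial_t\uveta\|_{L^2(0,T;\V)}$ and on $\sup_t\|\GRAD\chiveta(t)\|$ (hence on $\|\chiveta\|_{L^\infty(0,T;\Q)}$), together with $\|\partial_t\chiveta\|_{L^2(Q_T)}\leq C_\eta$ (this last constant depending on $\eta$ through (A1$^\star$), which is why $\eta$ must stay fixed here). Testing $\mathrm{(W4)}_{\zeta\eta}$ additionally with $(\partial_t\uveta,\chiveta)$ in the usual way and using the Legendre identity $\tfrac{d}{dt}\int_\Omega\Bke(\chiveta)=\llangle\partial_t\bke(\chiveta),\chiveta\rrangle$ as in~\cite{Alt1983}, together with (A8$^\star$), supplies the remaining bounds on $\|\Bke(\chiveta)\|_{L^\infty(0,T;L^1(\Omega))}$, on $\|\pek(\chiveta)\|_{L^2(Q_T)}$, and on $\|\partial_t\bke(\chiveta)\|_{L^2(0,T;H^{-1}(\Omega))}$ (the latter read off the flow equation); this is \eqref{idea:eps:stability-1}--\eqref{idea:eps:stability-2}. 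With these bounds, extract a subsequence $\zeta\to0$ with $\uveta\rightharpoonup\ueta$ and $\partial_t\uveta\rightharpoonup\partial_t\ueta$ in $L^2(0,T;\V)$, $\chiveta\rightharpoonup\chieta$ weakly-$\star$ in $L^\infty(0,T;\Q)$, $\partial_t\chiveta\rightharpoonup\partial_t\chieta$ in $L^2(Q_T)$, $\partial_t\bke(\chiveta)\rightharpoonup\xi$ in $L^2(0,T;Q^\star)$, and $\zeta\,\partial_t\uveta\to\bm{0}$ in $L^2(0,T;\V)$ (since $\|\partial_t\uveta\|_{L^2(0,T;\V)}$ is $\zeta$-uniformly bounded). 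As $\chiveta$ is bounded in $L^2(0,T;\Q)$ with $\partial_t\chiveta$ bounded in $L^2(Q_T)$, an Aubin--Lions argument gives $\chiveta\to\chieta$ strongly in $L^2(Q_T)$, hence a.e.; continuity of $\pek,\swk,\bke$ then identifies the weak $L^2(Q_T)$-limit of $\pek(\chiveta)$ as $\pek(\chieta)$, yields $\swk(\chiveta)\to\swk(\chieta)$ in every $L^p(Q_T)$, $p<\infty$, by dominated convergence, and --- using the equi-integrability from the $\Bke$-bound (A8$^\star$) and Vitali's theorem --- $\bke(\chiveta)\to\bke(\chieta)$ in $L^1(Q_T)$, so that $\xi=\partial_t\bke(\chieta)$ distributionally. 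Finally $\swk(\chiveta)\,\partial_t\DIV\uveta\rightharpoonup\swk(\chieta)\,\partial_t\DIV\ueta$ in $L^2(Q_T)$ follows from strong-times-weak convergence.

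Passing to the limit in $\mathrm{(W4)}_{\zeta\eta}$ is then routine: $\zeta a(\partial_t\uveta,\v)=a(\zeta\partial_t\uveta,\v)\to0$, the remaining linear terms converge by weak convergence, and the two nonlinear terms by the previous step, which yields $\mathrm{(W4)}_\eta$ for $(\ueta,\chieta)$; $\mathrm{(W1)}_\eta$ and $\mathrm{(W2)}_\eta$ follow from the limits and the $L^1(Q_T)$-convergence of $\bke(\chiveta)$ (using $\bke(\chiveta(0))=\bke(\chi_0)$), while $\mathrm{(W3)}_\eta$ for $\zeta=0$ holds because $\partial_t\ueta\in L^2(0,T;\V)$ and, since $\uveta\rightharpoonup\ueta$ in $H^1(0,T;\V)\hookrightarrow C([0,T];\V)$ with $\uveta(0)=\u_0$ for every $\zeta$, one gets $\ueta(0)=\u_0$; thus $(\ueta,\chieta)$ is a weak solution in the sense of Definition~\ref{definition:simply-regularized-weak-solution}. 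The main obstacle is the $\zeta$-uniform a priori estimate, and inside it the quadratic-form/discriminant computation that allows the coupling terms to be dropped under (ND3): it hinges on testing with the time derivatives and on pointwise-in-time identities, so it has no obvious fully discrete analogue --- which is precisely why Step~5 must be carried out separately from Steps~2--4 --- while a secondary technical point is the $L^1(Q_T)$-convergence of $\bke(\chiveta)$, handled through the equi-integrability encoded in the Legendre-transform bound (A8$^\star$).
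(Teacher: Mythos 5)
Your proposal is correct and takes essentially the same route as the paper's Section~\ref{section:pure-consolidation}: test the time-differentiated mechanics identity $\mathrm{(W6)}_{\zeta\eta}$ with $\partial_t\uveta$ and the flow equation with $\partial_t\chiveta$, use the completion-of-squares argument together with $\mathrm{(ND3)}$ (and $\bke'\geq\bk'$) to render the coupling contribution non-negative, observe that the $\partial_t\chiveta$ bound is the sole $\eta$-dependent estimate through $\mathrm{(A1^\star)}$, and conclude by the usual compactness and identification arguments. The small stylistic differences — bounding by $\Kdr\|\partial_t\DIV\uveta\|^2$ rather than $\|\partial_t\uveta\|_{\V}^2$, re-deriving the $\pek$ and Legendre bounds by testing rather than inheriting them from Step~3, and invoking Vitali for the $L^1$-convergence of $\bke(\chiveta)$ — do not alter the substance of the argument.
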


\begin{proof}
 The assertion follows from step~5.
\end{proof}

\section{Step 2: Implicit Euler non-linear FEM-TPFA discretization}\label{section:fully-discrete}

The next two sections, identified with steps 2 and 3, are providing the proof of Lemma~\ref{lemma:existence-doubly-regularized}. To this aim, we employ the implicit Euler time stepping method, whereas for the spatial discretization of the mechanics equation~\eqref{model:regularized:weak:kirchhoff:u} a conforming Galerkin finite element method is used. For the flow equation~\eqref{model:regularized:weak:kirchhoff:p}, the spatial discretization can be interpreted in various ways. It can be viewed as cell-centered finite volume method utilizing a two point flux approximation (TPFA), the simplest approximation one can consider, but it can also be interpreted as lowest order mixed finite element method with inexact quadrature allowing for lumping~\cite{Baranger1996}. In this section, we show the existence of a fully discrete solution. We start with introducing the notations used in the discretization.

\subsection{Finite volume and finite element notation}

We use standard notations in the finite volume literature, see e.g.~\cite{Eymard1999,Saad2013}. In particular, we introduce notation for elements, faces, their measures, transmissibilities etc. We assume that the domain $\Omega$ is polygonal such that it can be discretized by an admissible mesh, as introduced by~\cite{Eymard2000}.

\begin{definition}[Admissible mesh $\mathcal{T}$]\label{definition:admissible-mesh}
Let $\mathcal{T}$ be a regular mesh of $\Omega$ with mesh size $h$, consisting of simplices in 2D or 3D, or convex quadrilaterals in 2D and convex hexahedrals in 3D. Furthermore, we introduce the following terminology:
\begin{itemize}
 \item $K\in \mathcal{T}$ denotes a single element.
 
 \item $\mathcal{N}(K):= \left\{L\in \mathcal{T} \,| \, L\neq K,\ \bar{L}\cap\bar{K}\neq \emptyset \right\}$ denotes the set of neighboring elements of $K\in \mathcal{T}$.
 
 \item $\mathcal{E}$ denotes the set of all faces, i.e., boundaries of all elements; let $\mathcal{E}_K$ denote the faces of a single element $K\in\mathcal{T}$; let $\mathcal{E}_\mathrm{ext}$ denote the faces lying on the boundary $\partial\Omega$.
 
 \item $K|L\in \mathcal{E}$ denotes the face between two neighboring elements $K,L\in\mathcal{T}$.
  
 \item $\{x_K\}_{K\in\mathcal{T}}$ is such that for all $K\in \mathcal{T},L\in\mathcal{N}(K)$ the connecting line between $\x_K$ and $\x_L$ is perpendicular to $K|L$ .
 
 \item $d_{K,\sigma}$ denotes the distance between center of $K$ and $\sigma\in\mathcal{E}_K$;
 \begin{align*}
  d_\sigma = \left\{ \begin{array}{ll} d_{K,\sigma} + d_{L,\sigma}, & K\in\mathcal{T},\ L\in\mathcal{N}(K),\ \sigma=K|L, \\
  d_{K,\sigma}, & \sigma \in \mathcal{E}_\mathrm{ext}\cap\mathcal{E}_K.\end{array} \right.
 \end{align*}
 
 \item $\tau_\sigma = |\sigma| / d_\sigma$ denotes the transmissibility through $\sigma\in\mathcal{E}$.
\end{itemize}
 Assume there holds the regularity property: there exists a constant $C>0$ such that 
 \begin{align*}
  \sum_{\substack{L \in \mathcal{N}(K) \\ \sigma=K|L}} |\sigma| d_\sigma \leq C |K|\quad \text{for all }K\in \mathcal{T}.
 \end{align*}
\end{definition}

We introduce a dual grid $\mathcal{T}^\star$ with diamonds as elements. It will be used for the approximation of heterogeneous permeability fields. Additionally, it will be utilized within the proof.

\begin{definition}[Dual grid to $\mathcal{T}$]\label{definition:dual-mesh}
 Let $\mathcal{T}$ be an admissible mesh, cf.\ Definition~\ref{definition:admissible-mesh}. For each face $K|L\in\mathcal{E}$, $K\in\mathcal{T}$, $L\in\mathcal{N}(K)$, define a prism $P_{K|L}\subset\Omega$ with $\x_K$, $\x_L$ and the vertices of $K|L$ as vertices. For all $\sigma \in\mathcal{E}_\mathrm{ext}\cap\mathcal{E}_K$, $K\in\mathcal{T}$ define $P_\sigma\subset\Omega$ to be the prism with $\x_K$ and the vertices of $\sigma$ as vertices. By construction, $\mathcal{T}^\star:=\{P_\sigma\}_{\sigma\in\mathcal{E}}$ defines a partition of $\Omega$. 
\end{definition}
Figure~\ref{figure:mesh} displays a two-dimensional, admissible mesh and its auxiliary, dual grid.

\begin{figure}[h!]
 \centering
 \begin{overpic}[width=0.65\textwidth]{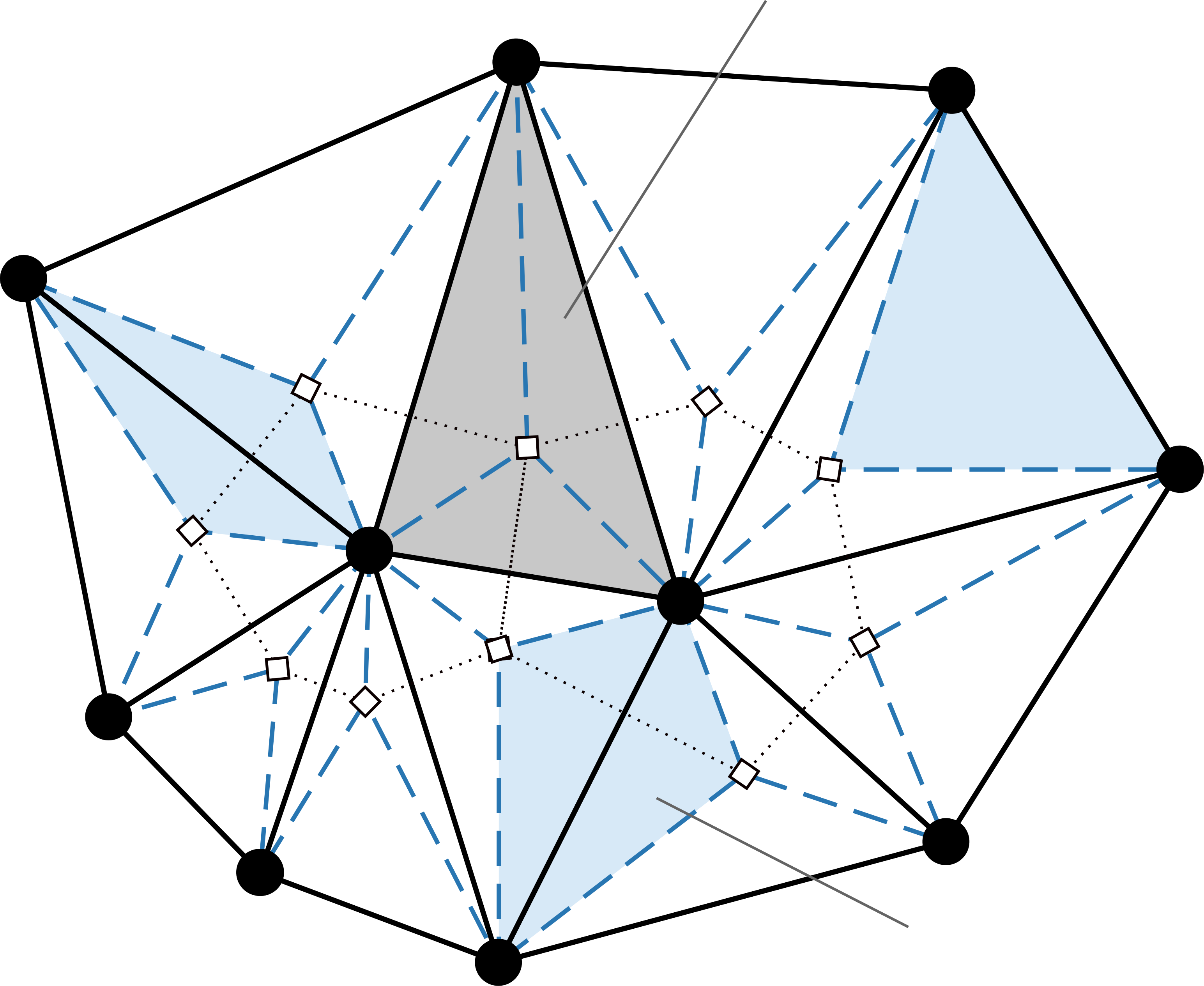}
  \put(78,3.5){diamond $\in\mathcal{T}^\star$}
  \put(60,84){element $\in\mathcal{T}$}
  \put(9,36){$x_K$}
  \put(14,28){$K$}
  \put(27.5,49.5){$x_L$}
  \put(20,59){$L$}
  \put(16,48.5){\small $P_{K|L}$}
  \put(18.15,38.5){\small $K|L$}
  \put(90,58){$\sigma\in\mathcal{E}_\mathrm{ext}$}
  \put(72,38){$M$}
  \put(71,44.5){$x_M$} 
  \put(80,50){$P_\sigma$}
 \end{overpic}
 \caption{\label{figure:mesh} Admissible mesh $\mathcal{T}$ (consisting of elements) in two dimensions, together with the corresponding dual grid $\mathcal{T}^\star$ (consisting of diamonds).}
\end{figure}

The final discrete scheme is written in variational form. Given an admissible mesh $\mathcal{T}$, we introduce the discrete function spaces and implicitly their bases
\begin{alignat*}{2}
 \Vh &= \text{span}\,\{\vhi\}_{i\in\{1,...,d_\mathrm{V}\}}, \\
 \Qh &= \text{span}\,\{\qhj\}_{j\in\{1,...,d_\mathrm{Q}\}},
\end{alignat*}
providing spaces for the discrete displacement and pressure, respectively. For the analysis below, we assume that the discrete function spaces to satisfy the following conditions:
\begin{itemize}
 \item[(D1)] $\Qh$ is the space of all piecewise constant functions ($\mathbb{P}_0$) on $\mathcal{T}$ and the basis $\{\qhj\}_j$ is equal to the indicator functions of all single elements. Note $\Qh \not\subset \Q$.

 \item[(D2)] $\Vh\subset \V$ such that $\Vh\times\Qh$ is inf-sup stable regarding the bilinear form
 \begin{alignat*}{3}
  \Vh\times\Qh &\rightarrow \mathbb{R},\quad&& (\vh,\qh) \mapsto \langle \qh, \DIV \vh \rangle.
 \end{alignat*}
 In more detail, there exists a constant $\gamma_\mathrm{is}=\COmegaInfSup^{-1}>0$ (independent of $h$), such that
 \begin{alignat}{2}
 \label{inf-sup-div}
  \underset{0\neq \qh\in\Qh }{\mathrm{inf}}\, \underset{\vh\in\Vh}{\mathrm{sup}}\, &\frac{\llangle \qh, \DIV \vh \rrangle}{\| \qh \|\, \|\vh\|_{\V}} &&\geq \gamma_\mathrm{is}.
 \end{alignat} 
\end{itemize}
In the analysis, (D1) will allow for intuitively handling non-linearities in the pressure variable easily. Assumption~(D2) will allow for using standard inf-sup arguments. In two dimensions, one can use piecewise quadratic elements for $\Vh$. In three dimensions, a practical choice is less trivial, cf.~\cite{Boffi2013} for a thorough discussion.

In the analysis, we require the notion of a discrete $H^1(\Omega)$ norm for piecewise constant functions in $Q_h$, see also~\cite{Eymard1999}.
\begin{definition}[Discrete $H^1(\Omega)$ norms on $Q_h$]\label{definition:discrete-h1-norm}
Let $\qh \in \Qh$. We define
 \begin{align*}
  \| q_h \|_{1,\mathcal{T}} := \left( \sum_{\sigma \in\mathcal{E}} \tau_\sigma \, \delta_\sigma( q_h )^2\right)^{\tfrac{1}{2}},
 \end{align*}
 where
 \begin{align*}
  \delta_\sigma q_h := \left\{ \begin{array}{ll} \left| {{q_h}_|}_K - {{q_h}_|}_L \right|, & K\in\mathcal{T},\ L\in\mathcal{N}(K),\ \sigma=K|L,\\[10pt]
                           \left| {{q_h}_|}_K \right| , & \sigma \in \mathcal{E}_\mathrm{ext} \cap \mathcal{E}_K.
                          \end{array} \right.
 \end{align*}
 In the same sense, given a uniformly positive field $\omega \in C(\Omega)$, a scaled inner product of discrete gradients is defined by 
\begin{align*} 
 \langle \GRAD_h \chih, \GRAD_h \qh \rangle_{\omega} 
 &:= 
 \sum_{K\in \mathcal{T}} \sum_{L\in \mathcal{N}(K)} \tau_{K|L} \, \{ \omega \}_{K|L} \, \delta_{K|L}(\chih) \, \delta_{K|L}(\qh) + \sum_{\sigma\in\mathcal{E}_\mathrm{ext} \cap \mathcal{E}_K} \tau_{K,\sigma} \, \{ \omega \}_{\sigma} \,  {{\chih}_|}_K \, {{\qh}_|}_K.
\end{align*}
where the the weight $\omega$ evaluated at faces is approximated as weighted average incorporating the neighboring elements, i.e., utilizing the dual mesh $\mathcal{T}^\star$ to $\mathcal{T}$ it is
\begin{align*}
 \{ \omega \}_{\sigma} := \frac{1}{|P_\sigma|} \int_{P_\sigma} \omega(x) \, dx,\quad \sigma\in\mathcal{E}.
\end{align*}
A norm $\|\cdot\|_{1,\mathcal{T},\omega} := \llangle \GRAD_h \cdot, \GRAD_h \cdot \rrangle_{\omega}^{1/2}$ is naturally induced.
\end{definition}
A discrete Poincar\'e inequality can be showed for $\|\cdot\|_{1,\mathcal{T}}$, introducing a discrete Poincar\'e constant $\COmegaDiscretePoincare>0$ such that
\begin{align*}
 \| \qh \| \leq \COmegaDiscretePoincare \| \qh \|_{1,\mathcal{T}} \qquad \text{for all } \qh\in \Qh,
\end{align*}
cf.\ Lemma~\ref{appendix:lemma:discrete-poincare}; similarly also for $\|\cdot\|_{1,\mathcal{T},\omega}$.

\subsection{Approximation of source terms and initial conditions}
Let $0=t_0<t_1<...<t_N=T$ define a partition of the time interval $(0,T)$ with constant time step size $\tau=t_{n}-t_{n-1}=T/N$, $n,N\in\mathbb{N}$. We interpolate the source terms at discrete time steps. Let
 \begin{align*}
  \fext^n  &:= \frac{1}{\tau} \int_{t_{n-1}}^{t_n} \fext(t) \, dt, \\
  \hext^n  &:= \frac{1}{\tau} \int_{t_{n-1}}^{t_n} \hext(t) \, dt.
 \end{align*}
 Discrete initial conditions are chosen to imitate the compatibility assumption~(A9). Let $\chih^0\in\Qh$ be defined by the piecewise constant projection of $\chi^0$, i.e., on $K\in\mathcal{T}$, we define 
 \begin{align*}
  {{\chih^0}_|}_K := \frac{1}{|K|} \int_K \chi^0 \, dx. 
 \end{align*}
 As $\chi_0 \in L^2(\Omega)$, cf.\ (A8$^\star$), it follows by classical approximation theory for $h\rightarrow 0$
 \begin{alignat*}{2}
  \chih^0 &\rightarrow \chi_0&\ &\text{in }L^2(\Omega),
 \end{alignat*}
 and it holds that $\| \chih^0 \|_{1,\mathcal{T},\absolutepermeability} \leq C \| \chi_0 \|_1$ for some constant $C>0$, cf., e.g.,~\cite{Eymard2000}. 
 Furthermore, since $\pek \in C(\mathbb{R})$, cf.\ (A3), and $\pek(\chi_0)\in L^2(\Omega)$, cf.\ (A8$^\star$), it follows for $h\rightarrow 0$
 \begin{alignat*}{2}
  \pek(\chih^0) &\rightarrow \pek(\chi_0)&\ &\text{in }L^2(\Omega),
 \end{alignat*}
 similarly for $\{\Bke \left(\chih^0 \right)\}_h$ and $\left\{\bar{B}_\eta\left(\frac{\pek(\chih^{0})}{\swk(\chih^{0})}\right)\right\}_h$. Then in order to satisfy~(A9) in a discrete sense, we define $\uh^0\in\Vh$ to be the unique element in $\Vh$, satisfying
 \begin{align}\label{compatibility:initial-conditions}
  a(\uh^0, \vh) - \alpha \llangle \pek(\chih^0), \DIV \vh \rrangle = \llangle \fext(0), \vh \rrangle,\quad \text{for all } \vh\in\Vh.
 \end{align}
 Using standard finite element techniques and (A9), it holds that
 \begin{align*}
  \| \u_0 - \uh^0 \|_{\V} \leq 2 \underset{\vh\in\Vh}{\mathrm{inf}}\, \|\u_0 - \vh\|_{\V} + \frac{\alpha}{K_\mathrm{dr}} \| \pek(\chi_0) - \pek(\chih^0) \|.
 \end{align*}
 Hence, by classical approximation theory and the imposed regularity~(A8$^\star$) it follows for $h\rightarrow 0$
 \begin{alignat*}{2}
  \uh^0   &\rightarrow \u_0 &\ &\text{in }\V. 
 \end{alignat*}
 All in all, due to the convergence,~(A8$^\star$) also applies on discrete level.
 \begin{itemize}
  \item[(A8$^\star$)$_h$] For bounded $\eta>0$, there exists a constant $C_0>0$ (wlog.\ the same as in (A8)) such that 
   \begin{align*}
  \| \uh^{0} \|_{\V}^2
  + \left\| \chih^0 \right\|_{1,\mathcal{T},\absolutepermeability}^2
  + \left\| \Bke \left(\chih^0 \right) \right\|_{L^1(\Omega)} 
  + \left\| \bar{B}_\eta\left(\frac{\pek(\chih^{0})}{\swk(\chih^{0})}\right) \right\|_{L^1(\Omega)}.
  \leq C_0
 \end{align*}
 \end{itemize}
 
\subsection{Approximation of the evolutionary problem}

The discretization of \eqref{model:regularized:weak:kirchhoff:u}--\eqref{model:regularized:weak:kirchhoff:p} is defined by the Galerkin method combined with the standard implicit Euler time discretization:  for $n\geq1$, given $(\uh^{n-1},\chih^{n-1}) \in \Vh\times\Qh$, find $(\uh^n,\chih^n)\in\Vh\times\Qh$ satisfying for all $(\vh,\qh)\in\Vh\times\Qh$
\begin{align}
   \zeta\tau^{-1}a (\uh^n - \uh^{n-1},\vh) 
   +  a(\uh^n,\vh) - \alpha \langle \pek(\chih^n), \DIV \vh \rangle &\= \langle \fext^n, \vh \rangle, \label{ht:kirchhoff-reduced:u}\\
   \label{ht:kirchhoff-reduced:p}
  \langle \bke(\chih^n) - \bke(\chih^{n-1}), \qh \rangle + \alpha \langle \swk(\chih^n) \DIV(\uh^n - \uh^{n-1}), \qh\rangle \\
  \nonumber
  + \, \tau \langle \GRAD_h \chih^n, \GRAD_h \qh \rangle_{\absolutepermeability} &\= \tau \langle \hext^n, \qh \rangle.
\end{align}

\begin{lemma}[Existence of a discrete solution]\label{lemma:existence-discrete-solution}
 Let $n\geq 1$. $\mathrm{(A0)}$--$\mathrm{(A9)}$,~$\mathrm{(ND1)}$, and $\mathrm{(D1)}$--$\mathrm{(D2)}$ hold true. Then there exists a discrete solution $(\uh^n,\chih^n)\in\VQh$ satisfying~\eqref{ht:kirchhoff-reduced:u}--\eqref{ht:kirchhoff-reduced:p}, and
 \begin{align}
 \label{existence-discrete-solution-aux-02}
  \left\| \bar{B}_\eta\left( \frac{\pek(\chih^{n})}{\swk\left(\chih^{n}\right)}\right)\right\|_{L^1(\Omega)}
  + \left\| \uh^{n} \right\|_{\V}^2 < \infty\quad\text{for all }n\geq 1.
 \end{align}
\end{lemma}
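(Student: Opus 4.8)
The plan is to reduce the single time step \eqref{ht:kirchhoff-reduced:u}--\eqref{ht:kirchhoff-reduced:p} to a finite-dimensional root-finding problem and to apply a corollary of Brouwer's fixed point theorem. For fixed $\chih\in\Qh$ the function $\pek(\chih)$ is again piecewise constant by (A3) and $\chih\in\mathbb{P}_0$, so the right-hand side of \eqref{ht:kirchhoff-reduced:u} is a bounded linear functional on $\Vh$, while $(\zeta\tau^{-1}+1)\,a(\cdot,\cdot)$ is bounded and $\V$-coercive by Korn's inequality; hence \eqref{ht:kirchhoff-reduced:u} has a unique solution $\uh=\uh(\chih)\in\Vh$ that depends continuously on $\chih$, because $\chih\mapsto\pek(\chih)$ is continuous on the finite-dimensional space $\Qh$. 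Substituting $\uh(\chih)$ into \eqref{ht:kirchhoff-reduced:p} leaves a continuous residual map on $\Qh$ whose zeros are the sought $\chih^n$. The coupling term $\alpha\langle\swk(\chih)\DIV(\uh-\uh^{n-1}),\qh\rangle$ cannot be matched to the mechanics coupling term $-\alpha\langle\pek(\chih),\DIV\vh\rangle$ unless the flow test function is chosen as $\tfrac{\pek(\chih)}{\swk(\chih)}$ — an admissible element of $\Qh$ precisely because $\Qh$ consists of piecewise constants (D1) and $\swk>0$ (A2). I therefore change the unknown from $\chih$ to $z_h:=\tfrac{\pek(\chih)}{\swk(\chih)}$: by (A2) and (A4) the scalar function $\pek/\swk$ is a strictly increasing homeomorphism of $\mathbb{R}$ with $(\pek/\swk)(0)=0$, so $z_h\mapsto\chih(z_h):=(\pek/\swk)^{-1}(z_h)$ is a homeomorphism of $\Qh$, and $z_h=\tfrac{\pek(\chih(z_h))}{\swk(\chih(z_h))}$. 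Let $G:\Qh\to\Qh$ (identifying $\Qh$ with its dual via the $L^2(\Omega)$ inner product) be the continuous map sending $z_h$ to the flow residual evaluated at $\chih=\chih(z_h)$, $\uh=\uh(\chih(z_h))$; a zero of $G$ yields a solution $(\uh^n,\chih^n)\in\VQh$ of \eqref{ht:kirchhoff-reduced:u}--\eqref{ht:kirchhoff-reduced:p}.

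To establish coercivity of $G$, I pair the flow residual with $z_h=\tfrac{\pek(\chih)}{\swk(\chih)}$; the coupling contribution then becomes $\alpha\langle\pek(\chih),\DIV(\uh-\uh^{n-1})\rangle$, which by \eqref{ht:kirchhoff-reduced:u} tested with $\vh=\uh-\uh^{n-1}$ — the identity $\uh(\chih)$ satisfies by construction — equals $\zeta\tau^{-1}\Vnorm{\uh-\uh^{n-1}}^2+a(\uh,\uh-\uh^{n-1})-\langle\fext^n,\uh-\uh^{n-1}\rangle$. Hence
\begin{align*}
 \langle G(z_h),z_h\rangle
 &= \langle\bke(\chih)-\bke(\chih^{n-1}),\tfrac{\pek(\chih)}{\swk(\chih)}\rangle
 + \zeta\tau^{-1}\Vnorm{\uh-\uh^{n-1}}^2 + a(\uh,\uh-\uh^{n-1}) \\
 &\quad + \tau\langle\GRAD_h\chih,\GRAD_h\tfrac{\pek(\chih)}{\swk(\chih)}\rangle_{\absolutepermeability}
 - \langle\fext^n,\uh-\uh^{n-1}\rangle - \tau\langle\hext^n,\tfrac{\pek(\chih)}{\swk(\chih)}\rangle .
\end{align*}
The terms are estimated as follows: $a(\uh,\uh-\uh^{n-1})\geq\tfrac12\Vnorm{\uh}^2-\tfrac12\Vnorm{\uh^{n-1}}^2$; the storage term is bounded below by $\|\bar{B}_\eta(\tfrac{\pek(\chih)}{\swk(\chih)})\|_{L^1(\Omega)}-\|\bar{B}_\eta(\tfrac{\pek(\chih^{n-1})}{\swk(\chih^{n-1})})\|_{L^1(\Omega)}$ using the Fenchel--Young inequality for the monotone function $\bar{b}_\eta:=\bke\circ(\pek/\swk)^{-1}$ and its Legendre transform $\bar{B}_\eta$, recalling $\bke(\chih)=\bar{b}_\eta(z_h)$; the discrete diffusion term satisfies, face by face, $\delta_\sigma(\chih)\,\delta_\sigma(\tfrac{\pek(\chih)}{\swk(\chih)})\geq c_{\pek/\swk}\,\delta_\sigma(\chih)^2$ by the monotonicity and uniform increase (A4) of $\pek/\swk$ (with $(\pek/\swk)(0)=0$ used on boundary faces), so it is bounded below by $\tau\,c_{\pek/\swk}\,\absolutepermeabilitymin\,\|\chih\|_{1,\mathcal{T}}^2$ by (A6); the body-force term is absorbed into $\zeta\tau^{-1}\Vnorm{\uh-\uh^{n-1}}^2$ by Young's inequality; and $\tau\langle\hext^n,\tfrac{\pek(\chih)}{\swk(\chih)}\rangle$ is absorbed into $\tau\,c_{\pek/\swk}\,\absolutepermeabilitymin\,\|\chih\|_{1,\mathcal{T}}^2$ using the pointwise bound $|\tfrac{\pek(\chih)}{\swk(\chih)}|\leq C_{\mathrm{ND},1}|\chih|$ from (ND1) together with the discrete Poincaré and discrete trace inequalities. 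All subtracted constants are finite — in particular $\Vnorm{\uh^{n-1}}$ and $\|\bar{B}_\eta(\tfrac{\pek(\chih^{n-1})}{\swk(\chih^{n-1})})\|_{L^1(\Omega)}$ are finite because the previous iterate lies in the finite-dimensional space $\Vh\times\Qh$ — so after dropping nonnegative terms, $\langle G(z_h),z_h\rangle\geq\tfrac{\tau}{2}\,c_{\pek/\swk}\,\absolutepermeabilitymin\,\|\chih\|_{1,\mathcal{T}}^2-C$. Since $\|z_h\|\leq C_{\mathrm{ND},1}\,\COmegaDiscretePoincare\,\|\chih\|_{1,\mathcal{T}}$, the right-hand side tends to $+\infty$ as $\|z_h\|\to\infty$, so there is $\rho>0$ with $\langle G(z_h),z_h\rangle>0$ whenever $\|z_h\|=\rho$; the corollary of Brouwer's fixed point theorem then provides $z_h^\ast$ with $G(z_h^\ast)=0$. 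Setting $\chih^n:=\chih(z_h^\ast)$ and $\uh^n:=\uh(\chih^n)$ yields a solution of \eqref{ht:kirchhoff-reduced:u}--\eqref{ht:kirchhoff-reduced:p}, and running the same chain of inequalities at this solution, where $\langle G(z_h^\ast),z_h^\ast\rangle=0$, gives $\tfrac12\Vnorm{\uh^n}^2+\|\bar{B}_\eta(\tfrac{\pek(\chih^n)}{\swk(\chih^n)})\|_{L^1(\Omega)}\leq C<\infty$, i.e.\ \eqref{existence-discrete-solution-aux-02}.

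The main obstacle is the non-linear coupling. The key device is the choice $\qh=\tfrac{\pek(\chih)}{\swk(\chih)}$ as flow test function — legal only because of the piecewise-constant space (D1), $\swk>0$, and invertibility of $\pek/\swk$ — which both converts the coupling into the nonnegative energy $\zeta\tau^{-1}\Vnorm{\uh-\uh^{n-1}}^2+a(\uh,\uh-\uh^{n-1})$ via the mechanics equation and turns the discrete diffusion term into a coercive form in $\|\chih\|_{1,\mathcal{T}}$ through the monotonicity of $\pek/\swk$; this is where (D1), (A2), (A4) and (ND1) genuinely enter, and it forces the accompanying change of variables $\chih\leftrightarrow z_h$ so that Brouwer's corollary becomes applicable. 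The remaining ingredients — the Legendre/Fenchel--Young estimate for the possibly non-Lipschitz storage term $\bke$, and the discrete Poincaré and trace inequalities needed since $\Qh\not\subset\Q$ — are routine.
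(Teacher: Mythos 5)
Your proposal is correct and follows essentially the same approach as the paper: a corollary of Brouwer's fixed point theorem applied to the flow residual, after substituting the mechanics solution and after a change of variables to $z_h = \pek(\chi_h)/\swk(\chi_h)$ so that the residual paired with $z_h$ cancels the coupling term against the mechanics energy, gives coercivity via the Legendre transform of $\bar{b}_\eta$, the uniform increase (A4) for the diffusion term, and (ND1) plus discrete Poincar\'e/trace for the source term. The only presentational difference is that the paper encodes the same change of variables at the coefficient level by choosing the basis map $\bm{\beta}\mapsto\sum_j(\pek/\swk)^{-1}(\beta_j)\,q_{h,j}$ and pairing with $\bm{\beta}$ in $\mathbb{R}^{d_Q}$, whereas you work directly with $z_h\in Q_h$ and the $L^2$ pairing; you also note that finiteness of the previous-step quantities follows from finite-dimensionality, which removes the paper's explicit induction, but this is a minor simplification of the same argument.
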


\begin{proof}
The proof is by induction. We present only the general step, since the proof for $n=1$ is similar. We employ a corollary of Brouwer's fixed point theorem, cf.~Lemma~\ref{appendix:lemma:brouwer}, to show the existence of a solution of a non-linear algebraic system, which is equivalent to~\eqref{ht:kirchhoff-reduced:u}--\eqref{ht:kirchhoff-reduced:p}.
 
 \paragraph{\textbf{Introduction of a pressure-reduced algebraic problem}.}  We introduce an isomorphism between the discrete function space corresponding to the fluid pressure $\chi$ and a suitable coefficient vector space
 \begin{alignat*}{2}
  \chih:& \ \mathbb{R}^{d_\mathrm{Q}} \rightarrow \Qh, \ \  & \bm{\beta}  &\mapsto \sum_j \left(\tfrac{\pek}{\swk}\right)^{-1}\!\!(\beta_j) \, \qhj.
 \end{alignat*}
 Due to (A4), $\chih$ is well-defined. Similarly, let
 \begin{alignat*}{2}
  \uh:&\  \mathbb{R}^{d_\mathrm{V}} \rightarrow \Vh, \ \ & \bm{\alpha} &\mapsto \sum_i \alpha_i \vhi.
 \end{alignat*}
 For given $\bm{\beta}\in\mathbb{R}^{d_\mathrm{Q}}$, define $\bm{\alpha}=\bm{\alpha}(\bm{\beta})\in\mathbb{R}^{d_\mathrm{V}}$ to be the unique solution to: find $\bm{\alpha}\in\mathbb{R}^{d_\mathrm{V}}$ such that 
\begin{align*}
 \zeta\tau^{-1}a( \uh(\bm{\alpha}) - \uh^{n-1}, \vh)
 +
 a(\uh(\bm{\alpha}), \vh)& \\
 =
 \llangle \bm{f}^n, \vh \rrangle + \alpha \llangle \pek(\chih(\bm{\beta})), \DIV \vh \rrangle 
 ,&\ \text{for all } \vh \in \Vh.
\end{align*}
Finally, we define $\bm{F}:\mathbb{R}^{\dQ} \rightarrow \mathbb{R}^{\dQ}$ by 
 \begin{align*}
  F_j(\bm{\beta}) 
  =& \,
    \llangle \bke(\chih(\bm{\beta})) - \bke(\chih^{n-1}), \qhj \rrangle + \alpha \llangle \swk(\chih(\bm{\beta})) \DIV (\uh(\bm{\alpha}(\bm{\beta})) - \uh^{n-1}), \qhj \rrangle \\
    &\qquad + \tau \llangle \GRAD_h \chih(\bm{\beta}), \GRAD_h \qhj \rrangle_{\absolutepermeability} - \tau \llangle \hext^n, \qhj \rrangle,\quad j\in\{1,...,d_\mathrm{Q}\}.
 \end{align*}
 We note, the existence of a discrete solution of Eq.~\eqref{ht:kirchhoff-reduced:u}--\eqref{ht:kirchhoff-reduced:p} is equivalent to the existence of $\bm{\beta} \in \mathbb{R}^{\dQ}$, satisfying $\bm{F}(\bm{\beta})=\bm{0}$. To prove the existence of a zero of $\bm{F}$, we employ Lemma~\ref{appendix:lemma:brouwer}; we consider the expression
 \begin{align}
 \label{existence-discrete-solution-aux-1}
  \llangle \bm{F}(\bm{\beta}), \bm{\beta} \rrangle
  &=
  \llangle \bke(\chih(\bm{\beta})) - \bke(\chih^{n-1}), \frac{\pek(\chih(\bm{\beta}))}{\swk(\chih(\bm{\beta}))} \rrangle \\
  \nonumber
  &\quad +
  \alpha \llangle \DIV (\uh(\bm{\alpha}) - \uh^{n-1}), \pek(\chih(\bm{\beta})) \rrangle \\
  \nonumber
  &\quad +
  \tau \llangle \GRAD_h \chi(\bm{\beta}), \GRAD_h \tfrac{\pek(\chih(\bm{\beta}))}{\swk(\chih(\bm{\beta}))} \rrangle \\
  \nonumber
  &\quad
  - \tau \llangle \hext^n, \tfrac{\pek(\chih(\bm{\beta}))}{\swk(\chih(\bm{\beta}))} \rrangle\\[0.25em]
  \nonumber
  &=: T_1 + T_2 + T_3 + T_4.
 \end{align}
 where we used
  \begin{align*}
  \sum_{j=1}^{d_\mathrm{Q}} \beta_j \qhj &=
  \frac{\pek(\chih(\bm{\beta}))}{\swk(\chih(\bm{\beta}))}.
 \end{align*}
 and dropped the explicit dependence of $\bm{\alpha}$ on $\bm{\beta}$. We discuss the terms $T_1,...,T_4$ separately. 
 
 \paragraph{Discussion of $T_1$.}
 Using~(A4), we define $\bar{b}_\eta:=\bke \circ \left(\tfrac{\pek}{\swk}\right)^{-1}:\mathbb{R} \rightarrow \mathbb{R}$ and its Legendre transformation, cf.~\eqref{legendre-transform-b-bar}. Finally, using standard properties of the Legendre transformation of non-decreasing functions, cf.\ Lemma~\ref{appendix:lemma:legendre}, we obtain for term $T_1$
 \begin{align*}
  T_1
  \geq 
  \left\| \bar{B}_\eta\left( \frac{\pek(\chih(\bm{\beta}))}{\swk(\chih(\bm{\beta}))}\right) \right\|_{L^1(\Omega)} 
  - 
  \left\| \bar{B}_\eta\left( \frac{\pek(\chih(\bm{\beta}^{n-1}))}{\swk(\chih(\bm{\beta}^{n-1}))}\right) \right\|_{L^1(\Omega)},
 \end{align*}
 where $\bm{\beta}^{n-1}\in\mathbb{R}^{d_\mathrm{Q}}$ such that $\chih^{n-1} = \chih(\bm{\beta}^{n-1})$.
 
 \paragraph{Discussion of $T_2$.}
 From the definition of $\bm{\alpha}$, under the use of a binomial identity, the Cauchy-Schwarz inequality and Young's inequality, the coupling term $T_2$ becomes
 \begin{align*}
  T_2 &=\alpha \llangle \DIV (\uh(\bm{\alpha}) - \uh^{n-1}), \pek(\chih(\bm{\beta})) \rrangle \\
  &=
  \zeta \tau^{-1} \left\| \uh(\bm{\alpha}) - \uh^{n-1} \right\|_{\V}^2
  +
  \tfrac{1}{2} \left\| \uh(\bm{\alpha}) \right\|_{\V}^2
  +\tfrac{1}{2} \left\| \uh(\bm{\alpha}) - \uh^{n-1} \right\|_{\V}^2 \\
  &\qquad  -\tfrac{1}{2} \left\| \uh^{n-1} \right\|_{\V}^2
  -
  \llangle \bm{f}^n, \uh(\bm{\alpha}) - \uh^{n-1} \rrangle \\
  &\geq 
  \zeta \tau^{-1} \left\| \uh(\bm{\alpha}) - \uh^{n-1} \right\|_{\V}^2
  +
  \tfrac{1}{2} \left\| \uh(\bm{\alpha}) \right\|_{\V}^2
  +\tfrac{1}{4} \left\| \uh(\bm{\alpha}) - \uh^{n-1} \right\|_{\V}^2 \\
  &\qquad  -\tfrac{1}{2} \left\| \uh^{n-1} \right\|_{\V}^2
  -
  \| \bm{f}^n \|_{\V^\star}^2.
 \end{align*}
 
 \paragraph{Discussion of $T_3$.}
 By the mean value theorem and~(A4), the diffusion term  $T_3$ can be estimated from below
 \begin{align*}
   T_3 
   \geq
   c_{\pek/\swk} \tau
   \| \chih(\bm{\beta}) \|_{1,\mathcal{T},\absolutepermeability}^2.
 \end{align*}
 
 \paragraph{Discussion of $T_4$.}
 Employing the definition of $\hext=(h,w_\mathrm{N})$, the non-degeneracy condition (ND1), a discrete trace inequality, cf.\ Lemma~\ref{appendix:lemma:discrete-trace}, together with a discrete Poincar\'e inequality (introducing $\COmegaDiscretePoincare$), cf.\ Lemma~\ref{appendix:lemma:discrete-poincare}, we obtain
 \begin{align*}
  & \llangle \hext^n, \tfrac{\pek(\chih(\bm{\beta}))}{\swk(\chih(\bm{\beta}))} \rrangle \\
  &\quad \leq 
  \left \| \tfrac{\pek(\chih(\bm{\beta}))}{\swk(\chih(\bm{\beta})) \chih(\bm{\beta}) } \right\|_{\infty}
  \,
  \left( \left\| h^n \right\| \, \left\| \chih(\bm{\beta}) \right\| + \left\| w_\mathrm{N}^n \right\|_{L^2(\FlowNeumannBoundary)} \, \left\| \chih(\bm{\beta}) \right\|_{L^2(\FlowNeumannBoundary)} \right) \\
  &\quad \leq 
  C\left( C_\mathrm{ND,1}, \CDiscreteTrace,\COmegaDiscretePoincare\right)
  \,
  \left\| \hext^n \right\|_{L^2(\Omega) \times L^2(\FlowNeumannBoundary)}
  \, 
  \left\| \chih(\bm{\beta}) \right\|_{1,\mathcal{T}}
 \end{align*}
 for a constant $C\left( C_\mathrm{ND,1}, \CDiscreteTrace,\COmegaDiscretePoincare\right)>0$
 Hence, by~(A6) and Young's inequality, for the term $T_4$ it holds that
 \begin{align*}
  T_4 
  &\leq 
  \frac{C\left( C_\mathrm{ND,1}, \CDiscreteTrace,\COmegaDiscretePoincare\right)^2}{2c_{\pek/\swk} \absolutepermeabilitymin} \tau \left\| \hext^n \right\|^2_{L^2(\Omega) \times L^2(\FlowNeumannBoundary)}
  +
  \frac{c_{\pek/\swk}}{2} \tau\| \chih(\bm{\beta})  \|_{1,\mathcal{T},\absolutepermeability}^2.
 \end{align*}

\paragraph{Combination of all results.}
 By inserting the estimates for $T_1$, $T_2$, $T_3$, and $T_4$,~\eqref{existence-discrete-solution-aux-1} becomes
 \begin{align}
 \label{existence-discrete-solution-aux-3}
  \llangle \bm{F}(\bm{\beta}), \bm{\beta} \rrangle
  &\geq
    \Bigg(\left\| \bar{B}_\eta\left( \frac{\pek(\chih(\bm{\beta}))}{\swk(\chih(\bm{\beta}))}\right)\right\|_{L^1(\Omega)} + \tfrac{c_{\pek/\swk}}{2} \tau
   \| \chih(\bm{\beta}) \|_{1,\mathcal{T},\absolutepermeability}^2  \\
   \nonumber
   &\qquad +
  \frac{1}{4} \left\| \uh(\bm{\alpha}) \right\|_{\V}^2 
  + \frac{1}{2} \zeta \tau^{-1} \left\| \uh(\bm{\alpha}) - \uh^{n-1} \right\|_{\V}^2
  + \frac{1}{4} \left\| \uh(\bm{\alpha}) - \uh^{n-1} \right\|_{\V}^2 \Bigg)\\
  \nonumber
  &  -
  \Bigg(
  \left\| \bar{B}_\eta\left( \frac{\pek(\chih^{n-1})}{\swk(\chih^{n-1})}\right) \right\|_{L^1(\Omega)}
  +
  \frac{1}{2}
  \left\| \uh^{n-1} \right\|_{\V}^2
  +
  \frac{5}{4}\| \bm{f}^n \|_{\V^\star}^2 \\
  \nonumber
  &\qquad +
  \frac{C\left( C_\mathrm{ND,1}, \CDiscreteTrace,\COmegaDiscretePoincare\right)^2}{2c_{\pek/\swk} \absolutepermeabilitymin} \tau \left\| \hext^n \right\|_{L^2(\Omega) \times L^2(\FlowNeumannBoundary)} \Bigg).
 \end{align}
 Finally, since $\|\cdot\|_{1,\mathcal{T},\absolutepermeability}$ defines a norm on $\Qh$ and~\eqref{existence-discrete-solution-aux-02} holds by induction for $n-1$ if $n\geq 2$ or from (A8$^\star$) for $n=1$, by a corollary of Brouwer's fixed point theorem, cf.\ Lemma~\ref{appendix:lemma:brouwer}, there exists a  $\bm{\beta}\in\mathbb{R}^{d_\mathrm{Q}}$ such that $\bm{F}(\bm{\beta})=\bm{0}$, which implies existence of a solution. The bound~\eqref{existence-discrete-solution-aux-02} for $n$ follows immediately from~\eqref{existence-discrete-solution-aux-3}.
 \end{proof}

\section{Step 3: Limit $h,\tau\rightarrow 0$}\label{section:well-posedness:regularized-continuous-problem}

In the following, we show that the fully-discrete FEM-TPFA discretization, introduced in the previous section, converges to a weak solution of the doubly regularized unsaturated poroelasticity model, i.e., we prove Lemma~\ref{lemma:existence-doubly-regularized}. The proof follows the steps: 1) derive stability results for the fully discrete approximation; 2) define suitable approximations a.e.\ in time using interpolation; 3) deduce stability for those as well; 4) relative compactness arguments are performed yielding a well-defined limit for $h,\tau\rightarrow 0$; 5) the limit is showed to be a weak solution of the doubly regularized model. Throughout the entire section, we assume~(A0)--(A9) and~(ND1) hold true. 

\subsection{Stability estimates for the fully-discrete approximation}

\begin{lemma}[Stability estimate for the primary variables]\label{lemma:discrete-solution-apriori-estimate-1} Let $\tau<\frac{1}{8}$. There exists a constant $\htauStabilityUChi>0$ (independent of $h,\tau,\zeta,\eta$), such that
\begin{align*}
 &\zeta \, \sum_{n} \tau^{-1} \left\| \uh^n - \uh^{n-1} \right\|_{\V}^2
 +
 \underset{n}{\mathrm{sup}} \, \|\uh^n\|_{\V}^2  
 + \sum_{n} \| \uh^n - \uh^{n-1} \|_{\V}^2
 +
 \sum_{n=1}^N \tau \|  \chih^n \|_{1,\mathcal{T}}^2 \\
 &\quad\leq
 \htauStabilityUChi \bigg( C_0, 
 C_\mathrm{ND,1} \| \hext \|_{L^2(0,T;\Q^\star)},
 \| \fext\|_{H^1(0,T;\V^\star)} \bigg),
\end{align*}
where $C_0$ and $C_\mathrm{ND,1}$ are defined in~$\mathrm{(A8}^\star\mathrm{)}_h$ and $\mathrm{(ND1)}$, respectively.
\end{lemma}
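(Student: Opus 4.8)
The plan is to run a discrete energy estimate on~\eqref{ht:kirchhoff-reduced:u}--\eqref{ht:kirchhoff-reduced:p}. At each time level $n$ I test the mechanics equation with $\vh=\uh^n-\uh^{n-1}$ and the flow equation with $\qh=\tfrac{\pek(\chih^n)}{\swk(\chih^n)}$; by (D1) the latter is piecewise constant, hence a legitimate element of $\Qh$, and it is well defined by (A4). Since $\chih^n$, and therefore $\swk(\chih^n)$, is cell-wise constant, the coupling term of the tested flow equation equals $\alpha\llangle\DIV(\uh^n-\uh^{n-1}),\pek(\chih^n)\rrangle$, which cancels exactly the coupling term $-\alpha\llangle\pek(\chih^n),\DIV(\uh^n-\uh^{n-1})\rrangle$ of the tested mechanics equation. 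This is precisely the computation of $\llangle\bm F(\bm\beta),\bm\beta\rrangle$ performed in the proof of Lemma~\ref{lemma:existence-discrete-solution}, so the bulk of the per-step bookkeeping carries over; the present lemma is its summed, $m$-uniform refinement.

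After cancellation I estimate the surviving terms. The elastic contribution $a(\uh^n,\uh^n-\uh^{n-1})$ splits, by the binomial identity, into the telescoping pair $\tfrac12\|\uh^n\|_\V^2-\tfrac12\|\uh^{n-1}\|_\V^2$ plus $\tfrac12\|\uh^n-\uh^{n-1}\|_\V^2$, and the regularization term contributes the non-negative $\zeta\tau^{-1}\|\uh^n-\uh^{n-1}\|_\V^2$, which is simply retained on the left. For the storage term I follow~\cite{Alt1983}: with $\bar b_\eta:=\bke\circ\bigl(\tfrac{\pek}{\swk}\bigr)^{-1}$ (non-decreasing, since $\bke$ is and $\tfrac{\pek}{\swk}$ is increasing) and its Legendre transform $\bar B_\eta$ as in~\eqref{legendre-transform-b-bar}, Lemma~\ref{appendix:lemma:legendre} gives $\llangle\bke(\chih^n)-\bke(\chih^{n-1}),\tfrac{\pek(\chih^n)}{\swk(\chih^n)}\rrangle\ge\bigl\|\bar B_\eta\bigl(\tfrac{\pek(\chih^n)}{\swk(\chih^n)}\bigr)\bigr\|_{L^1(\Omega)}-\bigl\|\bar B_\eta\bigl(\tfrac{\pek(\chih^{n-1})}{\swk(\chih^{n-1})}\bigr)\bigr\|_{L^1(\Omega)}$, again telescoping. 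The diffusion term is bounded below face-by-face: by the mean value theorem and (A4), $\delta_\sigma\bigl(\tfrac{\pek(\chih^n)}{\swk(\chih^n)}\bigr)\,\delta_\sigma(\chih^n)\ge c_{\pek/\swk}\,\delta_\sigma(\chih^n)^2$, hence with (A6) one gets $\tau\langle\GRAD_h\chih^n,\GRAD_h\tfrac{\pek(\chih^n)}{\swk(\chih^n)}\rangle_{\absolutepermeability}\ge c_{\pek/\swk}\,\tau\|\chih^n\|_{1,\mathcal{T},\absolutepermeability}^2\ge c_{\pek/\swk}\,\absolutepermeabilitymin\,\tau\|\chih^n\|_{1,\mathcal{T}}^2$. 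Half of this is kept; the other half absorbs the flow source, for which (ND1) gives $\bigl|\tfrac{\pek(\chih^n)}{\swk(\chih^n)}\bigr|\le C_\mathrm{ND,1}|\chih^n|$ pointwise and then the discrete trace and discrete Poincar\'e inequalities (Lemmas~\ref{appendix:lemma:discrete-trace} and~\ref{appendix:lemma:discrete-poincare}) plus Young's inequality leave only $\lesssim\tau\|\hext^n\|_{L^2(\Omega)\times L^2(\FlowNeumannBoundary)}^2$, whose sum over $n$ is controlled by $C_\mathrm{ND,1}^2\|\hext\|_{L^2(0,T;\Q^\star)}^2$ via Jensen's inequality applied to the time averages $\hext^n$.

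The only non-telescoping term is the mechanical load $\sum_{n=1}^m\llangle\fext^n,\uh^n-\uh^{n-1}\rrangle$. A naive Young estimate against $\sum_n\|\uh^n-\uh^{n-1}\|_\V^2$ would leave $\sum_n\|\fext^n\|_{\V^\star}^2\sim\tau^{-1}\|\fext\|_{L^2(0,T;\V^\star)}^2$, which is not $\tau$-uniform, so I use summation by parts, $\sum_{n=1}^m\llangle\fext^n,\uh^n-\uh^{n-1}\rrangle=\llangle\fext^m,\uh^m\rrangle-\llangle\fext^1,\uh^0\rrangle-\sum_{n=1}^{m-1}\llangle\fext^{n+1}-\fext^n,\uh^n\rrangle$. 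The boundary terms are handled by $\|\fext\|_{C(0,T;\V^\star)}$ (which bounds every $\|\fext^m\|_{\V^\star}$, by (A7)) together with Young against a fraction of $\|\uh^m\|_\V^2$ and with (A8$^\star$)$_h$ for $\|\uh^0\|_\V$; the remaining sum is bounded by $\bigl(\max_n\|\uh^n\|_\V\bigr)\sum_n\|\fext^{n+1}-\fext^n\|_{\V^\star}$, and expressing $\fext^{n+1}-\fext^n$ as a double time integral of $\partial_t\fext$ and using Fubini and Cauchy--Schwarz yields $\sum_n\|\fext^{n+1}-\fext^n\|_{\V^\star}\le T^{1/2}\|\partial_t\fext\|_{L^2(0,T;\V^\star)}$, so one more Young step turns this into $\varepsilon\max_n\|\uh^n\|_\V^2$ plus data.

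Putting it together, I sum the resulting per-step inequality over $n=1,\dots,m$, discard the non-negative $\bar B_\eta$-term at level $n$ (not needed for this statement), bound all level-$0$ quantities by $C_0$ via (A8$^\star$)$_h$, take the maximum over $m\in\{1,\dots,N\}$, and absorb the $\varepsilon\max_n\|\uh^n\|_\V^2$ contribution into $\max_n\|\uh^n\|_\V^2$ on the left; in this last bookkeeping, together with the various applications of Young's inequality, the hypothesis $\tau<\tfrac18$ keeps the constants admissible. The outcome is the claimed estimate, with a constant depending only on $C_0$, $C_\mathrm{ND,1}\|\hext\|_{L^2(0,T;\Q^\star)}$, $\|\fext\|_{H^1(0,T;\V^\star)}$ and the fixed model and mesh constants $c_{\pek/\swk}$, $\absolutepermeabilitymin$, $\COmegaDiscretePoincare$, $\CDiscreteTrace$; independence of $\zeta$ is clear since its term is only retained on the left, and independence of $\eta$ holds because $\eta$ enters only through $\bar b_\eta,\bar B_\eta$, which are non-negative on the left and whose level-$0$ value is uniformly $\eta$-bounded by (A8$^\star$)$_h$. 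I expect the load term to be the main obstacle: obtaining a $\tau$-uniform bound forces the summation-by-parts argument and hence the $H^1$-in-time regularity of $\fext$ from (A7); the other indispensable structural ingredient is the exact coupling cancellation, which works only thanks to the piecewise-constant pressure space (D1) and the local, TPFA-type discrete gradient.
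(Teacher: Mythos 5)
Your proposal follows the paper's proof in all structural respects: the same test functions $\vh=\uh^n-\uh^{n-1}$ and $\qh=\pek(\chih^n)/\swk(\chih^n)$, the exact cancellation of the coupling terms enabled by (D1), the binomial identity for the elastic term, the Legendre transform $\bar B_\eta$ for the storage term, the mean-value/(A4) lower bound for the TPFA diffusion term, the (ND1) + discrete trace + discrete Poincar\'e treatment of the flow source, and summation by parts for the mechanical load. The one place you genuinely diverge is the post-summation-by-parts bookkeeping for $\sum_n\llangle\fext^{n+1}-\fext^n,\uh^n\rrangle$. The paper estimates this by Cauchy--Schwarz and Young against $\sum_n\tau\|\uh^n\|_\V^2$, which forces an appeal to the discrete Gr\"onwall inequality (Lemma~\ref{appendix:discrete-gronwall}); the hypothesis $\tau<\tfrac18$ is precisely what makes that Gr\"onwall absorption admissible. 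You instead use an $\ell^1$--$\ell^\infty$ pairing, bounding the remainder by $\bigl(\max_n\|\uh^n\|_\V\bigr)\sum_n\|\fext^{n+1}-\fext^n\|_{\V^\star}$ and controlling the second factor by $T^{1/2}\|\partial_t\fext\|_{L^2(0,T;\V^\star)}$ via a double-integral representation and Fubini, then absorbing an $\varepsilon\max_n\|\uh^n\|_\V^2$ term after taking the sup over the terminal index $m$. This is cleaner and avoids Gr\"onwall entirely; as a consequence your argument does not actually use the smallness condition $\tau<\tfrac18$, so your closing remark that this hypothesis ``keeps the constants admissible'' is a bit of a red herring --- in your route it is simply not needed, whereas in the paper's Gr\"onwall route it is. Both paths require the same data regularity $\fext\in H^1(0,T;\V^\star)$ from (A7), and both give the $h,\tau,\zeta,\eta$-uniform constant claimed, so the proposal is correct.
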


\begin{proof}
The proof follows essentially the same steps as in the proof of Lemma~\ref{lemma:existence-discrete-solution}. Therefore, we are quick on similar steps. 
We consider the reduced displacement-pressure formulation~\eqref{ht:kirchhoff-reduced:u}--\eqref{ht:kirchhoff-reduced:p}. We choose $\vh=\uh^n - \uh^{n-1}$ and $\qh=\frac{\pek(\chih^n)}{\swk(\chih^n)}$ as test functions and sum the two equations; note that the second is well-defined as $\swk(\chi)>0$ for all $\chi\in\mathbb{R}$, by (A2). We obtain
\begin{align*}
 &\zeta \tau^{-1} \left\| \uh^n - \uh^{n-1} \right\|_{\V}^2
 +
 a(\uh^n, \uh^n - \uh^{n-1}) \\
 &\quad+
 \llangle \bke(\chih^n) - \bke(\chih^{n-1}), \frac{\pek(\chih^n)}{\swk(\chih^n)} \rrangle
 +
 \tau \llangle \GRAD_h \chih^n, \GRAD_h \frac{\pek(\chih^n)}{\swk(\chih^n)} \rrangle_{\absolutepermeability} \\
 &\quad=
 \llangle \fext^n, \uh^n - \uh^{n-1} \rrangle
 +
 \tau \llangle \hext^n, \frac{\pek(\chih^n)}{\swk(\chih^n)} \rrangle.
\end{align*}
On the left hand side, we employ the binomial identity~\eqref{binomial-identity}, the Legendre transformation, $\bar{B}_\eta$, of $\bar{b}_\eta=\bke\circ \left(\tfrac{\pek}{\swk}\right)^{-1}$, cf.~\eqref{legendre-transform-b-bar} and Lemma~\ref{appendix:lemma:legendre}, and the uniform increase of $\tfrac{\pek}{\swk}$, cf.,~(A4). It holds that
\begin{align*}
 &\zeta \tau^{-1} \left\| \uh^n - \uh^{n-1} \right\|_{\V}^2
 +
 \frac{1}{2} \left( \|\uh^n\|_{\V}^2 - \| \uh^{n-1} \|_{\V}^2 + \| \uh^n - \uh^{n-1} \|_{\V}^2 \right) \\
 &\quad+
 \left\| \bar{B}_\eta\left(\frac{\pek(\chih^n)}{\swk(\chih^n)}\right) \right\|_{L^1(\Omega)} - \left\| \bar{B}_\eta\left(\frac{\pek(\chih^{n-1})}{\swk(\chih^{n-1})}\right) \right\|_{L^1(\Omega)}
 +
 c_{\pek/\swk} \tau \| \chih^n \|_{1,\mathcal{T},\absolutepermeability}^2 \\
 &\quad\leq
 \llangle \fext^n, \uh^n - \uh^{n-1} \rrangle
 +
 \llangle \hext^n, \frac{\pek(\chih^n)}{\swk(\chih^n)} \rrangle.
\end{align*}
Summing over the time steps $1$ to $N$ and rearranging terms, yields
\begin{align*}
 &\zeta \,\sum_{n} \tau^{-1} \left\| \uh^n - \uh^{n-1} \right\|_{\V}^2
 +
 \frac{1}{2} \|\uh^N\|_{\V}^2  + \frac{1}{2} \sum_{n=1}^N \| \uh^n - \uh^{n-1} \|_{\V}^2  \\
 &\quad+
 \left\| \bar{B}_\eta\left(\frac{\pek(\chih^N)}{\swk(\chih^N)}\right)  \right\|_{L^1(\Omega)}
 +
 c_{\pek/\swk} \sum_{n=1}^N \tau \| \chih^n \|_{1,\mathcal{T},\absolutepermeability}^2 \\ 
 &\quad\leq
 \frac{1}{2} \| \uh^{0} \|_{\V}^2
 + \left\| \bar{B}_\eta\left(\frac{\pek(\chih^{0})}{\swk(\chih^{0})}\right)  \right\|_{L^1(\Omega)}\\
 &\quad+
 \sum_{n=1}^N\llangle \fext^n, \uh^n - \uh^{n-1} \rrangle
 + 
 \sum_{n=1}^N \tau \llangle \hext^n, \frac{\pek(\chih^n)}{\swk(\chih^n)} \rrangle.
\end{align*}
It remains to discuss the last two terms on the right hand side. For the first of them, we employ summation by parts, cf.\ Lemma~\ref{appendix:lemma:summation-by-parts}, as well as the Cauchy-Schwarz inequality and Young's inequality:
\begin{align*}
 &\sum_{n=1}^N\llangle \fext^n, \uh^n - \uh^{n-1} \rrangle \\
 &\quad=
 \llangle \fext^N, \uh^N \rrangle - \llangle \fext^1, \uh^0 \rrangle - \sum_{n=1}^{N-1} \llangle \fext^{n+1} - \fext^{n}, \uh^{n} \rrangle \\
 &\quad\leq 
 \| \fext^N\|_{\V^\star}^2 
 + \frac{1}{4} \| \uh^N \|_{\V}^2 
 + \frac{1}{2} \| \fext^1 \|_{\V^\star}^2 
 + \frac{1}{2} \| \uh^0 \|_{\V}^2 
 + \sum_{n=1}^N \tau^{-1} \left\| \fext^n - \fext^{n-1} \right\|^2_{\V^\star}
 + \sum_{n=1}^N \tau \| \uh^{n} \|_{\V}^2.
\end{align*}
The second term is estimated as in the discussion of $T_4$ within the proof of Lemma~\ref{lemma:existence-discrete-solution}. We obtain
\begin{align*}
 &\sum_{n=1}^N \tau \llangle \hext^n, \frac{\pek(\chih^n)}{\swk(\chih^n)} \rrangle  \\
 & \quad \leq
  \frac{C\left( C_\mathrm{ND,1}, \CDiscreteTrace,\COmegaDiscretePoincare\right)^2}{2c_{\pek/\swk} \absolutepermeabilitymin} \sum_{n=1}^N \tau \left\| \hext^n \right\|_{L^2(\Omega)^2 \times L^2(\FlowNeumannBoundary)}
  +
  \frac{c_{\pek/\swk}}{2} \sum_{n=1}^N \tau \| \chih^n \|_{1,\mathcal{T},\absolutepermeability}^2.
\end{align*}
Altogether, after rearranging terms, we obtain
\begin{align*}
 &\frac{\zeta}{2} \sum_{n} \tau^{-1} \left\| \uh^n - \uh^{n-1} \right\|_{\V}^2
 +
 \frac{1}{4} \|\uh^N\|_{\V}^2  + \frac{1}{4} \sum_{n=1}^N \| \uh^n - \uh^{n-1} \|_{\V}^2  \\
 &\quad+
 \left\| \bar{B}_\eta\left(\frac{\pek(\chih^N)}{\swk(\chih^N)}\right)   \right\|_{L^1(\Omega)}
 +
 \frac{c_{\pek/\swk}}{2} \sum_{n=1}^N \tau \| \chih^n \|_{1,\mathcal{T},\absolutepermeability}^2 \\
 &\quad\leq
 \| \uh^{0} \|_{\V}^2
 + \left\| \bar{B}_\eta\left(\frac{\pek(\chih^{0})}{\swk(\chih^{0})}\right) \right\|_{L^1(\Omega)}
 +
 \frac{C\left( C_\mathrm{ND,1}, \CDiscreteTrace,\COmegaDiscretePoincare\right)^2}{2c_{\pek/\swk} \absolutepermeabilitymin} \sum_{n=1}^N \tau \left\| \hext^n \right\|_{L^2(\Omega) \times L^2(\FlowNeumannBoundary)}^2\\ 
 &\quad+
 \| \fext^N\|_{\V^\star}^2 + \| \fext^1 \|_{\V^\star}^2 
 +
 \sum_{n=1}^N \tau^{-1} \left\| \fext^n - \fext^{n-1} \right\|^2_{\V^\star}
 +
 \sum_{n=1}^N \tau \| \fext^n \|_{\V^\star}^2
 +
 2 \sum_{n=1}^N \tau \| \uh^{n} \|_{\V}^2.
\end{align*}
Finally, the last term on the right hand side can be controlled after applying a discrete Gr\"onwall inequality, cf.\ Lemma~\ref{appendix:discrete-gronwall}, using that $2\tau<\frac{1}{4}$. The thesis follows from the assumptions on the regularity of the source terms (A7) (together with a Sobolev embedding) and initial data~(A8$^\star$).
\end{proof}


\begin{lemma}[Stability for the Kirchhoff pressure]\label{lemma:discrete-solution-apriori-estimate-2} 
There exists a constant $\htauStabilityDchiDt>0$ (independent of $h,\tau$) such that
\begin{align*}
    &b_{\chi,m} \sum_{n=1}^N \tau^{-1} \left\| \chih^n - \chih^{n-1} \right\|^2 
  + \left\| \chih^N \right\|_{1,\mathcal{T}}^2  + \sum_{n=1}^N \left\|\chih^n - \chih^{n-1} \right\|_{1,\mathcal{T}}^2  \leq \htauStabilityDchiDt \left(C_0,  \frac{1+\zeta^{-1}}{b_{\chi,\mathrm{m}}} \htauStabilityUChi \right),
\end{align*}
where $\htauStabilityUChi$ is the stability constant from Lemma~\ref{lemma:discrete-solution-apriori-estimate-1}, $b_\mathrm{\chi,m}$ is from the growing condition~$\mathrm{(A1}^\star\mathrm{)}$, and $C_0$ is the bound in $\mathrm{(A8}^\star\mathrm{)}_h$. 
\end{lemma}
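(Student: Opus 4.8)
The plan is to test the discrete flow equation~\eqref{ht:kirchhoff-reduced:p}, after dividing through by $\tau$, with $\qh=\chih^n-\chih^{n-1}$ (the discrete time increment of the Kirchhoff pressure), and to sum over $n=1,\dots,N$. This is the discrete counterpart of testing a parabolic equation against $\partial_t\chi$, and it generates precisely the three quantities on the left-hand side of the claim: by the uniform growth condition~$\mathrm{(A1}^\star\mathrm{)}$ the storage term produces $b_{\chi,m}\sum_n\tau^{-1}\|\chih^n-\chih^{n-1}\|^2$; applying the binomial identity~\eqref{binomial-identity} to $\langle\GRAD_h\chih^n,\GRAD_h(\chih^n-\chih^{n-1})\rangle_{\absolutepermeability}$ makes the diffusion term telescope into $\tfrac12\|\chih^N\|_{1,\mathcal{T},\absolutepermeability}^2+\tfrac12\sum_n\|\chih^n-\chih^{n-1}\|_{1,\mathcal{T},\absolutepermeability}^2-\tfrac12\|\chih^0\|_{1,\mathcal{T},\absolutepermeability}^2$; and by~$\mathrm{(A6)}$ the weighted discrete norm dominates $\absolutepermeabilitymin\|\cdot\|_{1,\mathcal{T}}^2$, while the initial contribution $\|\chih^0\|_{1,\mathcal{T},\absolutepermeability}^2$ is controlled by $C_0$ through~$\mathrm{(A8}^\star\mathrm{)}_h$.

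It then remains to absorb the coupling term and the source term. For the coupling term, since $\swk(\chih^n)\le 1$ a.e.\ by~$\mathrm{(A2)}$ and $\|\DIV\v\|^2\le\tfrac{d}{2\mu}\|\v\|_{\V}^2$, Cauchy--Schwarz and Young's inequality give
\[
 \alpha\,\tau^{-1}\bigl|\langle \swk(\chih^n)\,\DIV(\uh^n-\uh^{n-1}),\,\chih^n-\chih^{n-1}\rangle\bigr|
 \le \tfrac{b_{\chi,m}}{4}\,\tau^{-1}\|\chih^n-\chih^{n-1}\|^2+\tfrac{\alpha^2 d}{2\mu\,b_{\chi,m}}\,\tau^{-1}\|\uh^n-\uh^{n-1}\|_{\V}^2;
\]
the first contribution is absorbed into the storage term, and the second is summed using the bound $\zeta\sum_n\tau^{-1}\|\uh^n-\uh^{n-1}\|_{\V}^2\le\htauStabilityUChi$ from Lemma~\ref{lemma:discrete-solution-apriori-estimate-1}, which is exactly where the factor $\zeta^{-1}$ in the asserted constant enters. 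Here the structure of the FEM--TPFA scheme is exploited once more, as in the proof of Lemma~\ref{lemma:existence-discrete-solution}: the piecewise-constant pressure lets $\swk(\chih^n)$ be paired with the increment without generating spurious cross terms.

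For the source term a direct estimate of $\sum_n\langle\hext^n,\chih^n-\chih^{n-1}\rangle$ by products of norms would cost an unacceptable factor $\tau^{-1}$, so instead I would apply summation by parts (Lemma~\ref{appendix:lemma:summation-by-parts}) to rewrite it as $\langle\hext^N,\chih^N\rangle-\langle\hext^1,\chih^0\rangle-\sum_{n=1}^{N-1}\langle\hext^{n+1}-\hext^n,\chih^n\rangle$. The body and boundary parts of $\hext^n$ are handled by the discrete Poincar\'e and discrete trace inequalities (Lemmas~\ref{appendix:lemma:discrete-poincare} and~\ref{appendix:lemma:discrete-trace}); $\langle\hext^N,\chih^N\rangle$ is split by Young's inequality so that a small multiple of $\|\chih^N\|_{1,\mathcal{T}}^2$ is absorbed into $\tfrac{\absolutepermeabilitymin}{2}\|\chih^N\|_{1,\mathcal{T}}^2$ on the left; $\langle\hext^1,\chih^0\rangle$ is bounded via $\|\chih^0\|_{1,\mathcal{T}}\le C\|\chi_0\|_1$ and~$\mathrm{(A8}^\star\mathrm{)}_h$; and the telescoped sum is estimated using $\sum_n\tau\|\chih^n\|_{1,\mathcal{T}}^2\le\htauStabilityUChi$ from Lemma~\ref{lemma:discrete-solution-apriori-estimate-1} together with $\sum_n\tau^{-1}\|\hext^{n+1}-\hext^n\|_{\Q^\star}^2\lesssim\|\partial_t\hext\|_{L^2(0,T;\Q^\star)}^2$ and $\|\hext^N\|_{\Q^\star}\le\|\hext\|_{L^\infty(0,T;\Q^\star)}$, both consequences of~$\mathrm{(A7)}$. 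Collecting all contributions, rescaling, and noting that the data norms are already subsumed in $\htauStabilityUChi$ yields the estimate with a constant depending only on $C_0$ and $\tfrac{1+\zeta^{-1}}{b_{\chi,m}}\htauStabilityUChi$. The main obstacle is the coupling term: the quantity $\tau^{-1}\|\uh^n-\uh^{n-1}\|_{\V}^2$ can only be controlled thanks to the viscous regularization $\zeta$ and Lemma~\ref{lemma:discrete-solution-apriori-estimate-1}, which is why this bound degenerates as $\zeta\to 0$ and is not uniform in $\zeta$ -- consistent with the separate treatment of the limit $\zeta\to 0$ in Step~5.
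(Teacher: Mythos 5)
Your proof is correct and follows the same skeleton as the paper's: test \eqref{ht:kirchhoff-reduced:p} with $\qh=\chih^n-\chih^{n-1}$, divide by $\tau$, sum, use the binomial identity on the diffusion term, invoke $(\mathrm{A1}^\star)$ for the storage term, and absorb the coupling term via Young's inequality into $b_{\chi,\mathrm{m}}\sum_n\tau^{-1}\|\chih^n-\chih^{n-1}\|^2$, which is where the factor $\zeta^{-1}\htauStabilityUChi$ enters through $\sum_n\tau^{-1}\|\uh^n-\uh^{n-1}\|_\V^2\le\zeta^{-1}\htauStabilityUChi$ from Lemma~\ref{lemma:discrete-solution-apriori-estimate-1}.

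The one place you diverge is the source term. The paper estimates $\sum_n\llangle\hext^n,\chih^n-\chih^{n-1}\rrangle$ directly by Cauchy--Schwarz/Young, absorbing $\frac{b_{\chi,\mathrm{m}}}{2}\sum_n\tau^{-1}\|\chih^n-\chih^{n-1}\|^2$ into the storage term and leaving $\frac{1}{2b_{\chi,\mathrm{m}}}\sum_n\tau\|\hext^n\|_{\Q^\star}^2\lesssim\|\hext\|_{L^2(0,T;\Q^\star)}^2$. Your stated reason for avoiding this --- that it ``would cost an unacceptable factor $\tau^{-1}$'' --- is not accurate for the interior piece $\int_\Omega h^n(\chih^n-\chih^{n-1})$: there the $\tau^{-1}$ is harmlessly absorbed into the storage term exactly as for the coupling term. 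Where your instinct is actually sound is the Neumann piece $\int_{\FlowNeumannBoundary}w_N^n(\chih^n-\chih^{n-1})$: its estimate via the discrete trace inequality (Lemma~\ref{appendix:lemma:discrete-trace}) produces $\|\chih^n-\chih^{n-1}\|_{1,\mathcal{T}}$, and the only such term on the left-hand side, $\sum_n\|\chih^n-\chih^{n-1}\|_{1,\mathcal{T},\absolutepermeability}^2$, carries no $\tau^{-1}$, so direct Young absorption would leave $\sum_n\|w_N^n\|_{L^2(\FlowNeumannBoundary)}^2$ without a factor of $\tau$. The summation-by-parts route you take (mirroring how the paper itself treats $\fext$ in Lemma~\ref{lemma:discrete-solution-apriori-estimate-1}) avoids this by placing the trace on $\chih^n$, $\chih^N$, $\chih^0$, which are controlled by $\sum_n\tau\|\chih^n\|_{1,\mathcal{T}}^2\le\htauStabilityUChi$, the left-hand-side term $\|\chih^N\|_{1,\mathcal{T},\absolutepermeability}^2$, and $(\mathrm{A8}^\star)_h$ respectively, at the cost of needing $\hext\in H^1(0,T;\Q^\star)\cap C(0,T;\Q^\star)$, which $(\mathrm{A7})$ supplies. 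So your treatment of the source term is somewhat longer but in fact more careful regarding the boundary contribution; just tighten the justification to single out the trace issue rather than a blanket $\tau^{-1}$ objection.
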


\begin{proof}
 We choose $\qh=\chih^n - \chih^{n-1}$ in~\eqref{ht:kirchhoff-reduced:p}. By using the binomial identity~\eqref{binomial-identity} for the diffusion term, we obtain
 \begin{align*}
 &\llangle \bke(\chih^n) - \bke(\chih^{n-1}), \chih^n - \chih^{n-1} \rrangle + \alpha \llangle \swk(\chih^n) \DIV(\uh^n - \uh^{n-1}), \chih^n - \chih^{n-1} \rrangle \\
  &\quad+ \, \frac{\tau}{2} \left( \left\| \chih^n \right\|_{1,\mathcal{T},\absolutepermeability}^2 -  \left\|\chih^{n-1} \right\|_{1,\mathcal{T},\absolutepermeability}^2 + \left\|\chih^n - \chih^{n-1} \right\|_{1,\mathcal{T},\absolutepermeability}^2 \right) \\
  &\quad= \tau \llangle \hext^n, \chih^n - \chih^{n-1} \rrangle.
  \end{align*}
  Dividing by $\tau$ and summing over time steps 1 to $N$, yields
  \begin{align}
  \label{proof:stability-htau-kirchhoff-pressure:aux-1}
  &\sum_{n=1}^N \tau^{-1} \llangle \bke(\chih^n) - \bke(\chih^{n-1}), \chih^n - \chih^{n-1} \rrangle
  + \, \frac{1}{2} \left\| \chih^N \right\|_{1,\mathcal{T},\absolutepermeability}^2  + \frac{1}{2} \sum_{n=1}^N \left\| \chih^n - \chih^{n-1} \right\|_{1,\mathcal{T},\absolutepermeability}^2  \\
  \nonumber
  &\quad= \frac{1}{2} \left\|\chih^{0} \right\|_{1,\mathcal{T},\absolutepermeability}^2 + \sum_{n=1}^N \llangle \hext^n, \chih^n - \chih^{n-1} \rrangle
  -
  \alpha \sum_{n=1}^N \tau^{-1} \llangle \swk(\chih^n) \DIV(\uh^n - \uh^{n-1}), \chih^n - \chih^{n-1} \rrangle.
  \end{align}
  We discuss some of the terms above separately. Employing the growth condition~(A1$^\star$), yields for the first term on the left hand side of~\eqref{proof:stability-htau-kirchhoff-pressure:aux-1}
  \begin{align*}
   \sum_{n=1}^N \tau^{-1} \llangle \bke(\chih^n) - \bke(\chih^{n-1}), \chih^n - \chih^{n-1} \rrangle
  \geq
   b_{\chi,\mathrm{m}}\sum_{n=1}^N \tau^{-1} \left\| \chih^n - \chih^{n-1} \right\|^2.
  \end{align*}
  By employing the Cauchy-Schwarz inequality and Young's inequality, we get for the second term on the right hand side of~\eqref{proof:stability-htau-kirchhoff-pressure:aux-1}
  \begin{align*}
   \sum_{n=1}^N \llangle \hext^n, \chih^n - \chih^{n-1} \rrangle
   &\leq 
   \frac{b_{\chi,\mathrm{m}}}{2} \sum_{n=1}^N \tau^{-1} \left\| \chih^n - \chih^{n-1} \right\|^2 + \frac{1}{2b_{\chi,\mathrm{m}}} \sum_{n=1}^N \tau \left\| \hext^n \right\|_{Q^\star}^2.
  \end{align*}
  Similarly, for the last term on the right hand side of~\eqref{proof:stability-htau-kirchhoff-pressure:aux-1}, we get
  \begin{align*}
   &\alpha \sum_{n=1}^N \tau^{-1} \langle \swk(\chih^n) \DIV(\uh^n - \uh^{n-1}), \chih^n - \chih^{n-1} \rangle\\
   &\quad\leq 
   \frac{b_{\chi,\mathrm{m}}}{4} \sum_{n=1}^N \tau^{-1} \| \chih^n - \chih^{n-1} \|^2
   +
   \frac{\alpha^2}{b_{\chi,\mathrm{m}}} \sum_{n=1}^N \tau^{-1} \| \DIV ( \uh^n - \uh^{n-1}) \|^2.
  \end{align*}
  All in all,~\eqref{proof:stability-htau-kirchhoff-pressure:aux-1} becomes
  \begin{align*}
    &\frac{b_{\chi,m}}{4} \sum_{n=1}^N \tau^{-1} \| \chih^n - \chih^{n-1} \|^2 
  + \, \frac{1}{2} \| \chih^N \|_{1,\mathcal{T},\absolutepermeability}^2  + \frac{1}{2} \sum_{n=1}^N \| \chih^n - \chih^{n-1} \|_{1,\mathcal{T},\absolutepermeability}^2  \\
  &\quad\leq
  \frac{1}{2} \| \chih^{0} \|_{1,\mathcal{T},\absolutepermeability}^2 
   +
   \frac{1}{2b_{\chi,\mathrm{m}}} \sum_{n=1}^N \tau \| \hext^n \|_{Q^\star}^2 
   +
   \frac{\alpha^2}{b_{\chi,\mathrm{m}}} \sum_{n=1}^N \tau^{-1} \| \DIV ( \uh^n - \uh^{n-1}) \|^2.
  \end{align*}
  Finally, the first term on the right hand side is bounded by~$\mathrm{(A8}^\star\mathrm{)}_h$, whereas the last term can be bounded by employing Lemma~\ref{lemma:discrete-solution-apriori-estimate-1}. On the left hand side, we employ~(A6).
\end{proof}

\begin{lemma}[Stability for the Legendre transformation $\Bke$ of $\bke$]\label{lemma:discrete-solution-stabilty:legendre}
 Let $\Bke(z)$ denote the Legendre transformation of $\bk$, cf.~\eqref{legendre-transform-bk}. There exists a constant $\htauStabilityLegendre>0$ (independent of $h,\tau,\eta$), such that
 \begin{align*}
  \underset{n}{\mathrm{sup}}\, \left\| \Bke(\chih^n) \right\|_{L^1(\Omega)}
  \leq 
  \htauStabilityLegendre\left(C_0, \htauStabilityUChi\left( 1 + \zeta^{-1} \right)\right),
 \end{align*}
 where $\htauStabilityUChi$ is the stability constant from Lemma~\ref{lemma:discrete-solution-apriori-estimate-1}, and $C_0$ is the bound in $\mathrm{(A8}^\star\mathrm{)}_h$.
\end{lemma}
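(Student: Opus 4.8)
The plan is to test the discrete flow equation~\eqref{ht:kirchhoff-reduced:p} with $\qh=\chih^n$ itself — rather than with the increment $\chih^n-\chih^{n-1}$ as in Lemma~\ref{lemma:discrete-solution-apriori-estimate-2} — and then to sum over the time steps, exploiting the monotonicity of $\bke$ via the Legendre-transformation inequality of Lemma~\ref{appendix:lemma:legendre}. Testing gives, for each $n\geq1$,
\begin{align*}
 \langle\bke(\chih^n)-\bke(\chih^{n-1}),\chih^n\rangle
 + \alpha\langle\swk(\chih^n)\DIV(\uh^n-\uh^{n-1}),\chih^n\rangle
 + \tau\|\chih^n\|_{1,\mathcal{T},\absolutepermeability}^2
 = \tau\langle\hext^n,\chih^n\rangle.
\end{align*}
Since $\bke$ is continuous, non-decreasing, and $\bke(0)=0$ by~(A1), Lemma~\ref{appendix:lemma:legendre} applied to the Legendre transform $\Bke$ from~\eqref{legendre-transform-bk} yields the pointwise bound $(\bke(a)-\bke(b))\,a\geq\Bke(a)-\Bke(b)$, so the first term is $\geq\|\Bke(\chih^n)\|_{L^1(\Omega)}-\|\Bke(\chih^{n-1})\|_{L^1(\Omega)}$. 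Summing from $n=1$ to an arbitrary $m\leq N$, telescoping, and keeping the non-negative diffusion contribution on the left, I would obtain
\begin{align*}
 \|\Bke(\chih^m)\|_{L^1(\Omega)}
 + \sum_{n=1}^m\tau\|\chih^n\|_{1,\mathcal{T},\absolutepermeability}^2
 \leq \|\Bke(\chih^0)\|_{L^1(\Omega)}
 - \alpha\sum_{n=1}^m\langle\swk(\chih^n)\DIV(\uh^n-\uh^{n-1}),\chih^n\rangle
 + \sum_{n=1}^m\tau\langle\hext^n,\chih^n\rangle.
\end{align*}

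It then remains to bound the three terms on the right uniformly in $h,\tau$. The initial term $\|\Bke(\chih^0)\|_{L^1(\Omega)}$ is controlled by $C_0$ directly from $\mathrm{(A8}^\star\mathrm{)}_h$. For the source term I would use the definition of $\hext=(h,w_\mathrm{N})$ together with the discrete trace inequality (Lemma~\ref{appendix:lemma:discrete-trace}) and the discrete Poincar\'e inequality (Lemma~\ref{appendix:lemma:discrete-poincare}) to get $\langle\hext^n,\chih^n\rangle\lesssim\|\hext^n\|_{L^2(\Omega)\times L^2(\FlowNeumannBoundary)}\,\|\chih^n\|_{1,\mathcal{T}}$, and then Cauchy--Schwarz in the time index together with $\sum_n\tau\|\chih^n\|_{1,\mathcal{T}}^2\leq\htauStabilityUChi$ (Lemma~\ref{lemma:discrete-solution-apriori-estimate-1}) and $\sum_n\tau\|\hext^n\|_{L^2(\Omega)\times L^2(\FlowNeumannBoundary)}^2\leq\|\hext\|_{L^2(0,T;\Q^\star)}^2$. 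For the coupling term I would use $0<\swk\leq1$ from~(A2), the $\|\cdot\|_\V$-continuity of $\DIV\colon\V\to L^2(\Omega)$, the splitting $ab\leq\tfrac12\tau^{-1}a^2+\tfrac12\tau b^2$, and once more discrete Poincar\'e, arriving at
\begin{align*}
 \Big|\alpha\sum_{n=1}^m\langle\swk(\chih^n)\DIV(\uh^n-\uh^{n-1}),\chih^n\rangle\Big|
 \lesssim \alpha^2\sum_{n=1}^N\tau^{-1}\|\uh^n-\uh^{n-1}\|_{\V}^2
 + \sum_{n=1}^N\tau\|\chih^n\|_{1,\mathcal{T}}^2,
\end{align*}
where both sums on the right are bounded by Lemma~\ref{lemma:discrete-solution-apriori-estimate-1}: the first is exactly the quantity estimated there by $\zeta^{-1}\htauStabilityUChi$, the second by $\htauStabilityUChi$. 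Taking the supremum over $m$ and collecting the estimates then gives the claim, with the constant of the stated form $\htauStabilityLegendre(C_0,\htauStabilityUChi(1+\zeta^{-1}))$.

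The step I expect to be the main (indeed only) obstacle is the scaling of the coupling term. One cannot apply Cauchy--Schwarz in the time index directly to $\sum_n\|\DIV(\uh^n-\uh^{n-1})\|\,\|\chih^n\|$, because the only $h,\tau$-uniform control available for $\chih^n$ is the \emph{weighted} sum $\sum_n\tau\|\chih^n\|_{1,\mathcal{T}}^2$ from Lemma~\ref{lemma:discrete-solution-apriori-estimate-1}; pairing each factor with the matching power of $\tau$ forces the companion factor $\tau^{-1}\|\uh^n-\uh^{n-1}\|_{\V}^2$, which Lemma~\ref{lemma:discrete-solution-apriori-estimate-1} bounds by $\zeta^{-1}\htauStabilityUChi$. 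This is the origin of the $\zeta$-dependence in the statement and of the fact that the bound degenerates as $\zeta\to0$; uniformity in $\zeta$ is recovered only later, in Step~5, through a genuinely different, time-continuous argument. Everything else is routine and essentially mirrors the $T_1$--$T_4$ bookkeeping in the proof of Lemma~\ref{lemma:existence-discrete-solution}.
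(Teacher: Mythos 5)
Your proposal matches the paper's proof almost line for line: testing the discrete flow equation with $\qh=\chih^n$, invoking the Legendre-transform inequality from Lemma~\ref{appendix:lemma:legendre}, telescoping, and bounding the source and coupling terms via discrete trace/Poincar\'e and Young with the $\tau^{-1}$--$\tau$ splitting, which is exactly where the $\zeta^{-1}$ enters through Lemma~\ref{lemma:discrete-solution-apriori-estimate-1}. The only cosmetic difference is that the paper applies Young per time step and absorbs into the diffusion term before summing, whereas you sum first and then estimate; your closing remark on why the $\zeta$-dependence is unavoidable at this stage is also correct and matches the paper's rationale for deferring $\zeta$-uniformity to Step~5.
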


\begin{proof}
 Testing~\eqref{ht:kirchhoff-reduced:p} with $\qh=\chih^n$ and employing the properties of the Legendre transformation $\Bke$, cf.\ Lemma~\ref{appendix:lemma:legendre}, yields for all $n$
 \begin{align*}
  &\left\|\Bke(\chih^n)\right\|_{L^1(\Omega)} - \left\|\Bke(\chih^{n-1})\right\|_{L^1(\Omega)} + \tau \left\|\chih^n\right\|_{1,\mathcal{T},\absolutepermeability}^2  \\
  &\quad \leq 
  \tau \llangle \hext^n, \chih^n \rrangle - \alpha \llangle \swk(\chih^n) \DIV (\uh^n - \uh^{n-1}), \chih^n \rrangle.
 \end{align*}
 For the first term on the right hand side, we employ a similar bound as in the discussion of $T_4$ within the proof of Lemma~\ref{lemma:existence-discrete-solution}; for the second term, we employ the Cauchy-Schwarz inequality, a discrete Poincar\'e inequality (introducing $\COmegaDiscretePoincare$), and~(A6). We obtain
 \begin{align*}
  &\left\|\Bke(\chih^n)\right\|_{L^1(\Omega)} - \left\|\Bke(\chih^{n-1})\right\|_{L^1(\Omega)} + \frac{\tau}{2} \|  \chih^n\|_{1,\mathcal{T},\absolutepermeability}^2  \\
  &\qquad \leq 
   \frac{C\left( C_\mathrm{ND,1}, \CDiscreteTrace,\COmegaDiscretePoincare\right)^2}{\absolutepermeabilitymin}
  \tau \| \hext^n \|_{Q^\star}^2 + \frac{\COmegaDiscretePoincare}{\absolutepermeabilitymin}\, \frac{\alpha^2}{ K_\mathrm{dr}} \tau^{-1} \left\| \uh^n - \uh^{n-1} \right\|_{\V}^2.
 \end{align*}
 Finally, summing over time steps $1$ to $N$ and employing Lemma~\ref{lemma:discrete-solution-apriori-estimate-1} and~(A7) proves the assertion.
\end{proof}

\begin{lemma}[Stability for the pore pressure]\label{lemma:discrete-solution-stability:pore-pressure}
 There exists a constant $\htauStabilityPpore>0$ (independent of $h,\tau,\zeta,\eta$), such that
 \begin{align*}
  \sum_{n=1}^N \tau \| \pek(\chih^n) \|^2
  &\leq \htauStabilityPpore \left(\htauStabilityUChi \right),
 \end{align*}
 where $\htauStabilityUChi$ is the stability constant from Lemma~\ref{lemma:discrete-solution-apriori-estimate-1}.
\end{lemma}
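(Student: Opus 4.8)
The plan is to use the non-degeneracy condition~(ND1) to bound $\pek(\chih^n)$ elementwise by $\chih^n$, and then to combine the discrete Poincar\'e inequality with the stability estimate already established in Lemma~\ref{lemma:discrete-solution-apriori-estimate-1}. No new energy-type testing is needed; in particular this is a one-line-per-step consequence of earlier results.

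First I would note that, since $\chih^n\in\Qh$ is piecewise constant on $\mathcal{T}$ and $\pek\in C(\mathbb{R})$ by (A3), the composition $\pek(\chih^n)$ is again piecewise constant, so every estimate below reduces to scalar inequalities on each element. Fix $K\in\mathcal{T}$ and set $c_K := {{\chih^n}_|}_K$. Writing $\pek(c_K) = \tfrac{\pek(c_K)}{\swk(c_K)\,c_K}\,\swk(c_K)\,c_K$ and using (ND1) together with the fact that $\swk$ is valued in $(0,1]$ by (A2), one gets $|\pek(c_K)| \leq C_\mathrm{ND,1}\,|c_K|$; in the degenerate case $c_K=0$ the bound is trivial since $\pek(0)=0$ by (A3). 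Summing $|K|\,|\pek(c_K)|^2$ over $K\in\mathcal{T}$ yields $\|\pek(\chih^n)\| \leq C_\mathrm{ND,1}\,\|\chih^n\|$ for every $n$.

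Next I would invoke the discrete Poincar\'e inequality, Lemma~\ref{appendix:lemma:discrete-poincare}, in the form $\|\chih^n\| \leq \COmegaDiscretePoincare\,\|\chih^n\|_{1,\mathcal{T}}$, square, multiply by $\tau$, and sum over $n=1,\dots,N$, which gives
$$
\sum_{n=1}^N \tau\,\|\pek(\chih^n)\|^2 \;\leq\; C_\mathrm{ND,1}^2\,\COmegaDiscretePoincare^2 \sum_{n=1}^N \tau\,\|\chih^n\|_{1,\mathcal{T}}^2 .
$$
The sum on the right is precisely one of the quantities controlled by $\htauStabilityUChi$ in Lemma~\ref{lemma:discrete-solution-apriori-estimate-1}, so the claim follows with $\htauStabilityPpore := C_\mathrm{ND,1}^2\,\COmegaDiscretePoincare^2\,\htauStabilityUChi$. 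Since $C_\mathrm{ND,1}$ is a fixed model constant, $\COmegaDiscretePoincare$ is independent of $h$, and $\htauStabilityUChi$ is independent of $h,\tau,\zeta,\eta$, so is $\htauStabilityPpore$.

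There is essentially no hard step here; the only points requiring a little care are that $\pek(\chih^n)$ inherits piecewise constancy so that (ND1) applies cell by cell, and that the discrete Poincar\'e constant does not deteriorate as $h\to 0$. An alternative that avoids (ND1) would be to test the discrete momentum balance~\eqref{ht:kirchhoff-reduced:u} against an inf-sup optimal $\vh\in\Vh$ (legitimate because $\pek(\chih^n)\in\Qh$) and apply (D2) together with the Cauchy--Schwarz bound for $a(\cdot,\cdot)$; but this produces an extra factor involving $\zeta$ (harmless, since $\zeta$ is bounded along the vanishing-regularization limit) and moreover requires $\alpha>0$, so the direct argument above is cleaner and uniform in all parameters.
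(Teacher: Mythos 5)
Your proof is correct, but it takes a genuinely different route from the paper's. The paper's proof applies a discrete inf-sup argument (D2): it picks $\vh\in\Vh$ with $\|\vh\|_\V\leq\COmegaInfSup\|\pek(\chih^n)\|$ realizing the supremum of $\llangle\pek(\chih^n),\DIV\vh\rrangle/\|\vh\|_\V$, substitutes this into the discrete momentum balance~\eqref{ht:kirchhoff-reduced:u}, and reads off
$\|\pek(\chih^n)\|\leq\COmegaInfSup\bigl(\zeta\tau^{-1}\|\uh^n-\uh^{n-1}\|_\V+\|\uh^n\|_\V+\|\fext^n\|_{\V^\star}\bigr)$;
squaring, multiplying by $\tau$, summing and invoking Lemma~\ref{lemma:discrete-solution-apriori-estimate-1} gives the bound, with the caveat that the $\zeta^2\sum\tau^{-1}\|\uh^n-\uh^{n-1}\|_\V^2$ term is absorbed into the quantity $\zeta\sum\tau^{-1}\|\uh^n-\uh^{n-1}\|_\V^2\leq\htauStabilityUChi$ only after assuming $\zeta$ bounded from above (the paper says ``wlog''). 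Your route avoids the momentum balance entirely, extracting $|\pek(\chi)|\leq C_{\mathrm{ND,1}}\,\swk(\chi)\,|\chi|\leq C_{\mathrm{ND,1}}|\chi|$ elementwise from (ND1) and (A2), then pairing the discrete Poincar\'e inequality with the $\sum_n\tau\|\chih^n\|_{1,\mathcal T}^2$ control already in $\htauStabilityUChi$. What your version buys is unconditional uniformity in $\zeta$ and independence from $\alpha>0$ and from the inf-sup constant; what the paper's version buys is that it does not lean on the full strength of (ND1) for this step and it sets up the same inf-sup machinery reused in Step 6 to control $\partial_t\pek(\chieta)$. You already flagged the inf-sup alternative and the $\zeta$ subtlety correctly --- the paper in fact takes precisely that ``alternative'' route.
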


\begin{proof} 
 We utilize a standard inf-sup argument  (introducing $\COmegaInfSup$), cf.\ Lemma~\ref{appendix:lemma:inf-sup}. Due to~(D2), there exists a $\vh\in\Vh$ such that
 \begin{align*}
  \| \pek(\chih(\bm{\beta})) \|^2 = \alpha \llangle \pek(\chih(\bm{\beta})), \DIV \vh \rrangle,
  \qquad 
  \| \vh \|_{\V} \leq \COmegaInfSup \| \pek(\chih(\bm{\beta})) \|,
 \end{align*}
 Employing the mechanics equation~\eqref{model:regularized:weak:kirchhoff:u}, we obtain
 \begin{align*}
  \| \pek(\chih(\bm{\beta})) \| \leq \COmegaInfSup \left( \zeta \tau^{-1}  \left\| \uh^n - \uh^{n-1} \right\|_{\V} 
  + \| \uh^n \|_{\V} + \| \fext^n \|_{\V^\star} \right),
 \end{align*}
 and hence,
 \begin{align*}
  \sum_{n=1}^N \tau \| \pek(\chih^n) \|^2
  \leq
  3 \COmegaInfSup^2 \left(
  \zeta^2 \sum_{n=1}^N \tau^{-1} \left\| \uh^n - \uh^{n-1} \right\|_{\V}^2
  +
  \sum_{n=1}^N \tau \| \uh^n \|_{\V}^2
  +
  \sum_{n=1}^N \tau \| \fext^n \|_{\V^\star}^2 \right).
 \end{align*}
 Finally, the assertion follows from Lemma~\ref{lemma:discrete-solution-apriori-estimate-1}, assuming wlog.\ $\zeta$ is bounded from above.
\end{proof}

\begin{lemma}[Stability for the temporal change of $\bk$]\label{lemma:discrete-solution-stability:dtbk}
 There exists a constant $\htauStabilityDbDt>0$ (independent of $h,\tau,\eta$), such that
 \begin{align*}
  &\underset{\{\qh^n\}_n\subset \Qh\setminus\{0\}}{\mathrm{sup}}\, 
  \frac{\sum_{n=1}^N \tau \llangle \frac{\bke(\chih^n) - \bke(\chih^{n-1})}{\tau}, \qh^n \rrangle}{\left(\sum_{n=1}^N \tau \| \qh^n \|_{1,\mathcal{T}}^2 \right)^{1/2}}
  \leq
  \htauStabilityDbDt \left( \htauStabilityUChi \left(1 + \zeta^{-1}\right) \right),
 \end{align*}
 where $\htauStabilityUChi$ is the stability constant from Lemma~\ref{lemma:discrete-solution-apriori-estimate-1}.
\end{lemma}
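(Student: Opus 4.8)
The plan is to read the bound directly off the discrete flow equation~\eqref{ht:kirchhoff-reduced:p}, rather than trying to control $\bke(\chih^n)-\bke(\chih^{n-1})$ pointwise (which is not possible, since $\bke$ is not assumed Lipschitz). Fix an arbitrary family $\{\qh^n\}_{n=1}^N\subset\Qh$. Testing~\eqref{ht:kirchhoff-reduced:p} with $\qh=\qh^n$, isolating the storage term, and summing over $n=1,\dots,N$ yields
\begin{align*}
 \sum_{n=1}^N \llangle \bke(\chih^n) - \bke(\chih^{n-1}), \qh^n \rrangle
 &= - \alpha \sum_{n=1}^N \llangle \swk(\chih^n) \DIV(\uh^n - \uh^{n-1}), \qh^n \rrangle \\
 &\quad - \sum_{n=1}^N \tau \llangle \GRAD_h \chih^n, \GRAD_h \qh^n \rrangle_{\absolutepermeability}
 + \sum_{n=1}^N \tau \llangle \hext^n, \qh^n \rrangle.
\end{align*}
Since the left-hand side is exactly the numerator in the claimed supremum, it suffices to bound each of the three terms on the right by a constant (independent of $h,\tau$) times $\big(\sum_{n=1}^N \tau \|\qh^n\|_{1,\mathcal{T}}^2\big)^{1/2}$.

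For the coupling term I would use $0<\swk(\chih^n)\le 1$ from~(A2), which makes multiplication by $\swk(\chih^n)$ an $L^2$-nonexpansive operation, followed by the Cauchy--Schwarz inequality, the discrete Poincar\'e inequality (Lemma~\ref{appendix:lemma:discrete-poincare}), and the elementary bound $\|\DIV\v\|\le K_\mathrm{dr}^{-1/2}\|\v\|_{\V}$ coming from the definition of $\|\cdot\|_{\V}$. The key manipulation is to split each time summand as $\big(\tau^{-1/2}\|\DIV(\uh^n-\uh^{n-1})\|\big)\big(\tau^{1/2}\|\qh^n\|_{1,\mathcal{T}}\big)$ and apply the discrete Cauchy--Schwarz inequality in $n$; this factors out precisely $\big(\sum_n \tau^{-1}\|\uh^n-\uh^{n-1}\|_{\V}^2\big)^{1/2}$, which Lemma~\ref{lemma:discrete-solution-apriori-estimate-1} bounds by $\big(\zeta^{-1}\htauStabilityUChi\big)^{1/2}$ -- this is the origin of the $\zeta$-dependence of the constant. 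For the diffusion term, Cauchy--Schwarz in the weighted form $\llangle\cdot,\cdot\rrangle_{\absolutepermeability}$ and the norm equivalence from~(A6) give $\sum_n\tau\llangle\GRAD_h\chih^n,\GRAD_h\qh^n\rrangle_{\absolutepermeability}\le \absolutepermeabilitymax\big(\sum_n\tau\|\chih^n\|_{1,\mathcal{T}}^2\big)^{1/2}\big(\sum_n\tau\|\qh^n\|_{1,\mathcal{T}}^2\big)^{1/2}$, and $\sum_n\tau\|\chih^n\|_{1,\mathcal{T}}^2$ is already controlled by Lemma~\ref{lemma:discrete-solution-apriori-estimate-1}. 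For the source term I would repeat the argument used for $T_4$ in the proof of Lemma~\ref{lemma:existence-discrete-solution}: splitting $\hext^n=(h^n,w_\mathrm{N}^n)$ and invoking the discrete trace inequality (Lemma~\ref{appendix:lemma:discrete-trace}) together with the discrete Poincar\'e inequality gives $\llangle\hext^n,\qh^n\rrangle\le C(\CDiscreteTrace,\COmegaDiscretePoincare)\|\hext^n\|_{Q^\star}\|\qh^n\|_{1,\mathcal{T}}$, and a final discrete Cauchy--Schwarz in $n$ with the regularity assumption~(A7) bounds $\sum_n\tau\|\hext^n\|_{Q^\star}^2$.

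Collecting the three estimates, dividing by $\big(\sum_{n=1}^N\tau\|\qh^n\|_{1,\mathcal{T}}^2\big)^{1/2}$, and taking the supremum over all nonzero families $\{\qh^n\}_n$ yields the asserted bound with a constant of the stated form $\htauStabilityDbDt\big(\htauStabilityUChi(1+\zeta^{-1})\big)$. I expect the only genuinely delicate point to be the time-step bookkeeping in the coupling term: the summand must be split so that exactly $\tau^{1/2}\|\qh^n\|_{1,\mathcal{T}}$ appears on one side (to reproduce the required denominator) while the complementary factor $\tau^{-1/2}\|\uh^n-\uh^{n-1}\|_{\V}$ reassembles into precisely the quantity that Lemma~\ref{lemma:discrete-solution-apriori-estimate-1} controls; all remaining steps are routine applications of Cauchy--Schwarz, Poincar\'e, and trace inequalities already used in the preceding lemmas. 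Note, finally, that this estimate is the fully discrete counterpart of the bound $\|\partial_t\bke(\chiveta)\|_{L^2(0,T;Q^\star)}\le C_{\zeta\eta}$ required in Step~3.
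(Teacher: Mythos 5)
Your proposal follows essentially the same route as the paper's proof: test the flow equation~\eqref{ht:kirchhoff-reduced:p} with $\qh^n$, isolate the storage term, and bound the coupling, diffusion, and source terms respectively via the discrete Cauchy--Schwarz inequality in $n$ combined with $\swk\le1$, $\|\DIV\v\|\le K_\mathrm{dr}^{-1/2}\|\v\|_{\V}$, the discrete Poincar\'e and trace inequalities, and~(A6), invoking Lemma~\ref{lemma:discrete-solution-apriori-estimate-1} for $\sum_n\tau^{-1}\|\uh^n-\uh^{n-1}\|_{\V}^2$ (source of the $\zeta^{-1}$) and for $\sum_n\tau\|\chih^n\|_{1,\mathcal{T}}^2$. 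The argument is correct and the bookkeeping with the $\tau^{\pm1/2}$ factors matches the paper's; no gap.
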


\begin{proof}
 Let $\{\qh^n\}_n\subset\Qh\setminus\{0\}$ be an arbitrary sequence of test functions. Employ $\qh^n$ as test function for~\eqref{ht:kirchhoff-reduced:p}. Summing over time steps $1$ to $N$ and applying the Cauchy-Schwarz inequality, yields
 \begin{align*}
  &\sum_{n=1}^N \tau \llangle \frac{\bke(\chih^n) - \bke(\chih^{n-1})}{\tau}, \qh^n \rrangle \\
  &\quad\leq
  \left( \frac{\alpha^2}{K_\mathrm{dr}} \sum_{n=1}^N \tau^{-1} \left\| \uh^n - \uh^{n-1} \right\|_{\V}^2 \right)^{1/2}
  \left(\sum_{n=1}^N \tau \| \qh^n \|^2 \right)^{1/2} \\
  &\qquad+
  \left(\sum_{n=1}^N \tau \| \chih^n \|_{1,\mathcal{T},\absolutepermeability}^2 \right)^{1/2}
  \left(\sum_{n=1}^N \tau \|  \qh^n \|_{1,\mathcal{T},\absolutepermeability}^2 \right)^{1/2} \\
  &\qquad+
  (1+\CDiscreteTrace)\COmegaDiscretePoincare \left( \sum_{n=1}^N \tau \| \hext^n \|_{\Q^\star}^2 \right)^{1/2}
  \left(\sum_{n=1}^N \tau \| \qh^n \|_{1,\mathcal{T}}^2 \right)^{1/2}.
 \end{align*}
 For the last term, we employed a discrete trace inequality, cf.\ Lemma~\ref{appendix:lemma:discrete-trace}, and a discrete Poincar\'e inequality, cf.\ Lemma~\ref{appendix:lemma:discrete-poincare}. Finally, utilizing a discrete Poincar\'e inequality for the first term on the right hand side,~(A6), and employing Lemma~\ref{lemma:discrete-solution-apriori-estimate-1}, we prove the assertion with $\htauStabilityDbDt := 3 \sqrt{\htauStabilityUChi}\left( \COmegaDiscretePoincare \frac{\alpha}{\zeta^{1/2} K_\mathrm{dr}^{1/2}} + \absolutepermeabilitymax^{1/2}  + (1+\CDiscreteTrace)\COmegaDiscretePoincare \right)$.
\end{proof}

\subsection{Stability estimates for interpolants in time}\label{section:discrete:interpolation:stability}

Utilizing the discrete-in-time approximations $(\uh^n,\chih^n)_n$, defined by~\eqref{ht:kirchhoff-reduced:u}--\eqref{ht:kirchhoff-reduced:p}, we define continuous-in-time approximations on $(0,T]$ by piecewise constant interpolation
\begin{alignat*}{2}
 \uhtau(t)  &:=\uh^n,   &\ &t\in  (t_{n-1},t_n], \\
 \chihtau(t)&:=\chih^n, &&t\in  (t_{n-1},t_n],
\end{alignat*}
and by piecewise linear interpolation
\begin{alignat}{2}
\label{interpolation:linear-displacement}
 \uhtauHat(t)&:=\uh^{n-1} + \frac{t - t_{n-1}}{\tau} (\uh^{n} - \uh^{n-1}), &\ &t \in  (t_{n-1},t_n], \\
\label{interpolation:linear-pressure}
 \chihtauHat(t)&:=\chih^{n-1} + \frac{t - t_{n-1}}{\tau} (\chih^{n} - \chih^{n-1}), &&t\in (t_{n-1},t_n].
\end{alignat}
We deduce stability for the interpolants from the stability of the fully discrete approximation.

\begin{lemma}[Stability estimate for time interpolants of the mechanical displacement]\label{lemma:interpolants-apriori-estimate-1}
 For all $h,\tau>0$ and $\hat{\tau}\in[0,\tau)$ it holds that
 \begin{align}
 \label{result:stability:interpolation:u:1}
 \zeta \int_0^T \left\| \partial_t \uhtauHat \right\|_{\V}^2\, dt + \| \uhtau \|_{L^\infty(0,T;\V)}^2 
 &\leq
 \htauStabilityUChi, \\
 \label{result:stability:interpolation:u:2}
 \int_0^{T-\hat{\tau}} \| \uhtau(t+\hat{\tau}) - \uhtau(t) \|_{\V}^2 \, dt 
 &\leq
 \htauStabilityUChi \hat{\tau}, \\
 \label{result:stability:interpolation:u:3}
 \| \uhtau - \uhtauHat \|_{L^2(Q_T)}^2 &\leq \htauStabilityUChi\tau,
\end{align}
where $\htauStabilityUChi$ is the stability constant from Lemma~\ref{lemma:discrete-solution-apriori-estimate-1}.
\end{lemma}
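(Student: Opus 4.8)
The plan is to obtain all three bounds as direct, essentially book-keeping consequences of the fully discrete stability estimate of Lemma~\ref{lemma:discrete-solution-apriori-estimate-1}, using only the explicit form of the piecewise constant interpolant $\uhtau$ and the piecewise linear interpolant $\uhtauHat$ from~\eqref{interpolation:linear-displacement}, together with the continuous embedding $\V\hookrightarrow L^2(\Omega)^d$ (Korn--Poincaré, constant $\COmegaPoincareKorn$) wherever the statement is phrased in the $L^2(Q_T)$-norm. Throughout, the constant $\htauStabilityUChi$ is allowed to absorb such fixed multiplicative factors, as is done elsewhere in the paper.

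For~\eqref{result:stability:interpolation:u:1} and~\eqref{result:stability:interpolation:u:2} I would argue as follows. From~\eqref{interpolation:linear-displacement}, $\partial_t \uhtauHat \equiv \tau^{-1}(\uh^n - \uh^{n-1})$ on each $(t_{n-1},t_n]$, so
\begin{align*}
 \zeta \int_0^T \Vnorm{\partial_t \uhtauHat}^2 \dt = \zeta \sum_{n} \tau^{-1} \Vnorm{\uh^n - \uh^{n-1}}^2, \qquad \| \uhtau \|_{L^\infty(0,T;\V)}^2 = \sup_n \Vnorm{\uh^n}^2,
\end{align*}
and both right-hand sides are among the quantities bounded by $\htauStabilityUChi$ in Lemma~\ref{lemma:discrete-solution-apriori-estimate-1}. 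For the shifted difference, since $\hat\tau\in[0,\tau)$ the map $t\mapsto \uhtau(t+\hat\tau)-\uhtau(t)$ vanishes except when $t$ and $t+\hat\tau$ lie in consecutive time intervals, i.e.\ for $t\in(t_n-\hat\tau,t_n]$, a set of measure $\hat\tau$, on which it equals $\uh^{n+1}-\uh^n$; hence
\begin{align*}
 \int_0^{T-\hat\tau} \Vnorm{\uhtau(t+\hat\tau) - \uhtau(t)}^2 \dt \le \hat\tau \sum_{n} \Vnorm{\uh^{n+1}-\uh^n}^2 \le \htauStabilityUChi\, \hat\tau,
\end{align*}
again invoking the bound on $\sum_n \Vnorm{\uh^n - \uh^{n-1}}^2$ from Lemma~\ref{lemma:discrete-solution-apriori-estimate-1}.

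For~\eqref{result:stability:interpolation:u:3}, on $(t_{n-1},t_n]$ one computes $\uhtau - \uhtauHat = \tfrac{t_n-t}{\tau}(\uh^n - \uh^{n-1})$, so that $|\uhtau - \uhtauHat|\le|\uh^n-\uh^{n-1}|$ pointwise; integrating over $(t_{n-1},t_n]$, passing from the $L^2(\Omega)$-norm to the $\V$-norm, and summing gives
\begin{align*}
 \| \uhtau - \uhtauHat \|_{L^2(Q_T)}^2 \le \COmegaPoincareKorn^2 \, \tau \sum_{n} \Vnorm{\uh^n-\uh^{n-1}}^2 \le \htauStabilityUChi\, \tau,
\end{align*}
where the last inequality absorbs $\COmegaPoincareKorn^2$ into the (redefined) $\htauStabilityUChi$.

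I do not anticipate any genuine obstacle: the entire analytical content is already packaged in Lemma~\ref{lemma:discrete-solution-apriori-estimate-1}, and the only points that require (minor) care are the measure-theoretic identification of the set on which the shifted interpolant in~\eqref{result:stability:interpolation:u:2} is nonzero, and the harmless bookkeeping of the Korn--Poincaré constant in~\eqref{result:stability:interpolation:u:3}.
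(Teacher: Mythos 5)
Your proof is correct and follows essentially the same route as the paper's: all three bounds are read off from the explicit form of the interpolants and the fully discrete estimates of Lemma~\ref{lemma:discrete-solution-apriori-estimate-1}. The only cosmetic differences are that for~\eqref{result:stability:interpolation:u:3} the paper evaluates the time integral exactly (obtaining the factor $\tfrac{1}{3}$) whereas you use the cruder bound $\left(\tfrac{t_n-t}{\tau}\right)^2\le 1$, and that you make the Korn--Poincar\'e embedding needed to pass from the $L^2(\Omega)$- to the $\V$-norm explicit where the paper leaves it implicit.
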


\begin{proof}
 The assertion~\eqref{result:stability:interpolation:u:1} follows directly from Lemma~\ref{lemma:discrete-solution-apriori-estimate-1} by definition of the interpolants. Similarly, by definition of the piecewise constant in time interpolation, it holds that
 \begin{align*}
  &\int_0^{T-\hat{\tau}} \| \uhtau(t+\hat{\tau}) - \uhtau(t) \|_{\V}^2\, dt\\
  &\quad=
  \sum_{n=1}^{N-1} \int_{t_{n-1}}^{t_n} \| \uhtau(t+\hat{\tau}) - \uhtau(t) \|_{\V}^2 \, dt
  +
  \int_{t_{N-1}}^{t_N-\hat{\tau}} \| \uhtau(t+\hat{\tau}) - \uhtau(t) \|_{\V}^2 \, dt\\
  &\quad=
  \sum_{n=1}^{N-1} \int_{t_{n}-\hat{\tau}}^{t_n} \| \uh^{n+1} - \uh^n \|_{\V}^2 \, dt\\
  &\quad=
  \hat{\tau} \sum_{n=1}^N \| \uh^{n+1} - \uh^n \|_{\V}^2.
 \end{align*}
 We obtain~\eqref{result:stability:interpolation:u:2} from Lemma~\ref{lemma:discrete-solution-apriori-estimate-1}. By definition of the piecewise constant and piecewise linear interpolation, it holds that 
 \begin{align*}
  \| \uhtau - \uhtauHat \|_{L^2(Q_T)}^2
  &=
  \sum_{n=1}^N \int_{t_{n-1}}^{t^n} \left\| \uh^n - \uh^{n-1} - \tfrac{t - t_{n-1}}{\tau} \left( \uh^{n} - \uh^{n-1} \right) \right\|^2\\
  &=
  \frac{1}{3}\tau   \sum_{n=1}^N  \left\| \uh^n - \uh^{n-1}  \right\|^2.
 \end{align*}
 We conclude~\eqref{result:stability:interpolation:u:3}.
\end{proof}

Analogously, we conclude stability for the interpolants of the Kirchhoff pressure.

\begin{lemma}[Stability estimate for time interpolants of the Kirchhoff pressure]\label{lemma:interpolants-apriori-estimate-2}
For all $h,\tau>0$ and $\hat{\tau}\in[0,\tau)$ it holds that
 \begin{align*}
 \int_0^T \| \chihtau(t) \|_{1,\mathcal{T}}^2 \, dt 
 &\leq
 \htauStabilityUChi, \\
 b_\mathrm{\chi,m} \| \partial_t \chihtauHat \|_{L^2(Q_T)}^2 + \| \chihtau \|_{L^\infty(0,T;L^2(\Omega))}^2 
 &\leq 
 \htauStabilityDchiDt, \\
 \int_0^{T-\hat{\tau}} \| \chihtau(t+\hat{\tau}) - \chihtau(t) \|^2 \, dt  
 &\leq 
 \COmegaDiscretePoincare^2\htauStabilityDchiDt \hat{\tau},\\
 \| \chihtau - \chihtauHat \|_{L^2(Q_T)}^2 &\leq \COmegaDiscretePoincare^2\htauStabilityDchiDt \tau,
\end{align*}
where $\htauStabilityUChi$ and $\htauStabilityDchiDt$ are the stability constants from Lemma~\ref{lemma:discrete-solution-apriori-estimate-1} and Lemma~\ref{lemma:discrete-solution-apriori-estimate-2}, respectively, and $\COmegaDiscretePoincare$ is the discrete Poincar\'e constant, cf.\ Lemma~\ref{appendix:lemma:discrete-poincare}.
\end{lemma}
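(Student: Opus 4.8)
The plan is to mirror, line by line, the argument already used for the mechanical displacement in Lemma~\ref{lemma:interpolants-apriori-estimate-1}. Each quantity on the left-hand side of the four asserted inequalities is, by virtue of the piecewise-constant/piecewise-linear structure of $\chihtau$ and $\chihtauHat$ in time, a time-weighted sum of discrete quantities that are already controlled by Lemma~\ref{lemma:discrete-solution-apriori-estimate-1} and Lemma~\ref{lemma:discrete-solution-apriori-estimate-2}; the only additional ingredient is the discrete Poincar\'e inequality of Lemma~\ref{appendix:lemma:discrete-poincare}, together with~(A6), to pass between the discrete $H^1(\Omega)$ norms $\|\cdot\|_{1,\mathcal{T}}$, $\|\cdot\|_{1,\mathcal{T},\absolutepermeability}$ and the $L^2(\Omega)$ norm. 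No genuinely new estimate is required.

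Concretely, for the first bound I would use that $\chihtau\equiv\chih^n$ on $(t_{n-1},t_n]$, so $\int_0^T\|\chihtau(t)\|_{1,\mathcal{T}}^2\,dt=\sum_{n=1}^N\tau\|\chih^n\|_{1,\mathcal{T}}^2\le\htauStabilityUChi$ directly from Lemma~\ref{lemma:discrete-solution-apriori-estimate-1}. For the second bound, the piecewise-linear interpolant satisfies $\partial_t\chihtauHat=\tau^{-1}(\chih^n-\chih^{n-1})$ on $(t_{n-1},t_n]$, hence $\|\partial_t\chihtauHat\|_{L^2(Q_T)}^2=\sum_{n=1}^N\tau^{-1}\|\chih^n-\chih^{n-1}\|^2$, which is bounded after multiplication by $b_{\chi,\mathrm{m}}$ by Lemma~\ref{lemma:discrete-solution-apriori-estimate-2}; for $\|\chihtau\|_{L^\infty(0,T;L^2(\Omega))}^2=\sup_n\|\chih^n\|^2$ I would observe that the energy identity established in the proof of Lemma~\ref{lemma:discrete-solution-apriori-estimate-2}, if the summation over time steps is stopped at an arbitrary level $m\le N$ instead of at $N$ (which only discards nonnegative terms), yields $\sup_m\|\chih^m\|_{1,\mathcal{T},\absolutepermeability}^2\le C$ for a constant $C$ of the same type as $\htauStabilityDchiDt$, and then the discrete Poincar\'e inequality and~(A6) turn this into a uniform bound on $\|\chih^m\|$; absorbing the fixed constants into $\htauStabilityDchiDt$ gives the claim.

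The last two bounds are the exact analogues of~\eqref{result:stability:interpolation:u:2} and~\eqref{result:stability:interpolation:u:3}. For the time-translation estimate, the piecewise-constant structure gives $\int_0^{T-\hat{\tau}}\|\chihtau(t+\hat{\tau})-\chihtau(t)\|^2\,dt=\hat{\tau}\sum_n\|\chih^{n+1}-\chih^n\|^2$; bounding each term by $\COmegaDiscretePoincare^2\|\chih^{n+1}-\chih^n\|_{1,\mathcal{T}}^2$ and using $\sum_n\|\chih^{n+1}-\chih^n\|_{1,\mathcal{T}}^2\le\htauStabilityDchiDt$ (from Lemma~\ref{lemma:discrete-solution-apriori-estimate-2}, up to the constant in~(A6)) produces the bound $\COmegaDiscretePoincare^2\htauStabilityDchiDt\hat{\tau}$. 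For the final estimate, comparing the two interpolants on each subinterval gives $\|\chihtau-\chihtauHat\|_{L^2(Q_T)}^2=\tfrac13\tau\sum_n\|\chih^n-\chih^{n-1}\|^2$, and the same discrete Poincar\'e argument together with Lemma~\ref{lemma:discrete-solution-apriori-estimate-2} bounds this by $\COmegaDiscretePoincare^2\htauStabilityDchiDt\,\tau$.

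I do not expect a real obstacle: the statement is essentially a bookkeeping translation of the discrete stability bounds through the interpolation operators. The only mildly delicate point is the $L^\infty(0,T;L^2(\Omega))$ estimate, since Lemma~\ref{lemma:discrete-solution-apriori-estimate-2} as \emph{stated} controls only $\|\chih^N\|_{1,\mathcal{T}}$ at the terminal step; I would handle this either by re-running its energy argument with the summation truncated at an arbitrary time level, or by simply pointing out that this uniform-in-$n$ bound is what that proof already yields, exactly as the $\sup_n\|\uh^n\|_{\V}$ bound is obtained in Lemma~\ref{lemma:discrete-solution-apriori-estimate-1}.
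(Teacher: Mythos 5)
Your proposal is correct and follows essentially the same route as the paper, which proves this lemma by declaring it analogous to Lemma~\ref{lemma:interpolants-apriori-estimate-1} and invoking the discrete Poincar\'e inequality before using the bound on $\sum_{n}\|\chih^n-\chih^{n-1}\|_{1,\mathcal{T}}^2$ from Lemma~\ref{lemma:discrete-solution-apriori-estimate-2}. Your observation that Lemma~\ref{lemma:discrete-solution-apriori-estimate-2} as stated bounds only $\|\chih^N\|_{1,\mathcal{T}}$, and that truncating its energy argument at an arbitrary level $m\le N$ (thereby discarding only nonnegative terms) is needed to obtain the $L^\infty(0,T;L^2(\Omega))$ estimate, correctly fills a detail the paper leaves implicit.
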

\begin{proof}
The proof is analogous to the proof Lemma~\ref{lemma:interpolants-apriori-estimate-1}. For the last two estimates in the assertion, a discrete Poincar\'e inequality, cf.\ Lemma~\ref{appendix:lemma:discrete-poincare}, has to be applied before utilizing the stability bound on $\sum_{n=1}^N \| \chih^n - \chih^{n-1} \|_{1,\mathcal{T}}^2$ from Lemma~\ref{lemma:discrete-solution-apriori-estimate-2}.
\end{proof}

Similarly, by definition of the piecewise constant interpolation, we deduce stability for some of the non-linearities used in the model.
\begin{lemma}[Stability estimates for non-linearities evaluated in interpolants]\label{lemma:interpolants-stabilty:non-linearities}
 It holds that
 \begin{align*}
  \left\| \Bke(\chihtau) \right\|_{L^\infty(0,T;L^1(\Omega))} 
  &\leq 
  \htauStabilityLegendre,\\
  \| \pek(\chihtau) \|_{L^2(Q_T)}^2 &\leq \htauStabilityPpore,
 \end{align*}
 where $\htauStabilityLegendre$ and $\htauStabilityPpore$ are the stability constants from Lemma~\ref{lemma:discrete-solution-stabilty:legendre} and Lemma~\ref{lemma:discrete-solution-stability:pore-pressure}, respectively.
\end{lemma}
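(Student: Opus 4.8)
The plan is to obtain both bounds as a direct transcription of the fully-discrete a priori estimates of Lemma~\ref{lemma:discrete-solution-stabilty:legendre} and Lemma~\ref{lemma:discrete-solution-stability:pore-pressure} to the piecewise constant in time interpolant $\chihtau$, in exactly the spirit of the interpolant estimates of Lemma~\ref{lemma:interpolants-apriori-estimate-1}. The single structural observation needed is that $\chihtau$ is constant, equal to $\chih^n$, on each slab $(t_{n-1},t_n]$, and that $\Bke$ and $\pek$ act pointwise in the spatial variable; hence $\Bke(\chihtau)$ and $\pek(\chihtau)$ are again piecewise constant in time, with $\Bke(\chihtau)|_{(t_{n-1},t_n]} = \Bke(\chih^n)$ and $\pek(\chihtau)|_{(t_{n-1},t_n]} = \pek(\chih^n)$.

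For the first estimate, constancy on each slab gives
\[
 \left\| \Bke(\chihtau) \right\|_{L^\infty(0,T;L^1(\Omega))} = \underset{1 \leq n \leq N}{\mathrm{max}}\, \left\| \Bke(\chih^n) \right\|_{L^1(\Omega)} \leq \htauStabilityLegendre,
\]
the last inequality being exactly Lemma~\ref{lemma:discrete-solution-stabilty:legendre} (recall $\Bke \geq 0$ by~\eqref{legendre-transform-bk}, so the $L^1(\Omega)$-norm is just the integral, and the finiteness of each $\left\| \Bke(\chih^n) \right\|_{L^1(\Omega)}$ is part of that lemma). For the second estimate, splitting the space-time integral over the slabs,
\[
 \left\| \pek(\chihtau) \right\|_{L^2(Q_T)}^2 = \sum_{n=1}^N \int_{t_{n-1}}^{t_n} \| \pek(\chih^n) \|^2 \, dt = \sum_{n=1}^N \tau \, \| \pek(\chih^n) \|^2 \leq \htauStabilityPpore,
\]
which is Lemma~\ref{lemma:discrete-solution-stability:pore-pressure}; this is the discrete-sum counterpart of the $L^2(Q_T)$ computation already carried out in the proof of Lemma~\ref{lemma:interpolants-apriori-estimate-1}.

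There is no genuine obstacle here: the lemma is a bookkeeping step whose only purpose is to make the uniform-in-$h,\tau$ bounds on $\Bke$ and $\pek$ available for the weak-compactness extraction performed in the remainder of Step~3. The one point requiring a little care is that the identities above rely on using the piecewise constant interpolant; the piecewise linear interpolant $\chihtauHat$ would only satisfy such bounds up to an $O(\tau)$ correction, as in~\eqref{result:stability:interpolation:u:3}, which is neither claimed nor needed here.
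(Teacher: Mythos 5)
Your proof is correct and matches the paper's (implicit) reasoning exactly: the paper states this lemma without a proof block, preceding it only by the remark that it follows ``by definition of the piecewise constant interpolation,'' which is precisely the slab-by-slab transcription you carried out. The two displayed identities and the identification with the discrete bounds of Lemmas~\ref{lemma:discrete-solution-stabilty:legendre} and~\ref{lemma:discrete-solution-stability:pore-pressure} are all that is needed.
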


\begin{lemma}[Stability estimate for the temporal change of $\bk$]\label{lemma:interpolants-stability:dtbk}
 For
 \begin{align*}
  \bar{\lambda}_{h\tau}(t):= \frac{\bke(\chih^n) - \bke(\chih^{n-1})}{\tau}\ t\in(t_{n-1},t_n]
 \end{align*}
 it holds that
 \begin{align*}
  \| \bar{\lambda}_{h\tau} \|_{L^2(0,T;H^{-1}(\Omega)} \leq \COmegaPoincare^{1/2} \htauStabilityDbDt,
 \end{align*}
 where $\htauStabilityDbDt$ is the stability constant from Lemma~\ref{lemma:discrete-solution-stability:dtbk}, and $\COmegaPoincare$ is a Poincar\'e constant.
\end{lemma}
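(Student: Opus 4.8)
The plan is to read the claimed bound as a duality estimate in the Bochner space $L^{2}(0,T;H^{-1}(\Omega))$ and to reduce it, by replacing continuous test functions with their cell averages, to the discrete dual‑norm bound already obtained in Lemma~\ref{lemma:discrete-solution-stability:dtbk}. Since $\bar\lambda_{h\tau}$ is a step function in time with values in $\Qh\subset L^{2}(\Omega)\hookrightarrow H^{-1}(\Omega)$, the left‑hand side is well defined, and
\[
 \| \bar\lambda_{h\tau} \|_{L^{2}(0,T;H^{-1}(\Omega))}
 = \sup\Big\{ \textstyle\int_{0}^{T} \llangle \bar\lambda_{h\tau}(t), w(t) \rrangle \, dt \ :\ w\in L^{2}(0,T;H^{1}_{0}(\Omega)),\ \|\GRAD w\|_{L^{2}(Q_T)}\le 1 \Big\}.
\]

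The first step is to fix such a $w$ and pass to cell averages. Let $\Pi_{h}:L^{2}(\Omega)\to\Qh$ denote the $L^{2}$‑orthogonal (i.e.\ cell‑average) projection. On each time slab $(t_{n-1},t_{n}]$ the interpolant $\bar\lambda_{h\tau}$ equals the piecewise constant field $\tau^{-1}\big(\bke(\chih^{n})-\bke(\chih^{n-1})\big)\in\Qh$, so its $L^{2}(\Omega)$‑pairing with $w$ only sees $\Pi_{h}w$; hence, setting $\qh^{n}:=\tau^{-1}\int_{t_{n-1}}^{t_{n}}\Pi_{h}w(t)\,dt\in\Qh$,
\[
 \int_{0}^{T} \llangle \bar\lambda_{h\tau}(t), w(t) \rrangle \, dt
 = \sum_{n=1}^{N} \tau\, \llangle \tfrac{\bke(\chih^{n})-\bke(\chih^{n-1})}{\tau}, \qh^{n} \rrangle .
\]
Applying Lemma~\ref{lemma:discrete-solution-stability:dtbk} to the sequence $\{\qh^{n}\}_{n}$ bounds the right‑hand side by $\htauStabilityDbDt\big(\sum_{n=1}^{N} \tau \| \qh^{n} \|_{1,\mathcal{T}}^{2}\big)^{1/2}$.

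The second step is to control $\sum_{n} \tau\|\qh^{n}\|_{1,\mathcal{T}}^{2}$. By Jensen's inequality applied to the convex functional $\|\cdot\|_{1,\mathcal{T}}^{2}$ (followed by Cauchy--Schwarz to absorb the time average) one has $\tau\|\qh^{n}\|_{1,\mathcal{T}}^{2}\le \int_{t_{n-1}}^{t_{n}}\|\Pi_{h}w(t)\|_{1,\mathcal{T}}^{2}\,dt$, hence $\sum_{n}\tau\|\qh^{n}\|_{1,\mathcal{T}}^{2}\le \int_{0}^{T}\|\Pi_{h}w(t)\|_{1,\mathcal{T}}^{2}\,dt$. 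Invoking the standard finite‑volume stability of the cell‑average projection in the discrete $H^{1}$‑norm, $\|\Pi_{h}w\|_{1,\mathcal{T}}\le C\,\|\GRAD w\|$ for $w\in H^{1}_{0}(\Omega)$ — which relies on the mesh‑regularity property of Definition~\ref{definition:admissible-mesh}, with $C^{2}$ a Poincar\'e‑type constant here denoted $\COmegaPoincare$ — gives $\sum_{n}\tau\|\qh^{n}\|_{1,\mathcal{T}}^{2}\le \COmegaPoincare\,\|\GRAD w\|_{L^{2}(Q_T)}^{2}\le \COmegaPoincare$. Combining the displays and taking the supremum over $w$ yields $\|\bar\lambda_{h\tau}\|_{L^{2}(0,T;H^{-1}(\Omega))}\le \COmegaPoincare^{1/2}\htauStabilityDbDt$.

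The only genuinely non‑routine ingredient is the projection estimate $\|\Pi_{h}w\|_{1,\mathcal{T}}\le C\|\GRAD w\|$: it is precisely the step in which a continuous $H^{1}_{0}(\Omega)$ test function is replaced by its piecewise‑constant cell averages without losing control of the discrete gradient, and where mesh regularity is used; it can be cited from the appendix alongside the discrete Poincar\'e and trace inequalities already employed in Steps~2--3. Everything else — the duality characterisation of the Bochner norm, the piecewise‑constant‑in‑time‑and‑space structure that turns the integral pairing into the discrete sum of Lemma~\ref{lemma:discrete-solution-stability:dtbk}, and the Jensen/Cauchy--Schwarz step absorbing the time integral — is bookkeeping.
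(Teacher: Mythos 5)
Your proof is correct and follows essentially the same route as the paper: it rewrites the pairing against a continuous test function as a discrete sum against cell-and-time averages, invokes the discrete dual-norm bound of Lemma~\ref{lemma:discrete-solution-stability:dtbk}, and then controls the discrete $\|\cdot\|_{1,\mathcal{T}}$ norm of the averaged test function via the projection stability of Lemma~\ref{appendix:lemma:discrete-vs-continuous-gradients} together with a Poincar\'e inequality. The only cosmetic difference is the ordering of the Jensen/time-average step relative to the projection estimate; both orderings yield the same constant.
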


\begin{proof}
 Let $q\in L^2(0,T;Q)$. We define a piecewise constant interpolation in both space and time, and only time by
 \begin{alignat*}{2}
  \qh^n(x,t) &:= \tfrac{1}{\tau} \int_{t_{n-1}}^{t_n} \frac{1}{|K|}\int_K \q \,dx\,dt, 
  &\quad & (x,t)\in K\times(t_{n-1},t_n],\ K\in\mathcal{T}, \\
  q^n(x,t) &:= \tfrac{1}{\tau} \int_{t_{n-1}}^{t_n} \q \,dt,
  && (x,t) \in \Omega \times (t_{n-1},t_n].
 \end{alignat*}
 Then by Lemma~\ref{lemma:discrete-solution-stability:dtbk} it holds that
 \begin{align*}
  \int_0^T \llangle \bar{\lambda}_{h\tau}, \q \rrangle 
  =
  \sum_{n=1}^N \tau \llangle \frac{\bke(\chih^n) - \bke(\chih^{n-1})}{\tau}, \qh^n \rrangle
  \leq 
  \htauStabilityDbDt \left(\sum_{n=1}^N \tau \| \qh^n \|_{1,\mathcal{T}}^2 \right)^{1/2}.
 \end{align*}
 By Lemma~\ref{appendix:lemma:discrete-vs-continuous-gradients}, a (continuous) Poincar\'e inequality  (introducing $\COmegaPoincare$), analogous to Lemma~\ref{appendix:lemma:discrete-poincare}, the triangle inequality and the Cauchy-Schwarz inequality, it holds that
 \begin{align*}
 \sum_{n=1}^N \tau  \| \qh^n \|_{1,\mathcal{T}}^2 
 &\leq 
 \COmegaPoincare \sum_{n=1}^N \tau \| \GRAD \q^n \|^2 \\
 &=
 \COmegaPoincare \sum_{n=1}^N \tau \left\| \tau^{-1} \int_{t_{n-1}}^{t_n} \GRAD q \, dt \right\|^2 \\
 &\leq
 \COmegaPoincare \sum_{n=1}^N \tau^{-1} \left( \int_{t_{n-1}}^{t_n} \left\| \GRAD q  \right\| \, dt \right)^2 \\
 &\leq 
  \COmegaPoincare \sum_{n=1}^N \int_{t_{n-1}}^{t_n} \left\| \GRAD q  \right\|^2  \, dt \\
 &=
 \COmegaPoincare \| q \|_{L^2(0,T;H_0^1(\Omega))}^2,
 \end{align*}
 which concludes the proof.
\end{proof}

\subsection{Relative (weak) compactness for the limit $h,\tau\rightarrow 0$}\label{section:limit-regularized-degenerate-case}

We utilize the stability results from the previous section to conclude relative compactness. We deduce limits for the interpolants which eventually converge towards a weak solution of the doubly regularized unsaturated poroelasticity model, i.e., it fulfils~(W1)$_{\zeta\eta}$--(W4)$_{\zeta\eta}$.

\begin{lemma}[Convergence of the mechanical displacement]\label{lemma:convergence-u} 
We can extract subsequences of $\{\uhtau\}_{h,\tau}$ and $\{\uhtauHat\}_{h,\tau}$ (still denoted like the original sequences), and there exists $\uveta\in L^\infty(0,T;\V)$ with $\partial_t \uveta \in L^2(0,T;\V)$ such that for $h,\tau\rightarrow0$
 \begin{alignat}{2}
 \label{result:convergence-u:1}
  \uhtau     &\rightharpoonup \uveta                       && \text{ in }L^\infty(0,T;\V), \\
 \label{result:convergence-u:2}
  \uhtau     &\rightarrow     \uveta                       && \text{ in }L^2(Q_T), \\
 \label{result:convergence-u:3}
  \uhtauHat     &\rightharpoonup     \uveta                && \text{ in }L^2(0,T;\V), \\
 \label{result:convergence-u:4}
  \partial_t \uhtauHat &\rightharpoonup \partial_t \uveta  && \text{ in }L^2(0,T;\V).
 \end{alignat}
\end{lemma}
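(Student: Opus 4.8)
The plan is to extract weak and weak-$*$ limits of the two time interpolants using only the $\V$-stability bounds collected in Lemma~\ref{lemma:interpolants-apriori-estimate-1}, then to identify those limits as a single function $\uveta$, and finally to upgrade the convergence of $\uhtau$ in $L^2(Q_T)$ from weak to strong by an Aubin--Lions-type compactness argument routed through the piecewise-linear interpolant. Throughout, $\zeta>0$ and $\eta>0$ are fixed, so constants that depend on $\zeta$ (but not on $h,\tau$) are admissible.

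\textbf{Weak extractions.} By \eqref{result:stability:interpolation:u:1}, $\{\uhtau\}_{h,\tau}$ is bounded in $L^\infty(0,T;\V)$. Since $\V$ is a separable Hilbert space (closed subspace of $H^1(\Omega)^d$, with inner product $a(\cdot,\cdot)$), one has $L^\infty(0,T;\V)=\bigl(L^1(0,T;\V^\star)\bigr)^\star$, so Banach--Alaoglu yields a subsequence and a limit $\uveta\in L^\infty(0,T;\V)$ with $\uhtau\rightharpoonup\uveta$ weakly-$*$ in $L^\infty(0,T;\V)$, which is \eqref{result:convergence-u:1} (and, by weak lower semicontinuity, $\|\uveta\|_{L^\infty(0,T;\V)}^2\le\htauStabilityUChi$). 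For the piecewise-linear interpolant, the nodal values of $\uhtauHat$ are the $\uh^n$, so $\|\uhtauHat\|_{L^\infty(0,T;\V)}^2\le\htauStabilityUChi$; together with the bound $\zeta\int_0^T\|\partial_t\uhtauHat\|_\V^2\,dt\le\htauStabilityUChi$ from \eqref{result:stability:interpolation:u:1} and $\zeta>0$ fixed, this bounds $\{\uhtauHat\}_{h,\tau}$ in $H^1(0,T;\V)$. Passing to a further subsequence, there is $w\in H^1(0,T;\V)$ with $\uhtauHat\rightharpoonup w$ in $L^2(0,T;\V)$ and $\partial_t\uhtauHat\rightharpoonup\partial_t w$ in $L^2(0,T;\V)$, the distributional time derivative passing to the weak limit; in particular $\partial_t w\in L^2(0,T;\V)$.

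\textbf{Identification and strong convergence.} From \eqref{result:stability:interpolation:u:3}, $\|\uhtau-\uhtauHat\|_{L^2(Q_T)}^2\le\htauStabilityUChi\tau\to 0$, so $\uhtau-\uhtauHat\to 0$ strongly in $L^2(Q_T)$. Using the continuous embeddings $L^\infty(0,T;\V)\hookrightarrow L^2(Q_T)$ and $L^2(0,T;\V)\hookrightarrow L^2(Q_T)$, both $\uhtau\rightharpoonup\uveta$ and $\uhtauHat\rightharpoonup w$ hold weakly in $L^2(Q_T)$, and uniqueness of weak limits forces $w=\uveta$. Hence $\uveta\in L^\infty(0,T;\V)$ with $\partial_t\uveta\in L^2(0,T;\V)$, and \eqref{result:convergence-u:3}--\eqref{result:convergence-u:4} follow. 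For \eqref{result:convergence-u:2}, the embedding $\V\hookrightarrow L^2(\Omega)^d$ is compact (Rellich--Kondrachov), so by the Aubin--Lions--Simon lemma $H^1(0,T;\V)\hookrightarrow\hookrightarrow L^2(Q_T)$; thus $\uhtauHat\to\uveta$ strongly in $L^2(Q_T)$, and adding $\uhtau-\uhtauHat\to 0$ gives $\uhtau\to\uveta$ strongly in $L^2(Q_T)$. (Equivalently, since $\uhtau$ is only piecewise constant in time, one may apply the Kolmogorov--Riesz--Fr\'echet criterion directly to $\{\uhtau\}$, using equi-boundedness in $L^2(Q_T)$, the time-translation estimate \eqref{result:stability:interpolation:u:2}, and control of spatial translations from the uniform bound in $L^2(0,T;H^1(\Omega)^d)$.)

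\textbf{Main obstacle.} The only genuine subtlety is that the piecewise-constant interpolant $\uhtau$ possesses no time derivative in $L^2(0,T;\V)$, so Aubin--Lions cannot be applied to it directly; the detour through $\uhtauHat$, justified quantitatively by $\|\uhtau-\uhtauHat\|_{L^2(Q_T)}\le(\htauStabilityUChi\tau)^{1/2}\to 0$, is what makes the strong convergence work. Everything else is a routine use of Banach--Alaoglu, weak sequential compactness of bounded sets in reflexive spaces, and uniqueness of limits.
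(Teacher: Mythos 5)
Your proof is correct and reaches the same conclusion, but it routes the strong convergence \eqref{result:convergence-u:2} differently. The paper applies the relaxed Aubin--Lions--Simon criterion (Lemma~\ref{appendix:lemma:aubin-lions}) \emph{directly} to the piecewise-constant interpolant $\uhtau$, using the $L^\infty(0,T;\V)$ bound together with the time-translation estimate \eqref{result:stability:interpolation:u:2} (which is precisely the second alternative hypothesis in that lemma); it then extracts weak limits $\hat{\u}$, $\partial_t\hat{\u}$ for $\uhtauHat$ and identifies $\hat{\u}=\uveta$ via the triangle inequality and \eqref{result:stability:interpolation:u:3} \emph{after} strong convergence of $\uhtau$ is already in hand. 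You instead identify the two limits first (uniqueness of weak limits in $L^2(Q_T)$ plus $\|\uhtau-\uhtauHat\|_{L^2(Q_T)}\to 0$), then obtain strong convergence by applying the derivative-based Aubin--Lions to the piecewise-linear interpolant $\uhtauHat$, exploiting its $H^1(0,T;\V)$ bound, and finally transfer strong convergence back to $\uhtau$. Both are valid; your route buys you the more standard Aubin--Lions formulation (a time derivative in $L^2$ rather than a translation estimate), but it leans on the factor $1/\zeta$ in the $H^1$ bound, so it only works because $\zeta>0$ is fixed at this step, whereas the paper's translation-based estimate \eqref{result:stability:interpolation:u:2} is $\zeta$-uniform. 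Your parenthetical remark (applying Kolmogorov--Riesz--Fr\'echet or, equivalently, translation-based Aubin--Lions directly to $\uhtau$) is essentially the paper's actual argument. One small point in your favor: you correctly invoke Banach--Alaoglu for the weak-$*$ extraction in $L^\infty(0,T;\V)$, which is the technically appropriate tool for a non-reflexive space; the paper cites the Eberlein--$\check{\text{S}}$mulian theorem there, which strictly speaking governs weak (not weak-$*$) sequential compactness.
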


\begin{proof} 
 By the Eberlein-$\check{\text{S}}$mulian theorem, cf.\ Lemma~\ref{appendix:lemma:eberlein-smulian}, and Lemma~\ref{lemma:interpolants-apriori-estimate-1}, we obtain directly~\eqref{result:convergence-u:1}. For~\eqref{result:convergence-u:2}, we employ a relaxed Aubin-Lions-Simon type compactness result for Bochner spaces, cf.\ Lemma~\ref{appendix:lemma:aubin-lions}, together with Lemma~\ref{lemma:interpolants-apriori-estimate-1}. Furthermore, by the Eberlein-$\check{\text{S}}$mulian theorem, cf.\ Lemma~\ref{appendix:lemma:eberlein-smulian}, and Lemma~\ref{lemma:interpolants-apriori-estimate-1}, there exists a $\hat{\u}\in L^2(0,T,\V)$ such that up to a subsequence
 \begin{alignat*}{2}
   \uhtauHat &\rightharpoonup \hat{\u} &&\text{ in }L^2(0,T;\V), \\
   \partial_t \uhtauHat &\rightharpoonup \partial_t \hat{\u} &&\text{ in }L^2(0,T;\V).
 \end{alignat*}
 We can identify $\hat{\u}=\uveta$ as follows. Employing the triangle inequality and Lemma~\ref{lemma:interpolants-apriori-estimate-1}, yields
 \begin{align*}
  \| \uhtauHat - \uveta \|_{L^2(Q_T)} 
  &\leq  \| \uhtauHat - \uhtau \|_{L^2(Q_T)} + \| \uhtau - \uveta \|_{L^2(Q_T)} \\
  &\leq  \htauStabilityUChi \tau + \| \uhtau - \uveta \|_{L^2(Q_T)},
 \end{align*}
 which converges to zero for $h,\tau\rightarrow0$. This concludes the proof.
\end{proof}

In order to discuss the limit of the pressure, we utilize techniques employed in the finite volume literature~\cite{Saad2013}. We define a piecewise constant discrete gradient of $\chihtau$ utilizing the dual grid $\mathcal{T}^\star$, cf.\ Definition~\ref{definition:dual-mesh},
 \begin{align*}
  \left(\overline{\GRAD \chi}\right)_{h\tau} := \left\{ \begin{array}{lll} 
                                    d \frac{{{\chih^n}_|}_L  - {{\chih^n}_|}_K}{d_{K|L}}\, \n_{K|L},
                                    &(x,t) \in P_\sigma \times(t_{n-1},t_n],
                                    & K\in\mathcal{T},\ L\in\mathcal{N}(L),\ \sigma=K|L,\\
                                    d \frac{ {{\chih^n}_|}_K }{d_{\sigma,K}}\, \n_{\sigma,K}, 
                                    &(x,t) \in P_{\sigma} \times(t_{n-1},t_n], 
                                    & \sigma\in\mathcal{E}_\mathrm{ext}\cap\mathcal{E}_K,
                                   \end{array} \right.
 \end{align*}
 where $\n_{K|L}$ denotes the outward normal on $K|L\in\mathcal{E}$, pointing towards $L$; and $\n_{\sigma,K}$  denotes the outward normal on $\sigma\in\mathcal{E}_\mathrm{ext}\cap\mathcal{E}_K$, pointing towards $K$.
 
\begin{lemma}[Convergence of the Kirchhoff pressure]\label{lemma:convergence-p}
We can extract a subsequence of $\{\chihtau\}_{h,\tau}$ (still denoted like the original sequences), and there exists $\chiveta\in H^1(0,T;Q)$ such that
  \begin{alignat}{2}
  \label{result:lemma:convergence-chi:1}
  \chihtau &\rightarrow \chiveta                && \text{ in }L^2(Q_T), \\
  \label{result:lemma:convergence-chi:2}
  \left(\overline{\GRAD \chi}\right)_{h\tau} &\rightharpoonup \GRAD \chiveta      && \text{ in }L^2(Q_T),\\
  \label{result:lemma:convergence-chi:3}
  \partial_t \chihtauHat &\rightharpoonup \partial_t \chiveta && \text{ in }L^2(Q_T).
 \end{alignat}
\end{lemma}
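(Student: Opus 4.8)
The three convergences are obtained by compactness, in the same spirit as Lemma~\ref{lemma:convergence-u}, but the non-conforming character of $\Qh$ forces the use of finite-volume tools as in~\cite{Eymard1999,Saad2013}. All subsequences extracted below are nested and not relabelled. For the strong convergence~\eqref{result:lemma:convergence-chi:1}: by Lemma~\ref{lemma:interpolants-apriori-estimate-2} the family $\{\chihtau\}_{h,\tau}$ is bounded in $L^\infty(0,T;L^2(\Omega))$, uniformly bounded in the discrete $H^1$-norm, $\int_0^T\|\chihtau(t)\|_{1,\mathcal{T}}^2\,dt\le\htauStabilityUChi$, and has controlled time translates, $\int_0^{T-\hat\tau}\|\chihtau(t+\hat\tau)-\chihtau(t)\|^2\,dt\le\COmegaDiscretePoincare^2\htauStabilityDchiDt\,\hat\tau$. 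The discrete $H^1$ bound yields the classical finite-volume space-translate estimate $\int_0^T\|\chihtau(\cdot+\xi,t)-\chihtau(\cdot,t)\|_{L^2(\Omega)}^2\,dt\le C\,|\xi|\,(|\xi|+h)$, uniformly in $h,\tau$; combined with the time-translate estimate, the Kolmogorov--Riesz--Fréchet theorem on $Q_T$ (equivalently a discrete Aubin--Lions argument, cf.\ Lemma~\ref{appendix:lemma:aubin-lions}) gives relative compactness of $\{\chihtau\}_{h,\tau}$ in $L^2(Q_T)$, hence $\chihtau\to\chiveta$ in $L^2(Q_T)$ for some $\chiveta$ up to a subsequence. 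Since $\|\chihtau-\chihtauHat\|_{L^2(Q_T)}^2\le\COmegaDiscretePoincare^2\htauStabilityDchiDt\,\tau\to0$, also $\chihtauHat\to\chiveta$ in $L^2(Q_T)$.

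For~\eqref{result:lemma:convergence-chi:3}: by Lemma~\ref{lemma:interpolants-apriori-estimate-2} the family $\{\partial_t\chihtauHat\}_{h,\tau}$ is bounded in $L^2(Q_T)$, so up to a subsequence (Lemma~\ref{appendix:lemma:eberlein-smulian}) it converges weakly in $L^2(Q_T)$; testing against $\varphi\in C_c^\infty(Q_T)$ and using the strong convergence $\chihtauHat\to\chiveta$ identifies the weak limit as $\partial_t\chiveta$, so $\partial_t\chiveta\in L^2(Q_T)$.

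For~\eqref{result:lemma:convergence-chi:2}: by the construction of $\left(\overline{\GRAD\chi}\right)_{h\tau}$ on the dual grid $\mathcal{T}^\star$ and the mesh-regularity property of Definition~\ref{definition:admissible-mesh}, one has $\left\|\left(\overline{\GRAD\chi}\right)_{h\tau}\right\|_{L^2(Q_T)}^2\le C\int_0^T\|\chihtau(t)\|_{1,\mathcal{T}}^2\,dt\le C\,\htauStabilityUChi$, so up to a subsequence $\left(\overline{\GRAD\chi}\right)_{h\tau}\rightharpoonup\bm{G}$ in $L^2(Q_T)^d$ for some $\bm{G}$. To identify $\bm{G}=\GRAD\chiveta$, fix $\bm{\varphi}\in C^\infty(\overline{Q_T})^d$ vanishing at $t=0,T$ and with $\bm{\varphi}\cdot\n=0$ on $\FlowNeumannBoundary\times(0,T)$; a discrete summation by parts over the faces of $\mathcal{T}$ rewrites $\int_{Q_T}\left(\overline{\GRAD\chi}\right)_{h\tau}\cdot\bm{\varphi}$ as $-\int_{Q_T}\chihtau\,\DIV\bm{\varphi}$ plus a consistency error vanishing with $h$ by the orthogonality of the mesh; passing to the limit with the strong convergence of $\chihtau$ from the first step yields $\int_{Q_T}\bm{G}\cdot\bm{\varphi}=-\int_{Q_T}\chiveta\,\DIV\bm{\varphi}$. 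Since the external-face contributions in the definition of the discrete gradient encode the homogeneous condition on $\FlowDirichletBoundary$, this shows $\chiveta(\cdot,t)\in\Q$ for a.e.\ $t$ and $\bm{G}=\GRAD\chiveta$, which, together with the first two steps, gives $\chiveta$ the regularity stated in the lemma; a density argument extends the identification of the weak limit to all of $L^2(Q_T)^d$.

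The crux is the last step. The discrete summation by parts relating $\left(\overline{\GRAD\chi}\right)_{h\tau}$ to $\chihtau$, and the control of the resulting consistency error, must exploit the perpendicularity of the line connecting $\x_K$ and $\x_L$ to $K|L$, the factor $d$ and the geometry of the diamonds $P_\sigma$, and must keep careful track of the Dirichlet/Neumann splitting of $\partial\Omega$; this is precisely the finite-volume machinery of~\cite{Eymard1999,Saad2013}, with the quantitative comparison of discrete and continuous gradients supplied by Lemma~\ref{appendix:lemma:discrete-vs-continuous-gradients}. By contrast, the strong $L^2(Q_T)$-compactness of $\{\chihtau\}_{h,\tau}$ and the weak convergence of $\{\partial_t\chihtauHat\}_{h,\tau}$ are routine once the uniform stability estimates of Lemma~\ref{lemma:interpolants-apriori-estimate-2} are in place.
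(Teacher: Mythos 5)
Your proposal is correct and follows essentially the same route as the paper: uniform discrete-$H^1$ and time-translate estimates from Lemma~\ref{lemma:interpolants-apriori-estimate-2} combined with the finite-volume space-translate estimate and the Riesz--Fr\'echet--Kolmogorov criterion for~\eqref{result:lemma:convergence-chi:1}, an $L^2$ bound on $\left(\overline{\GRAD\chi}\right)_{h\tau}$ via the diamond geometry plus a discrete integration-by-parts/consistency argument \`a la~\cite{Saad2013} for~\eqref{result:lemma:convergence-chi:2}, and Eberlein--\v{S}mulian with identification against smooth test functions for~\eqref{result:lemma:convergence-chi:3}. The only cosmetic difference is that you impose extra vanishing conditions on the test function $\bm{\varphi}$, whereas the paper simply tests against all of $C^\infty(Q_T)^d$, which is what its discrete gradient (defined with homogeneous data on \emph{every} exterior face) naturally pairs with.
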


\begin{proof}
 Let $\hat{h}\in\mathbb{R}^d$ and $\Omega_{\hat{h}}:=\{ \x\in\Omega \, | \, \x+\hat{h} \in \Omega \}$. Using Lemma~4 from~\cite{Eymard1999}, for all $\qh\in \Qh$ it holds that
 \begin{align*}
  \int_{\Omega_{\hat{h}}} \left\| \qh(\x+\hat{h})  - \qh(\x) \right\|^2 \, d\x 
  \leq 
  C \left\| \qh \right\|_{1,\mathcal{T}}^2 |\hat{h}| \left( |\hat{h}| + |\Omega| \right)
 \end{align*}
 for some $C>0$. Hence, we obtain
 \begin{align*}
  \int_0^T \int_{\Omega_{\hat{h}}} \left\| \chihtau(\x+\hat{h})  - \chihtau(\x) \right\|^2 \, d\x \, dt
  &=
  \sum_{n=1}^N \tau \int_{\Omega_{\hat{h}}} \left\| \chih^n(\x+\hat{h})  - \chih^n(\x) \right\|^2 \, d\x \\
  &\leq 
  |\hat{h}| \left( |\hat{h}| + |\Omega| \right) \sum_{n=1}^N \tau \left\|\chih^n\right\|_{1,\mathcal{T}}^2.
 \end{align*}
 Consequently, by Lemma~\ref{lemma:interpolants-apriori-estimate-2}, $\chihtau$ satisfies a translation property in space and time wrt.\ $L^2(Q_T)$. We conclude by the Riesz-Frechet-Kolmogorov compactness criterion, cf.\ Lemma~\ref{appendix:lemma:kolmogorov}, that there exists a $\chiveta\in L^2(Q_T)$ satisfying~\eqref{result:lemma:convergence-chi:1}. 
 
  By definition of $\left(\overline{\GRAD \chi}\right)_{h\tau}$ and the geometrical identity $|P_\sigma|=d^{-1} |\sigma| d_\sigma$, it holds that
 \begin{align*}
  &\left\| \left(\overline{\GRAD \chi}\right)_{h\tau} \right\|_{L^2(Q_T)}^2 \\
  &\qquad= 
  \sum_{n=1}^N \tau \sum_{\sigma\in\mathcal{E}} \int_{P_\sigma} \left| \left(\overline{\GRAD \chi} \right)_{h\tau} \right|^2 \, dx \\
  &\qquad=
  \sum_{n=1}^N \tau \sum_{K\in\mathcal{T}} \sum_{L\in\mathcal{N}(K)} |P_{K|L}| d^2 \frac{\left|{{\chih^n}_|}_K - {{\chih^n}_|}_L \right|^2}{d_{K|L}^2} +
  \sum_{n=1}^N \tau \sum_{\sigma \in \mathcal{E}_\mathrm{ext}\cap\mathcal{E}_K} |P_\sigma| d^2 \frac{\left|{{\chih^n}_|}_K \right|^2}{d_{\sigma,K}^2} \\
  &\qquad=
  d \sum_{n=1}^N \tau \sum_{\sigma \in \mathcal{E}} \tau_\sigma  \left|\delta_\sigma(\chih^n)\right|^2 \\
  &\qquad=
  d \int_0^T \| \chihtau \|_{1,\mathcal{T}}^2\, dt,
 \end{align*}
 which is uniformly bounded by Lemma~\ref{lemma:interpolants-apriori-estimate-2}. Hence, by the Eberlein-$\check{\text{S}}$mulian theorem, cf.\ Lemma~\ref{appendix:lemma:eberlein-smulian}, there exist a $\g_{\chi}\in L^2(Q_T)$ such that (up to a subsequence)
 \begin{align*}
  \left(\overline{\GRAD \chi}\right)_{h\tau} \rightharpoonup \g_{\chi}\text{ in }L^2(Q_T).
 \end{align*}
 It remains to show that $\g_{\chi}=\GRAD \chiveta$ in the sense of distributions, i.e.,
 \begin{align*}
  \int_0^T \langle \g_\chi, \bm{\varphi} \rangle \, dt + \int_0^T \langle \chiveta, \DIV \bm{\varphi} \rangle \, dt = 0\quad \text{for all } \bm{\varphi}\in C^\infty(Q_T)^d.
 \end{align*}
 For that, we follow an argument in~\cite{Saad2013}. Let $\bm{\varphi} \in C^\infty(Q_T)^d$. As
 \begin{align*}
  \int_0^T \langle \left(\overline{\GRAD \chi}\right)_{h\tau}, \bm{\varphi} \rangle \, dt &\rightarrow \int_0^T \langle \g_\chi, \bm{\varphi} \rangle \, dt, \quad\text{and}\\
  \int_0^T \langle \chihtau, \DIV \bm{\varphi} \rangle \, dt &\rightarrow \int_0^T \langle \chiveta, \DIV  \bm{\varphi} \rangle \, dt
 \end{align*}
 for $h,\tau\rightarrow 0$, it suffices to show that
 \begin{align*}
  \int_0^T \langle \left(\overline{\GRAD \chi}\right)_{h\tau}, \bm{\varphi} \rangle \, dt + \int_0^T \langle \chihtau, \DIV \bm{\varphi} \rangle \, dt \rightarrow 0.
 \end{align*}
 By definition of $\left(\overline{\GRAD \chi}\right)_{h\tau}$ and the construction of $\mathcal{T}^\star$ with $\tfrac{d}{d_\sigma} = \tfrac{|\sigma|}{|P_\sigma|}$ for all $\sigma\in\mathcal{E}$, it holds that
 \begin{align*}
  \int_0^T \langle \left(\overline{\GRAD \chi}\right)_{h\tau}, \bm{\varphi} \rangle \, dt
  &=
  \sum_{n=1}^N \int_{t_{n-1}}^{t_n} \sum_{\sigma \in \mathcal{E}} \int_{P_\sigma}\left(\overline{\GRAD \chi}\right)_{h\tau} \cdot \bm{\varphi} \, dx\, dt \\
  &=
  \sum_{n=1}^N \int_{t_{n-1}}^{t_n} \sum_{K\in\mathcal{T}} \sum_{L\in\mathcal{N}(K)} 
  d \frac{ {{\chih^n}_|}_L - {{\chih^n}_|}_K}{d_{K|L}} \int_{P_{K|L}} \bm{\varphi} \cdot \n_{K|L} \, dx\,dt \\
  &\qquad +
  \sum_{n=1}^N \int_{t_{n-1}}^{t_n} \sum_{\sigma \in \mathcal{E}_\mathrm{ext} \cap \mathcal{E}_K} 
  d \frac{ {{\chih^n}_|}_K}{d_{\sigma,K}} \int_{P_{\sigma}} \bm{\varphi} \cdot \n_{\sigma,K} \, dx\,dt \\
  &=\sum_{n=1}^N \int_{t_{n-1}}^{t_n} \sum_{K\in\mathcal{T}} \sum_{L\in\mathcal{N}(K)} 
  |\sigma| \left( {{\chih^n}_|}_L - {{\chih^n}_|}_K \right) \, \frac{1}{|P_{K|L}|}\int_{P_{K|L}} \bm{\varphi} \cdot \n_{K|L} \, dx\,dt \\
  &\qquad + 
  \sum_{n=1}^N \int_{t_{n-1}}^{t_n} \sum_{\sigma \in \mathcal{E}_\mathrm{ext} \cap \mathcal{E}_K} 
  |\sigma| {{\chih^n}_|}_K \, \frac{1}{|P_\sigma|}\int_{P_{\sigma}} \bm{\varphi} \cdot \n_{\sigma,K} \, dx\,dt.
 \end{align*}
 On the other hand, since $\chihtau$ is constant and hence continuous within each $K\in\mathcal{T}$, it holds that
 \begin{align*}
  &\int_0^T \llangle \chihtau, \DIV \bm{\varphi} \rrangle \,dt \\
  &\qquad=
  \sum_{n=1}^N \int_{t_{n-1}}^{t_n} \sum_{\sigma\in\mathcal{E}} \int_{P_\sigma}\chihtau \DIV \bm{\varphi}\, dx  \, dt  \\
  &\qquad=
  \sum_{n=1}^N \int_{t_{n-1}}^{t_n} 
  \Bigg[
  \sum_{K\in\mathcal{T}} \sum_{L\in\mathcal{N}(K)} \left( {{\chih^n}_|}_K \int_{P_{K|L}\cap K} \DIV \bm{\varphi}\, dx + {{\chih^n}_|}_L \int_{P_{K|L}\cap L} \DIV \bm{\varphi}\, dx \right) \\
  &\qquad\qquad\qquad\qquad+
  \sum_{\sigma\in\mathcal{E}_\mathrm{ext}\cap \mathcal{E}_K} {{\chih^n}_|}_K \int_{P_\sigma\cap K} \DIV \bm{\varphi}\, dx  \Bigg]\, dt  \\
  &\qquad=
  -\sum_{n=1}^N \int_{t_{n-1}}^{t_n} 
  \Bigg[
  \sum_{K\in\mathcal{T}} \sum_{L\in\mathcal{N}(K)} \left( {{\chih^n}_|}_L - {{\chih^n}_|}_K \right) \int_{K|L}  \bm{\varphi}\cdot \n_{K|L}\, ds \\
  &\qquad\qquad\qquad\qquad+
  \sum_{\sigma\in\mathcal{E}_\mathrm{ext}\cap \mathcal{E}_K} {{\chih^n}_|}_K \int_{\sigma}  \bm{\varphi}\cdot \n_{\sigma,K} \, ds \Bigg]\, dt.
 \end{align*}
 As $\bm{\varphi}\in C^\infty(Q_T)^d$ is smooth, there exists a constant $C>0$ such that
 \begin{align*}
  \left| \frac{1}{\tau} \int_{t_{n-1}}^{t_n} \frac{1}{|P_\sigma|} \int_{P_\sigma} \bm{\varphi} \cdot \n_{\sigma} \, dx\,dt - 
  \frac{1}{\tau} \int_{t_{n-1}}^{t_n} \frac{1}{|\sigma|}\int_{\sigma} \bm{\varphi} \cdot \n_{\sigma} \, ds\,dt \right|
  \leq C h.
 \end{align*}
 By abuse of notation, we used $\n_\sigma$ for both $\n_{K|L}$ and $\n_{\sigma,K}$. After all, together with the Cauchy-Schwarz inequality, it holds that
 \begin{align*}
  &\left|\int_0^T \langle \left(\overline{\GRAD \chi}\right)_{h\tau}, \bm{\varphi} \rangle \, dt
  +
  \int_0^T \int_\Omega \chihtau \DIV \bm{\varphi} \,dx\,dt \right|\\
  &\quad \leq 
  C h \sum_{n=1}^N  \tau \left(\sum_{K\in\mathcal{T}} \sum_{L\in\mathcal{N}(K)} 
  |\sigma| \left|{{\chih^n}_|}_L - 
  {{\chih^n}_|}_K \right| 
  +
  \sum_{\sigma\in\mathcal{E}_\mathrm{ext}\cap\mathcal{E}_K} |\sigma| \left| {{\chih^n}_|}_K \right|
  \right) 
  \\
  &\quad \leq 
  C h \left( \sum_{n=1}^N  \tau \left\| \chih^n \right\|_{1,\mathcal{T}}^2 \right)^{1/2} \left( \sum_{n=1}^N  \tau \sum_{\sigma\in\mathcal{E}} |\sigma| d_\sigma \right)^{1/2}.
 \end{align*}
 By Lemma~\ref{lemma:interpolants-apriori-estimate-2} and the regularity assumption on $\mathcal{T}$, convergence towards $0$ follows for $h,\tau\rightarrow 0$. This concludes the proof of~\eqref{result:lemma:convergence-chi:2}.
 
 The proof of~\eqref{result:lemma:convergence-chi:3} is standard and follows mainly from the stability results in Lemma~\ref{lemma:interpolants-apriori-estimate-2} and the Eberlein-$\check{\text{S}}$mulian theorem, cf.\ Lemma~\ref{appendix:lemma:eberlein-smulian}. This concludes the proof.
\end{proof}

The main purpose of the double regularization has been the aim to get control over the non-linear coupling terms, and eventually establish convergence.

\begin{lemma}[Convergence of the coupling terms]\label{lemma:convergence-coupling}
We can extract a subsequence of $\{\chihtau\}_{h,\tau}$ (still denoted like the original sequences) such that
  \begin{alignat}{2}
  \label{result:lemma:convergence-pek}
  \pek(\chihtau) &\rightharpoonup \pek(\chiveta)       &\quad & \text{ in }L^2(Q_T), \\
  \label{result:lemma:convergence-sw-divu}
  \swk(\chihtau)\partial_t \DIV \uhtauHat &\rightharpoonup \swk(\chiveta)\partial_t \DIV \uveta &&\text{ in }L^2(Q_T).
 \end{alignat}
\end{lemma}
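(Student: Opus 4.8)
The plan is to exploit the strong $L^2(Q_T)$ convergence $\chihtau\to\chiveta$ established in Lemma~\ref{lemma:convergence-p} together with the continuity of $\pek$ and $\swk$ and the uniform bounds already at hand. First I would pass to a further subsequence (not relabelled) so that $\chihtau\to\chiveta$ almost everywhere on $Q_T$; all the convergences below will be extracted from this one subsequence, so no compatibility issue between subsequences arises. Since $\pek$ is continuous by (A3), this gives $\pek(\chihtau)\to\pek(\chiveta)$ a.e.\ on $Q_T$. Fatou's lemma combined with the uniform bound $\|\pek(\chihtau)\|_{L^2(Q_T)}^2\le\htauStabilityPpore$ from Lemma~\ref{lemma:interpolants-stabilty:non-linearities} shows $\pek(\chiveta)\in L^2(Q_T)$, and the standard fact that an a.e.-convergent sequence bounded in a reflexive $L^p$-space converges weakly to its pointwise limit yields~\eqref{result:lemma:convergence-pek}.

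For the product term I would argue that the two factors converge in complementary topologies. By (A2) the map $\swk$ is continuous with $0<\swk\le 1$, so $\swk(\chihtau)\to\swk(\chiveta)$ a.e.\ on $Q_T$ with $|\swk(\chihtau)|\le 1$; Lebesgue's dominated convergence theorem then gives $\swk(\chihtau)\to\swk(\chiveta)$ strongly in $L^r(Q_T)$ for every finite $r$, in particular in $L^2(Q_T)$, and $\|\swk(\chiveta)\|_{L^\infty(Q_T)}\le 1$. On the other hand $\DIV$ is bounded and linear from $\V$ into $L^2(\Omega)$ and commutes with $\partial_t$, so the weak convergence $\partial_t\uhtauHat\rightharpoonup\partial_t\uveta$ in $L^2(0,T;\V)$ from Lemma~\ref{lemma:convergence-u} gives $\partial_t\DIV\uhtauHat\rightharpoonup\partial_t\DIV\uveta$ in $L^2(Q_T)$; in particular $\{\partial_t\DIV\uhtauHat\}$ is bounded in $L^2(Q_T)$, hence the product sequence $\{\swk(\chihtau)\partial_t\DIV\uhtauHat\}$ is bounded in $L^2(Q_T)$ and $\swk(\chiveta)\partial_t\DIV\uveta\in L^2(Q_T)$.

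It then remains to identify the weak limit of the product. For $\varphi\in L^\infty(Q_T)$ I would split
\begin{align*}
\int_0^T\!\llangle \swk(\chihtau)\partial_t\DIV\uhtauHat - \swk(\chiveta)\partial_t\DIV\uveta,\ \varphi\rrangle\,dt
&= \int_0^T\!\llangle (\swk(\chihtau)-\swk(\chiveta))\,\varphi,\ \partial_t\DIV\uhtauHat\rrangle\,dt\\
&\quad + \int_0^T\!\llangle \swk(\chiveta)\,\varphi,\ \partial_t\DIV\uhtauHat-\partial_t\DIV\uveta\rrangle\,dt .
\end{align*}
The first integral tends to zero, since $(\swk(\chihtau)-\swk(\chiveta))\varphi\to 0$ in $L^2(Q_T)$ (the integrand is dominated by $2|\varphi|\in L^2(Q_T)$) while $\|\partial_t\DIV\uhtauHat\|_{L^2(Q_T)}$ stays bounded; the second tends to zero because $\swk(\chiveta)\varphi$ is a fixed $L^2(Q_T)$ function tested against a weakly convergent sequence. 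Since the product sequence is bounded in $L^2(Q_T)$ and $L^\infty(Q_T)$ is dense in $L^2(Q_T)$, convergence of this pairing for all $\varphi\in L^\infty(Q_T)$ upgrades to weak convergence in $L^2(Q_T)$, giving~\eqref{result:lemma:convergence-sw-divu}. The only point needing care is ensuring that "strong times weak" lands back in $L^2$ rather than merely in $L^1$ — which is exactly what the uniform $L^\infty$-bound on $\swk$ provides — together with the (routine) upgrade from a.e.\ to weak convergence; I expect no substantial obstacle here, the difficulty of controlling the coupling terms having already been absorbed into the stability estimates of Step~3.
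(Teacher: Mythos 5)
Your proposal is correct and follows essentially the same route as the paper: strong $L^2(Q_T)$ (hence a.e.) convergence of $\chihtau$, continuity of $\pek$ and $\swk$, dominated convergence for $\swk(\chihtau)$, and the strong-times-bounded / fixed-times-weak split for the product term. The only cosmetic differences are that you invoke the ``bounded in $L^2$ and a.e.\ convergent $\Rightarrow$ weakly convergent to the a.e.\ limit'' fact directly (the paper uses Eberlein--$\check{\text{S}}$mulian plus identification) and that you test against $L^\infty(Q_T)$ and close by density, whereas the paper tests directly against $q\in L^2(Q_T)$; both are equivalent.
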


\begin{proof}
 By the Eberlein-$\check{\text{S}}$mulian theorem, cf.\ Lemma~\ref{appendix:lemma:eberlein-smulian}, and Lemma~\ref{lemma:interpolants-stabilty:non-linearities}, we can extract a subsequence of $\{\chihtau\}_{h,\tau}$ (still denoted $\{\chihtau\}_{h,\tau}$), and there exists a $\hat{p}\in L^2(Q_T)$ such that
 \begin{align*}
  \pek(\chihtau) \rightharpoonup \hat{p} \text{ in }L^2(Q_T).
 \end{align*}
 We can identify $\hat{p}=\pek(\chiveta)$ as follows. From Lemma~\ref{lemma:convergence-p}, we have $\chihtau\rightarrow \chiveta$ a.e.\ on $Q_T$ for a subsequence (still denoted $\{\chihtau\}_{h,\tau}$). As $\pek$ is continuous by~(A3), it holds that $\pek(\chihtau)\rightarrow \pek(\chiveta)$ a.e.\ on $Q_T$. This concludes~\eqref{result:lemma:convergence-pek}.
 
 The convergence property~\eqref{result:lemma:convergence-sw-divu} follows from the convergence properties of the single contributions. Let $q\in L^2(Q_T)$; it holds that $\swk(\chihtau)q \rightarrow \swk(\chiveta) q$ in $L^2(Q_T)$ (up to a subsequence). Indeed, by Lemma~\ref{lemma:convergence-p}, we have $\chihtau\rightarrow \chiveta$ a.e.\ on $Q_T$ (up to a subsequence); due to~(A2), it holds that $\swk(\chihtau)q \rightarrow \swk(\chiveta)q$ a.e.\ on $Q_T$ and $|\swk(\chihtau)q | \leq |q|$ a.e.; hence, by the dominated convergence theorem $\swk(\chihtau)q \rightarrow \swk(\chiveta) q$ in $L^2(Q_T)$. In particular, it holds that $\swk(\chiveta)q \in L^2(\Omega)$. Moreover from Lemma~\ref{lemma:convergence-u}, we have $\partial_t \DIV \uhtauHat \rightharpoonup \partial_t \DIV \uveta$ in $L^2(Q_T)$. Altogether, we obtain
 \begin{align*}
  &\left| \llangle \swk(\chihtau) \partial_t \DIV \uhtauHat - \swk(\chiveta) \partial_t \DIV \uveta, q \rrangle \right| \\
  &\quad\leq
  \left| \llangle \left(\swk(\chihtau) - \swk(\chiveta)\right) \partial_t \DIV \uhtauHat, q \rrangle \right| +
  \left| \llangle \swk(\chiveta) \left( \partial_t \DIV \uhtauHat - \partial_t \DIV \uveta\right), q \rrangle \right|\\
  &\quad\leq 
  \| \swk(\chihtau)q - \swk(\chiveta)q\| \, \| \partial_t \DIV \uhtauHat \| +
  \left| \llangle \partial_t \DIV \uhtauHat - \partial_t \DIV \uveta, \swk(\chiveta)q \rrangle \right|,
 \end{align*}
 which converges towards $0$ for $h,\tau\rightarrow 0$, due to strong and weak convergence of the single components. 
\end{proof}

\begin{lemma}[Initial conditions for the fluid flow]\label{lemma:convergence-dbdt}
It holds that
 \begin{alignat}{2}
  \label{result:lemma:convergence-dtbk}
  \bar{\lambda}_{h\tau} &\rightharpoonup \partial_t \bke(\chiveta) &&\text{ in }L^2(0,T;Q^\star)
 \end{alignat}
 (up to a subsequence), where $\partial_t \bke(\chiveta)\in L^2(0,T;Q^\star)$ is understood in the sense of~$\mathrm{(W2)_{\zeta\eta}}$.
\end{lemma}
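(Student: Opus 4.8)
The plan is to obtain $\partial_t\bke(\chiveta)$ as a weak limit of $\{\bar{\lambda}_{h\tau}\}_{h,\tau}$ and to identify this limit through a discrete integration by parts, relying on the strong convergence $\chihtau\to\chiveta$ from Lemma~\ref{lemma:convergence-p} and an Alt--Luckhaus type treatment of the non-linearity $\bke$. First, observe that $\bar{\lambda}_{h\tau}$ is piecewise constant in time with values in $\Qh\subset L^2(\Omega)$, and that by Lemma~\ref{lemma:interpolants-stability:dtbk} the family $\{\bar{\lambda}_{h\tau}\}_{h,\tau}$ is uniformly bounded in $L^2(0,T;Q^\star)$. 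As this space is reflexive, the Eberlein--$\check{\text{S}}$mulian theorem (Lemma~\ref{appendix:lemma:eberlein-smulian}) yields a subsequence such that $\bar{\lambda}_{h\tau}\rightharpoonup\ell$ in $L^2(0,T;Q^\star)$ for some $\ell$. The claim will be that $\ell$ realises $\partial_t\bke(\chiveta)$ in the sense of~(W2)$_{\zeta\eta}$, and we then set $\partial_t\bke(\chiveta):=\ell$.

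The decisive step is the convergence of the non-linearity $\bke$. Along a further subsequence, $\chihtau\to\chiveta$ a.e.\ on $Q_T$ by Lemma~\ref{lemma:convergence-p}; hence, by continuity of $\bke$ (assumption~(A1)), $\bke(\chihtau)\to\bke(\chiveta)$ a.e.\ on $Q_T$. To upgrade this to strong convergence in $L^1(Q_T)$, I would use the uniform bound $\|\Bke(\chihtau)\|_{L^\infty(0,T;L^1(\Omega))}\le\htauStabilityLegendre$ from Lemma~\ref{lemma:interpolants-stabilty:non-linearities}: via the properties of the Legendre transform (Lemma~\ref{appendix:lemma:legendre}) together with the uniform increase~(A1$^\star$) of $\bke$, the family $\{\bke(\chihtau)\}_{h,\tau}$ is equi-integrable on $Q_T$, so Vitali's convergence theorem gives $\bke(\chihtau)\to\bke(\chiveta)$ in $L^1(Q_T)$; in particular $\bke(\chiveta)\in L^\infty(0,T;L^1(\Omega))$ by Fatou's lemma, which is part of~(W2)$_{\zeta\eta}$. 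Applied at $t=0$ together with $\chih^0\to\chi_0$ in $L^2(\Omega)$ (see Section~\ref{section:fully-discrete}), the same reasoning gives $\bke(\chih^0)\to\bke(\chi_0)$ in $L^1(\Omega)$.

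It remains to identify $\ell$. I would first test against $\q\in C_c^\infty(Q_T)$; writing $\q^n:=\tfrac1\tau\int_{t_{n-1}}^{t_n}\q\,dt$ and using that $\bar{\lambda}_{h\tau}$ is piecewise constant in time, summation by parts (Lemma~\ref{appendix:lemma:summation-by-parts}) gives
\begin{align*}
 \int_0^T\llangle\bar{\lambda}_{h\tau},\q\rrangle\,dt
 &=\sum_{n=1}^N\llangle\bke(\chih^n)-\bke(\chih^{n-1}),\q^n\rrangle
 =-\sum_{n=1}^{N-1}\llangle\bke(\chih^n),\q^{n+1}-\q^n\rrangle,
\end{align*}
with no endpoint contributions since $\q$ is compactly supported in time. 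Because $\q^{n+1}-\q^n$ is a doubly averaged increment of the smooth function $\q$, the $L^1(Q_T)$-convergence of $\bke(\chihtau)$ and the uniform convergence of these increments to $\tau\,\partial_t\q$ force the right-hand side to converge to $-\int_0^T\llangle\bke(\chiveta),\partial_t\q\rrangle\,dt$, while the left-hand side converges to $\int_0^T\llangle\ell,\q\rrangle\,dt$ by weak convergence. Thus $\ell=\partial_t\bke(\chiveta)$ in $\mathcal{D}'(Q_T)$, and $\ell\in L^2(0,T;Q^\star)$. Finally, by a standard density and continuity argument, using $\bke(\chiveta)\in L^\infty(0,T;L^1(\Omega))$, $\partial_t\bke(\chiveta)\in L^2(0,T;Q^\star)$, and the initial value $\bke(\chiveta)|_{t=0}=\bke(\chi_0)$ (read off from the endpoint term $\llangle\bke(\chih^0),\q^1\rrangle\to\llangle\bke(\chi_0),\q(0)\rrangle$ in the summation by parts for test functions not vanishing at $t=0$), the integration-by-parts identity of~(W2)$_{\zeta\eta}$ holds for all admissible $\q$. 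This proves~\eqref{result:lemma:convergence-dtbk}.

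The main obstacle is exactly the $L^1(Q_T)$-convergence $\bke(\chihtau)\to\bke(\chiveta)$: for the doubly regularized model only~(ND1) is available, so $\bke$ need not be globally Lipschitz and $\{\bke(\chihtau)\}$ need not be bounded in $L^2$; one is therefore forced to extract equi-integrability from the Legendre-transform bound on $\Bke(\chihtau)$ together with the a.e.\ convergence of $\chihtau$, in the spirit of~\cite{Alt1983}. All remaining ingredients --- the summation by parts, the consistency of the time-averages $\q^n$, and the convergence of the discrete initial data --- are routine and mirror the proof of Lemma~\ref{lemma:interpolants-stability:dtbk} and the initial-data analysis of Section~\ref{section:fully-discrete}.
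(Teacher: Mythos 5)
Your proposal is correct and follows essentially the same route as the paper: control $\bar{\lambda}_{h\tau}$ via Lemma~\ref{lemma:interpolants-stability:dtbk} and extract a weak limit in $L^2(0,T;Q^\star)$, use the uniform Legendre bound from Lemma~\ref{lemma:interpolants-stabilty:non-linearities} together with a.e.\ convergence of $\chihtau$ to control $\bke(\chihtau)$, and then identify the limit through summation by parts. The one point where you diverge is in the treatment of $\bke(\chihtau)$: the paper asserts weak convergence in $L^\infty(0,T;L^1(\Omega))$ via Eberlein--$\check{\text{S}}$mulian, which is technically awkward since that space is not reflexive; you instead combine the Legendre bound (through the pointwise inequality $|\bke(x)|\leq\delta\,\Bke(x)+\sup_{|y|\leq\delta^{-1}}|\bke(y)|$ of Lemma~\ref{appendix:lemma:legendre}) with a.e.\ convergence and Vitali to obtain \emph{strong} $L^1(Q_T)$ convergence of $\bke(\chihtau)$ --- note that (A1$^\star$) is not actually needed for this equi-integrability step. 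This is a sharper and more robust execution of the same idea, and it cleanly furnishes both the convergence of the interior term in (W2)$_{\zeta\eta}$ and, at $t=0$, the convergence $\bke(\chih^0)\to\bke(\chi_0)$ needed to recover the initial datum. Your subsequent passage from $C_c^\infty$ test functions to the admissible class of (W2)$_{\zeta\eta}$ by density is somewhat condensed --- the paper instead performs the summation by parts directly against the discretized admissible test functions and tracks the boundary term at $t=T$ explicitly via $\q(T)=0$ --- but the reasoning goes through since $\bke(\chiveta)\in L^\infty(0,T;L^1(\Omega))$ pairs precisely against $\partial_t\q\in L^1(0,T;L^\infty(\Omega))$.
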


\begin{proof}
 By definition of the Legendre transformation $\Bk$ and its properties, cf.\ Lemma~\ref{appendix:lemma:legendre}, it holds that
 \begin{align*}
  |\bke(x)| \leq \delta \Bke(x) + \underset{|y|\leq \delta^{-1}}{\mathrm{sup}}\, |\bke(y)|,
 \end{align*}
 for all $\delta>0$. Since $\Bke(\chihtau) \in L^\infty(0,T;L^1(\Omega))$ is uniformly bounded by Lemma~\ref{lemma:interpolants-stabilty:non-linearities}, and $\bke$ is continuous by~(A1)$^\star$, it holds that $\|\bke(\chihtau)\|_{L^\infty(0,T;L^1(\Omega))}$ is uniformly bounded. Hence, by the Eberlein-$\check{\text{S}}$mulian theorem, cf.\ Lemma~\ref{appendix:lemma:eberlein-smulian}, we can extract a subsequence of $\{\chihtau\}_{h,\tau}$ (still denoted $\{\chihtau\}_{h,\tau}$), and there exists a $\bk_\chi \in L^\infty(0,T;L^1(\Omega))$ such that
 \begin{align*}
  \bke(\chihtau) \rightharpoonup \bk_\chi\text{ in }L^\infty(0,T;L^1(\Omega)).
 \end{align*} 
 As $\bk$ is continuous by~(A1), and $\chihtau \rightarrow \chiveta$ in $L^2(Q_T)$ (up to a subsequence) by Lemma~\ref{lemma:convergence-p}, it holds that $\bke(\chihtau)\rightarrow \bke(\chiveta)$ a.e.\ on $Q_T$ (up to a subsequence). We conclude $\bk_\chi=\bke(\chiveta)$, which proves
 \begin{align}
  \label{result:lemma:convergence-bk}
  \bke(\chihtau) \rightharpoonup \bke(\chiveta) \text{ in }L^\infty(0,T;L^1(\Omega)).
 \end{align}
 
 By the Eberlein-$\check{\text{S}}$mulian theorem, cf.\ Lemma~\ref{appendix:lemma:eberlein-smulian}, and Lemma~\ref{lemma:interpolants-stability:dtbk}, we can extract a subsequence of $\{\chihtau\}_{h,\tau}$ (still denoted $\{\chihtau\}_{h,\tau}$), and there exists a $\bk_t \in L^2(0,T;Q^\star)$ such that
 \begin{align*}
  \bar{\lambda}_{h\tau} \rightharpoonup \bk_t \text{ in }L^2(0,T;Q^\star).
 \end{align*}
 It remains to show that $\bk_t=\partial_t \bke(\chiveta)$ in the sense of~(W2)$_{\zeta\eta}$. For this, we follow arguments by~\cite{Alt1983} as follows. Let $\q\in L^2(0,T; Q)$ with $\partial_t \q \in L^1(0,T;L^\infty(\Omega))$ and $\q(T)=0$. Due to~\eqref{result:lemma:convergence-bk} it holds that
 \begin{align*}
  \int_0^T \llangle \bke(\chih^0) - \bke(\chi_0), \partial_t q \rrangle \, dt \rightarrow 0,
 \end{align*}
 for $h,\tau\rightarrow0$. Thus, it suffices to show that
 \begin{align*}
  \int_0^T \llangle \bar{\lambda}_{h\tau}, q \rrangle \, dt + \int_0^T \llangle \bke(\chihtau) - \bke(\chih^0), \partial_t q \rrangle \, dt \rightarrow 0,
 \end{align*}
 for $h,\tau\rightarrow0$. By definition of $\bar{\lambda}_{h\tau}$, after applying summation by parts, cf.\ Lemma~\ref{appendix:lemma:summation-by-parts}, we obtain
 \begin{align*}
  &\int_0^T \llangle \bar{\lambda}_{h\tau}, q \rrangle \, dt\\
  &\quad=
  \sum_{n=1}^N \llangle \bke(\chih^n) - \bke(\chih^{n-1}), \tau^{-1} \int_{t_{n-1}}^{t_n} q \, dt \rrangle \\
  &\quad=
  \llangle \bke(\chih^N),\tau^{-1} \int^{T}_{T-\tau} q \, dt \rrangle
  -
  \llangle \bke(\chih^0),\tau^{-1} \int^{\tau}_{0} q \, dt \rrangle \\
  &\qquad-
  \sum_{n=1}^{N-1} \llangle \bke(\chih^n), \tau^{-1} \int_{t_{n}}^{t_{n+1}} q \, dt - \tau^{-1} \int_{t_{n-1}}^{t_n} q \, dt \rrangle\\
  &\quad=
  \llangle \bke(\chih^N) - \bke(\chih^0) ,\tau^{-1} \int^{T}_{T-\tau} q \, dt \rrangle \\
  &\qquad -
  \sum_{n=1}^{N-1} \int_{t_{n-1}}^{t_n} \llangle \bke(\chih^n) - \bke(\chih^0), \frac{\tau^{-1} \int_{t_{n}}^{t_{n+1}} q \, dt - \tau^{-1} \int_{t_{n-1}}^{t_n} q \, dt}{\tau} \rrangle \, d\tilde{t} \\
  &\quad \rightarrow
  0 - \int_0^T \llangle \bke(\chiveta) - \bke(\chi_0), \partial_t q \rrangle \, dt,
  \end{align*}
  for $h,\tau\rightarrow0$, due to the smoothness of $q$ and the convergence properties of $\bke(\chihtau)$. This concludes the proof. 
\end{proof}

\begin{lemma}[Initial conditions for the mechanical displacement]\label{lemma:htau:initial-conditions-u}
 The limit $\uveta\in H^1(0,T;\V)$ from Lemma~\ref{lemma:convergence-u} satisfies~$\mathrm{(W3)_{\zeta\eta}}$.
\end{lemma}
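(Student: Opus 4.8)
The plan is to recognize that $\mathrm{(W3)_{\zeta\eta}}$ is a purely temporal integration-by-parts identity encoding the initial value $\uveta(0)=\u_0$, and that it is inherited in the limit from an \emph{exact} discrete counterpart satisfied by the piecewise linear interpolant $\uhtauHat$. Notably, the discrete momentum equation~\eqref{ht:kirchhoff-reduced:u} plays no role; all that is used is the structural fact that $\uhtauHat$, defined in~\eqref{interpolation:linear-displacement}, is absolutely continuous in time with values in $\Vh\subset\V$ and satisfies $\uhtauHat(0)=\uh^0$, together with the convergences from Lemma~\ref{lemma:convergence-u} and the approximation property $\uh^0\rightarrow\u_0$ in $\V$ established in the treatment of the discrete initial conditions.

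First I would fix an arbitrary $\v\in H^1(0,T;\V)$ with $\v(T)=\bm{0}$. Since both $\uhtauHat$ and $\v$ are absolutely continuous in time with values in $\V$, the map $t\mapsto a(\uhtauHat(t),\v(t))$ is absolutely continuous, and integrating $\tfrac{d}{dt}a(\uhtauHat,\v)=a(\partial_t\uhtauHat,\v)+a(\uhtauHat,\partial_t\v)$ over $(0,T)$ collapses the boundary terms to $-a(\uh^0,\v(0))$. Writing $\v(0)=-\int_0^T\partial_t\v\,dt$ and using bilinearity turns this into the exact identity $\int_0^T a(\partial_t\uhtauHat,\v)\,dt+\int_0^T a(\uhtauHat-\uh^0,\partial_t\v)\,dt=0$, which holds for every admissible $\v$ and every $h,\tau$.

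Then I would pass to the limit $h,\tau\rightarrow0$ along the subsequence of Lemma~\ref{lemma:convergence-u}. The first term converges to $\int_0^T a(\partial_t\uveta,\v)\,dt$ by the weak convergence $\partial_t\uhtauHat\rightharpoonup\partial_t\uveta$ in $L^2(0,T;\V)$ from~\eqref{result:convergence-u:4}, tested against $\v\in L^2(0,T;\V)$. The term $\int_0^T a(\uhtauHat,\partial_t\v)\,dt$ converges to $\int_0^T a(\uveta,\partial_t\v)\,dt$ by the weak convergence $\uhtauHat\rightharpoonup\uveta$ in $L^2(0,T;\V)$ from~\eqref{result:convergence-u:3}, tested against $\partial_t\v\in L^2(0,T;\V)$. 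Finally, $\int_0^T a(\uh^0,\partial_t\v)\,dt\rightarrow\int_0^T a(\u_0,\partial_t\v)\,dt$ since $\uh^0\rightarrow\u_0$ in $\V$ and $\partial_t\v\in L^1(0,T;\V)$. The limiting identity is exactly $\int_0^T a(\partial_t\uveta,\v)\,dt+\int_0^T a(\uveta-\u_0,\partial_t\v)\,dt=0$, i.e.\ $\mathrm{(W3)_{\zeta\eta}}$, and the regularity $\partial_t\uveta\in L^2(0,T;\V)$ required there is already part of the conclusion of Lemma~\ref{lemma:convergence-u}.

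I do not anticipate any genuine obstacle: the only point needing a little care is that the discrete identity is \emph{exact}, which relies on $\uhtauHat$ being piecewise \emph{linear} (hence absolutely continuous) with the correct endpoint value $\uh^0$, and that all three passages to the limit use only the weak and strong convergences already established. If one additionally wanted the stronger pointwise-in-time formulation of the initial condition alluded to after Definition~\ref{definition:weak-solution}, one would then invoke the embedding $H^1(0,T;\V)\hookrightarrow C([0,T];\V)$ for $\uveta$ and specialize $\v$ to $\v(t)=(1-t/T)\,\bm{w}$ with $\bm{w}\in\V$ to read off $\uveta(0)=\u_0$; but for $\mathrm{(W3)_{\zeta\eta}}$ as stated the argument above is complete.
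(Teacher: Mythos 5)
Your proof is correct, and it takes a genuinely different route from the paper's. The paper reuses the discrete summation-by-parts template from the proof of Lemma~\ref{lemma:convergence-dbdt}: it applies Lemma~\ref{appendix:lemma:summation-by-parts} to the sequence $\{\uh^n\}_n$ against the time-averaged test values $\{\tau^{-1}\int_{t_{n-1}}^{t_n}\v\,dt\}_n$, which produces an exact discrete identity with a residual boundary term $a(\uh^N-\uh^0,\tau^{-1}\int_{T-\tau}^T\v\,dt)$ and a test-function interpolant $\vhtauHatTest$; the boundary term is then killed by $\v(T)=\bm{0}$ and the uniform bound on $\uh^N$, and the remaining integral is handled via $\uhtau\rightharpoonup\uveta$. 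You instead perform a continuous-in-time integration by parts on the absolutely continuous piecewise-linear interpolant $\uhtauHat$, using $\uhtauHat(0)=\uh^0$, $\v(T)=\bm{0}$, and the rewriting $\v(0)=-\int_0^T\partial_t\v\,dt$ to obtain the same exact prelimit identity with no boundary term to dispose of, and then pass to the limit via~\eqref{result:convergence-u:3},~\eqref{result:convergence-u:4}, and $\uh^0\rightarrow\u_0$ in $\V$. Your route is cleaner here precisely because $\uhtauHat$ is Lipschitz in time; the paper's more elaborate template has the advantage of being reusable verbatim in Lemma~\ref{lemma:convergence-dbdt}, where the corresponding quantity $\bke(\chihtau)$ is only piecewise constant and a direct continuous integration by parts is unavailable. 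Your final remark on recovering $\uveta(0)=\u_0$ pointwise via the embedding $H^1(0,T;\V)\hookrightarrow C([0,T];\V)$ and a suitable affine-in-time test function is also sound and matches what the paper alludes to after Definition~\ref{definition:weak-solution}.
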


\begin{proof}
 Let $\v \in H^1(0,T;\V)$ with $\v(T)=\bm{0}$. We obtain, using the same calculations as in the proof of Lemma~\ref{lemma:convergence-dbdt},
 \begin{align*}
  \int_0^T a(\partial_t \uhtauHat, \v) \, dt
  =
  a\left(\uh^N - \uh^0, \tau^{-1} \int_{T-\tau}^T \v \, dt \right)
  - \int_0^{T-\tau} a\left( \uhtau - \uh^0, \partial_t \vhtauHatTest\right),
 \end{align*}
 where
 \begin{align*}
  \vhtauHatTest(t) = \tau^{-1} \int_{t_{n-2}}^{t_{n-1}} \v \, dt + 
  \frac{t - t_{n-1}}{\tau} \left( \tau^{-1} \int_{t_{n-1}}^{t_n} \v \, dt - \tau^{-1} \int_{t_{n-2}}^{t_{n-1}} \v \, dt \right), \quad t\in (t_{n-1},t_n].
 \end{align*}
 By construction of $\uh^0$ it holds that $\uh^0 \rightharpoonup \u_0$ in $L^2(0,T;\V)$. Furthermore, by Lemma~\ref{lemma:convergence-u}, it holds that $\uhtau \rightharpoonup \uveta$ in $L^2(0,T;\V)$ and $\partial_t \uhtauHat \rightharpoonup \partial_t \uveta$ in $L^2(0,T;\V)$ (up to subsequences). Hence, for $h,\tau\rightarrow 0$, we obtain
 \begin{align*}
  \int_0^T a(\partial_t \uveta, \v) \, dt
  =
  - \int_0^{T} a\left( \uveta - \u_0 , \partial_t \v\right),
 \end{align*}
 and thereby~(W3)$_{\zeta\eta}$.
\end{proof}

\subsection{Identifying a weak solution for $h,\tau\rightarrow 0$}

Finally, we show the limit $(\uveta,\chiveta)$, introduced in the previous section, is a weak solution of the doubly regularized unsaturated poroelasticity model, cf. Definition~\ref{definition:regularization:weak-solution}.

\begin{lemma}[Limit satisfies (W1)$_{\zeta\eta}$--(W4)$_{\zeta\eta}$]\label{lemma:limit-satisfies-w1-w3}
 The limit $(\uveta,\chiveta)$ introduced in the previous section is a weak solution to the doubly regularized unsaturated poroelasticity model, cf.\ Definition~\ref{definition:regularization:weak-solution}.
\end{lemma}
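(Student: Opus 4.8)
The plan is to read off the four defining conditions of Definition~\ref{definition:regularization:weak-solution} from the compactness statements of Lemmas~\ref{lemma:convergence-u}--\ref{lemma:htau:initial-conditions-u}, passing to the limit $h,\tau\to0$ in the fully discrete scheme~\eqref{ht:kirchhoff-reduced:u}--\eqref{ht:kirchhoff-reduced:p}.

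Three of the conditions are essentially already available. Condition (W1)$_{\zeta\eta}$ holds since $\pek(\chiveta)\in L^2(Q_T)$ is the weak limit identified in~\eqref{result:lemma:convergence-pek}, while $\swk(\chiveta)\in L^\infty(Q_T)$ because $\swk$ takes values in $(0,1]$ by~(A2) and $\chihtau\to\chiveta$ a.e.\ on $Q_T$. Condition (W2)$_{\zeta\eta}$ follows from~\eqref{result:lemma:convergence-bk}, which gives $\bke(\chiveta)\in L^\infty(0,T;L^1(\Omega))$, together with Lemma~\ref{lemma:convergence-dbdt}, which provides $\partial_t\bke(\chiveta)\in L^2(0,T;Q^\star)$ and the integrated identity. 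Condition (W3)$_{\zeta\eta}$ is exactly Lemma~\ref{lemma:htau:initial-conditions-u}. Hence only (W4)$_{\zeta\eta}$ requires real work.

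For (W4)$_{\zeta\eta}$, I would first note that testing~\eqref{ht:kirchhoff-reduced:u} with an arbitrary discrete sequence $\{\vh^n\}_n\subset\Vh$ and~\eqref{ht:kirchhoff-reduced:p} (divided by $\tau$) with $\{\qh^n\}_n\subset\Qh$, then multiplying by $\tau$ and summing over $n$, expresses the scheme entirely through the time interpolants of Section~\ref{section:discrete:interpolation:stability}:
\begin{align*}
 \int_0^T\!\big[\zeta a(\partial_t\uhtauHat,\bar{\v})+a(\uhtau,\bar{\v})-\alpha\langle\pek(\chihtau),\DIV\bar{\v}\rangle\big]\,dt &= \int_0^T\langle\fext^n,\bar{\v}\rangle\,dt,\\
 \int_0^T\!\big[\langle\bar{\lambda}_{h\tau},\bar{\q}\rangle+\alpha\langle\swk(\chihtau)\partial_t\DIV\uhtauHat,\bar{\q}\rangle+\langle\GRAD_h\chihtau,\GRAD_h\bar{\q}\rangle_{\absolutepermeability}\big]\,dt &= \int_0^T\langle\hext^n,\bar{\q}\rangle\,dt,
\end{align*}
where $\bar{\v},\bar{\q}$ are the piecewise-constant-in-time functions with values $\vh^n,\qh^n$ on $(t_{n-1},t_n]$, and $\fext^n,\hext^n$ are likewise read piecewise constant in time. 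I would then fix a test pair in a dense subclass of $L^2(0,T;\V)\times L^2(0,T;Q)$, say $\v\in C^1([0,T];\V)$ and $\q\in C^1([0,T];C^\infty(\bar{\Omega}))$ with $\q|_{\FlowDirichletBoundary}=0$, choosing $\vh^n\to\v(\cdot,t_n)$ a conforming finite element approximation (so $\bar{\v}\to\v$ in $H^1(0,T;\V)$ using~(D2)) and $\qh^n$ equal to the elementwise averages of $\tau^{-1}\int_{t_{n-1}}^{t_n}\q\,dt$ (so $\bar{\q}\to\q$ in $L^2(Q_T)$, with boundary traces converging in $L^2(\partial\Omega)$). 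The mechanics equation then passes to the limit using~\eqref{result:convergence-u:1}, \eqref{result:convergence-u:4}, \eqref{result:lemma:convergence-pek} and~(A7); in the flow equation the storage term is handled by summation by parts exactly as in the proof of Lemma~\ref{lemma:convergence-dbdt} (this detour is needed because $\bar{\lambda}_{h\tau}$ is controlled only in $L^2(0,T;Q^\star)$ whereas $\bar{\q}\notin L^2(0,T;Q)$, so a direct weak--strong pairing is unavailable), the coupling term converges by~\eqref{result:lemma:convergence-sw-divu}, and the source term by~(A7).

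The main obstacle is the discrete diffusion term. I would relate it to a genuine $L^2(Q_T)$ pairing of dual-grid gradient reconstructions: writing $(\overline{\GRAD q})_{h\tau}$ for the reconstruction built from $\bar{\q}$ just as $(\overline{\GRAD\chi})_{h\tau}$ is built from $\chihtau$ in Lemma~\ref{lemma:convergence-p}, the geometric identities $|P_\sigma|=d^{-1}|\sigma|d_\sigma$ and $d/d_\sigma=|\sigma|/|P_\sigma|$ used there yield
\begin{align*}
 d\int_0^T\langle\GRAD_h\chihtau,\GRAD_h\bar{\q}\rangle_{\absolutepermeability}\,dt = \int_{Q_T}\{\absolutepermeability\}_{h\tau}\,(\overline{\GRAD\chi})_{h\tau}\cdot(\overline{\GRAD q})_{h\tau}\,dx\,dt,
\end{align*}
with $\{\absolutepermeability\}_{h\tau}$ the dual-grid cell average of $\absolutepermeability$, up to boundary-face contributions treated in the same way. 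Since $\q$ is smooth, $(\overline{\GRAD q})_{h\tau}\to\GRAD\q$ strongly in $L^2(Q_T)$ by standard finite-volume consistency and $\{\absolutepermeability\}_{h\tau}\to\absolutepermeability$ (uniformly if $\absolutepermeability$ is continuous, otherwise boundedly a.e.\ by~(A6)); combined with the weak convergence $(\overline{\GRAD\chi})_{h\tau}\rightharpoonup\GRAD\chiveta$ of Lemma~\ref{lemma:convergence-p}, a weak-times-strong argument gives $\int_0^T\langle\GRAD_h\chihtau,\GRAD_h\bar{\q}\rangle_{\absolutepermeability}\,dt\to\int_0^T\langle\absolutepermeability\GRAD\chiveta,\GRAD\q\rangle\,dt$. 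Collecting all the limits yields~\eqref{model:regularized:weak:kirchhoff:u}--\eqref{model:regularized:weak:kirchhoff:p} for the dense subclass of test functions, and then for all $(\v,\q)\in L^2(0,T;\V)\times L^2(0,T;Q)$ by density; this establishes (W4)$_{\zeta\eta}$ and finishes the proof.
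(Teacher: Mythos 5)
Your structure matches the paper's — (W1)$_{\zeta\eta}$--(W3)$_{\zeta\eta}$ are harvested from the preceding compactness lemmas, and (W4)$_{\zeta\eta}$ is obtained by passing to the limit in the discrete scheme with suitably interpolated test functions, followed by density. The way you recast the storage term via summation by parts is sound and is essentially the mechanism behind Lemma~\ref{lemma:convergence-dbdt}.

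There is, however, a genuine gap in your treatment of the diffusion term. You define $(\overline{\GRAD q})_{h\tau}$ as the dual-grid reconstruction of the piecewise-constant function $\bar q_{h\tau}$, i.e.\ $(\overline{\GRAD q})_{h\tau}|_{P_{K|L}} = d\,\frac{\bar q_L - \bar q_K}{d_{K|L}}\,\n_{K|L}$, and assert that this converges \emph{strongly} in $L^2(Q_T)$ to $\GRAD q$. This is false. That reconstruction only captures the normal component of the difference quotient, so for smooth $q$ it behaves like $d(\GRAD q \cdot \n_{K|L})\,\n_{K|L}$; it converges to $\GRAD q$ only \emph{weakly} (the isotropic averaging of $d\,n\,n^T$ over face directions yields the identity), not strongly. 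Indeed, your identity $d\int_0^T\langle\GRAD_h\chihtau,\GRAD_h\bar q_{h\tau}\rangle_{\absolutepermeability}\,dt = \int_{Q_T}\{\absolutepermeability\}(\overline{\GRAD\chi})_{h\tau}\cdot(\overline{\GRAD q})_{h\tau}\,dx\,dt$ is correct, but if both factors converged as you state (weak $\times$ strong) the right-hand side would tend to $\int\absolutepermeability\GRAD\chiveta\cdot\GRAD q$, and hence the discrete diffusion term would tend to $\tfrac{1}{d}\int\absolutepermeability\GRAD\chiveta\cdot\GRAD q$ — off by a factor of $1/d$. That contradiction shows the strong convergence cannot hold.

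The paper avoids this by a different reconstruction of the test-function gradient. Using the cell-center interpolant $\mathcal{I}_{\Qh}q^n$ (so that the differences are genuine point evaluations), the mean value theorem rewrites $\tfrac{1}{d_{K|L}}\bigl(q^n(\x_K)-q^n(\x_L)\bigr) = \GRAD q^n(\x_{K|L})\cdot\n_{L|K}$ for an interior point $\x_{K|L}\in P_{K|L}$. Since $(\overline{\GRAD\chi})_{h\tau}$ is aligned with $\n_{K|L}$ on $P_{K|L}$, the scalar pairing can be promoted to a vector pairing with the \emph{full} gradient, and the factor of $d$ is absorbed exactly by $|P_\sigma| = d^{-1}|\sigma|d_\sigma$: one obtains
$\int_0^T\langle\GRAD_h\chihtau,\GRAD_h\bar q_{h\tau}\rangle_{\absolutepermeability}\,dt = \int_{Q_T}\{\absolutepermeability\}_{\mathcal{T}}\,(\overline{\GRAD\chi})_{h\tau}\cdot(\overline{\GRAD q})_{h\tau}\,dx\,dt$
\emph{without} the factor $d$, where now $(\overline{\GRAD q})_{h\tau}|_{P_\sigma\times(t_{n-1},t_n]} := \GRAD q^n(\x_\sigma)$. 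This latter field, being a pointwise evaluation of the exact gradient of a smooth function, \emph{does} converge strongly in $L^2(Q_T)$ (by dominated convergence), and then the weak-times-strong argument closes correctly. In short: the key idea you are missing is that the reconstruction of the test gradient must be the exact continuous gradient sampled at an interior point of the diamond (made admissible by the alignment of $(\overline{\GRAD\chi})_{h\tau}$ with the face normal), not the discrete difference-quotient reconstruction.
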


\begin{proof}
 The limit $(\uveta,\chiveta)$ satisfies~(W1)$_{\zeta\eta}$--(W3)$_{\zeta\eta}$ by Lemma~\ref{lemma:convergence-u}, Lemma~\ref{lemma:convergence-p} Lemma~\ref{lemma:convergence-coupling}, and Lemma~\ref{lemma:htau:initial-conditions-u}. It remains to show~(W4)$_{\zeta\eta}$, i.e., that $(\uveta,\chiveta)$ satisfies the balance equations~\eqref{model:regularized:weak:kirchhoff:u}--\eqref{model:regularized:weak:kirchhoff:p}. We first consider sufficiently smooth test functions and then use a density argument. Let $(\v,q)\in L^2(0,T;\V \cap C^\infty(\Omega)^d) \times L^2(0,T;Q  \cap C^\infty(\Omega))$. For given mesh $\mathcal{T}$, we define spatial projection and interpolation operators, respectively, by
\begin{alignat}{4}
 \label{projection-operator-vh}
 \Pi_{\Vh} \,&:\, \V \cap C^\infty(\Omega) \rightarrow \Vh, &\ &\mathrm{s.t.\quad } &\llangle \Pi_{\Vh} \v , \vh \rrangle &= \llangle \v, \vh \rrangle&\ &\text{for all } \vh\in\Vh,\\
 \label{interpolation-operator-qh}
 \mathcal{I}_{\Qh} \,&:\, \Q \cap C^\infty(\Omega) \rightarrow \Qh, &\ &\mathrm{s.t.\ } &{{\mathcal{I}_{\Qh}q}_|}_K &= q(\x_K)\ &\ &\text{for all } K\in\mathcal{T}.
\end{alignat}
Using those, we define piecewise-constant-in-time interpolants of $(\v,q)$
\begin{alignat}{6}
\label{smooth-interpolation-vh}
 \bar{\v}_{h\tau}(t) &:= \vh^n,&\ &t\in(t_{n-1},t_n],&\qquad&  \vh^n &:= \Pi_{\Vh}\v^n,\qquad &&\v^n&:=  \tau^{-1}\int_{t_{n-1}}^{t_n} \v \, dt, \\
\label{smooth-interpolation-qh}
 \bar{q}_{h\tau}(t)  &:= \qh^n,&\ &t\in(t_{n-1},t_n],&\qquad& \qh^n &:= \mathcal{I}_{\Qh} q^n,\qquad &&q^n &:=  \tau^{-1}\int_{t_{n-1}}^{t_n} q \, dt.
\end{alignat}
Similarly, let
\begin{alignat*}{2}
 \bar{\f}_\mathrm{ext,\tau}(t) &:= \fext^n, &\ &t\in(t_{n-1},t_n],\\
 \bar{h}_\mathrm{ext,\tau}(t)  &:= \hext^n,  &\ &t\in(t_{n-1},t_n].
\end{alignat*}
Combining classical results, based on the assumed regularity~(A7), for $h,\tau\rightarrow 0$ it holds that
\begin{alignat*}{2}
  \bar{\v}_{h\tau} &\rightarrow \v  &\ &\text{in } L^2(0,T;\V), \\
  \bar{q}_{h\tau}  &\rightarrow q   &\ &\text{in } L^2(0,T;\Q),\\
  \bar{\f}_\mathrm{ext,\tau}    &\rightarrow \fext  &\ &\text{in } L^2(0,T;\V^\star),\\
  \bar{h}_\mathrm{ext,\tau}     &\rightarrow \hext  &\ &\text{in } L^2(0,T;\Q^\star).
\end{alignat*}
We choose $\vh=\vh^n$ and $\qh=\qh^n$ as test functions in~\eqref{ht:kirchhoff-reduced:u}--\eqref{ht:kirchhoff-reduced:p}, multiply both equations with $\tau$ and sum over all time steps $1$ to $N$; we obtain
 \begin{align}
 \label{proof:limit-weak-solution:aux-1:start}
 \int_0^T  \Big[ \lambdavisco \llangle \partial_t\DIV \uhtauHat, \DIV \bar{\v}_{h\tau} \rrangle 
 + a(\uhtau, \bar{\v}_{h\tau})
 - \alpha \llangle \pek(\chihtau), \DIV \bar{\v}_{h\tau} \rrangle \Big]& \, dt = \int_0^T \llangle \bar{\f}_\mathrm{ext,\tau}, \bar{\v}_{h\tau} \rrangle \, dt, \\
 \int_0^T \bigg[ 
  \llangle \bar{\lambda}_{h\tau}, \bar{q}_{h\tau} \rrangle \, +
  \alpha \llangle \swk(\chihtau) \partial_t \DIV \uhtauHat, \bar{q}_{h\tau} \rrangle 
 + \llangle \GRAD_h \chihtau, \GRAD_h \bar{q}_{h\tau} \rrangle_{\absolutepermeability}
 \bigg]& \, dt = \int_0^T \llangle \bar{h}_\mathrm{ext,\tau}, \bar{q}_{h\tau} \rrangle\, dt.
 \label{proof:limit-weak-solution:aux-1:end}
 \end{align}
 For most terms we can apply the fact that the product of weakly and strongly convergent sequences converge to the product of their limits. The only term needing discussion is the diffusion term in the flow equation. For this, we follow an argument by~\cite{Saad2013}. 
 
 By definition of the continuous extension of the discrete gradient $\left(\overline{\GRAD \chi}\right)_{h\tau}$, it holds that
 \begin{align*}
  &\int_0^T \llangle \GRAD_h \chihtau, \GRAD_h \bar{q}_{h\tau} \rrangle_{\absolutepermeability} \, dt \\
  &\quad=
  \sum_{n=1}^N \tau \sum_{K\in\mathcal{T}} \sum_{L\in\mathcal{N}(K)} \tau_{K|L} \{ \absolutepermeability \}_{K|L} \left( {{\chih^n}_|}_K - {{\chih^n}_|}_L \right) \, \left( \q^n(\x_K) - \q^n(\x_L) \right)\\
  &\qquad+
  \sum_{n=1}^N \tau \sum_{\sigma \in \mathcal{E}_\mathrm{ext} \cap \mathcal{E}_K } \tau_\sigma \{ \absolutepermeability \}_{\sigma} {{\chih^n}_|}_K  \, \q^n(\x_K) \\
  &\quad=
  \sum_{n=1}^N \tau \sum_{K\in\mathcal{T}} \sum_{L\in\mathcal{N}(K)} |P_{K|L}| \{ \absolutepermeability \}_{K|L} {{\left(\overline{\GRAD \chi}\right)_{h\tau}}_|}_{P_{K|L} \times (t_{n-1},t_n]} \cdot \n_{L|K} \, \tfrac{1}{d_{K|L}}\left( \q^n(\x_K) - \q^n(\x_L) \right)\\
  &\qquad+
  \sum_{n=1}^N \tau \sum_{\sigma\in\mathcal{E}_\mathrm{ext}\cap\mathcal{E}_K} |P_\sigma| \{ \absolutepermeability \}_{\sigma} {{\left(\overline{\GRAD \chi}\right)_{h\tau}}_|}_{P_\sigma \times (t_{n-1},t_n]} \cdot \left(-\n_{\sigma,K}\right) \, \tfrac{1}{d_{\sigma,K}}\q^n(\x_K).
 \end{align*}
 By the mean value theorem, there exists an $\x_{K|L} \in P_{K|L}$ on the line between $\x_K$ and $\x_L$, and an $\x_{\sigma} \in P_\sigma$ on the line between $\x_K$ and the closest point of $\x_K$ on $\sigma$ such that
 \begin{align*}
  \tfrac{1}{d_{K|L}} \left(q^n(\x_K) - q^n(\x_L) \right) &= \GRAD q^n(\x_{K|L}) \cdot \n_{L|K}, \\
  \tfrac{1}{d_{\sigma,K}} q^n(\x_K) &= \GRAD q^n(\x_\sigma) \cdot \left(-\n_{\sigma,K}\right).
 \end{align*}
 Due to identical alignment of the discrete gradients, it holds that
 \begin{align*}
  &\int_0^T \llangle \GRAD_h \chihtau, \GRAD_h \bar{q}_{h\tau} \rrangle_{\absolutepermeability} \, dt \\
  &\quad=
  \sum_{n=1}^N \tau \sum_{K\in\mathcal{T}} \sum_{L\in\mathcal{N}(K)} |P_{K|L}| \{ \absolutepermeability \}_{K|L} {{\left(\overline{\GRAD \chi}\right)_{h\tau}}_|}_{P_{K|L} \times (t_{n-1},t_n]} \cdot \GRAD \q^n(\x_{K|L})\\
  &\qquad+
  \sum_{n=1}^N \tau \sum_{\sigma\in\mathcal{E}_\mathrm{ext}\cap\mathcal{E}_K} |P_\sigma| \{ \absolutepermeability \}_{\sigma} {{\left(\overline{\GRAD \chi}\right)_{h\tau}}_|}_{P_\sigma \times (t_{n-1},t_n]} \cdot \GRAD q(\x_\sigma).
 \end{align*}
 We define the piecewise constant functions
 \begin{alignat*}{2}
  \left(\overline{\GRAD q}\right)_{h\tau}(x,t) &= \GRAD q^n(\x_{\sigma}), &\quad& (\x,t)\in P_{\sigma}\times (t_{n-1},t_n],\ \sigma\in\mathcal{E},\\[5pt]
  \{ \absolutepermeability \}_\mathcal{T}(\x) &= \{ \absolutepermeability \}_{\sigma}, && \x\in P_{\sigma},\ \sigma\in\mathcal{E}.
 \end{alignat*}
 We obtain for $h,\tau\rightarrow 0$
 \begin{align*}
  &\int_0^T \llangle \GRAD_h \chihtau, \GRAD_h \bar{q}_{h\tau} \rrangle_{\absolutepermeability} \, dt \\
  &\qquad=
  \int_0^T \int_{\Omega} \{ \absolutepermeability\}_\mathcal{T} \left(\overline{\GRAD \chi}\right)_{h\tau}\cdot \left(\overline{\GRAD q}\right)_{h\tau} \,dx\, dt
  \rightarrow \int_0^T \int_{\Omega} \absolutepermeability \GRAD \chiveta \cdot \GRAD q \,dx\, dt.
 \end{align*}
 Indeed, due to sufficient regularity, it holds that $\left(\overline{\GRAD q}\right)_{h\tau}\rightarrow \GRAD q$ a.e., and also in $L^2(Q_T)$ by the dominated convergence theorem. Furthermore, it holds that $\{ \absolutepermeability\}_{\mathcal{T}} \rightarrow \absolutepermeability$ in $L^\infty(Q_T)$, and by Lemma~\ref{lemma:convergence-p}, it holds that $\left(\overline{\GRAD \chi}\right)_{h\tau}\rightharpoonup \GRAD \chiveta$ in $L^2(Q_T)$. That suffices to discuss the product.
 
 All in all, together with the convergence properties of the test functions $\bar{\v}_{h\tau}$, $\bar{q}_{h\tau}$, the source terms $\bar{\f}_\mathrm{ext,\tau}$, $\bar{h}_\mathrm{ext,\tau}$, and the interpolants for the fully discrete approximations (cf.\ Lemma~\ref{lemma:convergence-u}, Lemma~\ref{lemma:convergence-p}, Lemma~\ref{lemma:convergence-coupling} and Lemma~\ref{lemma:convergence-dbdt}), we conclude that~\eqref{proof:limit-weak-solution:aux-1:start}--\eqref{proof:limit-weak-solution:aux-1:end} converges to~\eqref{model:regularized:weak:kirchhoff:u}--\eqref{model:regularized:weak:kirchhoff:p}, evaluated in $(\uveta,\chiveta)$ and tested with $(\v,q)\in L^2(0,T;\V \cap C^\infty(\Omega)^d) \times L^2(0,T;Q  \cap C^\infty(\Omega))$. Finally, a density argument yields the final result.
\end{proof}

\section{Step 4: Increased regularity in a non-degenerate case}\label{section:improved-regularity-nondegenerate-case}

In the following, further stability estimates for the fully-discrete problem are derived, allowing for showing that the limit $(\uveta,\chiveta)$ introduced in the previous section also satisfies (W5)$_{\zeta\eta}$ and (W6)$_{\zeta\eta}$, i.e., we prove Lemma~\ref{lemma:existence-doubly-regularized-increased-regularity}. For this, non-degeneracy assumptions are required. For compact presentation throughout the entire section, we assume~(A0)--(A9) and~(ND1)--(ND2) hold true, and we define $\uh^{-1}:=\uh^0$.

\subsection{Improved stability estimates for fully-discrete approximation}

\begin{lemma}[Improved stability estimate for the structural velocity]\label{lemma:stability-discrete-time-derivative-mechanics}
 There exists a constant $\htauStabilityDuDt>0$ (independent of $h,\tau$), satisfying
 \begin{align*}
  &\zeta \, \underset{n}{\mathrm{sup}}\, \left\|\tau^{-1} (\uh^n - \uh^{n-1} ) \right\|_{\V}^2
  + \sum_{n=1}^N \tau^{-1}  \| \uh^n - \uh^{n-1} \|_{\V}^2 \\
  &\quad+ \zeta \,\sum_{n=1}^N \left\|\tau^{-1} (\uh^n - \uh^{n-1} ) - \tau^{-1} (\uh^{n-1} - \uh^{n-2} ) \right\|_{\V}^2 \\
  &\quad + \sum_{n=1}^N \tau^{-1} \| \pek(\chih^n) - \pek(\chih^{n-1}) \|^2 \\
  &\quad\leq 
  \htauStabilityDuDt\left( \| \partial_t \fext \|_{L^2(Q_T)}^2, \frac{C_\mathrm{ND,2}}{b_\mathrm{\chi,m}} \, \htauStabilityDchiDt \right),
 \end{align*}
 where $\htauStabilityDchiDt$ is the stability constant from Lemma~\ref{lemma:discrete-solution-apriori-estimate-2}, $C_\mathrm{ND,2}$ comes from the non-degeneracy condition $\mathrm{(ND2)}$, and $b_\mathrm{\chi,m}$ comes from the growth condition $\mathrm{(A1}^\star\mathrm{)}$.
\end{lemma}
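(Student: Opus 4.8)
The plan is to derive the estimate from a discrete, differentiated form of the mechanics equation tested against the structural velocity. First I would subtract two consecutive copies of~\eqref{ht:kirchhoff-reduced:u} (at times $n$ and $n-1$); using the convention $\uh^{-1}:=\uh^0$ and setting $\fext^0:=\fext(0)$, this gives, for all $\vh\in\Vh$ and all $n\geq 1$,
\begin{align*}
 \zeta\tau^{-1} a\big(\uh^n - 2\uh^{n-1} + \uh^{n-2}, \vh\big)
 + a\big(\uh^n - \uh^{n-1}, \vh\big)
 - \alpha\llangle \pek(\chih^n) - \pek(\chih^{n-1}), \DIV \vh \rrangle
 = \llangle \fext^n - \fext^{n-1}, \vh \rrangle,
\end{align*}
where for $n=1$ this is precisely the difference of~\eqref{ht:kirchhoff-reduced:u} at $n=1$ and the compatibility identity~\eqref{compatibility:initial-conditions} defining $\uh^0$ (so $\uh^1-2\uh^0+\uh^{-1}=\uh^1-\uh^0$). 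Abbreviating $\bm{d}^n:=\uh^n-\uh^{n-1}$, so that $\bm{d}^0=\bm{0}$, I would then test with $\vh=\tau^{-1}\bm{d}^n$, apply the binomial identity~\eqref{binomial-identity} to the $\zeta$-term, and sum over $n=1,\dots,N'$ for an arbitrary $N'\leq N$. The $a(\uh^n-\uh^{n-1},\cdot)$-term contributes the coercive sum $\sum_n\tau^{-1}\|\bm{d}^n\|_{\V}^2$ directly (no telescoping), so that \emph{no} discrete Grönwall step is required; the $\zeta$-term telescopes to $\tfrac{\zeta}{2}\|\tau^{-1}\bm{d}^{N'}\|_{\V}^2+\tfrac{\zeta}{2}\sum_n\|\tau^{-1}(\bm{d}^n-\bm{d}^{n-1})\|_{\V}^2$, the boundary contribution at $n=0$ vanishing since $\bm{d}^0=\bm 0$.

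Next I would estimate the two right-hand side sums. For the coupling term, use $\|\DIV\bm{d}^n\|\leq K_\mathrm{dr}^{-1/2}\|\bm{d}^n\|_{\V}$ (from $a(\cdot,\cdot)\geq K_\mathrm{dr}\|\DIV\cdot\|^2$), Cauchy--Schwarz, Young's inequality, and the Lipschitz continuity of $\pek$ from~$\mathrm{(ND2)}$, i.e.\ $\|\pek(\chih^n)-\pek(\chih^{n-1})\|\leq C_\mathrm{ND,2}\|\chih^n-\chih^{n-1}\|$; for the source term, use Cauchy--Schwarz in $\V^\star\times\V$ and Young's inequality. Choosing the Young weights so that the displacement contributions add up to at most $\tfrac12\sum_n\tau^{-1}\|\bm{d}^n\|_{\V}^2$, I absorb them into the left-hand side and obtain
\begin{align*}
 \tfrac{\zeta}{2}\big\|\tau^{-1}\bm{d}^{N'}\big\|_{\V}^2
 + \tfrac{\zeta}{2}\sum_{n=1}^{N'} \big\|\tau^{-1}(\bm{d}^n-\bm{d}^{n-1})\big\|_{\V}^2
 + \tfrac12\sum_{n=1}^{N'} \tau^{-1}\|\bm{d}^n\|_{\V}^2
 \lesssim
 \frac{\alpha^2 C_\mathrm{ND,2}^2}{K_\mathrm{dr}}\sum_{n=1}^{N}\tau^{-1}\|\chih^n-\chih^{n-1}\|^2
 + \sum_{n=1}^{N}\tau^{-1}\|\fext^n-\fext^{n-1}\|_{\V^\star}^2 .
\end{align*}
The first sum on the right is controlled by Lemma~\ref{lemma:discrete-solution-apriori-estimate-2}, namely $\sum_n\tau^{-1}\|\chih^n-\chih^{n-1}\|^2\leq b_{\chi,\mathrm{m}}^{-1}\htauStabilityDchiDt$; the second by the regularity~$\mathrm{(A7)}$, via the standard bound $\sum_n\tau^{-1}\|\fext^n-\fext^{n-1}\|_{\V^\star}^2\lesssim\|\partial_t\fext\|_{L^2(0,T;\V^\star)}^2$ (for $n=1$ using $\fext^1-\fext^0=\tfrac1\tau\int_0^\tau(\fext(t)-\fext(0))\,dt$ and $\fext\in H^1(0,T;\V^\star)\hookrightarrow C(0,T;\V^\star)$). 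Since the right-hand side does not depend on $N'$, taking the supremum over $N'\leq N$ yields the first three terms of the asserted estimate, and the remaining term $\sum_n\tau^{-1}\|\pek(\chih^n)-\pek(\chih^{n-1})\|^2$ is bounded by the same chain $\mathrm{(ND2)}$ + Lemma~\ref{lemma:discrete-solution-apriori-estimate-2}; this also exhibits the claimed dependence $\htauStabilityDuDt=\htauStabilityDuDt\!\big(\|\partial_t\fext\|_{L^2}^2,\tfrac{C_\mathrm{ND,2}}{b_{\chi,\mathrm{m}}}\htauStabilityDchiDt\big)$.

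I do not expect a serious obstacle: in contrast with the coupling term in Step~3 (Lemma~\ref{lemma:discrete-solution-apriori-estimate-1}), the mismatched coupling term now carries the \emph{increment} $\pek(\chih^n)-\pek(\chih^{n-1})$, which $\mathrm{(ND2)}$ turns into $\|\chih^n-\chih^{n-1}\|$, already controlled by the previously established Lemma~\ref{lemma:discrete-solution-apriori-estimate-2}. The only points needing care are (i) the bookkeeping of the Young weights so that every displacement-velocity term is absorbable without a discrete Grönwall argument — which works precisely because the differentiated equation supplies $a(\uh^n-\uh^{n-1},\cdot)$ rather than $a(\uh^n,\cdot)$ — and (ii) the first time step, where the choices $\uh^{-1}:=\uh^0$ and $\fext^0:=\fext(0)$ make the differentiated equation valid through~\eqref{compatibility:initial-conditions} and make the boundary term $\tfrac{\zeta}{2}\|\tau^{-1}\bm{d}^0\|_{\V}^2$ vanish.
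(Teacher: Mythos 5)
Your proposal is correct and follows essentially the same line as the paper's proof: both difference the discrete mechanics equation at consecutive time steps (using $\uh^{-1}:=\uh^0$ and the compatibility condition~\eqref{compatibility:initial-conditions} to cover $n=1$), test with $\tau^{-1}(\uh^n-\uh^{n-1})$, apply the binomial identity so the $\zeta$-term telescopes while the elastic term is directly coercive, then close with Cauchy--Schwarz/Young, the Lipschitz bound from~(ND2), and the estimate from Lemma~\ref{lemma:discrete-solution-apriori-estimate-2}. Your handling of the partial sums to $N'$ and the explicit bound on $\tau^{-1}\|\fext^1-\fext^0\|_{\V^\star}^2$ via $\fext^0:=\fext(0)$ are slightly more detailed than the paper's write-up but amount to the same argument.
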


\begin{proof}
 First we observe, that the compatibility condition for the initial conditions~\eqref{compatibility:initial-conditions} is equivalent to the mechanics equation~\eqref{ht:kirchhoff-reduced:u} for $n=0$, since $\uh^0 - \uh^{-1}=\bm{0}$. This allows for considering the difference of the mechanics equation~\eqref{ht:kirchhoff-reduced:u} at time steps $n$ and $n-1$, $n\geq1$,
 \begin{align*}
   &\zeta a \left(\tau^{-1} (\uh^n - \uh^{n-1}) - (\uh^{n-1} - \uh^{n-2}), \vh\right)
   +
   a(\uh^n - \uh^{n-1},\vh) \\
   &\qquad - 
   \alpha \langle \pek(\chih^n) - \pek(\chih^{n-1}), \DIV \vh \rangle 
   = \langle \fext^n - \fext^{n-1}, \vh \rangle \quad \text{for all } \vh\in\Vh.
 \end{align*}
 By testing with $\vh=\tau^{-1}(\uh^n - \uh^{n-1})$ and using the binomial identity~\eqref{binomial-identity}, we obtain
 \begin{align*}
  &\frac{\zeta}{2} \Big( 
  \left\|\tau^{-1} (\uh^n - \uh^{n-1} ) \right\|_{\V}^2
  -\left\| \tau^{-1} (\uh^{n-1} - \uh^{n-2} ) \right\|_{\V}^2 \\
  &\quad+ \left\|\tau^{-1} (\uh^n - \uh^{n-1} ) - \tau^{-1} (\uh^{n-1} - \uh^{n-2} ) \right\|_{\V}^2 \Big)
  + \tau^{-1}  \| \uh^n - \uh^{n-1} \|_{\V}^2 \\
  &\quad=
  \tau^{-1} \llangle \fext^n - \fext^{n-1}, \uh^n - \uh^{n-1} \rrangle
  +
  \alpha
  \tau^{-1} \llangle \pek(\chih^n) - \pek(\chih^{n-1}), \uh^n - \uh^{n-1} \rrangle.
 \end{align*}
 Summing over $n\in\{1,...,N\}$, yields after applying the Cauchy-Schwarz inequality and Young's inequality for the right hand side terms
 \begin{align} 
\nonumber
  &\frac{\zeta}{2} 
   \left\|\tau^{-1} (\uh^n - \uh^{n-1} ) \right\|_{\V}^2
  + \frac{1}{2} \sum_{n=1}^N \tau^{-1}  \| \uh^n - \uh^{n-1} \|_{\V}^2 \\
  \nonumber
  &\quad+ \frac{1}{2} \sum_{n=1}^N \left\|\tau^{-1} (\uh^n - \uh^{n-1} ) - \tau^{-1} (\uh^{n-1} - \uh^{n-2} ) \right\|_{\V}^2 \\
  &\quad\leq 
  \sum_{n=1}^N \tau^{-1} \left\| \fext^n - \fext^{n-1} \right\|_{\V^\star}^2
  +\frac{\alpha^2}{K_\mathrm{dr}}  \sum_{n=1}^N \tau^{-1} \left\| \pek(\chih^n) - \pek(\chih^{n-1}) \right\|^2.
 \label{proof:stability-discrete-time-derivative-mechanics:aux-1:end}
 \end{align}
 Due to~(ND2), $\pek=\pek(\chi)$ is Lipschitz continuous. Therefore, by Lemma~\ref{lemma:discrete-solution-apriori-estimate-2} it holds that
 \begin{align*}
  \sum_{n=1}^N \tau^{-1} \| \pek(\chih^n) - \pek(\chih^{n-1}) \|^2 \leq C_\mathrm{ND,2}^2 \frac{\htauStabilityDchiDt}{b_\mathrm{\chi,m}},
 \end{align*}
 which together with~\eqref{proof:stability-discrete-time-derivative-mechanics:aux-1:end} concludes the proof.
\end{proof}

\begin{lemma}[Consequence for the structural acceleration]\label{lemma:stability-d2udtt}
 There exists a constant $\htauStabilityDuDtt>0$ (independent of $h,\tau$), such that
  \begin{align*}
  & \sum_{n=1}^N \tau \left\| \frac{\uh^n - 2\uh^{n-1} + \uh^{n-2} }{\tau^2} \right\|_{\V}^2 \leq \htauStabilityDuDtt\left( \zeta^{-2}\htauStabilityDuDt\right),
 \end{align*}
 where $\htauStabilityDuDt$ is the stability constant from Lemma~\ref{lemma:stability-discrete-time-derivative-mechanics}.
\end{lemma}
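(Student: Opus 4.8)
The idea is to test the second-order time difference of the discrete mechanics equation with the discrete structural acceleration itself, and then to recycle the bound already obtained in Lemma~\ref{lemma:stability-discrete-time-derivative-mechanics}. Recall that, with the convention $\uh^{-1}:=\uh^0$ (so that for $n=1$ the identity below is just the difference of~\eqref{ht:kirchhoff-reduced:u} at $n=1$ and the compatibility condition~\eqref{compatibility:initial-conditions}, with $\fext^0:=\fext(0)$), one has for every $n\geq 1$
\begin{align*}
 \zeta\tau^{-1} a\bigl(\uh^n - 2\uh^{n-1} + \uh^{n-2}, \vh\bigr)
 + a\bigl(\uh^n - \uh^{n-1}, \vh\bigr)
 - \alpha \llangle \pek(\chih^n) - \pek(\chih^{n-1}), \DIV \vh \rrangle
 = \llangle \fext^n - \fext^{n-1}, \vh \rrangle
\end{align*}
for all $\vh\in\Vh$. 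First I would take $\vh := \tau^{-2}\bigl(\uh^n - 2\uh^{n-1} + \uh^{n-2}\bigr)\in\Vh$. By bilinearity and symmetry of $a(\cdot,\cdot)$ the first term on the left is then exactly $\zeta\tau\,\Vnorm{\vh}^2$, and I would move the two remaining left-hand terms to the right, where they are bounded by the Cauchy--Schwarz inequality for $a(\cdot,\cdot)$, the elementary inequality $\|\DIV\vh\|\leq \Kdr^{-1/2}\Vnorm{\vh}$ (which follows from $(\DIV\vh)^2\leq d\,|\eps{\vh}|^2$ pointwise), and $\V^\star$--$\V$ duality.

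Dividing the resulting inequality by $\Vnorm{\vh}$ (the case $\vh=\bm 0$ being trivial) gives
\begin{align*}
 \zeta\tau\,\Vnorm{\vh}
 \;\leq\;
 \Vnorm{\uh^n - \uh^{n-1}}
 + \frac{\alpha}{\sqrt{\Kdr}}\,\| \pek(\chih^n) - \pek(\chih^{n-1}) \|
 + \Vdualnorm{\fext^n - \fext^{n-1}}.
\end{align*}
Next I would square this, multiply by $3\zeta^{-2}\tau^{-1}$ and sum over $n\in\{1,\dots,N\}$, obtaining
\begin{align*}
 \sum_{n=1}^N \tau\,\Vnorm{\vh}^2
 \;\leq\;
 \frac{3}{\zeta^2\tau}\Biggl(
 \sum_{n=1}^N \Vnorm{\uh^n - \uh^{n-1}}^2
 + \frac{\alpha^2}{\Kdr}\sum_{n=1}^N \| \pek(\chih^n) - \pek(\chih^{n-1}) \|^2
 + \sum_{n=1}^N \Vdualnorm{\fext^n - \fext^{n-1}}^2
 \Biggr).
\end{align*}
The decisive observation is that each sum on the right is precisely $\tau$ times a quantity already controlled: $\sum_{n}\Vnorm{\uh^n - \uh^{n-1}}^2 = \tau\sum_{n}\tau^{-1}\Vnorm{\uh^n - \uh^{n-1}}^2$ and $\sum_{n}\|\pek(\chih^n) - \pek(\chih^{n-1})\|^2 = \tau\sum_{n}\tau^{-1}\|\pek(\chih^n) - \pek(\chih^{n-1})\|^2$, both bounded by $\tau\,\htauStabilityDuDt$ via Lemma~\ref{lemma:stability-discrete-time-derivative-mechanics}, while $\sum_{n}\Vdualnorm{\fext^n - \fext^{n-1}}^2 = \tau\sum_{n}\tau^{-1}\Vdualnorm{\fext^n - \fext^{n-1}}^2\leq \tau\,\|\partial_t\fext\|_{L^2(0,T;\V^\star)}^2$ by~(A7) and Jensen's inequality. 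Hence the prefactor $\tau$ cancels against $\tau^{-1}$, the left-hand side equals $\sum_{n}\tau\bigl\|\tfrac{\uh^n - 2\uh^{n-1} + \uh^{n-2}}{\tau^2}\bigr\|_\V^2$, and one arrives at the asserted bound with $\htauStabilityDuDtt := C\,\zeta^{-2}\htauStabilityDuDt$ for a constant $C>0$ depending only on $\alpha$, $\Kdr$ and the data.

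\paragraph{Expected obstacle.}
Analytically nothing here is deep --- it is a standard energy estimate --- so the only real point requiring care is the bookkeeping of the powers of $\tau$ and $\zeta$. Specifically, one must test with the \emph{acceleration} $\tau^{-2}(\uh^n - 2\uh^{n-1} + \uh^{n-2})$ and not with the second difference itself, since this is exactly what produces the $\zeta\tau\,\Vnorm{\vh}^2$ on the left that is needed to absorb the $\tau^{-1}$ appearing when the right-hand side sums are re-expressed through the (uniformly bounded) quantities of Lemma~\ref{lemma:stability-discrete-time-derivative-mechanics}. A secondary bookkeeping detail is the boundary index $n=1$, where $\fext^n-\fext^{n-1}$ is to be read as $\fext^1-\fext(0)$ through the compatibility condition~\eqref{compatibility:initial-conditions}, and $\| \fext^1 - \fext(0) \|_{\V^\star}^2 \leq \tau\,\|\partial_t\fext\|_{L^2(0,\tau;\V^\star)}^2$, so that this term is handled on the same footing as the others.
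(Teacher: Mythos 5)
Your proof is correct and follows essentially the same route as the paper: form the difference of the discrete mechanics equation at consecutive time steps, estimate the coupling and load terms by Cauchy--Schwarz, and invoke the bounds of Lemma~\ref{lemma:stability-discrete-time-derivative-mechanics}. The only cosmetic difference is that the paper phrases the conclusion as a supremum over arbitrary discrete test sequences $\{\vh^n\}_n$ followed by an appeal to the equivalence $\|\cdot\|_\V^2 = a(\cdot,\cdot)$ (i.e.\ the $a$-dual norm on $\Vh$ coincides with the primal $\V$-norm), whereas you directly substitute the maximizer $\vh^n = \tau^{-2}(\uh^n - 2\uh^{n-1}+\uh^{n-2})$ and do the resulting algebra by hand. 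The two arguments are interchangeable; your explicit choice makes the cancellation of $\tau$- and $\zeta$-powers more visible, while the paper's duality formulation is a little more compact. Your careful treatment of the $n=1$ boundary case via the compatibility condition~\eqref{compatibility:initial-conditions} is also consistent with the paper's convention $\uh^{-1}:=\uh^0$.
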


\begin{proof}
 Let $\{\vh^n \}_n\subset \Vh \setminus \{ \bm{0} \}$ be an arbitrary sequence of test functions. Consider the difference of~\eqref{ht:kirchhoff-reduced:u} at $n$ and $n-1$, $n\geq 1$; it holds that
 \begin{align*}
  &\tau^{-1} \,\zeta a\left(\uh^n - 2\uh^{n-1} + \uh^{n-2}, \vh^n \right) \\
  &\quad=
  \llangle \fext^n - \fext^{n-1}, \vh^n \rrangle
  - a\left( \uh^n - \uh^{n-1}, \vh^n \right)
  + \alpha \llangle \pek(\chih^n) - \pek(\chih^{n-1}), \DIV \vh^n \rrangle.
 \end{align*}
 Summing over $n\in\{1,...,N\}$, applying the Cauchy-Schwarz inequality and Lemma~\ref{lemma:stability-discrete-time-derivative-mechanics}, yields
  \begin{align*}
  &\underset{\{\vh^n\}_n\subset \Vh\setminus\{\bm{0}\}}{\mathrm{sup}} \, \frac{\zeta\,  \sum_{n=1}^N \tau^{-1}  a(\uh^n - 2\uh^{n-1} + \uh^{n-2}, \vh^n ) }{\left( \sum_{n=1}^N \tau \|\vh^n \|_{\V}^2\right)^{1/2}} \leq 3 \sqrt{\htauStabilityDuDt}.
 \end{align*}
 As $\|\cdot\|_{\V}^2 = a(\cdot,\cdot)$, we obtain equivalence of norms, which concludes the proof.
\end{proof}

\subsection{Improved stability estimates for interpolants in time}
We define piecewise linear interpolations of the discrete structural velocities and the pore pressure. For $t\in(t_{n-1},t_n]$, $n\geq 1$, let
\begin{align}
\label{interpolation:linear-structural-velocity}
 \vhtauHat(t) &:= \frac{\uh^{n-1} - \uh^{n-2}}{\tau} + \frac{t - t_{n-1}}{\tau} \, \frac{\uh^{n} - 2\uh^{n-1}+ \uh^{n-2}}{\tau}, \\
\label{interpolation:linear-pore-pressure} 
 \porepressurehtauHat(t) &:= \pek(\chih^{n-1}) + \frac{t - t_{n-1}}{\tau} \left( \pek(\chih^n) - \pek(\chih^{n-1}) \right).
\end{align}
Note that $\partial_t \uhtauHat$ defines the piecewise constant analog of $\vhtauHat$. 
Stability bounds are obtained as direct consequence of Lemma~\ref{lemma:stability-discrete-time-derivative-mechanics} and Lemma~\ref{lemma:stability-d2udtt}.

\begin{lemma}[Stability estimate for interpolations of the structural velocity]\label{lemma:stability-interpolant:du-dt}
Let $\uhtauHat$ and $\vhtauHat$, as defined by~\eqref{interpolation:linear-displacement} and~\eqref{interpolation:linear-structural-velocity}. For all $h,\tau>0$ and $\hat{\tau}\in(0,\tau)$, it holds that
 \begin{align}
 \label{result:stability-interpolant:du-dt:1}
  \| \partial_t \uhtauHat \|_{L^2(0,T;\V)}^2  &\leq \htauStabilityDuDt, \\
 \label{result:stability-interpolant:du-dt:2}
  \int_0^{T-\hat{\tau}} \| \partial_t \uhtauHat(t + \hat{\tau}) - \partial_t \uhtauHat(t) \|^2 \, dt &\leq \COmegaPoincareKorn^2\htauStabilityDuDt \hat{\tau},\\
 \label{result:stability-interpolant:du-dt:3}
  \| \vhtauHat \|_{L^2(0,T;\V)}^2 &\leq 2\htauStabilityDuDt, \\
 \label{result:stability-interpolant:du-dt:4}
  \| \vhtauHat - \partial_t \uhtauHat \|_{L^2(Q_T)}^2 &\leq 
  \frac{\COmegaPoincareKorn^2 \htauStabilityDuDtt}{\zeta} \tau^2, \\
 \label{result:stability-interpolant:du-dt:5}
  \| \partial_t \vhtauHat \|_{L^2(0,T;\V)}^2 &\leq \frac{\htauStabilityDuDtt}{\zeta},
 \end{align}
 where $\htauStabilityDuDt$ and $\htauStabilityDuDtt$ are the stability constants from Lemma~\ref{lemma:stability-discrete-time-derivative-mechanics} and Lemma~\ref{lemma:stability-d2udtt}, respectively, and $\COmegaPoincareKorn$ is the product of the Poincar\'e and the Korn constants.
\end{lemma}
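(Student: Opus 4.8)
The plan is to derive all five bounds directly from the two discrete stability estimates already available, Lemma~\ref{lemma:stability-discrete-time-derivative-mechanics} and Lemma~\ref{lemma:stability-d2udtt}, by identifying, on each subinterval $(t_{n-1},t_n]$, the time-derivatives of the two interpolants with explicit discrete differences. Set $v^n := \tau^{-1}(\uh^n - \uh^{n-1})$ and $w^n := \tau^{-2}(\uh^n - 2\uh^{n-1} + \uh^{n-2})$, so that (recalling $\uh^{-1}:=\uh^0$, hence $v^0 = \bm 0$) one has $v^n - v^{n-1} = \tau w^n$, and on $(t_{n-1},t_n]$: $\partial_t \uhtauHat \equiv v^n$ is piecewise constant; $\vhtauHat$ is the affine function joining $v^{n-1}$ at $t_{n-1}$ to $v^n$ at $t_n$; and $\partial_t \vhtauHat \equiv w^n$ is piecewise constant. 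Wherever a bound in the energy norm $\|\cdot\|_{\V}$ has to be converted to an $L^2(\Omega)$ bound I would use the Poincar\'e--Korn inequality $\|\cdot\| \le \COmegaPoincareKorn \|\cdot\|_{\V}$. All multiplicative constants below are absorbed into the $\zeta,\eta$-dependent constants $\htauStabilityDuDt$ and $\htauStabilityDuDtt$.

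For~\eqref{result:stability-interpolant:du-dt:1} and~\eqref{result:stability-interpolant:du-dt:5} I would simply integrate the piecewise constant functions: $\| \partial_t \uhtauHat \|_{L^2(0,T;\V)}^2 = \sum_{n=1}^N \tau \| v^n \|_{\V}^2 = \sum_{n=1}^N \tau^{-1}\| \uh^n - \uh^{n-1} \|_{\V}^2$, which is one of the terms bounded in Lemma~\ref{lemma:stability-discrete-time-derivative-mechanics}, and likewise $\| \partial_t \vhtauHat \|_{L^2(0,T;\V)}^2 = \sum_{n=1}^N \tau \| w^n \|_{\V}^2$, bounded in Lemma~\ref{lemma:stability-d2udtt}. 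For the translation estimate~\eqref{result:stability-interpolant:du-dt:2}, the standard computation for piecewise constant functions (as in the proof of Lemma~\ref{lemma:interpolants-apriori-estimate-1}) gives, for $\hat\tau\in(0,\tau)$, $\int_0^{T-\hat\tau} \| \partial_t \uhtauHat(t+\hat\tau) - \partial_t \uhtauHat(t) \|^2 \, dt = \hat\tau \sum_{n=1}^{N-1} \| v^{n+1} - v^n \|^2 \le \COmegaPoincareKorn^2 \hat\tau \sum_{n=1}^N \| v^n - v^{n-1} \|_{\V}^2$, and the last sum is exactly the acceleration-difference term controlled in Lemma~\ref{lemma:stability-discrete-time-derivative-mechanics}.

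For~\eqref{result:stability-interpolant:du-dt:3}, writing $s = (t-t_{n-1})/\tau$ and using convexity of $\|\cdot\|_{\V}^2$, $\int_{t_{n-1}}^{t_n} \| \vhtauHat \|_{\V}^2 \, dt = \tau\int_0^1 \|(1-s)v^{n-1}+sv^n\|_{\V}^2\,ds \le \tfrac{\tau}{2}(\| v^{n-1} \|_{\V}^2 + \| v^n \|_{\V}^2)$, and summing over $n$ (with $v^0 = \bm 0$) yields the claim. For~\eqref{result:stability-interpolant:du-dt:4}, the difference on $(t_{n-1},t_n]$ equals $\vhtauHat - \partial_t \uhtauHat = -(1-s)(v^n - v^{n-1}) = -(1-s)\,\tau\, w^n$, so, since $\int_0^1(1-s)^2\,ds = \tfrac13$, $\int_{t_{n-1}}^{t_n} \| \vhtauHat - \partial_t \uhtauHat \|^2 \, dt = \tfrac{\tau^3}{3} \| w^n \|^2 \le \tfrac{\COmegaPoincareKorn^2 \tau^2}{3}\, \tau \| w^n \|_{\V}^2$, and summation together with Lemma~\ref{lemma:stability-d2udtt} produces the stated $O(\tau^2)$ bound.

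The argument is entirely routine and there is no analytical obstacle. The only point that requires care is the bookkeeping --- correctly matching each interpolant's time-derivative to the right discrete difference (confusing $\partial_t\uhtauHat$ with $\vhtauHat$, or $\partial_t\vhtauHat$ with $v^n - v^{n-1}$ rather than $w^n$, would misplace a factor of $\tau$) --- together with the consistent use of the Poincar\'e--Korn inequality to pass from $\|\cdot\|_{\V}$ to $\|\cdot\|_{L^2(\Omega)}$ in~\eqref{result:stability-interpolant:du-dt:2} and~\eqref{result:stability-interpolant:du-dt:4}.
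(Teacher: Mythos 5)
Your proof is correct and mirrors the paper's argument: on each subinterval $(t_{n-1},t_n]$ one identifies $\partial_t\uhtauHat\equiv v^n$ and $\partial_t\vhtauHat\equiv w^n$, evaluates the elementary time integrals, and invokes Lemmas~\ref{lemma:stability-discrete-time-derivative-mechanics} and~\ref{lemma:stability-d2udtt}, using the Poincar\'e--Korn inequality to pass from $\|\cdot\|_{\V}$ to $\|\cdot\|$ in~\eqref{result:stability-interpolant:du-dt:2} and~\eqref{result:stability-interpolant:du-dt:4}. The only cosmetic difference is in~\eqref{result:stability-interpolant:du-dt:3}, where you use convexity of $\|\cdot\|_{\V}^2$ instead of the Young-type split $\|a+b\|_{\V}^2\leq 2\|a\|_{\V}^2+2\|b\|_{\V}^2$ that the paper applies; both give the claimed bound.
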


\begin{proof}
By construction, it holds that
\begin{align*}
 \| \partial_t \uhtauHat \|_{L^2(0,T;\V)}^2 &= \sum_{n=1}^N \tau^{-1} \left\| \uh^n - \uh^{n-1} \right\|_{\V}^2.
\end{align*}
Hence,~\eqref{result:stability-interpolant:du-dt:1} follows directly from Lemma~\ref{lemma:stability-discrete-time-derivative-mechanics}. The time-translation property~\eqref{result:stability-interpolant:du-dt:2} follows from the fact that $\partial_t \uhtauHat$ is piecewise constant. Analogous to the proof of Lemma~\ref{lemma:interpolants-apriori-estimate-1}, one can show
\begin{align*}
  &\int_0^{T-\tau} \| \partial_t \uhtauHat(t + \tau) - \partial_t \uhtauHat(t) \|^2 \, dt 
  =
  \hat{\tau} \sum_{n=1}^N \left\| \tau^{-1} \left( \uh^n - \uh^{n-1} \right) - \tau^{-1} \left( \uh^{n-1} - \uh^{n-2} \right) \right\|^2.
\end{align*}
Finally, after using a Poincar\'e inequality and Korn's inequality,~\eqref{result:stability-interpolant:du-dt:2} follows from Lemma~\ref{lemma:stability-discrete-time-derivative-mechanics}.

In order to show~\eqref{result:stability-interpolant:du-dt:3}, we expand the integral over the time interval. By definition of $\vhtauHat$, it holds that
\begin{align*}
 &\| \vhtauHat \|_{L^2(0,T;\V)}^2\\
 &\quad=
 \sum_{n=1}^N \tau^{-2} \int_{t_{n-1}}^{t_n} \left\| \left(\uh^{n-1} - \uh^{n-2}\right) + \frac{t - t_{n-1}}{\tau} \left( \uh^n - 2\uh^{n-1} + \uh^{n-2} \right) \right\|_{\V}^2 \, dt\\
 &\quad\leq
 2\sum_{n=2}^N \tau^{-2} \int_{t_{n-1}}^{t_n} \left(\frac{t-t_n}{\tau}\right)^2 \left\| \uh^{n-1} - \uh^{n-2}\right\|_{\V}^2 \, dt \\
 &\qquad+
 2\sum_{n=2}^N \tau^{-2} \int_{t_{n-1}}^{t_n} \left(\frac{t-t_{n-1}}{\tau}\right)^2 \left\| \uh^{n} - \uh^{n-1}\right\|_{\V}^2 \, dt \\
 &\quad\leq
 \frac{4}{3}
 \sum_{n=1}^N \tau^{-1} \left\| \uh^n - \uh^{n-1} \right\|_{\V}^2.
\end{align*}
Hence,~\eqref{result:stability-interpolant:du-dt:3} follows by Lemma~\ref{lemma:stability-discrete-time-derivative-mechanics}. In order to show~\eqref{result:stability-interpolant:du-dt:4}, we again expand the integral over the time interval. By definition of $\vhtauHat$ and $\uhtauHat$, it holds that
\begin{align*}
 &\| \vhtauHat - \partial_t \uhtauHat \|_{L^2(Q_T)}^2 \\
 &\quad=
 \sum_{1=2}^N \int_{t_{n-1}}^{t_n}  \left(1 - \frac{t-t_{n-1}}{\tau} \right)^2 \tau^{-2} \left\| \uh^{n} - 2\uh^{n-1} + \uh^{n-2} \right\|^2 \, dt \\
 &\quad=
 \frac{1}{3} \sum_{n=1}^N \tau^{-1} \left\| \uh^{n} - 2\uh^{n-1} + \uh^{n-2} \right\|^2.
\end{align*}
Hence, after employing a Poincar\'e inequality and Korn's inequality,~\eqref{result:stability-interpolant:du-dt:4} follows from Lemma~\ref{lemma:stability-d2udtt}. Finally,~\eqref{result:stability-interpolant:du-dt:5} follows directly from Lemma~\ref{lemma:stability-d2udtt}, since
\begin{align*}
 \| \partial_t \vhtauHat \|_{L^2(0,T;\V)}^2 &= \sum_{n=1}^N \tau \left\| \frac{\uh^n - 2\uh^{n-1} + \uh^{n-2}}{\tau^2}\right\|_{\V}^2.
\end{align*}
\end{proof}

\begin{lemma}[Stability result for the interpolation of the pore pressure]\label{lemma:stability-interpolant:time-derivative-porepressure}
For $\porepressurehtauHat$ defined in~\eqref{interpolation:linear-pore-pressure}. It holds that
\begin{align*}
  \left\| \partial_t \porepressurehtauHat \right\|_{L^2(Q_T)}^2 &\leq  \htauStabilityDuDt, \\
  \left\| \porepressurehtauHat - \pek(\chihtau) \right\|_{L^2(Q_T)}^2 &\leq \htauStabilityDuDt \tau^2,
\end{align*}
where $\htauStabilityDuDt$ is the stability constant from Lemma~\ref{lemma:stability-discrete-time-derivative-mechanics}.
\end{lemma}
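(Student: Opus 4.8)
The plan is to exploit that both quantities are piecewise polynomial in time whose coefficients are the discrete differences $\pek(\chih^n) - \pek(\chih^{n-1})$, so that every integral over a subinterval $(t_{n-1},t_n]$ reduces to an elementary scalar integral multiplied by $\|\pek(\chih^n) - \pek(\chih^{n-1})\|^2$. The sum over $n$ of these contributions is then exactly (a multiple of) the quantity $\sum_{n=1}^N \tau^{-1}\|\pek(\chih^n)-\pek(\chih^{n-1})\|^2$, which has already been bounded by $\htauStabilityDuDt$ in Lemma~\ref{lemma:stability-discrete-time-derivative-mechanics}. So no new estimate is needed; this is a norm-equivalence/interpolation-error computation entirely parallel to Lemma~\ref{lemma:interpolants-apriori-estimate-1} and to the estimates for $\vhtauHat$ in Lemma~\ref{lemma:stability-interpolant:du-dt}.

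First, for the time derivative: from~\eqref{interpolation:linear-pore-pressure}, on each $(t_{n-1},t_n]$ the function $\partial_t \porepressurehtauHat$ is the constant $\tau^{-1}\bigl(\pek(\chih^n)-\pek(\chih^{n-1})\bigr)$. Hence $\| \partial_t \porepressurehtauHat \|_{L^2(Q_T)}^2 = \sum_{n=1}^N \tau\cdot\tau^{-2}\|\pek(\chih^n)-\pek(\chih^{n-1})\|^2 = \sum_{n=1}^N \tau^{-1}\|\pek(\chih^n)-\pek(\chih^{n-1})\|^2$, and the right-hand side is bounded by $\htauStabilityDuDt$ directly by Lemma~\ref{lemma:stability-discrete-time-derivative-mechanics}.

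Second, for the difference from the piecewise-constant interpolant $\pek(\chihtau)$: on $(t_{n-1},t_n]$ one has $\pek(\chihtau)\equiv \pek(\chih^n)$, so
\begin{align*}
 \porepressurehtauHat(t) - \pek(\chihtau)(t) = \left( \frac{t-t_{n-1}}{\tau} - 1\right)\bigl(\pek(\chih^n)-\pek(\chih^{n-1})\bigr).
\end{align*}
Using $\int_{t_{n-1}}^{t_n}\bigl(\tfrac{t-t_{n-1}}{\tau}-1\bigr)^2\,dt = \tfrac{\tau}{3}$, this gives
\begin{align*}
 \| \porepressurehtauHat - \pek(\chihtau) \|_{L^2(Q_T)}^2 = \frac{\tau}{3}\sum_{n=1}^N \|\pek(\chih^n)-\pek(\chih^{n-1})\|^2 = \frac{\tau^2}{3}\sum_{n=1}^N \tau^{-1}\|\pek(\chih^n)-\pek(\chih^{n-1})\|^2 \leq \frac{\tau^2}{3}\,\htauStabilityDuDt,
\end{align*}
which is $\leq \htauStabilityDuDt\,\tau^2$, again by Lemma~\ref{lemma:stability-discrete-time-derivative-mechanics}.

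There is essentially no obstacle: the only point requiring attention is that the bound $\sum_{n=1}^N \tau^{-1}\|\pek(\chih^n)-\pek(\chih^{n-1})\|^2 \leq \htauStabilityDuDt$ is part of the conclusion of Lemma~\ref{lemma:stability-discrete-time-derivative-mechanics}, which in turn used the Lipschitz continuity of $\pek$ granted by~(ND2); thus~(ND2) is implicitly in force (as already assumed throughout Section~\ref{section:improved-regularity-nondegenerate-case}).
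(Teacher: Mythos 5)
Your proof is correct and follows essentially the same route as the paper: both compute the two norms explicitly on each subinterval, reduce them to $\sum_n \tau^{-1}\|\pek(\chih^n)-\pek(\chih^{n-1})\|^2$ (up to the factor $\tau^2/3$ in the second estimate), and invoke Lemma~\ref{lemma:stability-discrete-time-derivative-mechanics}. Your remark that the hidden dependence on~(ND2) is already in force throughout the section is a helpful clarification but not a new ingredient.
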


\begin{proof}
 By construction, it holds that
 \begin{align*}
  \left\| \partial_t \porepressurehtauHat \right\|_{L^2(Q_T)}^2 &= \sum_{n=1}^N \tau^{-1} \| \pek(\chih^n) - \pek(\chih^{n-1}) \|^2,\qquad \text{and} \\
  \left\| \porepressurehtauHat - \pek(\chihtau) \right\|_{L^2(Q_T)}^2 &= \sum_{n=1}^N \frac{\tau}{3} \| \pek(\chih^n) - \pek(\chih^{n-1}) \|^2,
 \end{align*}
 where the second result follows by expanding time integration. Hence, the assertion follows directly from the stability result for the discrete time derivative of the pore pressure, cf.\ Lemma~\ref{lemma:stability-discrete-time-derivative-mechanics}.
\end{proof}

\subsection{More relative (weak) compactness for $h,\tau\rightarrow 0$}
The previous stability results allow for analyzing the limit in relation to $(\uveta,\chiveta)$.

\begin{lemma}[Convergence of the structural velocity and acceleration]\label{lemma:convergence-du-dt}
We can extract subsequences of $\{\uhtauHat\}_{h,\tau}$ and $\{\vhtauHat\}_{h,\tau}$ (still denoted like the original sequences) such that $\partial_t \uveta, \partial_{tt} \uveta \in L^2(0,T;\V)$ and 
 \begin{alignat}{2}
 \label{result:convergence-du-dt:1}
  \partial_t \uhtauHat &\rightharpoonup \partial_t \uveta, &\quad& \text{in }L^2(0,T;\V), \\
 \label{result:convergence-du-dt:2}
  \partial_t \vhtauHat &\rightharpoonup \partial_{tt} \uveta, &&\text{in }L^2(0,T;\V).
 \end{alignat}
\end{lemma}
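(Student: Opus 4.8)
The plan is to obtain \eqref{result:convergence-du-dt:1}--\eqref{result:convergence-du-dt:2} as a routine weak-compactness argument in the Hilbert space $L^2(0,T;\V)$, using the stability bounds of Lemma~\ref{lemma:stability-interpolant:du-dt}. First I would note that, for the fixed parameters $\zeta,\eta>0$, the estimates~\eqref{result:stability-interpolant:du-dt:1}, \eqref{result:stability-interpolant:du-dt:3} and~\eqref{result:stability-interpolant:du-dt:5} bound $\| \partial_t \uhtauHat \|_{L^2(0,T;\V)}$, $\| \vhtauHat \|_{L^2(0,T;\V)}$ and $\| \partial_t \vhtauHat \|_{L^2(0,T;\V)}$ by constants depending only on $\htauStabilityDuDt$, $\htauStabilityDuDtt$ and $\zeta$, hence independently of $h,\tau$. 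Since $L^2(0,T;\V)$ is reflexive, the Eberlein--$\check{\text{S}}$mulian theorem, cf.\ Lemma~\ref{appendix:lemma:eberlein-smulian}, provides (after a nested extraction) a single subsequence, not relabeled, and limits $\hat{\bm{v}},\hat{\bm{a}}\in L^2(0,T;\V)$ with $\partial_t \uhtauHat \rightharpoonup \bm{w}$, $\vhtauHat \rightharpoonup \hat{\bm{v}}$ and $\partial_t \vhtauHat \rightharpoonup \hat{\bm{a}}$ in $L^2(0,T;\V)$. Assertion~\eqref{result:convergence-du-dt:1} is then immediate, since Lemma~\ref{lemma:convergence-u} already gives $\partial_t \uhtauHat \rightharpoonup \partial_t \uveta$ in $L^2(0,T;\V)$ with the same limit $\uveta$, and weak limits are unique, so $\bm{w}=\partial_t \uveta$.

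For~\eqref{result:convergence-du-dt:2} I would first identify $\hat{\bm{v}}=\partial_t \uveta$. By~\eqref{result:stability-interpolant:du-dt:4} we have $\| \vhtauHat - \partial_t \uhtauHat \|_{L^2(Q_T)}^2 \le \COmegaPoincareKorn^2 \htauStabilityDuDtt\, \tau^2/\zeta \to 0$, so $\vhtauHat - \partial_t \uhtauHat \to \bm{0}$ strongly, and in particular weakly, in $L^2(Q_T)$. Combining this with the continuous embedding $L^2(0,T;\V) \hookrightarrow L^2(Q_T)$, the convergences $\partial_t \uhtauHat \rightharpoonup \partial_t \uveta$ and $\vhtauHat \rightharpoonup \hat{\bm{v}}$, and uniqueness of weak limits in $L^2(Q_T)$, yields $\hat{\bm{v}} = \partial_t \uveta$; in particular $\partial_t \uveta \in L^2(0,T;\V)$. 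It then remains to recognise $\hat{\bm{a}}$ as the time derivative of $\hat{\bm{v}}$. Here I would use that, by its definition~\eqref{interpolation:linear-structural-velocity}, $\vhtauHat$ is the continuous, piecewise-linear-in-time interpolant of the nodal values $\{ \tau^{-1}(\uh^n - \uh^{n-1}) \}_n$, so $\vhtauHat \in H^1(0,T;\V)$ with $\partial_t \vhtauHat$ its associated piecewise constant derivative; consequently, for every $\bm{\psi}\in\V$ and every $\varphi\in C_c^\infty(0,T)$ the exact identity $\int_0^T a(\vhtauHat,\bm{\psi})\, \varphi'\, dt = -\int_0^T a(\partial_t \vhtauHat, \bm{\psi})\, \varphi\, dt$ holds, because $a(\cdot,\bm{\psi})\in\V^\star$ and $\bm{\psi}$ is constant in time. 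Passing to the limit $h,\tau\to 0$ in both sides using the weak convergences above, and recalling $\hat{\bm{v}}=\partial_t \uveta$, we obtain $\int_0^T a(\partial_t \uveta, \bm{\psi})\, \varphi'\, dt = -\int_0^T a(\hat{\bm{a}}, \bm{\psi})\, \varphi\, dt$ for all such $\bm{\psi},\varphi$, i.e.\ $\hat{\bm{a}} = \partial_{tt} \uveta \in L^2(0,T;\V)$, which is~\eqref{result:convergence-du-dt:2}.

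I do not expect a genuine obstacle in this lemma; it is a standard compactness and limit-identification argument. The two places that need a little care are: (i) recognising the weak limit of $\{ \partial_t \vhtauHat \}$ as the time derivative of the weak limit of $\{ \vhtauHat \}$, which is handled cleanly by the integration-by-parts identity available because $\vhtauHat\in H^1(0,T;\V)$; and (ii) the identification $\hat{\bm{v}} = \partial_t \uveta$, which relies on the strong smallness estimate~\eqref{result:stability-interpolant:du-dt:4}, and hence on $\zeta>0$ being fixed so that $\htauStabilityDuDtt/\zeta$ is a finite constant --- this is precisely the reason, as noted after~\eqref{idea:result:regularization-eta-dt-mechanics}, that the analogous acceleration bound cannot be propagated to the limit $\zeta\to 0$ in Step~5.
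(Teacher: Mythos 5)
Your proof is correct and follows the same overall scaffolding as the paper's: bound $\partial_t \uhtauHat$, $\vhtauHat$, $\partial_t \vhtauHat$ in $L^2(0,T;\V)$ via Lemma~\ref{lemma:stability-interpolant:du-dt}, extract weakly convergent subsequences by Eberlein--$\check{\text{S}}$mulian, and then identify the limits. Where you diverge is in the identification step: the paper first upgrades $\partial_t \uhtauHat \rightharpoonup \partial_t \uveta$ to \emph{strong} convergence in $L^2(Q_T)$ (via the translation estimate~\eqref{result:stability-interpolant:du-dt:2} and the Aubin--Lions--Simon lemma), then shows $\vhtauHat \rightarrow \partial_t \uveta$ strongly in $L^2(Q_T)$ by the triangle inequality and~\eqref{result:stability-interpolant:du-dt:4}, whence $\u_t = \partial_t\uveta$. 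You avoid the strong compactness step entirely: you observe that $\vhtauHat - \partial_t\uhtauHat \to 0$ strongly by~\eqref{result:stability-interpolant:du-dt:4}, so $\vhtauHat$ and $\partial_t\uhtauHat$ have the same weak limit in $L^2(Q_T)$, which together with uniqueness of weak limits gives $\hat{\bm{v}} = \partial_t\uveta$ without ever invoking~\eqref{result:stability-interpolant:du-dt:2} or Aubin--Lions. This is a modest but genuine economy. You are also more explicit than the paper on the final step --- the paper asserts "consequently $\u_{tt}=\partial_{tt}\uveta$" without comment, while you spell out the distributional integration-by-parts argument, using that $\vhtauHat\in H^1(0,T;\V)$ with piecewise constant weak derivative; this is the correct justification and is implicit in the paper. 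Both routes are valid and give the same result.
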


\begin{proof}
 The convergence result~\eqref{result:convergence-du-dt:1} follows from the stability result~\eqref{result:stability-interpolant:du-dt:1}, the Eberlein-$\check{\text{S}}$mulian theorem, cf.\ Lemma~\ref{appendix:lemma:eberlein-smulian}, and the fact that $\uhtauHat \rightharpoonup \uveta$ in $L^2(0,T;\V)$, cf.\ Lemma~\ref{lemma:convergence-u}. Furthermore, due to the additional translation property~\eqref{result:stability-interpolant:du-dt:2}, by employing a relaxed Aubin-Lions-Simon type compactness result for Bochner spaces, cf.\ Lemma~\ref{appendix:lemma:aubin-lions}, we can extract a further subsequence (still denoted the same)
 \begin{align}
 \label{proof:convergence-du-dt:aux-1}
  \partial_t \uhtauHat \rightarrow \partial_t \uveta,\quad \text{in }L^2(Q_T).
 \end{align}
 
 Using the stability result~\eqref{result:stability-interpolant:du-dt:5}, by the Eberlein-$\check{\text{S}}$mulian theorem, cf.\ Lemma~\ref{appendix:lemma:eberlein-smulian}, we can extract a subsequence (still denoted the same) such that $\partial_t \vhtauHat \rightharpoonup \u_{tt}$ in $L^2(0,T;\V)$ for some $\u_{tt}\in L^2(0,T;\V)$. It holds that $\u_{tt}=\partial_{tt} \uveta$ if also $\vhtauHat \rightharpoonup \partial_t \uveta$ in $L^2(0,T;\V)$. From the stability result~\eqref{result:stability-interpolant:du-dt:3}, and the Eberlein-$\check{\text{S}}$mulian theorem, cf.\ Lemma~\ref{appendix:lemma:eberlein-smulian}, there exists a $\u_t\in L^2(0,T;\V)$ such that $\vhtauHat \rightharpoonup \u_t$ in $L^2(0,T;\V)$ (up to a subsequence). Employing the triangle inequality, yields
 \begin{align*}
  \| \vhtauHat - \partial_t \uveta \|_{L^2(Q_T)} \leq \| \vhtauHat - \partial_t \uhtauHat \|_{L^2(Q_T)} + \| \partial_t \uhtauHat - \partial_t \uveta \|_{L^2(Q_T)}.
 \end{align*}
 Hence, due to~\eqref{result:stability-interpolant:du-dt:4} and~\eqref{proof:convergence-du-dt:aux-1}, it holds that $\u_t = \partial_t \uveta$, and consequently $\u_{tt}=\partial_{tt} \uveta$, concluding the proof.
\end{proof}

\begin{lemma}[Convergence of the time derivative of the pore pressure]\label{lemma:convergence-dporepressure-dt}
 There exists a subsequence of $\{\porepressurehtauHat\}_{h,\tau}$ (still denoted $\{ \porepressurehtauHat \}_{h,\tau}$) satisfying
 \begin{align*}
  \partial_t \porepressurehtauHat \rightharpoonup \partial_t \pek(\chiveta), \ \text{in }L^2(Q_T).
 \end{align*}
\end{lemma}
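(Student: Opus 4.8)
The plan is to combine the uniform bound on $\partial_t \porepressurehtauHat$ from Lemma~\ref{lemma:stability-interpolant:time-derivative-porepressure} with the convergence of $\pek(\chihtau)$ already established in Lemma~\ref{lemma:convergence-coupling}, and to identify the weak limit using that $\porepressurehtauHat$ and $\pek(\chihtau)$ differ only by an $O(\tau)$ term in $L^2(Q_T)$. First I would observe that $\{\porepressurehtauHat\}_{h,\tau}$ is bounded in $L^2(Q_T)$ uniformly in $h,\tau$: by the triangle inequality,
\[
 \| \porepressurehtauHat \|_{L^2(Q_T)} \leq \| \pek(\chihtau) \|_{L^2(Q_T)} + \| \porepressurehtauHat - \pek(\chihtau) \|_{L^2(Q_T)} \leq \htauStabilityPpore^{1/2} + \htauStabilityDuDt^{1/2}\tau,
\]
where the first term is bounded via Lemma~\ref{lemma:interpolants-stabilty:non-linearities} and the second via Lemma~\ref{lemma:stability-interpolant:time-derivative-porepressure}. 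Together with $\| \partial_t \porepressurehtauHat \|_{L^2(Q_T)}^2 \leq \htauStabilityDuDt$, this shows $\{\porepressurehtauHat\}_{h,\tau}$ is bounded in $H^1(0,T;L^2(\Omega))$.

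Next, by the Eberlein--$\check{\text{S}}$mulian theorem, cf.\ Lemma~\ref{appendix:lemma:eberlein-smulian}, one extracts a subsequence (not relabeled) and functions $P, g\in L^2(Q_T)$ with $\porepressurehtauHat \rightharpoonup P$ and $\partial_t \porepressurehtauHat \rightharpoonup g$ in $L^2(Q_T)$; since the distributional time derivative is weakly closed, $P\in H^1(0,T;L^2(\Omega))$ with $\partial_t P = g$. It then remains to identify $P$. Because $\porepressurehtauHat - \pek(\chihtau) \rightarrow \bm{0}$ strongly in $L^2(Q_T)$ as $h,\tau\rightarrow0$ (again Lemma~\ref{lemma:stability-interpolant:time-derivative-porepressure}), while $\pek(\chihtau) \rightharpoonup \pek(\chiveta)$ weakly in $L^2(Q_T)$ by Lemma~\ref{lemma:convergence-coupling}, it follows that $\porepressurehtauHat \rightharpoonup \pek(\chiveta)$ in $L^2(Q_T)$; uniqueness of weak limits gives $P = \pek(\chiveta)$. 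Hence $\pek(\chiveta)\in H^1(0,T;L^2(\Omega))$ with $\partial_t \pek(\chiveta) = g$, so $\partial_t \porepressurehtauHat \rightharpoonup \partial_t \pek(\chiveta)$ in $L^2(Q_T)$ along this subsequence, which is the claim.

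I expect no serious obstacle here: the only point requiring a little care is the identification step, i.e.\ ensuring that the weak limit of the piecewise linear interpolants is exactly $\pek(\chiveta)$ and not merely some function in $H^1(0,T;L^2(\Omega))$; this is taken care of entirely by the $O(\tau)$ closeness of $\porepressurehtauHat$ to $\pek(\chihtau)$, so that no estimate beyond those already proved in Lemma~\ref{lemma:stability-interpolant:time-derivative-porepressure} and Lemma~\ref{lemma:interpolants-stabilty:non-linearities} is needed.
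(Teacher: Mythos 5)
Your proof is correct and follows essentially the same route as the paper: a stability bound, weak compactness, and identification of the weak limit by exploiting that $\| \porepressurehtauHat - \pek(\chihtau) \|_{L^2(Q_T)} = O(\tau)$ from Lemma~\ref{lemma:stability-interpolant:time-derivative-porepressure}. The only cosmetic difference is that the paper identifies the limit via strong convergence $\pek(\chihtau)\rightarrow\pek(\chiveta)$ in $L^2(Q_T)$ (which uses a.e.\ convergence of $\chihtau$ plus the Lipschitz bound from (ND2)), whereas you use only the weak convergence $\pek(\chihtau)\rightharpoonup\pek(\chiveta)$ from Lemma~\ref{lemma:convergence-coupling}; both are available at this point and both suffice, so the choice is immaterial.
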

\begin{proof}
 By Lemma~\ref{lemma:convergence-p}, we have $\chihtau \rightarrow \chiveta$ in $L^2(Q_T)$ (up to a subsequence). Hence, also $\pek(\chihtau)\rightarrow \pek(\chiveta)$ in $L^2(Q_T)$ (up to a subsequence). From Lemma~\ref{lemma:stability-interpolant:time-derivative-porepressure}, it follows $\porepressurehtauHat \rightarrow \pek(\chiveta)$ and $\partial_t \porepressurehtauHat \rightharpoonup p_t$ for some $p_t\in L^2(Q_T)$ (up to a subsequence). Consequently, $p_t = \partial_t \pek(\chiveta)$, which concludes the proof.
\end{proof}

\subsection{Identifying a weak solution with increased regularity for $h,\tau\rightarrow 0$}

Finally, we show the limit $(\uveta,\chi)$, derived in Section~\ref{section:limit-regularized-degenerate-case}, also satisfies (W5)$_{\zeta\eta}$--(W6)$_{\zeta\eta}$, i.e., $(\uveta,\chiveta)$ is a weak solution with increased regularity for the doubly regularized unsaturated poroelasticity model, cf.\ Definition~\ref{definition:regularization:weak-solution}.

\begin{lemma}[Limit satisfies (W1)$_{\zeta\eta}$--(W6)$_{\zeta\eta}$]\label{lemma:limit-satisfies-w1-w5}
 The limit $(\uveta,\chiveta)$, derived in Section~\ref{section:limit-regularized-degenerate-case}, is a weak solution with increased regularity for the doubly regularized unsaturated poroelasticity model, cf.\ Definition~\ref{definition:regularization:weak-solution}.
\end{lemma}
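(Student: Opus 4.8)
The plan is to dispatch $\mathrm{(W5)_{\zeta\eta}}$ immediately from the compactness results of Step~4 and then to obtain $\mathrm{(W6)_{\zeta\eta}}$ by passing to the limit $h,\tau\rightarrow0$ in a time-differenced copy of the discrete mechanics equation~\eqref{ht:kirchhoff-reduced:u}, reusing almost verbatim the limiting procedure from the proof of Lemma~\ref{lemma:limit-satisfies-w1-w3}. For $\mathrm{(W5)_{\zeta\eta}}$ nothing new is needed: Lemma~\ref{lemma:convergence-du-dt} already gives $\partial_{tt}\uveta\in L^2(0,T;\V)$, hence $\uveta\in H^2(0,T;\V)$ together with Lemma~\ref{lemma:convergence-u}, and Lemma~\ref{lemma:convergence-dporepressure-dt} gives $\partial_t\pek(\chiveta)\in L^2(Q_T)$.

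For $\mathrm{(W6)_{\zeta\eta}}$ I would first record that, since $\uh^{-1}:=\uh^0$, the compatibility condition~\eqref{compatibility:initial-conditions} coincides with~\eqref{ht:kirchhoff-reduced:u} at $n=0$, so the difference of~\eqref{ht:kirchhoff-reduced:u} between steps $n$ and $n-1$ is available for every $n\geq1$. Dividing that difference by $\tau$ and identifying the resulting difference quotients with $\partial_t\vhtauHat$, $\partial_t\uhtauHat$ and $\partial_t\porepressurehtauHat$ through the definitions~\eqref{interpolation:linear-displacement},~\eqref{interpolation:linear-structural-velocity} and~\eqref{interpolation:linear-pore-pressure}, one gets, for $t\in(t_{n-1},t_n]$ and all $\vh\in\Vh$, a fully discrete analogue of~\eqref{model:regularized:weak:kirchhoff:dtu}:
\begin{align*}
 \zeta\, a(\partial_t \vhtauHat, \vh) + a(\partial_t \uhtauHat, \vh) - \alpha \llangle \partial_t \porepressurehtauHat, \DIV \vh \rrangle = \llangle \tau^{-1}(\fext^n - \fext^{n-1}), \vh \rrangle .
\end{align*}
I would then test with the smooth-in-space interpolants $\vh^n=\Pi_{\Vh}\v^n$ from~\eqref{smooth-interpolation-vh} of an arbitrary $\v\in L^2(0,T;\V\cap C^\infty(\Omega)^d)$, multiply by $\tau$ and sum over $n\in\{1,\dots,N\}$, exactly as in the proof of Lemma~\ref{lemma:limit-satisfies-w1-w3}.

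The limit $h,\tau\rightarrow0$ is then routine: the test interpolants converge strongly, $\bar{\v}_{h\tau}\rightarrow\v$ in $L^2(0,T;\V)$; the piecewise-constant difference quotient of the source converges strongly to $\partial_t\fext$ in $L^2(0,T;\V^\star)$ thanks to $\fext\in H^1(0,T;\V^\star)$ from (A7); and the three solution-side quantities converge weakly by Lemma~\ref{lemma:convergence-du-dt} ($\partial_t\vhtauHat\rightharpoonup\partial_{tt}\uveta$ and $\partial_t\uhtauHat\rightharpoonup\partial_t\uveta$ in $L^2(0,T;\V)$) and Lemma~\ref{lemma:convergence-dporepressure-dt} ($\partial_t\porepressurehtauHat\rightharpoonup\partial_t\pek(\chiveta)$ in $L^2(Q_T)$). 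Weak-times-strong convergence closes every term and produces~\eqref{model:regularized:weak:kirchhoff:dtu} tested against smooth $\v$; the density argument ending the proof of Lemma~\ref{lemma:limit-satisfies-w1-w3} then extends it to all $\v\in L^2(0,T;\V)$, which is $\mathrm{(W6)_{\zeta\eta}}$. Together with Lemma~\ref{lemma:limit-satisfies-w1-w3}, this establishes the assertion.

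I do not expect a genuine obstacle. The delicate work — the improved a priori bounds on the discrete structural velocity, acceleration and $\tau^{-1}(\pek(\chih^n)-\pek(\chih^{n-1}))$, together with the attendant compactness — was already carried out in Step~4 under (ND2). The only points needing attention are organizational: correctly tracking the $n=1$ endpoint contribution via the discrete compatibility condition, and verifying the strong $L^2(0,T;\V^\star)$-convergence of the source difference quotient. In contrast to Step~5, no coupling-term mismatch arises here, so the whole argument stays linear.
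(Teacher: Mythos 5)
Your proposal follows essentially the same route as the paper's proof: (W5)$_{\zeta\eta}$ read off directly from Lemmas~\ref{lemma:convergence-du-dt} and~\ref{lemma:convergence-dporepressure-dt}, and (W6)$_{\zeta\eta}$ obtained by time-differencing~\eqref{ht:kirchhoff-reduced:u}, rewriting the difference quotients via the interpolants $\vhtauHat$, $\uhtauHat$, $\porepressurehtauHat$, testing against $\vh^n=\Pi_{\Vh}\v^n$, summing, passing to the limit by weak-times-strong convergence, and finishing with a density argument. The only cosmetic deviation is that you assert strong $L^2(0,T;\V^\star)$ convergence of $\partial_t\hat{\f}_{\mathrm{ext},\tau}$ where the paper only uses weak convergence; both suffice since the test functions converge strongly.
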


\begin{proof}
 The limit $(\uveta,\chiveta)$ satisfies~(W1)$_{\zeta\eta}$--(W4)$_{\zeta\eta}$ by Lemma~\ref{lemma:limit-satisfies-w1-w3}. Furthermore,~(W5)$_{\zeta\eta}$ follows directly from Lemma~\ref{lemma:convergence-du-dt} and Lemma~\ref{lemma:convergence-dporepressure-dt}. In order to show~(W6)$_{\zeta\eta}$, let $\v \in L^2(0,T;\V\cap C^\infty(\Omega)^d)$. We utilize $\bar{\v}_{h\tau}$ and $\vh^n$, as introduced in~\eqref{projection-operator-vh} and~\eqref{smooth-interpolation-vh}, respectively; again it holds that
 \begin{align}
 \label{proof:limit-satisfies-w1-w5:aux-0}
  \bar{\v}_{h\tau} \rightarrow \v \quad \text{in }L^2(0,T;\V).
 \end{align}
 We consider the difference of the mechanics equation~\eqref{ht:kirchhoff-reduced:u} at time steps $n$ and $n-1$, $n\geq1$, tested with $\vh=\vh^n$; we obtain 
 \begin{align*}
  &\zeta\tau^{-1}a(\uh^n - 2\uh^{n-1} + \uh^{n-2}, \vh^n)
  +
  a(\uh^n - \uh^{n-1}, \vh^n) \\
  &\quad
  -
  \alpha \llangle \pek(\chih^n) - \pek(\chih^{n-1}), \DIV \vh^n \rrangle
  =
  \llangle \fext^n - \fext^{n-1}, \vh^n \rrangle.
 \end{align*}
%
%
%
%
 Summing over $n\in\{1,...,N\}$, and employing the definitions of $\bar{\v}_{h\tau}$, $\vhtauHat$, $\uhtauHat$, and $\porepressurehtauHat$, yields
\begin{align}
 \label{proof:limit-satisfies-w1-w5:aux-1}
 &\int_0^T \Big[ 
 \zeta a(\partial_t \vhtauHat, \bar{\v}_{h\tau})
 +
 a(\partial_t \uhtauHat , \bar{\v}_{h\tau})
  -
  \alpha  \llangle \partial_t \porepressurehtauHat, \DIV \bar{\v}_{h\tau} \rrangle \Big] \, dt
  =
 \int_0^T \llangle \partial_t \hat{\f}_\tau, \bar{\v}_{h\tau} \rrangle \, dt,
 \end{align}
 where $\hat{\f}_\tau$ denotes the piecewise linear interpolation of the discrete values $\{ \fext^n \}_n$
 \begin{align*}
  \hat{\f}_\mathrm{ext,\tau} (t) := \fext^{n-1} + \frac{t - t_{n-1}}{\tau} \left( \fext^n - \fext^{n-1} \right),\quad t\in(t_{n-1},t_n].
 \end{align*}
 It holds $\hat{\f}_\mathrm{ext,\tau} \rightarrow \fext$ in $L^2(0,T;\V^\star)$ and also $\partial_t \hat{\f}_\mathrm{ext,\tau} \rightharpoonup \partial_t \fext$ in $L^2(0,T;\V^\star)$, for $\tau\rightarrow 0$. Hence, together with the weak convergence properties of $\vhtauHat$, $\uhtauHat$ and $\porepressurehtauHat$, cf.\  Lemma~\ref{lemma:convergence-du-dt} and Lemma~\ref{lemma:convergence-dporepressure-dt}, and the strong convergence properties of the test function~$\bar{\v}_{h\tau}$, cf.~\eqref{proof:limit-satisfies-w1-w5:aux-0}, we conclude that
 \begin{align*}
  \int_0^T  \Big[ \zeta a(\partial_{tt} \uveta, \v ) 
 + a(\partial_t \uveta, \v)
 - \alpha \llangle \partial_t \pek(\chiveta), \DIV \v \rrangle \Big]& \, dt = \int_0^T \llangle \partial_t \fext, \v \rrangle \, dt, 
 \end{align*}
 for all $\v\in L^2(0,T;\V\cap C^\infty(\Omega)^d)$. A density argument yields the final result.
\end{proof}

\section{Step 5: Limit $\zeta\rightarrow0$}\label{section:pure-consolidation}

In this section, we prove Lemma~\ref{lemma:existence-simply-regularized}, i.e., the existence of weak solution to the simply regularized unsaturated poroelasticity model, cf.\ Definition~\ref{definition:simply-regularized-weak-solution}. For this we utilize the fact that under the assumptions of Lemma~\ref{lemma:existence-simply-regularized}, there exists weak solution, $(\uv,\chiv)$, with increased regularity for the doubly regularized unsaturated poroelasticity model, cf.\ Definition~\ref{definition:regularization:weak-solution}. We show that $\{(\uv,\chiv)\}_{\zeta}$ has a limit for $\zeta\rightarrow 0$, which is a weak solution to the simply regularized unsaturated poroelasticity model, i.e., it satisfies (W1)$_\eta$--(W4)$_\eta$ for $\zeta=0$. For this, we employ compactness arguments. The central uniform stability bound is derived utilizing~(W6)$_{\zeta\eta}$ and the non-degeneracy condition~(ND3). Throughout the entire section, we assume~(A0)--(A9) and~(ND1)--(ND3) hold true.

\subsection{Stability estimates independent of $\zeta$}

The key ingredients for the subsequent discussion are stability estimates, which are independent of $\zeta$. In Section~\ref{section:discrete:interpolation:stability}, some derived stability results are independent of $\zeta$; they remain true for weak limits. In particular, there exists a constant $C=C\left(\htauStabilityUChi, \htauStabilityPpore \right)>0$ (independent of $\zeta>0$ and $\eta>0$), such that
\begin{align}
\label{eps-regularization:stability-bound}
 \left\| \uv \right\|_{L^\infty(0,T;\V)}^2 + \left\| \porepressure(\chiv) \right\|_{L^2(Q_T)}^2
 \leq C,
\end{align}
where Lemma~\ref{lemma:interpolants-apriori-estimate-1} and Lemma~\ref{lemma:convergence-u} yield stability for the displacement, and Lemma~\ref{lemma:interpolants-stabilty:non-linearities} and Lemma~\ref{lemma:convergence-coupling} yield stability for the pore pressure. Further stability bounds can be obtained by exploiting the continuous nature of the balance equations and the time derivative of the mechanics equation. The following stability estimate is the essential step.

\begin{lemma}[Stability for the primal variables]\label{lemma:alternative-step-2}
 There exists a constant $\epsStabilityDuDt>0$ (independent of $\zeta$ and $\eta$), such that
 \begin{align*}
  &\zeta \left\| \partial_t \uv \right\|_{L^\infty(0,T;\V)}^2 
  +
  \left\| \partial_t \uv \right\|_{L^2(0,T;\V)}^2  
  +
  \left\| \GRAD \chiv \right\|_{L^\infty(0,T;L^2(\Omega))}^2 \\
  &\quad\leq
  \epsStabilityDuDt \left(
  C_0,
  \left\| \partial_t \fext \right\|_{L^2(0,T;\V^\star)}^2,
  \left\| \hext \right\|_{H^1(0,T;\Q^\star)}^2
  \right),
 \end{align*}
 where $C_0$ comes from~$\mathrm{(A8}^\star\mathrm{)}$.
\end{lemma}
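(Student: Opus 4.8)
The plan is to test the two balance equations of the regularized weak solution with the time derivatives of the primal variables, and to combine them in a way that the coupling terms cancel up to a controllable remainder. Concretely, I would first take the time-differentiated mechanics equation~(W6)$_{\zeta\eta}$, i.e.~\eqref{model:regularized:weak:kirchhoff:dtu}, and test it with $\v=\partial_t \uv$. This produces $\zeta \tfrac{1}{2}\tfrac{d}{dt}\|\partial_t\uv\|_{\V}^2 + \|\partial_t\uv\|_{\V}^2 - \alpha\llangle\partial_t\pek(\chiv),\DIV\partial_t\uv\rrangle$ on the left. Next I would take the flow equation~\eqref{model:regularized:weak:kirchhoff:p}; since the increased regularity gives $\partial_t\pek(\chiv)\in L^2(Q_T)$ and (ND2) makes $\pek$ bi-Lipschitz, one has $\partial_t\chiv\in L^2(Q_T)$, so I can legitimately test with $\q=\partial_t\chiv$. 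Using (A1$^\star$) (monotonicity/growth of $\bke$) one gets a nonnegative contribution from $\llangle\partial_t\bke(\chiv),\partial_t\chiv\rrangle \geq b_{\chi,\mathrm m}\|\partial_t\chiv\|^2$, the diffusion term yields $\tfrac{1}{2}\tfrac{d}{dt}\|\GRAD\chiv\|^2$ (after using (A6) to pass to the scaled norm), and the coupling term is $\alpha\llangle\swk(\chiv)\partial_t\DIV\uv,\partial_t\chiv\rrangle$.

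The key algebraic step is to add the two tested equations and regroup the two coupling contributions $-\alpha\llangle\partial_t\pek(\chiv),\DIV\partial_t\uv\rrangle + \alpha\llangle\swk(\chiv)\partial_t\chiv,\partial_t\DIV\uv\rrangle$. Writing $\swk(\chiv)\partial_t\chiv = \tfrac{\swk(\chiv)}{\pek'(\chiv)}\partial_t\pek(\chiv)$, the combined coupling term becomes $\alpha\llangle\bigl(\tfrac{\swk(\chiv)}{\pek'(\chiv)}-1\bigr)\partial_t\pek(\chiv),\partial_t\DIV\uv\rrangle$. Now I would complete the square against the $\|\partial_t\uv\|_{\V}^2$ term from the mechanics equation and the $b_{\chi,\mathrm m}\|\partial_t\chiv\|^2$-type term from the flow equation: using $\|\partial_t\DIV\uv\|^2\leq \tfrac{d}{2\mu+\lambda d}\|\partial_t\uv\|_{\V}^2$... more precisely I would bound $\alpha\bigl|\bigl(\tfrac{\swk}{\pek'}-1\bigr)\bigr|\,|\partial_t\pek(\chiv)|\,|\partial_t\DIV\uv|$ by Young's inequality distributing weight $\theta\|\partial_t\uv\|_{\V}^2$ and $(1-\theta)$-weighted $\|\partial_t\pek(\chiv)\|^2 = (\pek'(\chiv))^2\|\partial_t\chiv\|^2$ and invoking $\llangle\partial_t\bke(\chiv),\partial_t\chiv\rrangle \geq (\bke)'\|\partial_t\chiv\|^2 \geq \bk'\|\partial_t\chiv\|^2$. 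This is exactly the configuration in which (ND3) — in the form stated for $\bke$, as remarked after (A1$^\star$) — guarantees that after optimizing over $\theta$ the remaining quadratic form in $(\partial_t\uv,\partial_t\chiv)$ stays coercive, with a residual coercivity of $C_\mathrm{ND,3}$ times the mechanics term; this is precisely the inequality~\eqref{idea:coupling-expression} announced in the proof sketch. The "bad" cross term is thereby absorbed and what survives is $\tfrac{\zeta}{2}\tfrac{d}{dt}\|\partial_t\uv\|_{\V}^2 + c\|\partial_t\uv\|_{\V}^2 + \tfrac{1}{2}\tfrac{d}{dt}\|\GRAD\chiv\|^2 \leq \llangle\partial_t\fext,\partial_t\uv\rrangle + \llangle\partial_t\hext,\partial_t\chiv\rrangle$, with $c>0$ independent of $\zeta,\eta$.

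After this, I would integrate in time from $0$ to an arbitrary $t$, handle the right-hand side by Cauchy–Schwarz and Young (absorbing $\varepsilon\|\partial_t\uv\|_{\V}^2$ into the left, and the $\partial_t\hext$ term using $\|\partial_t\chiv\|\lesssim\|\partial_t\pek(\chiv)\|\lesssim\|\GRAD\chiv\|$... actually more carefully bounding $\llangle\partial_t\hext,\partial_t\chiv\rrangle$ by duality in $\Q^\star\times\Q$ together with a Poincaré inequality for $\chiv$, which again reduces to $\|\GRAD\chiv\|$). The time-integral of $\|\partial_t\uv\|_{\V}^2$ is then controlled, and Grönwall's inequality applied to $\|\GRAD\chiv(t)\|^2$ closes the estimate; the $\zeta$-weighted supremum of $\|\partial_t\uv(t)\|_{\V}^2$ comes for free from the same computation. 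The initial data $\|\GRAD\chi_0\|^2$ and $\|\partial_t\uv(0)\|_{\V}$ are bounded via (A8) and the compatibility condition (A9) (which, differentiated formally, controls $\partial_t\uv(0)$ in terms of $\partial_t\fext(0)$ and $\partial_t\pek(\chiv)(0)$ — here one needs a short argument that the required regularity at $t=0$ is available, or one works with difference quotients and passes to the limit). The main obstacle is the second paragraph: getting the constant in the completion-of-squares to come out right so that (ND3) — and nothing stronger — suffices, and making sure the manipulation $\swk\partial_t\chiv = \tfrac{\swk}{\pek'}\partial_t\pek(\chiv)$ together with the lower bound on $\llangle\partial_t\bke(\chiv),\partial_t\chiv\rrangle$ is rigorous given only $\swk$ differentiable a.e.\ (A2) and $\bke$ merely continuous and nondecreasing plus (A1$^\star$); one likely needs to argue via the chain rule for Lipschitz/absolutely continuous compositions or to regularize $\swk$ further and pass to the limit.
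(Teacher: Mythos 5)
Your overall strategy matches the paper's: test the time-differentiated mechanics equation $\mathrm{(W6)}_{\zeta\eta}$ with $\v=\partial_t\uv$, test the (undifferentiated) flow equation with $\q=\partial_t\chiv$, add, and complete the square pointwise under the space-time integral using the identity $\swk\partial_t\chiv=\tfrac{\swk}{\pek'}\partial_t\pek(\chiv)$, with $\mathrm{(ND3)}$ supplying exactly the coercivity needed to absorb the combined coupling term into a residual $C_{\mathrm{ND},3}\|\partial_t\uv\|_{\V}^2$. The first two terms (the $\zeta$-weighted visco term and the diffusion term) give time-boundary contributions by the fundamental theorem of calculus, and a key simplification you over-complicate is that $\partial_t\uv(0)=\bm{0}$: this follows directly by inserting $\mathrm{(A9)}$ into $\mathrm{(W6)}_{\zeta\eta}$ at $t=0$, so you do not need to formally differentiate the compatibility condition or bound $\partial_t\uv(0)$ in terms of $\partial_t\fext(0)$.

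There is, however, a genuine gap in your treatment of the flow source term. First, testing the undifferentiated flow equation with $\partial_t\chiv$ gives $\int_0^T\llangle\hext,\partial_t\chiv\rrangle\,dt$ on the right, \emph{not} $\int_0^T\llangle\partial_t\hext,\partial_t\chiv\rrangle\,dt$; the version you wrote down would require a time-differentiated flow equation that the regularity in $\mathrm{(W5)}_{\zeta\eta}$--$\mathrm{(W6)}_{\zeta\eta}$ does not provide. Second, both bounds you suggest for this term are unavailable: the chain $\|\partial_t\chiv\|\lesssim\|\partial_t\pek(\chiv)\|\lesssim\|\GRAD\chiv\|$ compares a time derivative against a spatial gradient and is simply false, and a Poincar\'e inequality for $\chiv$ controls $\|\chiv\|$, not $\|\partial_t\chiv\|$. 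The only $L^2$-in-time bound you have on $\partial_t\chiv$ at this stage is the $\eta$-weighted one coming from $\mathrm{(A1}^\star\mathrm{)}$, and that degenerates as $\eta\to 0$, so you cannot close the argument uniformly in $\eta$ without another idea. The fix — and this is what the paper does and what produces the $\|\hext\|_{H^1(0,T;\Q^\star)}$ dependence in the conclusion — is to integrate $\int_0^T\llangle\hext,\partial_t\chiv\rrangle\,dt$ by parts in time, obtaining $\llangle\hext(T),\chiv(T)\rrangle-\llangle\hext(0),\chi_0\rrangle-\int_0^T\llangle\partial_t\hext,\chiv\rrangle\,dt$, and then to control each of these via Cauchy--Schwarz, Young, Poincar\'e, a Sobolev embedding $H^1(0,T;\Q^\star)\hookrightarrow C([0,T];\Q^\star)$, and $\mathrm{(A6)}$, leaving an $\int_0^T\|\GRAD\chiv\|^2\,dt$ term that Gr\"onwall absorbs. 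Once you replace your source-term treatment with this integration-by-parts step, the rest of your outline (including the cautionary remarks about the pointwise chain-rule manipulations under (A2), (A1$^\star$)) is a faithful reconstruction of the paper's proof.
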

 
\begin{proof}
 Consider the flow equation~\eqref{model:regularized:weak:kirchhoff:p} and the mechanics equation differentiated in time~\eqref{model:regularized:weak:kirchhoff:dtu}, tested with $q=\partial_t \chiv$ and $\v=\partial_t \uv$, respectively. Summing both equation yields
 \begin{align}
\nonumber
  &\zeta \int_0^T a(\partial_{tt} \uv, \partial_t \uv) \, dt
  +
  \int_0^T \llangle \absolutepermeability \GRAD \chiv, \GRAD \partial_t \chiv \rrangle \, dt \\
\nonumber
  &\qquad+
  \left\| \partial_t \uv \right\|_{L^2(0,T;\V)}^2
  +
  \int_0^T \llangle \partial_t \bke(\chiv), \partial_t \chiv \rrangle
  +
  \alpha \int_0^T \llangle \swk \partial_t \chiv - \partial_t \pek, \partial_t \DIV \uv \rrangle \\
  \label{proof:alternative-step-2:aux:1} 
  &\qquad=
  \int_0^T \llangle \partial_t \fext, \partial_t \uv \rrangle \, dt
  +
  \int_0^T \llangle \hext, \partial_t \chiv \rrangle \, dt. 
 \end{align}
 We discuss the individual terms separately. For the first two terms on the left hand side of~\eqref{proof:alternative-step-2:aux:1}, we employ the fundamental theorem of calculus
 \begin{align*}
  &\zeta \int_0^T a(\partial_{tt} \uv, \partial_t \uv) \, dt
  +
  \int_0^T \llangle \absolutepermeability \GRAD \chiv, \GRAD \partial_t \chiv \rrangle \, dt \\
  &\quad=
  \frac{\zeta}{2} \left\| \partial_t \uv(T) \right\|_{L^2(0,T;\V)}^2
  +
  \frac{1}{2} \left( \llangle \absolutepermeability \GRAD \chiv(T), \GRAD \chiv(T) \rrangle - \llangle \absolutepermeability \GRAD \chiv(0), \GRAD \chiv(0) \rrangle \right),
 \end{align*}
 where we used that $\partial_t \uv(0) = \bm{0}$, following from the temporal derivative of the mechanics equations~\eqref{model:regularized:weak:kirchhoff:dtu} and the compatibility condition for the initial conditions~(A9).
 
 For the remaining terms on the left hand side of~\eqref{proof:alternative-step-2:aux:1}, we employ the fact that $\bke$ is increasing with $\bke'\geq\bk'$, that $a(\v,\v) \geq K_\mathrm{dr} \| \DIV \v \|^2$ for all $\v\in\V$ with $K_\mathrm{dr}=\tfrac{2\mu}{d} + \lambda$, and~(ND3). Starting with a binomial identity, we obtain
 \begin{align*}
  &\left\| \partial_t \uv \right\|_{L^2(0,T;\V)}^2
  +
  \int_0^T \llangle \partial_t \bke(\chiv), \partial_t \chiv \rrangle
  +
  \alpha \int_0^T \llangle \swk \partial_t \chiv - \partial_t \pek, \partial_t \DIV \uv \rrangle \\
  &\qquad =
  \left\| \partial_t \uv \right\|_{L^2(0,T;\V)}^2 
  - 
  \frac{\alpha^2}{4} \int_0^T \int_\Omega 
  \left( \frac{\sw(\chiv)}{\pek'(\chiv)} - 1 \right)^2 \frac{\left(\pek'(\chiv)\right)^2}{\bke'(\chiv)} \, \left|\partial_t \DIV \uv \right|^2 \,dx\,dt
  \\
  &\qquad\qquad +
  \int_0^T \int_\Omega \left[
  \left(\partial_t \bke \partial_t \chiv\right)^{1/2} + \frac{\alpha}{2} \left(\swk \partial_t \chiv - \partial_t \pek \right) \left( \partial_t \bke \partial_t \chiv\right)^{-1/2} \partial_t \DIV \uv 
  \right]^2 \, dx \, dt \\
  &\qquad \geq 
  \left(1- C_\mathrm{ND,3}\right) \left\| \partial_t \uv \right\|_{L^2(0,T;\V)}^2.
 \end{align*}
 For the first term on the right hand side of~\eqref{proof:alternative-step-2:aux:1}, we apply the Cauchy-Schwarz inequality and Young's inequality
 \begin{align*}
  &\int_0^T \llangle \partial_t \fext, \partial_t \uv \rrangle \, dt  \leq 
  \frac{1}{2(1-C_\mathrm{ND,3})} \left\| \partial_t \fext \right\|_{L^2(0,T;\V^\star)}^2
  + \frac{1-C_\mathrm{ND,3}}{2} \left\| \partial_t \uv \right\|_{L^2(0,T;\V)}^2.
 \end{align*}
 For the second term on the right hand side of~\eqref{proof:alternative-step-2:aux:1}, we apply integration by parts, a Cauchy-Schwarz inequality and Young's inequality, a Poincar\'e inequality (introducing the Poincar\'e constant $\COmegaPoincare$) and a Sobolev embedding (introducing the constant $C_\mathrm{T,Sob}$), as well as~(A6). All in all, we obtain
 \begin{align*}
  &\int_0^T \llangle \hext, \partial_t \chiv \rrangle \, dt \\
  &\quad=
  \llangle \hext(T),\chiv(T) \rrangle - \llangle \hext(0), \chi(0) \rrangle  
  -\int_0^T \llangle \partial_t \hext, \chiv \rrangle \, dt \\
  &\quad\leq
  \frac{\COmegaPoincare^2}{\absolutepermeabilitymin} \left( \left\| \hext(T) \right\|^2 + \left\| \hext(0) \right\|^2 + \left\| \partial_t \hext \right\|_{L^2(Q_T)}^2 \right) \\
  &\qquad\quad
  + \frac{\absolutepermeabilitymin}{4\COmegaPoincare^2} \left( \left\| \chiv(T) \right\|^2 + \left\| \chiv(0) \right\|^2 + \left\| \chiv \right\|_{L^2(Q_T)}^2 \right)\\
  &\quad\leq 
  \frac{3\left(C_\mathrm{T,Sob}\COmegaPoincare\right)^2}{\absolutepermeabilitymin} \left\| \hext \right\|_{H^1(0,T;\Q^\star)}^2 \\
  &\qquad\quad+ \frac{1}{4} \bigg( \llangle \absolutepermeability \GRAD \chiv(T), \GRAD \chiv(T) \rrangle + \llangle \absolutepermeability \GRAD \chiv(0), \GRAD \chiv(0) \rrangle + \int_0^T \llangle \absolutepermeability \GRAD \chiv, \GRAD \chiv \rrangle \, dt \bigg).
 \end{align*}
 Altogether,~\eqref{proof:alternative-step-2:aux:1} becomes
 \begin{align*}
  &\frac{\zeta}{2} \left\| \partial_t \uv(T) \right\|_{L^2(0,T;\V)}^2 
  +
  \frac{1}{4} \llangle \absolutepermeability \GRAD \chiv(T), \GRAD \chiv(T) \rrangle 
  +
  \frac{1- C_\mathrm{ND,3}}{2} \left\| \partial_t \uv \right\|_{L^2(0,T;\V)}^2  \\
  &\quad\leq 
  \frac{3}{4} \llangle \absolutepermeability \GRAD \chiv(0), \GRAD \chiv(0) \rrangle
  +
  \frac{1}{2(1-C_\mathrm{ND,3})} \left\| \partial_t \fext \right\|_{L^2(0,T;\V^\star)}^2 \\
  &\qquad +
  \frac{3\left(C_\mathrm{T,Sob}\COmegaPoincare\right)^2}{\absolutepermeabilitymin} \left\| \hext \right\|_{H^1(0,T;\Q^\star)}^2
  + 
  \frac{1}{4} \int_0^T \llangle \absolutepermeability \GRAD \chiv, \GRAD \chiv \rrangle \, dt. 
 \end{align*}
 Applying a Gr\"onwall inequality proves the assertion under the given assumptions.
\end{proof}

The last stability estimate allows for deriving further stability estimates.

\begin{lemma}[Stability for the Legendre transformation of $\bke$]
There exists a constant $\epsStabilityLegendre>0$ (independent of $\zeta,\eta$), such that
 \begin{align*}
  &\left\| \Bke(\chiv) \right\|_{L^\infty(0,T;L^1(\Omega))} 
  \leq 
  \epsStabilityLegendre \left( C_0, \epsStabilityDuDt\right),
 \end{align*}
 where $\epsStabilityDuDt$ is the stability constant from Lemma~\ref{lemma:alternative-step-2}, and $C_0$ is the stability constant from $\mathrm{(A8}^\star\mathrm{)}$.
\end{lemma}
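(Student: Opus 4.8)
The strategy mirrors the discrete counterpart, Lemma~\ref{lemma:discrete-solution-stabilty:legendre}, now carried out on the time-continuous level and exploiting the uniform (in $\zeta$ and $\eta$) bounds already established in Lemma~\ref{lemma:alternative-step-2} and in~\eqref{eps-regularization:stability-bound}. Fix an arbitrary $t\in(0,T)$. Since $\chiv\in L^2(0,T;\Q)$, the function $\q=\chiv\,\mathbf{1}_{(0,t)}$ is an admissible test function in the regularized flow equation~\eqref{model:regularized:weak:kirchhoff:p}, yielding
\[
\int_0^t \Big[ \llangle \partial_t \bke(\chiv), \chiv \rrangle + \alpha \llangle \swk(\chiv)\,\partial_t\DIV\uv, \chiv \rrangle + \llangle \absolutepermeability \GRAD\chiv, \GRAD\chiv \rrangle \Big] \, ds = \int_0^t \llangle \hext, \chiv \rrangle \, ds.
\]
For the storage term, the integration-by-parts/chain-rule property of the Legendre transformation of a non-decreasing function, cf.\ Lemma~\ref{appendix:lemma:legendre} (used in the same spirit as in~\cite{Alt1983} and in the proof of Lemma~\ref{lemma:alternative-step-2}), together with the weak formulation~(W2)$_{\zeta\eta}$ of the initial condition $\chiv(0)=\chi_0$, gives
\[
\int_0^t \llangle \partial_t \bke(\chiv), \chiv \rrangle \, ds = \left\| \Bke(\chiv(t)) \right\|_{L^1(\Omega)} - \left\| \Bke(\chi_0) \right\|_{L^1(\Omega)},
\]
and $\left\| \Bke(\chi_0) \right\|_{L^1(\Omega)} \leq C_0$ by~(A8$^\star$). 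The diffusion term is non-negative and is dropped.

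It remains to bound the coupling and source terms. For the coupling term I would use $0<\swk\leq 1$ from~(A2), the Cauchy--Schwarz inequality, a Poincaré inequality, and the estimate $\| \partial_t \uv \|_{L^2(0,T;\V)}^2 \leq \epsStabilityDuDt$ from Lemma~\ref{lemma:alternative-step-2} combined with $\| \DIV \v \| \leq K_\mathrm{dr}^{-1/2} \| \v \|_{\V}$ for $\v\in\V$, so that $\int_0^t |\llangle \swk(\chiv)\partial_t\DIV\uv,\chiv\rrangle|\,ds$ is controlled by a constant times $\epsStabilityDuDt$ and $\| \GRAD\chiv \|_{L^\infty(0,T;L^2(\Omega))}^2$, the latter again bounded by $\epsStabilityDuDt$ via Lemma~\ref{lemma:alternative-step-2}. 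For the source term I would split $\hext=(h,w_\mathrm{N})$ and proceed exactly as in the discussion of $T_4$ in the proof of Lemma~\ref{lemma:existence-discrete-solution}, now with a continuous trace inequality and a continuous Poincaré inequality, using~(A6) and the regularity~(A7). Combining these bounds, absorbing no terms on the left (all remaining left-hand terms are already non-negative), and taking the supremum over $t\in(0,T)$ yields the assertion with $\epsStabilityLegendre$ depending only on $C_0$, $\epsStabilityDuDt$, and the data norms in~(A7).

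The only genuinely delicate point is the justification of the chain-rule identity for $\int_0^t \llangle \partial_t \bke(\chiv), \chiv \rrangle\,ds$ at the available regularity, since $\partial_t\bke(\chiv)$ lies only in $L^2(0,T;\Q^\star)$ while $\chiv\in L^2(0,T;\Q)\cap L^\infty(0,T;\Q)$; this is handled by the standard integration-by-parts lemma for the Legendre transformation underlying~(W2)$_{\zeta\eta}$, with the sharp cut-off at time $t$ treated by a routine limiting argument in time. Everything else is a direct application of Young's, Cauchy--Schwarz, Poincaré, and trace inequalities.
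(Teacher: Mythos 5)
Your proposal is correct and follows essentially the same route as the paper: test the regularized flow equation with $\chiv$ (the paper does so directly over $(0,T)$, you use the cut-off $\chiv\,\mathbf{1}_{(0,t)}$ to make the passage to the $L^\infty$ bound in time explicit), apply the Legendre-transformation integration-by-parts identity from \cite{Alt1983} to the storage term, and bound the coupling and source terms via Cauchy--Schwarz, Poincar\'e, (A6), and Lemma~\ref{lemma:alternative-step-2}. The only difference is cosmetic: the paper retains a factor of the diffusion term on the left by applying Young's inequality, whereas you drop the (non-negative) diffusion term and control $\|\chiv\|_{L^2(Q_T)}$ directly through the $L^\infty(0,T;L^2)$ bound on $\GRAD\chiv$ from Lemma~\ref{lemma:alternative-step-2}; both are valid and lead to the same conclusion.
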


\begin{proof}
 Testing the flow equation~\eqref{model:regularized:weak:kirchhoff:p} with $q=\chiv$, yields
 \begin{align*}
  \int_0^T \llangle \partial_t \bke(\chiv), \chiv \rrangle \, dt + \int_0^T \left\| \GRAD \chiv \right\|_{\absolutepermeability}^2 \, dt
  =
  \int_0^T \llangle \hext, \chiv \rrangle \, dt - \alpha \int_0^T \llangle \sw \partial_t \DIV \uv, \chiv \rrangle \, dt.
 \end{align*}
 For the first term on the left hand side, we apply an identity for Legendre transformations, cf.~\cite{Alt1983},
 \begin{align*}
  \int_0^T \llangle \partial_t \bke(\chiv), \chiv \rrangle \, dt = \left\| \Bke(\chiv(T)) \right\|_{L^1(\Omega)} - \left\| \Bke(\chi_0) \right\|_{L^1(\Omega)},
 \end{align*}
 where $\Bke$ is the Legendre transformation for $\bke$. On the right hand side, we apply the Cauchy-Schwarz inequality, Young's inequality, a Poincar\'e inequality  (introducing $\COmegaPoincare$) and~(A6), and obtain
 \begin{align*}
  &\left\| \Bke(\chiv(T)) \right\|_{L^1(\Omega)} 
  + \frac{1}{2} \int_0^T \left\| \GRAD \chiv \right\|_{\absolutepermeability}^2 \, dt \\
  &\quad \leq 
  \left\| \Bke(\chi_0) \right\|_{L^1(\Omega)}
  +
  \frac{\COmegaPoincare^2}{\absolutepermeabilitymin} \left( \left\| \hext \right\|_{L^2(0,T;\Q^\star)}^2 + \alpha^2 \left\| \partial_t \DIV \uv \right\|_{L^2(Q_T)}^2 \right).
 \end{align*}
 Finally, the thesis follows from Lemma~\ref{lemma:alternative-step-2}.
\end{proof}

\begin{lemma}[Stability for the temporal change of $\bke$]\label{lemma:regularization:stability:dkbe-dt}
 There exists a constant $\epsStabilityDbDt>0$ (independent of $\zeta,\eta$), such that
 \begin{align*}
  \underset{ 0 \neq q \in L^2(0,T;Q) }{\mathrm{sup}}\, 
  \frac{ \int_0^T \llangle \partial_t \bke(\chiv), q \rrangle \, dt }{ \| \GRAD q \|_{L^2(Q_T)} }
  \leq
  \epsStabilityDbDt\left( \epsStabilityDuDt \right),
 \end{align*}
 where $\epsStabilityDuDt$ is the stability constant from Lemma~\ref{lemma:alternative-step-2}.
\end{lemma}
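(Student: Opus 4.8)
The plan is to read off $\partial_t \bke(\chiv)$ directly from the flow equation~\eqref{model:regularized:weak:kirchhoff:p} and to estimate the three remaining terms, each time absorbing a factor $\|q\|$ into $\|\GRAD q\|_{L^2(Q_T)}$ by a Poincar\'e inequality (legitimate since $q\in Q$ vanishes on $\FlowDirichletBoundary$, which has positive measure). Concretely, for any $q\in L^2(0,T;Q)$, \eqref{model:regularized:weak:kirchhoff:p} gives
\begin{align*}
 \int_0^T \llangle \partial_t \bke(\chiv), q \rrangle \, dt
 = \int_0^T \llangle \hext, q \rrangle \, dt
 - \alpha \int_0^T \llangle \swk(\chiv) \partial_t \DIV \uv, q \rrangle \, dt
 - \int_0^T \llangle \absolutepermeability \GRAD \chiv, \GRAD q \rrangle \, dt,
\end{align*}
so it suffices to bound each of the three terms on the right by a $\zeta,\eta$-independent constant times $\|\GRAD q\|_{L^2(Q_T)}$.

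First I would handle the source term: by the definition of $\hext=(h,w_\mathrm{N})$, the Cauchy--Schwarz inequality, a trace inequality on $\FlowNeumannBoundary$, and the Poincar\'e inequality, one gets $\int_0^T \llangle \hext, q \rrangle \, dt \leq C\,\|\hext\|_{L^2(0,T;\Q^\star)}\,\|\GRAD q\|_{L^2(Q_T)}$, and $\|\hext\|_{L^2(0,T;\Q^\star)}$ is controlled by~(A7), independently of $\zeta,\eta$. Second, for the coupling term I use $\|\swk(\chiv)\|_{L^\infty(Q_T)}\leq 1$ from~(A2) together with $\|\DIV\v\|\leq \Kdr^{-1/2}\|\v\|_{\V}$, so that $\|\swk(\chiv)\partial_t \DIV \uv\|_{L^2(Q_T)} \leq \Kdr^{-1/2}\|\partial_t \uv\|_{L^2(0,T;\V)}$; a further Cauchy--Schwarz/Poincar\'e step gives $\alpha\int_0^T \llangle \swk(\chiv)\partial_t \DIV \uv, q \rrangle \, dt \leq C\,\|\partial_t \uv\|_{L^2(0,T;\V)}\,\|\GRAD q\|_{L^2(Q_T)}$, and $\|\partial_t \uv\|_{L^2(0,T;\V)}$ is bounded uniformly in $\zeta,\eta$ by Lemma~\ref{lemma:alternative-step-2}. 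Third, for the diffusion term, (A6) gives $\int_0^T \llangle \absolutepermeability \GRAD \chiv, \GRAD q \rrangle \, dt \leq \absolutepermeabilitymax\,\|\GRAD \chiv\|_{L^2(Q_T)}\|\GRAD q\|_{L^2(Q_T)} \leq \absolutepermeabilitymax\sqrt{T}\,\|\GRAD \chiv\|_{L^\infty(0,T;L^2(\Omega))}\|\GRAD q\|_{L^2(Q_T)}$, and again Lemma~\ref{lemma:alternative-step-2} bounds $\|\GRAD \chiv\|_{L^\infty(0,T;L^2(\Omega))}$ uniformly. Dividing by $\|\GRAD q\|_{L^2(Q_T)}$ and taking the supremum over $q\neq 0$ yields the claimed estimate, with $\epsStabilityDbDt$ depending only on $\epsStabilityDuDt$ and on the fixed data constants.

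I do not expect a genuine obstacle here: the only substantive input is the $\zeta,\eta$-uniform control of $\|\partial_t \uv\|_{L^2(0,T;\V)}$ and $\|\GRAD \chiv\|_{L^\infty(0,T;L^2(\Omega))}$, which is precisely the content of Lemma~\ref{lemma:alternative-step-2}; everything else is Cauchy--Schwarz, the bounds~(A2) and~(A6)--(A7), Poincar\'e, and a trace inequality. The one point requiring a little care is that the right-hand side of the desired bound measures $q$ solely through $\|\GRAD q\|_{L^2(Q_T)}$ rather than a full $L^2(0,T;\Q)$ norm, so one must invoke the Poincar\'e inequality (valid because $\FlowDirichletBoundary$ has positive measure) to absorb the $L^2(\Omega)$ and boundary-trace norms of $q$; with that in place the estimate is immediate.
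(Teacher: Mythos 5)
Your proof is correct and matches the paper's argument, which is stated tersely as ``analogous to Lemma~\ref{lemma:discrete-solution-stability:dtbk}'' with the single change of replacing the $\zeta$-dependent bound on $\partial_t \DIV \uv$ by the $\zeta$-uniform one from Lemma~\ref{lemma:alternative-step-2}. You have simply unfolded that reference: isolate $\partial_t\bke(\chiv)$ from~\eqref{model:regularized:weak:kirchhoff:p}, estimate the source, coupling, and diffusion terms via Cauchy--Schwarz, trace, and Poincar\'e, and feed in the uniform bounds on $\|\partial_t\uv\|_{L^2(0,T;\V)}$ and $\|\GRAD\chiv\|_{L^\infty(0,T;L^2(\Omega))}$ from Lemma~\ref{lemma:alternative-step-2}.
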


\begin{proof}
 The proof is analog to the proof of Lemma~\ref{lemma:discrete-solution-stability:dtbk}. However, this time, we exploit
 \begin{align*}
  \left\| \partial_t \DIV \uv \right\|_{L^2(Q_T)} \leq \frac{1}{K_\mathrm{dr}^{1/2}} \left\| \partial_t \uv \right\|_{L^2(0,T;\V)} \leq \frac{\epsStabilityDuDt}{K_\mathrm{dr}^{1/2}}
 \end{align*}
 by Lemma~\ref{lemma:alternative-step-2}. Thus, we drop the dependence on $\zeta$.
\end{proof}

We will require to show strong convergence of the Kirchhoff pressure. Having that in mind, we conclude with a stability estimate for $\partial_t \chiv$. We note, this is the only stability estimate in this section, requiring the regularizing growth condition~(A1$^\star$).

\begin{lemma}[Stability estimate for the temporal change of the Kirchhoff pressure]\label{lemma:regularization:stability:dchidt}
 There exists a constant $\epsStabilityDchiDt>0$ (independent of $\zeta$), such that
 \begin{align*}
  \| \partial_t \chiv \|_{L^2(Q_T)}^2
 \leq \epsStabilityDchiDt\left(b_\mathrm{\chi,m}^{-1} C_0, b_\mathrm{\chi,m}^{-2} \epsStabilityDuDt \right),
 \end{align*}
 where $\epsStabilityDuDt$ is the stability constant from Lemma~\ref{lemma:alternative-step-2}, $b_\mathrm{\chi,m}$ is from~$\mathrm{(A1}^\star\mathrm{)}$, and $C_0$ is from~$\mathrm{(A8}^\star\mathrm{)}$.
\end{lemma}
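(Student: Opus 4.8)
The plan is to test the flow equation~\eqref{model:regularized:weak:kirchhoff:p} with $\q=\partial_t\chiv$ and to turn the storage term into a coercive contribution via the strong monotonicity of $\bke$ encoded in~$\mathrm{(A1}^\star\mathrm{)}$; the coupling term is then absorbed using the $\zeta$-independent control of $\partial_t\uv$ furnished by Lemma~\ref{lemma:alternative-step-2}. First I would note that $\partial_t\chiv$ is well defined in $L^2(Q_T)$: indeed $\partial_t\pek(\chiv)\in L^2(Q_T)$ by~$\mathrm{(W5)_{\zeta\eta}}$, and since $\pek$ is bi-Lipschitz by~$\mathrm{(ND2)}$ one has $\partial_t\chiv=(\pek'(\chiv))^{-1}\partial_t\pek(\chiv)$ a.e.\ with $\|\partial_t\chiv\|_{L^2(Q_T)}\leq C_\mathrm{ND,2}\|\partial_t\pek(\chiv)\|_{L^2(Q_T)}$. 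The admissibility of $\q=\partial_t\chiv$ as a test function, together with the identity $\int_0^T\llangle\absolutepermeability\GRAD\chiv,\GRAD\partial_t\chiv\rrangle\,dt=\tfrac12\llangle\absolutepermeability\GRAD\chiv(T),\GRAD\chiv(T)\rrangle-\tfrac12\llangle\absolutepermeability\GRAD\chi_0,\GRAD\chi_0\rrangle$ (using $\chiv(0)=\chi_0$ from~$\mathrm{(W2)_{\zeta\eta}}$), would be justified by a standard Steklov-averaging argument in time: one tests with the difference quotient $h^{-1}(\chiv(\cdot)-\chiv(\cdot-h))\in L^2(0,T;\Q)$ and lets $h\to0$, identifying the limit of the storage term through $\partial_t\bke(\chiv)\in L^2(0,T;\Q^\star)$ from~$\mathrm{(W2)_{\zeta\eta}}$, as in~\cite{Alt1983}.

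Testing then yields
\begin{align*}
 \int_0^T \llangle \partial_t \bke(\chiv), \partial_t \chiv \rrangle \, dt
 + \tfrac12 \llangle \absolutepermeability \GRAD \chiv(T), \GRAD \chiv(T) \rrangle
 + \alpha \int_0^T \llangle \swk(\chiv)\, \partial_t \DIV \uv, \partial_t \chiv \rrangle \, dt \\
 = \tfrac12 \llangle \absolutepermeability \GRAD \chi_0, \GRAD \chi_0 \rrangle
 + \int_0^T \llangle \hext, \partial_t \chiv \rrangle \, dt,
\end{align*}
and I would estimate the four contributions as follows. Evaluating~$\mathrm{(A1}^\star\mathrm{)}$ on constant functions gives the pointwise inequality $(\bke(a)-\bke(b))(a-b)\geq b_\mathrm{\chi,m}(a-b)^2$ for all $a,b\in\mathbb{R}$; applied to the difference quotients and passed to the limit, this gives $\int_0^T\llangle\partial_t\bke(\chiv),\partial_t\chiv\rrangle\,dt\geq b_\mathrm{\chi,m}\|\partial_t\chiv\|_{L^2(Q_T)}^2$. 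The term $\tfrac12\llangle\absolutepermeability\GRAD\chiv(T),\GRAD\chiv(T)\rrangle$ is nonnegative and is dropped, while $\tfrac12\llangle\absolutepermeability\GRAD\chi_0,\GRAD\chi_0\rrangle$ is bounded through~$\mathrm{(A6)}$ and~$\mathrm{(A8}^\star\mathrm{)}$. For the coupling term, $|\swk(\chiv)|\leq1$ by~$\mathrm{(A2)}$ and $\|\partial_t\DIV\uv\|\leq\Kdr^{-1/2}\|\partial_t\uv\|_{\V}$, so the Cauchy--Schwarz and Young inequalities give
\begin{align*}
 \Big| \alpha \int_0^T \llangle \swk(\chiv)\, \partial_t \DIV \uv, \partial_t \chiv \rrangle \, dt \Big|
 \leq \tfrac14 b_\mathrm{\chi,m} \| \partial_t \chiv \|_{L^2(Q_T)}^2
 + \frac{\alpha^2}{b_\mathrm{\chi,m}\, \Kdr} \| \partial_t \uv \|_{L^2(0,T;\V)}^2,
\end{align*}
and $\|\partial_t\uv\|_{L^2(0,T;\V)}^2$ is bounded $\zeta$-independently by Lemma~\ref{lemma:alternative-step-2}. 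Finally, for the source term I would integrate by parts in time, $\int_0^T\llangle\hext,\partial_t\chiv\rrangle\,dt=\llangle\hext(T),\chiv(T)\rrangle-\llangle\hext(0),\chi_0\rrangle-\int_0^T\llangle\partial_t\hext,\chiv\rrangle\,dt$, and bound the right-hand side by~$\mathrm{(A7)}$, a Poincar\'e inequality, and the $L^\infty(0,T;\Q)$-bound on $\chiv$ from Lemma~\ref{lemma:alternative-step-2} (again $\zeta$-independent). Collecting everything and absorbing $\tfrac14 b_\mathrm{\chi,m}\|\partial_t\chiv\|_{L^2(Q_T)}^2$ into the left-hand side leaves $\tfrac34 b_\mathrm{\chi,m}\|\partial_t\chiv\|_{L^2(Q_T)}^2$ bounded by data, which is exactly the claimed estimate with the stated dependence on $b_\mathrm{\chi,m}^{-1}C_0$ and $b_\mathrm{\chi,m}^{-2}\epsStabilityDuDt$.

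The main obstacle is the time-regularity bookkeeping needed to make the test $\q=\partial_t\chiv$ rigorous: a priori one only knows $\chiv\in L^\infty(0,T;\Q)$ with $\partial_t\chiv\in L^2(Q_T)$, so the diffusion identity and the passage to the limit in the monotonicity inequality must be carried out on Steklov-averaged quantities, identifying limits through $\partial_t\bke(\chiv)\in L^2(0,T;\Q^\star)$. A related subtlety is that $\bke$ need not be Lipschitz, so the chain rule for $\partial_t\bke(\chiv)$ is unavailable; this is precisely why the \emph{pointwise} form of~$\mathrm{(A1}^\star\mathrm{)}$ is invoked for the storage term rather than a chain-rule computation. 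I also note that, unlike the estimates of Step~3, this argument genuinely needs the time-continuous structure, since at the fully discrete level the analogue of $\|\partial_t\DIV\uv\|_{L^2(Q_T)}$ is only available with a $\zeta^{-1}$-dependent bound.
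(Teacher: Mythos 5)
Your proof is correct and follows essentially the same strategy as the paper: test the flow equation with $q=\partial_t\chiv$, use~$\mathrm{(A1}^\star\mathrm{)}$ to lower-bound the storage term by $b_{\chi,\mathrm{m}}\|\partial_t\chiv\|_{L^2(Q_T)}^2$, exploit the fundamental-theorem identity for the diffusion term, and absorb the coupling term via the $\zeta$-independent control of $\|\partial_t\uv\|_{L^2(0,T;\V)}$ from Lemma~\ref{lemma:alternative-step-2}. The one genuine variation is the source term: the paper estimates $\int_0^T\llangle\hext,\partial_t\chiv\rrangle\,dt$ directly with Cauchy--Schwarz and Young and absorbs the resulting $\tfrac{b_{\chi,\mathrm{m}}}{4}\|\partial_t\chiv\|^2$, whereas you integrate by parts in time (as the paper does in the preceding Lemma~\ref{lemma:alternative-step-2}) and then invoke the $H^1(0,T;\Q^\star)$ regularity from~(A7) together with the $L^\infty(0,T;\Q)$ bound on $\chiv$. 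Both routes close the estimate; yours has the modest advantage of pairing $\hext$ against $\chiv$, which lies in $\Q$ and hence has a well-defined boundary trace, rather than against $\partial_t\chiv$, which a priori is only $L^2(Q_T)$ --- this makes the Neumann-source contribution $w_\mathrm{N}$ unambiguously well defined, a point the paper's direct Cauchy--Schwarz step glosses over. Your explicit Steklov-averaging justification for the admissibility of $q=\partial_t\chiv$ and the diffusion identity is likewise more careful than the paper's record, which invokes the test function without comment, and your remark that the pointwise form of~$\mathrm{(A1}^\star\mathrm{)}$ must be used in place of an unavailable chain rule for $\partial_t\bke(\chiv)$ is well-placed.
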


\begin{proof}
 We repeat parts of the proof of Lemma~\ref{lemma:alternative-step-2}. We test the flow equation~\eqref{model:regularized:weak:kirchhoff:p} with $q=\partial_t \chiv$ and apply~(A1$^\star$) and the Cauchy-Schwarz inequality; we obtain 
 \begin{align*}
 &b_\mathrm{\chi,m} \| \partial_t \chiv \|_{L^2(Q_T)}^2
 + \frac{1}{2} \llangle \absolutepermeability \GRAD \chiv(T), \GRAD \chiv(T) \rrangle \\
 &\quad \leq
 \int_0^T 
 \llangle \partial_t \bke(\chiv), \partial_t \chiv \rrangle \, dt
 + \frac{1}{2} \llangle \absolutepermeability \GRAD \chiv(T), \GRAD \chiv(T) \rrangle \\
 &\quad = 
 \frac{1}{2} \llangle \absolutepermeability \GRAD \chiv(0),\GRAD \chiv(0) \rrangle 
 + \int_0^T \left( \llangle \hext, \partial_t \chiv \rrangle - \alpha \llangle \swk \partial_t \DIV \uv, \partial_t \chiv \rrangle \right)\, dt \\
 &\quad \leq
 \frac{1}{2} \llangle \absolutepermeability \GRAD \chiv(0),\GRAD \chiv(0) \rrangle 
 + \frac{1}{b_\mathrm{\chi,m}} \left( \left\| \hext \right\|_{L^2(0,T;Q^\star)}^2
 + \alpha^2 \| \partial_t \DIV \uv \|_{L^2(Q_T)}^2\right) \\
 &\qquad+ \frac{b_\mathrm{\chi,m}}{2} \| \partial_t \chiv \|_{L^2(Q_T)}^2.
 \end{align*}
 After rearranging terms, applying the regularity of the data, and applying from~Lemma~\ref{lemma:alternative-step-2}, the assertion follows.
\end{proof}

\subsection{Relative (weak) compactness for $\zeta\rightarrow 0$}\label{section:regularization:relative-compactness}
We utilize the stability results from the previous section to conclude relative compactness. 

\begin{lemma}[Convergence of the primary variables]\label{lemma:regularization:convergence-u-chi}
 We can extract subsequences of $\{\uv \}_{\zeta}$ and $\{\chiv\}_{\zeta}$ (still denoted like the original sequences), and there exist $\ueta\in H^1(0,T;\V)$ and $\chieta\in H^1(0,T;L^2(\Omega))\cap L^\infty(0,T;Q)$ such that for $\zeta\rightarrow0$
 \begin{alignat}{2}
 \label{result:regularization:convergence-u:1}
  \uv     &\rightharpoonup \ueta                       && \text{ in }H^1(0,T;\V), \\
 \label{result:regularization:convergence-u:3}
  \zeta \partial_t \uv     &\rightarrow     \bm{0}                       && \text{ in }L^2(0,T;\V), \\
  \label{result:regularization:convergence-chi:1}
  \chiv &\rightarrow \chieta               && \text{ in }L^2(Q_T), \\
  \label{result:regularization:convergence-chi:2}
  \chiv &\rightharpoonup \chieta           && \text{ in }L^\infty(0,T;Q), \\
  \label{result:regularization:convergence-chi:3}
  \partial_t \chiv &\rightharpoonup \partial_t \chieta && \text{ in }L^2(Q_T).
 \end{alignat}
\end{lemma}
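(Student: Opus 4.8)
The plan is to obtain all five convergences directly from the $\zeta$-uniform stability bounds established in the previous subsection, combined with standard Banach-space weak compactness and one application of the relaxed Aubin--Lions--Simon lemma. First I would record the bounds that are uniform in $\zeta$ for the frozen value of $\eta>0$: from Lemma~\ref{lemma:alternative-step-2} and~\eqref{eps-regularization:stability-bound} one has $\|\uv\|_{H^1(0,T;\V)}\leq C$, $\zeta\,\|\partial_t\uv\|_{L^\infty(0,T;\V)}^2\leq\epsStabilityDuDt$ and $\|\chiv\|_{L^\infty(0,T;\Q)}\leq C$, while Lemma~\ref{lemma:regularization:stability:dchidt} supplies $\|\partial_t\chiv\|_{L^2(Q_T)}^2\leq\epsStabilityDchiDt$ (this is the only estimate in which the $\eta$-dependent growth constant $b_{\chi,\mathrm{m}}$ enters, which is harmless here since $\eta$ is kept fixed throughout Step~5). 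All of these are independent of $\zeta$ by construction of the cited lemmas.

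Next I would treat the mechanical displacement. Since $H^1(0,T;\V)$ is reflexive, the Eberlein--$\check{\text{S}}$mulian theorem, cf.\ Lemma~\ref{appendix:lemma:eberlein-smulian}, yields a subsequence and a limit $\ueta\in H^1(0,T;\V)$ with $\uv\rightharpoonup\ueta$ in $H^1(0,T;\V)$, which is~\eqref{result:regularization:convergence-u:1} (and in particular $\partial_t\uv\rightharpoonup\partial_t\ueta$ in $L^2(0,T;\V)$). For~\eqref{result:regularization:convergence-u:3} no compactness is needed: one simply estimates $\|\zeta\,\partial_t\uv\|_{L^2(0,T;\V)}\leq\zeta\,T^{1/2}\,\|\partial_t\uv\|_{L^\infty(0,T;\V)}\leq (\zeta\,T\,\epsStabilityDuDt)^{1/2}\to 0$ as $\zeta\to0$, so this convergence is in fact strong.

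For the Kirchhoff pressure I would argue as follows. The sequence $\{\chiv\}_\zeta$ is bounded in $L^\infty(0,T;\Q)$ and $\{\partial_t\chiv\}_\zeta$ is bounded in $L^2(0,T;L^2(\Omega))$; since the embedding $\Q\hookrightarrow L^2(\Omega)$ is compact and $L^2(\Omega)\hookrightarrow\Q^\star$ continuously, the relaxed Aubin--Lions--Simon compactness result, cf.\ Lemma~\ref{appendix:lemma:aubin-lions}, shows that $\{\chiv\}_\zeta$ is relatively compact in $L^2(0,T;L^2(\Omega))=L^2(Q_T)$; extracting a further subsequence gives $\chiv\to\chieta$ in $L^2(Q_T)$, i.e.~\eqref{result:regularization:convergence-chi:1}. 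Because $L^\infty(0,T;\Q)$ is the dual of $L^1(0,T;\Q^\star)$ ($\Q$ being separable and reflexive), Banach--Alaoglu together with Lemma~\ref{appendix:lemma:eberlein-smulian} furnishes a weak-$\star$ convergent subsequence $\chiv\rightharpoonup\chieta$ in $L^\infty(0,T;\Q)$, whose limit coincides with the $L^2(Q_T)$-limit by uniqueness of distributional limits, giving~\eqref{result:regularization:convergence-chi:2}. Likewise $\partial_t\chiv\rightharpoonup g$ in $L^2(Q_T)$ for some $g$, and passing to the limit in $\int_0^T\llangle\partial_t\chiv,\varphi\rrangle\,dt=-\int_0^T\llangle\chiv,\partial_t\varphi\rrangle\,dt$ for $\varphi\in C^\infty_c(Q_T)$ identifies $g=\partial_t\chieta$; hence $\chieta\in H^1(0,T;L^2(\Omega))\cap L^\infty(0,T;\Q)$ and~\eqref{result:regularization:convergence-chi:3} holds. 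Passing throughout to a single common (diagonal) subsequence ensures all five statements are realized simultaneously.

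The argument is essentially routine; the only points requiring care are (i) verifying that every bound used really is $\zeta$-independent, which is exactly the content of Lemma~\ref{lemma:alternative-step-2} (whose proof exploits the time-continuous identity $\mathrm{(W6)_{\zeta\eta}}$ and the non-degeneracy condition $\mathrm{(ND3)}$ precisely to absorb the mismatched coupling terms without any $\zeta$ in the estimate), (ii) the weak-$\star$ versus weak distinction in $L^\infty(0,T;\Q)$, and (iii) the nesting of the successive subsequence extractions. No genuinely new obstacle appears at this stage: the substance of Step~5 lies in the $\zeta$-uniform estimates already in place and, afterwards, in identifying $(\ueta,\chieta)$ as a weak solution of the simply regularized model.
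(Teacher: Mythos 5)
Your proof is correct and follows essentially the same route as the paper: extract weak limits via the Eberlein--$\check{\text{S}}$mulian theorem from the $\zeta$-uniform bounds of Lemma~\ref{lemma:alternative-step-2}, \eqref{eps-regularization:stability-bound}, and Lemma~\ref{lemma:regularization:stability:dchidt}; obtain strong $L^2(Q_T)$ convergence of $\chiv$ by the relaxed Aubin--Lions--Simon lemma (with $X=\Q$, $B=L^2(\Omega)$); and handle $\zeta\partial_t\uv\rightarrow\bm{0}$ by a direct norm estimate. The one place you are more careful than the paper is the $L^\infty(0,T;\Q)$ limit, where you correctly invoke Banach--Alaoglu and weak-$\star$ compactness rather than Eberlein--$\check{\text{S}}$mulian (which, as stated for reflexive spaces, does not literally apply to $L^\infty(0,T;\Q)$).
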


\begin{proof}
 The proof follows standard arguments based on the Eberlein-$\check{\text{S}}$mulian theorem, cf.\ Lemma~\ref{appendix:lemma:eberlein-smulian}, the Aubin-Lions lemma, cf.\ Lemma~\ref{appendix:lemma:aubin-lions}, and the stability results for $\uv$, cf.\ Lemma~\ref{lemma:alternative-step-2} and~\eqref{eps-regularization:stability-bound}, as well as the stability results for $\chiv$, cf.\ Lemma~\ref{lemma:alternative-step-2} and Lemma~\ref{lemma:regularization:stability:dchidt}. In particular, for~\eqref{result:regularization:convergence-u:3}, we employ the uniform stability result from Lemma~\ref{lemma:alternative-step-2}; for all fixed $\v\in L^2(0,T;\V)$ it holds that
 \begin{align*}
  \left|\int_0^T \zeta a(\partial_t \uv, \v)\, dt \right| \leq \zeta \epsStabilityDuDt \| \v \|_{L^2(0,T;\V)} \rightarrow 0 \quad\text{for }\zeta \rightarrow 0.
 \end{align*}
\end{proof}

\begin{lemma}[Convergence of the coupling terms]\label{lemma:regularization:convergence-coupling}
Up to subsequences it holds for $\zeta\rightarrow0$
  \begin{alignat}{2}
  \label{result:regularization:convergence-pek}
  \pek(\chiv) &\rightharpoonup \pek(\chieta)       && \text{ in }L^2(Q_T), \\
  \label{result:regularization:convergence-coupling}
  \swk(\chiv)\partial_t \DIV \uv &\rightharpoonup \swk(\chieta)\partial_t \DIV \ueta &\quad&\text{ in }L^2(Q_T).
 \end{alignat}
\end{lemma}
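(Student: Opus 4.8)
The plan is to transcribe the argument of Lemma~\ref{lemma:convergence-coupling} almost verbatim, replacing the discretization index $h,\tau$ by the regularization parameter $\zeta$ and invoking the compactness already established in Lemma~\ref{lemma:regularization:convergence-u-chi}. No new idea is needed; the double regularization has done its job in making the stability bounds of Lemma~\ref{lemma:alternative-step-2} available, and the strong $L^2(Q_T)$-convergence of $\chiv$ is the decisive ingredient.

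For~\eqref{result:regularization:convergence-pek}, the uniform bound on $\| \pek(\chiv) \|_{L^2(Q_T)}$ recorded in~\eqref{eps-regularization:stability-bound}, together with the Eberlein-$\check{\text{S}}$mulian theorem, cf.\ Lemma~\ref{appendix:lemma:eberlein-smulian}, yields a subsequence and a limit $\hat{p}\in L^2(Q_T)$ with $\pek(\chiv)\rightharpoonup\hat{p}$ in $L^2(Q_T)$. To identify $\hat{p}=\pek(\chieta)$, I would pass from the strong convergence $\chiv\rightarrow\chieta$ in $L^2(Q_T)$, cf.~\eqref{result:regularization:convergence-chi:1}, to a further subsequence with $\chiv\rightarrow\chieta$ a.e.\ on $Q_T$; since $\pek$ is continuous by~(A3), this gives $\pek(\chiv)\rightarrow\pek(\chieta)$ a.e.\ on $Q_T$, and a sequence that converges both weakly in $L^2(Q_T)$ and a.e.\ must have the two limits coincide, so $\hat{p}=\pek(\chieta)$.

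For~\eqref{result:regularization:convergence-coupling}, fix an arbitrary $q\in L^2(Q_T)$ and split
\begin{align*}
 &\left| \llangle \swk(\chiv)\partial_t \DIV \uv - \swk(\chieta)\partial_t \DIV \ueta, q \rrangle \right| \\
 &\quad\leq \left| \llangle \left(\swk(\chiv) - \swk(\chieta)\right)\partial_t \DIV \uv, q \rrangle \right| + \left| \llangle \partial_t \DIV \uv - \partial_t \DIV \ueta, \swk(\chieta) q \rrangle \right|.
\end{align*}
For the first term I bound $\left| \llangle (\swk(\chiv)-\swk(\chieta))\partial_t \DIV \uv, q \rrangle \right| \leq \| (\swk(\chiv)-\swk(\chieta))q \|_{L^2(Q_T)}\, \| \partial_t \DIV \uv \|_{L^2(Q_T)}$; the elementary estimate $\| \partial_t \DIV \uv \|_{L^2(Q_T)} \leq K_\mathrm{dr}^{-1/2} \| \partial_t \uv \|_{L^2(0,T;\V)}$ together with Lemma~\ref{lemma:alternative-step-2} makes $\| \partial_t \DIV \uv \|_{L^2(Q_T)}$ uniformly bounded, while $(\swk(\chiv)-\swk(\chieta))q\rightarrow 0$ in $L^2(Q_T)$ by the dominated convergence theorem, using $\chiv\rightarrow\chieta$ a.e., the continuity of $\swk$ from~(A2), and the pointwise domination $|\swk(\chiv)q|\leq|q|$ (as $\swk$ takes values in $(0,1]$); in particular $\swk(\chieta)q\in L^2(Q_T)$. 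For the second term, the weak convergence $\partial_t \uv\rightharpoonup\partial_t\ueta$ in $L^2(0,T;\V)$ from~\eqref{result:regularization:convergence-u:1} implies $\partial_t \DIV \uv\rightharpoonup\partial_t\DIV\ueta$ in $L^2(Q_T)$ (as $\DIV$ is bounded and linear), so its pairing against $\swk(\chieta)q\in L^2(Q_T)$ vanishes in the limit. Since $q\in L^2(Q_T)$ was arbitrary, this proves~\eqref{result:regularization:convergence-coupling}.

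The only subtlety is the bookkeeping with subsequences: the a.e.-convergent subsequence of $\{\chiv\}_\zeta$ must be the one along which $\pek(\chiv)$ (and, for the second statement, $\partial_t \uv$) converge weakly, so I would first fix the weak limits and then pass to a further subsequence for the a.e.\ statement — which is harmless, since every subsequence of a sequence with a unique accumulation point still converges to it. I do not expect a genuinely hard step here; this lemma is essentially a repetition of Lemma~\ref{lemma:convergence-coupling} with the limit $(h,\tau)\rightarrow 0$ replaced by $\zeta\rightarrow 0$.
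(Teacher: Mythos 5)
Your proposal is correct and follows exactly the route the paper intends: the paper's own proof of this lemma is a one-line pointer back to Lemma~\ref{lemma:convergence-coupling}, and your argument carries out that analogy faithfully — Eberlein-$\check{\text{S}}$mulian plus the bound~\eqref{eps-regularization:stability-bound} for the pore-pressure term, then strong $L^2(Q_T)$ convergence of $\chiv$ from Lemma~\ref{lemma:regularization:convergence-u-chi}, continuity of $\pek$ and $\swk$, dominated convergence, the uniform $\partial_t \DIV \uv$ bound from Lemma~\ref{lemma:alternative-step-2}, and pairing weak against strong convergence for the coupling term. The subsequence bookkeeping you flag is handled correctly.
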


\begin{proof}
 The proof is analogous to the proof of Lemma~\ref{lemma:convergence-coupling}. Essentially, first, one has to utilize stability estimates together with the Eberlein-$\check{\text{S}}$mulian theorem, cf.\ Lemma~\ref{appendix:lemma:eberlein-smulian}; second, continuity properties of the non-linearities have to be employed together with the convergence of $\{\uv\}_{\zeta}$ and $\{\chiv\}_{\zeta}$, cf.\ Lemma~\ref{lemma:regularization:convergence-u-chi}. We note that for~\eqref{result:regularization:convergence-pek} the stability result~\eqref{eps-regularization:stability-bound} has to be utilized.
\end{proof}

\begin{lemma}[Initial conditions for the fluid flow]\label{lemma:regularization:convergence-dbdt} 
Up to subsequences it holds for $\zeta\rightarrow0$
\begin{alignat}{2}
  \label{result:regularization:convergence-dtbk}
  \partial_t \bke(\chiv) &\rightharpoonup \partial_t \bke(\chieta) &&\text{ in }L^2(0,T;Q^\star),
\end{alignat}
where $\partial_t \bke(\chieta)\in L^2(0,T;Q^\star)$ is understood in the sense of~$\mathrm{(W2)_\eta}$.
\end{lemma}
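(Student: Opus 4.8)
The plan is to mimic the proof of Lemma~\ref{lemma:convergence-dbdt}, transported to the continuous level with $\zeta$ playing the role of the discretization parameters. First I would invoke the uniform-in-$\zeta$ bound on $\{\partial_t \bke(\chiv)\}_\zeta$ in $L^2(0,T;Q^\star)$ supplied by Lemma~\ref{lemma:regularization:stability:dkbe-dt}, and extract, via the Eberlein-$\check{\text{S}}$mulian theorem, cf.\ Lemma~\ref{appendix:lemma:eberlein-smulian}, a subsequence with $\partial_t \bke(\chiv) \rightharpoonup \xi$ in $L^2(0,T;Q^\star)$ for some $\xi$. The remaining work is to identify $\xi$ with $\partial_t \bke(\chieta)$ understood in the sense of $\mathrm{(W2)_\eta}$.

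For the identification I would pass to the limit in the relation $\mathrm{(W2)_{\zeta\eta}}$, which $(\uv,\chiv)$ satisfies because it is a weak solution with increased regularity of the doubly regularized model, cf.\ Lemma~\ref{lemma:existence-doubly-regularized-increased-regularity}: for every admissible test function $\q$ (that is, $\q\in L^2(0,T;Q)$ with $\partial_t\q\in L^1(0,T;L^\infty(\Omega))$ and $\q(T)=0$),
\begin{align*}
 \int_0^T \llangle \partial_t \bke(\chiv), \q \rrangle \, dt + \int_0^T \llangle \bke(\chiv) - \bke(\chi_0), \partial_t \q \rrangle \, dt = 0.
\end{align*}
The first integral converges to $\int_0^T \llangle \xi, \q \rrangle\, dt$ by the weak convergence just obtained, tested against the fixed $\q\in L^2(0,T;Q)$. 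For the second integral I need $\bke(\chiv)$ to converge in a topology dual to $L^1(0,T;L^\infty(\Omega))$; here I would reuse the Legendre-transform device: from $|\bke(x)| \leq \delta\, \Bke(x) + \sup_{|y|\leq \delta^{-1}}|\bke(y)|$ and the uniform bound $\|\Bke(\chiv)\|_{L^\infty(0,T;L^1(\Omega))}\leq \epsStabilityLegendre$ established above, the family $\{\bke(\chiv)\}_\zeta$ is bounded in $L^\infty(0,T;L^1(\Omega))$; since $\chiv\rightarrow\chieta$ in $L^2(Q_T)$ and hence a.e.\ on $Q_T$ along a further subsequence by Lemma~\ref{lemma:regularization:convergence-u-chi}, continuity of $\bke$ from~(A1) gives $\bke(\chiv)\rightarrow\bke(\chieta)$ a.e.\ on $Q_T$, and combining the two yields $\bke(\chiv)\rightharpoonup\bke(\chieta)$ weak-$\star$ in $L^\infty(0,T;L^1(\Omega))$, in particular $\bke(\chieta)\in L^\infty(0,T;L^1(\Omega))$. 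Testing against the fixed $\partial_t\q\in L^1(0,T;L^\infty(\Omega))$ then gives convergence of the second integral, while the $\bke(\chi_0)$-contribution is $\zeta$-independent. Passing to the limit produces $\int_0^T \llangle \xi, \q\rrangle\,dt + \int_0^T \llangle \bke(\chieta)-\bke(\chi_0), \partial_t\q\rrangle\,dt = 0$, which is exactly $\mathrm{(W2)_\eta}$ with $\partial_t\bke(\chieta):=\xi$, hence the claim~\eqref{result:regularization:convergence-dtbk}.

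The only delicate point is the step upgrading a.e.\ convergence of $\bke(\chiv)$ to weak-$\star$ convergence in $L^\infty(0,T;L^1(\Omega))$, since $\bke$ need not be Lipschitz and $L^1$ is not reflexive, so equi-integrability has to be drawn from the uniform control of $\Bke(\chiv)$. This is, however, precisely the mechanism already set up for the discrete limit in Lemma~\ref{lemma:convergence-dbdt}, so no genuinely new ingredient is needed; everything else — Eberlein-$\check{\text{S}}$mulian extraction, linearity of the two pairings, and $\zeta$-independence of the initial term — is routine.
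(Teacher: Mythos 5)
Your argument is correct and mirrors the paper's proof for this lemma: both extract the weak limit $\partial_t \bke(\chiv)\rightharpoonup b_t$ in $L^2(0,T;Q^\star)$ via Lemma~\ref{lemma:regularization:stability:dkbe-dt} and Eberlein--$\check{\text{S}}$mulian, then identify $b_t=\partial_t\bke(\chieta)$ by passing to the limit in $\mathrm{(W2)_{\zeta\eta}}$, with the convergence of $\bke(\chiv)$ in $L^\infty(0,T;L^1(\Omega))$ drawn from the uniform Legendre-transform bound together with the a.e.\ convergence of $\chiv$ obtained from Lemma~\ref{lemma:regularization:convergence-u-chi}. You also flag, appropriately, the delicate equi-integrability step on which the $L^\infty(0,T;L^1(\Omega))$ convergence rests, which the paper handles by the same mechanism; no substantive difference in approach.
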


\begin{proof}
 The proof is analogous to the proof of Lemma~\ref{lemma:convergence-dbdt}. By Lemma~\ref{lemma:regularization:stability:dkbe-dt} and the Eberlein-$\check{\text{S}}$mulian theorem, cf.\ Lemma~\ref{appendix:lemma:eberlein-smulian}, there exists a $b_t\in L^2(0,T;Q^\star)$ such that $\partial_t \bke(\chiv) \rightharpoonup b_t$ in $L^2(0,T;Q^\star)$ (up to a subsequence). We can identify $b_t = \partial_t b(\chieta)$ by showing (W2)$_\eta$. For this we utilize (W2)$_{\zeta\eta}$. For $q\in L^2(0,T;Q)$ with $\partial_t q\in L^1(0,T:L^\infty(\Omega))$ and $q(T)=0$, it holds that
 \begin{align*}
  \int_0^T \llangle \partial_t \bke(\chiv), q \rrangle \, dt = \int_0^T \llangle \bke(\chi_0) - \bke(\chiv), \partial_t q \rrangle \, dt.
 \end{align*}
 The assertion follows immediately if
 \begin{align}
 \label{result:regularization:convergence-bk}
  \bke(\chiv) &\rightarrow \bke(\chieta) \quad \text{ in }L^\infty(0,T;L^1(\Omega))
 \end{align}
 (up to a subsequence). And indeed, by the uniform boundedness of the Legendre transformation, $\| \Bke (\chieta) \|_{L^\infty(0,T;L^1(\Omega))}$, there exists $b_\chi\in L^\infty(0,T;L^1(\Omega))$ such that $\bke(\chiv)\rightharpoonup b_\chi$ in $L^\infty(0,T;L^1(\Omega))$. Using the strong convergence of $\{\chieta\}_\eta$ and the dominated convergence theorem, we can identify $b_\chi = \bke(\chieta)$, and thus~\eqref{result:regularization:convergence-bk}.
\end{proof}

\begin{lemma}[Initial conditions of the mechanical displacement]\label{lemma:regularization:initial-conditions-u}
 $\partial_t \ueta\in L^2(0,T;\V)$ satisfies~$\mathrm{(W3)_{\eta}}$.
\end{lemma}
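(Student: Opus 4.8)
The plan is to obtain $\mathrm{(W3)_\eta}$ by passing to the limit $\zeta \to 0$ in the weak initial-condition identity $\mathrm{(W3)_{\zeta\eta}}$ satisfied by the doubly regularized solutions, following --- but in a simpler way than --- the argument of Lemma~\ref{lemma:htau:initial-conditions-u}. The key observation is that every term appearing in $\mathrm{(W3)_{\zeta\eta}}$ is linear in the displacement, so the weak convergences already collected in Lemma~\ref{lemma:regularization:convergence-u-chi} suffice; no strong compactness of $\{\uv\}_\zeta$ and no $\zeta$-weighted bound are needed.

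First I would recall that, under the assumptions of Lemma~\ref{lemma:existence-simply-regularized}, the non-degeneracy conditions $\mathrm{(ND1)}$--$\mathrm{(ND2)}$ hold, so that by Lemma~\ref{lemma:existence-doubly-regularized-increased-regularity} (equivalently, by Lemma~\ref{lemma:limit-satisfies-w1-w5}) each $(\uv,\chiv)$ is a weak solution with increased regularity for the doubly regularized model, and in particular satisfies $\mathrm{(W3)_{\zeta\eta}}$:
\[
 \int_0^T a(\partial_t \uv, \v) \, dt + \int_0^T a(\uv - \u_0, \partial_t \v) \, dt = 0, \qquad \v \in H^1(0,T;\V),\ \v(T) = \bm{0}.
\]
Fixing such a test function $\v$, I note that $\v, \partial_t \v \in L^2(0,T;\V)$, and --- since $a(\cdot,\cdot)$ is the inner product inducing $\|\cdot\|_{\V}$ --- the maps $\w \mapsto \int_0^T a(\w,\v)\,dt$ and $\w \mapsto \int_0^T a(\w,\partial_t \v)\,dt$ are bounded linear functionals on $L^2(0,T;\V)$, by the Cauchy--Schwarz inequality in space and in time. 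By Lemma~\ref{lemma:regularization:convergence-u-chi} (along the subsequence fixed there) we have $\uv \rightharpoonup \ueta$ and $\partial_t \uv \rightharpoonup \partial_t \ueta$ in $L^2(0,T;\V)$, so both integrals converge as $\zeta \to 0$, yielding
\[
 \int_0^T a(\partial_t \ueta, \v) \, dt + \int_0^T a(\ueta - \u_0, \partial_t \v) \, dt = 0
\]
for all $\v \in H^1(0,T;\V)$ with $\v(T) = \bm{0}$, which is exactly $\mathrm{(W3)_\eta}$; the regularity $\partial_t \ueta \in L^2(0,T;\V)$ is part of the statement of Lemma~\ref{lemma:regularization:convergence-u-chi}.

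I do not expect a genuine obstacle in this step: the only points worth stressing are that the $\zeta$-regularization term is absent from $\mathrm{(W3)}$ altogether, so (unlike in $\mathrm{(W4)_{\zeta\eta}}$ or $\mathrm{(W6)_{\zeta\eta}}$) no control of $\zeta \partial_{tt}\uv$ or of $\zeta\,\partial_t\uv$ is required, and that the full linearity of $\mathrm{(W3)_{\zeta\eta}}$ makes weak convergence of $\uv$ and $\partial_t\uv$ sufficient to pass to the limit term by term.
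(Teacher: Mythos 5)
Your proposal is correct and follows essentially the same route as the paper: both pass to the limit $\zeta\to 0$ in $\mathrm{(W3)_{\zeta\eta}}$ by linearity, using the weak convergences $\uv\rightharpoonup\ueta$ and $\partial_t\uv\rightharpoonup\partial_t\ueta$ in $L^2(0,T;\V)$ from Lemma~\ref{lemma:regularization:convergence-u-chi}. The paper phrases this slightly differently (re-deriving $\partial_t\uv\rightharpoonup\partial_t\ueta$ from the uniform bound in Lemma~\ref{lemma:alternative-step-2} plus $\uv\rightharpoonup\ueta$), but this is the same information already encoded in the $H^1(0,T;\V)$ weak convergence you invoke, so the arguments coincide.
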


\begin{proof}
 Using the uniform stability bound for $\{\partial_t \uveta\}_\zeta$ by Lemma~\ref{lemma:alternative-step-2} and the weak convergence $\uveta \rightharpoonup \ueta$ in $L^2(0,T;\V)$ (up to a subsequence) by Lemma~\ref{lemma:regularization:convergence-u-chi}, standard compactness arguments yield $\partial_t \uveta \rightharpoonup \partial_t \ueta$ in $L^2(0,T;\V)$ (up to a subsequence). Hence, $\zeta\rightarrow 0$ of (W3)$_{\zeta\eta}$ yields (W3)$_{\eta}$ immediately.
\end{proof}

\subsection{Identifying a weak solution for $\zeta \rightarrow 0$}\label{section:eps-regularization:limit-equations}

Finally, we show the limit $(\ueta,\chieta)$, introduced above, is a weak solution of the simply regularized unsaturated poroelasticity model.

\begin{lemma}[Limit satisfies (W1)$_\eta$--(W4)$_\eta$]\label{lemma:limit-satisfies-w1-w3-eps}
 The limit $(\ueta,\chieta)$, derived in Section~\ref{section:regularization:relative-compactness}, is a weak solution of the simply regularized unsaturated poroelasticity model, cf.\ Definition~\ref{definition:simply-regularized-weak-solution}.
\end{lemma}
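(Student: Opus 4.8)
The plan is to assemble the convergence results established in Section~\ref{section:regularization:relative-compactness} and pass to the limit $\zeta\to 0$ term by term in the weak formulation of the doubly regularized model. Three of the four conditions come for free: I would obtain (W1)$_\eta$ from Lemma~\ref{lemma:regularization:convergence-coupling}, since $\pek(\chieta)\in L^2(Q_T)$ as a weak limit of a sequence bounded in $L^2(Q_T)$ by~\eqref{eps-regularization:stability-bound}, and $\swk(\chieta)\in L^\infty(Q_T)$ with norm at most $1$ directly from~(A2); (W2)$_\eta$ is precisely the content of Lemma~\ref{lemma:regularization:convergence-dbdt}; and (W3)$_\eta$ is precisely Lemma~\ref{lemma:regularization:initial-conditions-u}. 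So the only real work is to verify (W4)$_\eta$, i.e.\ that $(\ueta,\chieta)$ satisfies~\eqref{model:regularized:weak:kirchhoff:u}--\eqref{model:regularized:weak:kirchhoff:p} with $\zeta=0$ for all $(\v,\q)\in L^2(0,T;\V)\times L^2(0,T;Q)$.

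For the mechanics equation I would fix $\v\in L^2(0,T;\V)$ and let $\zeta\to 0$ in~\eqref{model:regularized:weak:kirchhoff:u}: the regularization term satisfies $\int_0^T \zeta a(\partial_t\uv,\v)\,dt\to 0$ by the uniform bound of Lemma~\ref{lemma:alternative-step-2}, which is exactly~\eqref{result:regularization:convergence-u:3}; the term $\int_0^T a(\uv,\v)\,dt\to\int_0^T a(\ueta,\v)\,dt$ by the weak convergence $\uv\rightharpoonup\ueta$ in $H^1(0,T;\V)$ from Lemma~\ref{lemma:regularization:convergence-u-chi}; and the coupling term passes to the limit by $\pek(\chiv)\rightharpoonup\pek(\chieta)$ in $L^2(Q_T)$ from Lemma~\ref{lemma:regularization:convergence-coupling}. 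For the flow equation I would fix $\q\in L^2(0,T;Q)$ and pass to the limit in~\eqref{model:regularized:weak:kirchhoff:p}: the storage term converges by $\partial_t\bke(\chiv)\rightharpoonup\partial_t\bke(\chieta)$ in $L^2(0,T;Q^\star)$ from Lemma~\ref{lemma:regularization:convergence-dbdt}; the coupling term $\alpha\llangle\swk(\chiv)\partial_t\DIV\uv,\q\rrangle$ converges by Lemma~\ref{lemma:regularization:convergence-coupling}; and the diffusion term converges because $\chiv\rightharpoonup\chieta$ in $L^\infty(0,T;Q)$ yields $\GRAD\chiv\rightharpoonup\GRAD\chieta$ in $L^2(Q_T)$, so $\int_0^T\llangle\absolutepermeability\GRAD\chiv,\GRAD\q\rrangle\,dt\to\int_0^T\llangle\absolutepermeability\GRAD\chieta,\GRAD\q\rrangle\,dt$ using $\absolutepermeability\in L^\infty(\Omega)$ from~(A6). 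Since the right-hand sides do not depend on $\zeta$, this establishes~\eqref{model:regularized:weak:kirchhoff:u}--\eqref{model:regularized:weak:kirchhoff:p} with $\zeta=0$, and hence (W4)$_\eta$.

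I do not expect a genuinely new obstacle here: unlike Step~3, the diffusion term already involves the true gradient $\GRAD\chiv$ rather than the discrete surrogate $\GRAD_h\chihtau$, so no finite-volume limiting argument is needed, and the nonlinear coupling terms have been handled in Lemma~\ref{lemma:regularization:convergence-coupling}. The only point I would take care to emphasise is that the vanishing of the $\zeta$-regularization is obtained solely from the uniform $L^2(0,T;\V)$-bound on $\partial_t\uv$ of Lemma~\ref{lemma:alternative-step-2}; no bound on $\partial_{tt}\uv$ uniform in $\zeta$ is available, which is exactly why this limit must be taken at the time-continuous level and cannot be performed on the fully discrete scheme.
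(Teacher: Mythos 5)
Your proposal follows essentially the same route as the paper's proof: read off (W1)$_\eta$--(W3)$_\eta$ from Lemmas~\ref{lemma:regularization:convergence-u-chi}, \ref{lemma:regularization:convergence-coupling}, \ref{lemma:regularization:convergence-dbdt}, and \ref{lemma:regularization:initial-conditions-u}, then pass to the limit $\zeta\to 0$ term by term in $\mathrm{(W4)}_{\zeta\eta}$ using the weak convergences already established, with the regularization term vanishing via~\eqref{result:regularization:convergence-u:3}. The paper states this more tersely, but the substance and the lemmas invoked are identical; your added remarks on the diffusion term and on why this limit is taken at the continuous rather than discrete level are accurate elaborations, not departures.
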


\begin{proof}
 The limit $(\ueta,\chieta)$ satisfies~(W1)$_\eta$--(W3)$_\eta$ by Lemma~\ref{lemma:regularization:convergence-u-chi}, Lemma~\ref{lemma:regularization:convergence-coupling}, Lemma~\ref{lemma:regularization:convergence-dbdt}, and Lemma~\ref{lemma:regularization:initial-conditions-u}. It remains to show~(W4)$_\eta$, i.e., that $(\ueta,\chieta)$ satisfies the balance equations~\eqref{model:regularized:weak:kirchhoff:u}--\eqref{model:regularized:weak:kirchhoff:p} for $\zeta=0$. By definition, the sequence $(\uv,\chiv)$ satisfies $\mathrm{(W4)_{\zeta\eta}}$ for $\zeta>0$, i.e., it holds for all $(\v,q)\in L^2(0,T;\V) \cap L^2(0,T;Q)$
 \begin{align*} 
  \int_0^T  \left[ \zeta a(\partial_t \uv, \v)
 + a(\uv, \v)
 - \alpha \llangle \pek(\chiv), \DIV \v \rrangle \right]& \, dt = \int_0^T \llangle \fext, \v \rrangle \, dt, \\
 \int_0^T \bigg[
 \llangle \partial_t \bke(\chiv), \q \rrangle + \alpha \llangle \swk(\chiv) \partial_t \DIV \uv, \q \rrangle 
 + \llangle \absolutepermeability \GRAD \chiv, \GRAD \q \rrangle
 \bigg]& \, dt = \int_0^T \llangle \hext, \q \rrangle\, dt.
 \end{align*}
 Utilizing the weak convergence results, cf.\ Lemma~\ref{lemma:regularization:convergence-u-chi} and Lemma~\ref{lemma:regularization:convergence-coupling}, (W4)$_{\eta}$ follows directly for $\zeta \rightarrow 0$.
\end{proof}

\begin{remark}[Existence of a weak solution for compressible system]\label{remark:consequence-compressible-system}
 If compressibility is present either for the fluid or the solid grains, the regularizing property~$\mathrm{(A1}^\star\mathrm{)}$ is fulfilled for $\eta=0$. For instance, for $b$ as in~\eqref{model:example-b}, the equivalent pore pressure and the van Genuchten-Mualem model, it holds that $b_\mathrm{\chi,m}= \phi_0 c_\mathrm{w} + \tfrac{1}{N}$, cf.\ Appendix~\ref{appendix:feasibility-assumptions}. Consequently, the limit $(\ueta,\chieta)$ in Lemma~\ref{lemma:limit-satisfies-w1-w3-eps} is also well-defined for $\eta=0$. In particular, it is a weak solution of~\eqref{model:governing:kirchhoff:u}--\eqref{model:initial:kirchhoff:flow}, cf.\ Definition~\ref{definition:weak-solution}.
\end{remark}

\section{Step 6: Limit $\eta\rightarrow0$ in the incompressible case}\label{section:eta-regularization}

In this section, we show the main result, Theorem~\ref{theorem:existence}, for the more demanding case of an incompressible fluid and incompressible solid grains. Otherwise, by Remark~\ref{remark:consequence-compressible-system} the main result of this paper follows already. In the incompressible case, $b$ as in~\eqref{model:example-b} is monotone but with $\bk'=0$ on a part of the domain with non-zero measure. Under the use of regularization with $\eta>0$, it holds that $b_\mathrm{\chi,m}=\eta$. In the following, we prove that the limit of $\{(\ueta,\chieta)\}_\eta$ for $\eta\rightarrow 0$ exists, and that it is a weak solution of~\eqref{model:governing:kirchhoff:u}--\eqref{model:initial:kirchhoff:flow} according to Definition~\ref{definition:weak-solution}. Throughout the entire section, we assume~(A0)--(A9) and~(ND1)--(ND3) hold true.

\subsection{Stability estimates independent of $\eta$}

In Section~\ref{section:pure-consolidation}, almost all stability bounds have been independent of $\eta$. To summarize, there exists a constant $C>0$ (independent of $\eta$) such that
\begin{align}
\label{eta-regularization:uniform-stability-bound}
 &\left\| \ueta \right\|_{H^1(0,T;\V)} 
 + 
 \left\| \chieta \right\|_{L^\infty(0,T;H^1_0(\Omega))}
 +
 \left\| \pek(\chieta) \right\|_{L^2(Q_T)} \\
 \nonumber
 &\quad + \left\| \Bke(\chieta) \right\|_{L^\infty(0,T;L^1(\Omega))}
 +
 \left\| \partial_t \bke(\chieta) \right\|_{L^2(0,T;H^{-1}(\Omega))}
 \leq C.
\end{align}
The only bound depending on $\eta$ is the stability of $\partial_t \chieta$, cf.\ Lemma~\ref{lemma:regularization:stability:dchidt}. We recall, there exists a constant $C_\eta>0$, depending on $\eta$, satisfying
\begin{align}
\label{eta-regularization:stability-bound-dchi-dt}
 \left\| \partial_t \chieta \right\|_{L^2(Q_T)} \leq C_\eta.
\end{align}
In order to conclude that $(\ueta,\chieta)$ converges towards a weak solution of the unsaturated poroelasticity model, it will be sufficient to replace the stability result~\eqref{eta-regularization:stability-bound-dchi-dt} by a uniform stability estimate. The remaining discussion for $\eta\rightarrow 0$ can be done along the lines of Section~\ref{section:regularization:relative-compactness}--\ref{section:eps-regularization:limit-equations}.

In the following, we prove a uniform stability bound replacing~\eqref{eta-regularization:stability-bound-dchi-dt} in two steps. We show that the temporal derivative of the mechanics equation, i.e., (W5)$_{\zeta\eta}$ for $\zeta=0$, is well-defined; and then we use an inf-sup argument and the uniform stability estimate~\eqref{eta-regularization:uniform-stability-bound}.

\begin{lemma}[Temporal derivative of the mechanics equation]\label{regularization-eta-dt-mechanics}
 It holds for all $\v\in L^2(0,T;\V)$
 \begin{align}
 \label{result:regularization-eta-dt-mechanics}
  \int_0^T a(\partial_t \ueta, \v) \, dt -  \int_0^T \alpha\llangle \partial_t \pek(\chieta), \DIV \v \rrangle \, dt = \int_0^T \llangle \partial_t \fext, \v \rrangle \, dt.
 \end{align}
\end{lemma}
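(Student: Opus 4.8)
The plan is to obtain \eqref{result:regularization-eta-dt-mechanics} by differentiating in time the pointwise‑in‑time form of the mechanics equation satisfied by $(\ueta,\chieta)$; this is possible precisely because for $\zeta=0$ the mechanics equation is quasi‑static, i.e.\ it contains no time derivative of $\ueta$ itself. First I would check that the three terms appearing in \eqref{result:regularization-eta-dt-mechanics} are well‑defined. By Lemma~\ref{lemma:regularization:convergence-u-chi}, $\ueta\in H^1(0,T;\V)$, hence $\partial_t\ueta\in L^2(0,T;\V)$; by~(A7), $\partial_t\fext\in L^2(0,T;\V^\star)$. For the pore pressure term, I would use that~(ND2) makes $\pek$ Lipschitz continuous (it is $C^1$ by~(A3)); together with $\chieta\in H^1(0,T;L^2(\Omega))\cap L^\infty(0,T;Q)$ (Lemma~\ref{lemma:regularization:convergence-u-chi} and~\eqref{eta-regularization:stability-bound-dchi-dt}), a standard chain‑rule argument — passing to the limit in difference quotients in time, using the boundedness of $\pek'$ and the dominated convergence theorem — yields $\pek(\chieta)\in H^1(0,T;L^2(\Omega))$ with $\partial_t\pek(\chieta)=\pek'(\chieta)\,\partial_t\chieta\in L^2(Q_T)$.

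Next I would extract the pointwise‑in‑time mechanics equation from (W4)$_\eta$ (that is,~\eqref{model:regularized:weak:kirchhoff:u} with $\zeta=0$, valid by Lemma~\ref{lemma:limit-satisfies-w1-w3-eps} for all $\v\in L^2(0,T;\V)$). Fixing $\w\in\V$ and testing with $\v=\psi\,\w$ for arbitrary $\psi\in L^2(0,T)$ gives $\int_0^T\psi(t)\,g_{\w}(t)\,dt=0$, where $g_{\w}(t):=a(\ueta(t),\w)-\alpha\llangle\pek(\chieta(t)),\DIV\w\rrangle-\llangle\fext(t),\w\rrangle$ lies in $L^2(0,T)$; hence $g_{\w}=0$ a.e.\ on $(0,T)$. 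Each summand of $g_{\w}$ is the composition of a function in $H^1(0,T;X)$ — respectively $\ueta$, $\pek(\chieta)$, $\fext$ with $X=\V,\ L^2(\Omega),\ \V^\star$ — with a fixed bounded linear functional on $X$, so $g_{\w}\in H^1(0,T)$ with $g_{\w}'(t)=a(\partial_t\ueta(t),\w)-\alpha\llangle\partial_t\pek(\chieta(t)),\DIV\w\rrangle-\llangle\partial_t\fext(t),\w\rrangle$. Since an $H^1(0,T)$ function that vanishes a.e.\ vanishes identically, $g_{\w}'=0$ a.e.\ on $(0,T)$.

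Finally I would multiply $g_{\w}'=0$ by an arbitrary $\phi\in L^2(0,T)$, integrate over $(0,T)$, and thereby obtain \eqref{result:regularization-eta-dt-mechanics} for all product test functions $\v=\phi\,\w$. Since $\mathrm{span}\{\phi\,\w:\phi\in L^2(0,T),\ \w\in\V\}$ is dense in $L^2(0,T;\V)$, and since — by the regularity collected in the first step — all terms on both sides of \eqref{result:regularization-eta-dt-mechanics} are continuous linear functionals of $\v\in L^2(0,T;\V)$, the identity extends to every $\v\in L^2(0,T;\V)$, which is the claim. The only genuinely non‑routine ingredient is the chain‑rule regularity $\pek(\chieta)\in H^1(0,T;L^2(\Omega))$ guaranteeing that $\partial_t\pek(\chieta)$ exists as an $L^2(Q_T)$ function, and this is exactly where the Lipschitz bound~(ND2) is needed; everything else follows from the quasi‑static structure of the $\zeta=0$ mechanics equation and an ``integrate against separable test functions, then differentiate in time'' argument.
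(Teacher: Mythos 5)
Your proof is correct and follows essentially the same route as the paper: verify the requisite time-regularity (in particular $\partial_t\pek(\chieta)\in L^2(Q_T)$ via (ND2) and the $\eta$-dependent bound on $\partial_t\chieta$), extract the a.e.-in-time pointwise mechanics equation from (W4)$_\eta$, differentiate in time, and close with a density argument. The only cosmetic difference is in the differentiation step — the paper tests the weak form with $\partial_t\v$ for $\v\in C^\infty(0,T;\V)$ and integrates by parts, using the pointwise equation to cancel boundary terms, whereas you fix $\w\in\V$, observe $g_\w\in H^1(0,T)$ vanishes a.e.\ and hence has vanishing weak derivative, then reconstruct via product test functions; both are standard and interchangeable.
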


\begin{proof}
 First, we argue that the mechanics equation~\eqref{model:weak:kirchhoff:u} holds pointwise on $[0,T]$. Let $\v\in L^2(0,T;\V)\cap C^\infty(0,T;\V)$. By Lemma~\ref{lemma:limit-satisfies-w1-w3-eps}, it holds that
 \begin{align*}
  \int_0^T a(\ueta, \v) \, dt -  \int_0^T \alpha\llangle \pek(\chieta), \DIV \v \rrangle \, dt = \int_0^T \llangle \fext, \v \rrangle \, dt.
 \end{align*}
 By the fundamental lemma of calculus of variations it follows a.e.\ on $[0,T]$
 \begin{align}
 \label{proof:regularization-eta-dt-mechanics:aux1}
   a(\ueta, \v) - \alpha \llangle \pek(\chieta), \DIV \v \rrangle = \llangle \fext, \v \rrangle,\quad\text{for all } \v\in\V.
 \end{align}
 Applying a standard embedding for Bochner spaces~\cite{Evans2010}, we can assume wlog.\ that $\ueta\in C(0,T;\V)$ and $\pek(\chieta)\in C(0,T;L^2(\Omega))$, as $\partial_t \ueta \in L^2(0,T;\V)$ and $\partial_t \pek(\chieta) \in L^2(Q_T)$ by~\eqref{eta-regularization:stability-bound-dchi-dt} and assumption~(ND2). Hence,~\eqref{proof:regularization-eta-dt-mechanics:aux1} holds pointwise on $[0,T]$.

 Now we show~\eqref{result:regularization-eta-dt-mechanics}. Let $\v \in L^2(0,T;\V) \cap C^\infty(0,T;\V)$. By Lemma~\ref{lemma:limit-satisfies-w1-w3-eps}, it holds that
 \begin{align*}
  \int_0^T a(\ueta, \partial_t \v) \, dt - \alpha \int_0^T \llangle \pek(\chieta), \DIV \partial_t \v \rrangle \, dt = \int_0^T \llangle \fext, \partial_t \v \rrangle \, dt.  
 \end{align*}
 Since $\partial_t \ueta \in L^2(0,T;\V)$, $\partial_t \pek(\chieta) \in L^2(Q_T)$ and $\partial_t \fext \in L^2(0,T;\V^\star)$, integration by parts is well-defined. Together with~\eqref{proof:regularization-eta-dt-mechanics:aux1}, we obtain
 \begin{align*}
  \int_0^T a(\partial_t \ueta, \v) \, dt - \alpha \int_0^T \llangle \partial_t \pek(\chieta), \DIV \v \rrangle \, dt = \int_0^T \llangle \partial_t \fext, \v \rrangle \, dt.  
 \end{align*}
 The assertion follows after applying a density argument allowing for arbitrary test functions in $L^2(0,T;\V)$ in~\eqref{result:regularization-eta-dt-mechanics}.
\end{proof}

\begin{lemma}[Stability estimate for the temporal derivative of the Kirchhoff pressure]
 There exists a constant $\etaStabilityDchiDt>0$ (independent of $\eta$) such that
 \begin{align*}
  \left\| \partial_t \chieta \right\|_{L^2(Q_T)} \leq \etaStabilityDchiDt.
 \end{align*}
\end{lemma}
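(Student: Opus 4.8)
The plan is to reduce the desired bound to the already-available quantities by using the time-differentiated mechanics equation from Lemma~\ref{regularization-eta-dt-mechanics} together with an inf-sup argument. The starting observation is that by~(ND2) the constitutive map $\pek$ is bi-Lipschitz, $C_\mathrm{ND,2}^{-1}\le\pek'(\chi)\le C_\mathrm{ND,2}$ for all $\chi$, and $\pek\in C^1$ with bounded derivative by~(A3); since $\partial_t\chieta\in L^2(Q_T)$ by Lemma~\ref{lemma:regularization:stability:dchidt}, the chain rule $\partial_t\pek(\chieta)=\pek'(\chieta)\,\partial_t\chieta$ holds a.e.\ on $Q_T$, whence
\[
\|\partial_t\chieta\|_{L^2(Q_T)}\le C_\mathrm{ND,2}\,\|\partial_t\pek(\chieta)\|_{L^2(Q_T)}.
\]
Thus it suffices to bound $\partial_t\pek(\chieta)$ in $L^2(Q_T)$ uniformly in $\eta$.

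For the latter, I would invoke the continuous inf-sup condition for the bilinear form $(\v,q)\mapsto\llangle q,\DIV\v\rrangle$ on $\V\times L^2(\Omega)$, cf.\ Lemma~\ref{appendix:lemma:inf-sup} (with constant $\COmegaInfSup$), in the same spirit as in the proof of Lemma~\ref{lemma:discrete-solution-stability:pore-pressure}: for a.e.\ $t\in(0,T)$ choose $\v(t)\in\V$ with $\alpha\llangle\partial_t\pek(\chieta)(t),\DIV\v(t)\rrangle=\|\partial_t\pek(\chieta)(t)\|^2$ and $\|\v(t)\|_{\V}\le\COmegaInfSup\|\partial_t\pek(\chieta)(t)\|$. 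Since the inf-sup construction is linear in its data, $t\mapsto\v(t)$ is measurable, and $\v\in L^2(0,T;\V)$ with $\|\v\|_{L^2(0,T;\V)}\le\COmegaInfSup\|\partial_t\pek(\chieta)\|_{L^2(Q_T)}<\infty$ (finiteness from Lemma~\ref{lemma:regularization:stability:dchidt}). Inserting this $\v$ into the identity~\eqref{result:regularization-eta-dt-mechanics} of Lemma~\ref{regularization-eta-dt-mechanics} and applying the Cauchy--Schwarz inequality in time gives
\[
\|\partial_t\pek(\chieta)\|_{L^2(Q_T)}^2
= \int_0^T \big[\, a(\partial_t\ueta,\v) - \llangle \partial_t\fext, \v \rrangle \,\big]\, dt
\le \COmegaInfSup\big(\|\partial_t\ueta\|_{L^2(0,T;\V)} + \|\partial_t\fext\|_{L^2(0,T;\V^\star)}\big)\,\|\partial_t\pek(\chieta)\|_{L^2(Q_T)},
\]
so that $\|\partial_t\pek(\chieta)\|_{L^2(Q_T)}\le\COmegaInfSup\big(\|\partial_t\ueta\|_{L^2(0,T;\V)}+\|\partial_t\fext\|_{L^2(0,T;\V^\star)}\big)$. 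It then remains only to feed in the uniform bounds: $\|\partial_t\ueta\|_{L^2(0,T;\V)}\le C$ independently of $\eta$ by~\eqref{eta-regularization:uniform-stability-bound}, and $\|\partial_t\fext\|_{L^2(0,T;\V^\star)}$ is finite and $\eta$-independent by~(A7). Combining with the bi-Lipschitz estimate above yields $\|\partial_t\chieta\|_{L^2(Q_T)}\le\etaStabilityDchiDt$ with $\etaStabilityDchiDt$ independent of $\eta$.

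I do not expect a genuine obstacle: the heavy lifting is already contained in Lemma~\ref{regularization-eta-dt-mechanics} (the $\zeta=0$ time-differentiated mechanics equation) and in the $\eta$-uniform bound~\eqref{eta-regularization:uniform-stability-bound}. The only points deserving care are the measurable selection of the inf-sup test function $t\mapsto\v(t)$ and the justification of the chain rule for $\partial_t\pek(\chieta)$, both of which are routine given~(A3) and~(ND2). The essential structural reason the argument goes through here — and not in Step~5 — is that at $\zeta=0$ the time-differentiated mechanics equation is valid without any (unavailable) uniform control of $\partial_{tt}\ueta$.
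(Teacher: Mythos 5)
Your proof is correct and follows the same route as the paper: reduce via $\mathrm{(ND2)}$ to bounding $\|\partial_t\pek(\chieta)\|_{L^2(Q_T)}$, then bound that by an inf-sup argument applied to the $\zeta=0$ time-differentiated mechanics equation~\eqref{result:regularization-eta-dt-mechanics} together with the $\eta$-uniform bound~\eqref{eta-regularization:uniform-stability-bound} on $\partial_t\ueta$. You spell out the measurable selection of the inf-sup test function and the chain rule for $\partial_t\pek(\chieta)$, details the paper leaves implicit, but the argument is otherwise identical.
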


\begin{proof}
 We show that $\| \partial_t \pek(\chieta)\|_{L^2(Q_T)}$ is uniformly bounded. The assertion follows then from assumption~(ND2), as
 \begin{align*}
  \left\| \partial_t \chieta \right\|_{L^2(Q_T)} \leq C_\mathrm{ND,2} \left\| \partial_t \pek(\chieta) \right\|_{L^2(Q_T)}.
 \end{align*}
 By Lemma~\ref{regularization-eta-dt-mechanics}, the time derivative of the mechanics equations is well-defined, cf.~\eqref{result:regularization-eta-dt-mechanics}. Using a standard inf-sup argument (introducing the constant $\COmegaInfSup$), cf.\ Lemma~\ref{appendix:lemma:inf-sup}, it follows from~\eqref{result:regularization-eta-dt-mechanics} that
 \begin{align*}
  \left\| \partial_t \pek(\chieta) \right\|_{L^2(Q_T)}
  \leq 
  \COmegaInfSup \left( \left\| \partial_t \ueta \right\|_{L^2(0,T;\V)} + \left\| \partial_t \fext \right\|_{L^2(0,T;\V^\star)} \right).
 \end{align*}
 Since $\| \partial_t \ueta \|_{L^2(0,T;\V)}$ is uniformly bounded by~\eqref{eta-regularization:uniform-stability-bound}, $\| \partial_t \pek(\chieta)\|_{L^2(Q_T)}$ is uniformly bounded, which concludes the proof.
\end{proof}

\subsection{Relative (weak) compactness for $\eta\rightarrow 0$}

Using the same line of argumentation used in Section~\ref{section:regularization:relative-compactness}, we can discuss the limit process $\eta\rightarrow 0$.

\begin{lemma}[Convergence of the primary variables]\label{lemma:convergence-eta-0}
 We can extract subsequences of $\{\ueta \}_{\eta}$ and $\{\chieta\}_{\eta}$ (still denoted like the original sequences), and there exist $\u\in H^1(0,T;\V)$ and $\chi\in H^1(0,T;L^2(\Omega))\cap L^\infty(0,T;Q)$ such that for $\eta\rightarrow 0$
 \begin{alignat*}{2}
  \ueta     &\rightharpoonup \u                      && \text{ in }H^1(0,T;\V), \\
  \chieta &\rightarrow \chi               && \text{ in }L^2(Q_T), \\
  \chieta &\rightharpoonup \chi           && \text{ in }L^\infty(0,T;Q), \\
  \partial_t \chieta &\rightharpoonup \partial_t \chi && \text{ in }L^2(Q_T).
 \end{alignat*}
\end{lemma}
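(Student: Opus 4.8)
The plan is to extract the stated convergences by applying exactly the compactness machinery already developed in Section~\ref{section:regularization:relative-compactness}, now with respect to the parameter $\eta$ rather than $\zeta$. First I would collect the $\eta$-uniform bounds: the bound~\eqref{eta-regularization:uniform-stability-bound} gives uniform control of $\ueta$ in $H^1(0,T;\V)$, of $\chieta$ in $L^\infty(0,T;Q)$, of $\pek(\chieta)$ in $L^2(Q_T)$, of $\Bke(\chieta)$ in $L^\infty(0,T;L^1(\Omega))$, and of $\partial_t \bke(\chieta)$ in $L^2(0,T;H^{-1}(\Omega))$; and the newly established uniform bound $\| \partial_t \chieta \|_{L^2(Q_T)} \leq \etaStabilityDchiDt$ supplies the missing control on the temporal derivative of the Kirchhoff pressure. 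These are precisely the ingredients that in Section~\ref{section:pure-consolidation} were used to pass $\zeta\rightarrow 0$.

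Next I would invoke the Eberlein-$\check{\text{S}}$mulian theorem (Lemma~\ref{appendix:lemma:eberlein-smulian}) to extract, along a common subsequence, weak limits $\ueta \rightharpoonup \u$ in $H^1(0,T;\V)$, $\chieta \rightharpoonup \chi$ in $L^\infty(0,T;Q)$ (using that $L^\infty(0,T;Q)$ is a dual space), and $\partial_t \chieta \rightharpoonup \partial_t \chi$ in $L^2(Q_T)$; the latter two together force $\chi\in H^1(0,T;L^2(\Omega))\cap L^\infty(0,T;Q)$, with consistency of the two limits following by uniqueness of limits in $\mathcal{D}'(Q_T)$. For the strong convergence $\chieta \rightarrow \chi$ in $L^2(Q_T)$ I would apply the Aubin-Lions lemma (Lemma~\ref{appendix:lemma:aubin-lions}): the sequence $\{\chieta\}_\eta$ is bounded in $L^2(0,T;Q)$ with $\{\partial_t \chieta\}_\eta$ bounded in $L^2(Q_T)$, and $Q \hookrightarrow\hookrightarrow L^2(\Omega) \hookrightarrow L^2(\Omega)$, which yields relative compactness in $L^2(Q_T)$. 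This is the exact analogue of Lemma~\ref{lemma:regularization:convergence-u-chi}.

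The only genuinely new subtlety compared to the $\zeta\rightarrow 0$ step is that the coercivity constant $b_\mathrm{\chi,m}$ used in Lemma~\ref{lemma:regularization:stability:dchidt} degenerates ($b_\mathrm{\chi,m}=\eta\rightarrow 0$ in the incompressible case), so the uniform bound on $\partial_t \chieta$ cannot be obtained from that lemma; instead it comes from the inf-sup argument applied to the differentiated mechanics equation~\eqref{result:regularization-eta-dt-mechanics} combined with~(ND2) and the $\eta$-uniform bound on $\partial_t \ueta$ — this is the content of the preceding lemma and is why Lemma~\ref{regularization-eta-dt-mechanics} had to be established first. I expect the main obstacle, already resolved, to have been securing that $\eta$-uniform bound on $\partial_t \chieta$; once it is in hand, the present lemma is a routine extraction. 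I would then simply state that the proof follows verbatim the argument of Lemma~\ref{lemma:regularization:convergence-u-chi}, substituting $\eta$ for $\zeta$ and using~\eqref{eta-regularization:uniform-stability-bound} together with the uniform bound $\| \partial_t \chieta \|_{L^2(Q_T)} \leq \etaStabilityDchiDt$ in place of the $\zeta$-uniform bounds of Lemma~\ref{lemma:alternative-step-2} and Lemma~\ref{lemma:regularization:stability:dchidt}.
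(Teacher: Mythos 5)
Your proposal is correct and takes essentially the same approach as the paper, whose proof of this lemma is simply the statement that it is analogous to the proof of Lemma~\ref{lemma:regularization:convergence-u-chi}. You correctly flesh out the details — the $\eta$-uniform bounds from~\eqref{eta-regularization:uniform-stability-bound}, the Eberlein--$\check{\text{S}}$mulian extraction, and the Aubin--Lions argument with $Q\hookrightarrow\hookrightarrow L^2(\Omega)$ — and you correctly identify the one non-trivial point, namely that the uniform control of $\partial_t\chieta$ must come from the inf-sup argument applied to the differentiated mechanics equation together with~$\mathrm{(ND2)}$, rather than from Lemma~\ref{lemma:regularization:stability:dchidt} whose coercivity constant $b_{\chi,\mathrm{m}}=\eta$ degenerates.
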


\begin{proof}
 The proof is analog to the proofs of Lemma~\ref{lemma:regularization:convergence-u-chi}.
\end{proof}

\begin{lemma}[Convergence of the coupling terms]\label{lemma:eta:convergence-coupling}
 Up to subsequences it holds for $\eta\rightarrow 0$ that
 \begin{alignat*}{2}
  \pek(\chieta) &\rightharpoonup \pek(\chi)       &\quad& \text{ in }L^2(Q_T), \\
  \swk(\chieta)\partial_t \DIV \ueta &\rightharpoonup \swk(\chi)\partial_t \DIV \u &&\text{ in }L^2(Q_T).
 \end{alignat*}
\end{lemma}

\begin{proof}
 The proof is analog to the proof of Lemma~\ref{lemma:regularization:convergence-coupling}.
\end{proof}

\begin{lemma}[Initial conditions for the fluid flow]\label{lemma:eta:convergence-dbdt}
\label{lemma:convergence-eta-0}
 Up to subsequences it holds that
 \begin{alignat*}{2}
  \partial_t \bke(\chieta) &\rightharpoonup \partial_t \bk(\chi) &&\text{ in }L^2(0,T;Q^\star),
 \end{alignat*}
 where $\partial_t \bk(\chi)\in L^2(0,T;Q^\star)$ is understood in the sense of~$\mathrm{(W2)}$.
\end{lemma}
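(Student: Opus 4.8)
The plan is to mirror the argument of Lemma~\ref{lemma:regularization:convergence-dbdt} (itself modelled on Lemma~\ref{lemma:convergence-dbdt}), with the extra care that here the nonlinearity $\bke$ depends on the vanishing parameter $\eta$, so two limits must be tracked at once. First I would note that the bound on $\partial_t\bke(\chieta)$ in $L^2(0,T;Q^\star)$ is uniform in $\eta$: repeating the proof of Lemma~\ref{lemma:regularization:stability:dkbe-dt}, whose constant does not involve $\zeta$, and using $\|\partial_t\DIV\ueta\|_{L^2(Q_T)}\leq K_\mathrm{dr}^{-1/2}\|\partial_t\ueta\|_{L^2(0,T;\V)}$ together with the $\eta$-independent bound~\eqref{eta-regularization:uniform-stability-bound}, one gets $\|\partial_t\bke(\chieta)\|_{L^2(0,T;Q^\star)}\leq C$. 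By the Eberlein-$\check{\text{S}}$mulian theorem, cf.\ Lemma~\ref{appendix:lemma:eberlein-smulian}, there is then a subsequence and $b_t\in L^2(0,T;Q^\star)$ with $\partial_t\bke(\chieta)\rightharpoonup b_t$ in $L^2(0,T;Q^\star)$. It remains to identify $b_t=\partial_t\bk(\chi)$ in the sense of $\mathrm{(W2)}$.

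The key step is the strong convergence $\bke(\chieta)\to\bk(\chi)$ in $L^1(Q_T)$, which I would obtain by the splitting $\bke(\chieta)-\bk(\chi)=\big(\bke(\chieta)-\bk(\chieta)\big)+\big(\bk(\chieta)-\bk(\chi)\big)$. For the first term, a change of variables in the definition of $\bke$ gives
\begin{align*}
 \bke(\chi)-\bk(\chi)=\eta\int_0^{\pwk(\chi)}\sw(p)\,\porepressure'(p)\,dp=\eta\int_0^{\chi}\swk(\tilde\chi)\,\pek'(\tilde\chi)\,d\tilde\chi,
\end{align*}
so by $\mathrm{(A2)}$ and $\mathrm{(ND2)}$ the right-hand side is bounded by $\eta\,C_\mathrm{ND,2}\,|\chi|$; hence $\|\bke(\chieta)-\bk(\chieta)\|_{L^1(Q_T)}\lesssim\eta\,\|\chieta\|_{L^2(Q_T)}\to0$ by~\eqref{eta-regularization:uniform-stability-bound}. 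For the second term I would use that $\chieta\to\chi$ a.e.\ on $Q_T$ (up to a subsequence, by the convergence of the primary variables) and continuity of $\bk$, cf.\ $\mathrm{(A1)}$, so $\bk(\chieta)\to\bk(\chi)$ a.e.; equi-integrability follows from the uniform bound on $\|\Bke(\chieta)\|_{L^\infty(0,T;L^1(\Omega))}$ in~\eqref{eta-regularization:uniform-stability-bound} via the Legendre estimate $|\bk(x)|\leq\delta\Bk(x)+\sup_{|y|\leq\delta^{-1}}|\bk(y)|$, cf.\ Lemma~\ref{appendix:lemma:legendre}, so Vitali's theorem yields $L^1(Q_T)$ convergence (and in particular $\bk(\chi)\in L^\infty(0,T;L^1(\Omega))$). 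The same splitting applied to the fixed, regular initial datum $\chi_0$, cf.\ $\mathrm{(A8)}$, also gives $\bke(\chi_0)\to\bk(\chi_0)$ in $L^1(\Omega)$.

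Finally I would pass to the limit in $\mathrm{(W2)}_\eta$: for every admissible $q\in L^2(0,T;Q)$ with $\partial_t q\in L^1(0,T;L^\infty(\Omega))$ and $q(T)=0$,
\begin{align*}
 \int_0^T\llangle\partial_t\bke(\chieta),q\rrangle\,dt+\int_0^T\llangle\bke(\chieta)-\bke(\chi_0),\partial_t q\rrangle\,dt=0,
\end{align*}
and letting $\eta\to0$ the first term tends to $\int_0^T\llangle b_t,q\rrangle\,dt$ by weak convergence, while the second tends to $\int_0^T\llangle\bk(\chi)-\bk(\chi_0),\partial_t q\rrangle\,dt$ by the strong $L^1(Q_T)$ convergence just established together with $\partial_t q\in L^1(0,T;L^\infty(\Omega))$. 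This is exactly the defining relation of $\partial_t\bk(\chi)$ in $\mathrm{(W2)}$, hence $b_t=\partial_t\bk(\chi)$ and the weak convergence asserted in the lemma holds. I expect the main obstacle to be precisely this double limiting: one cannot simply reuse Lemma~\ref{lemma:convergence-dbdt}, and the control of $\bke(\chieta)-\bk(\chieta)$ genuinely hinges on the non-degeneracy of $\pek$ (and of $\pek/\swk$ through the uniform estimates of Section~\ref{section:pure-consolidation}), which is what converts the extra $\eta$-term into $\eta$ times a quantity bounded independently of $\zeta$ and $\eta$.
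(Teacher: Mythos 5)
Your proposal is correct and follows the same route as the paper, which simply refers back to the argument of Lemma~\ref{lemma:regularization:convergence-dbdt} and remarks that $\bke(\chieta)\rightharpoonup\bk(\chi)$ and $\bke(\chi_0)\rightharpoonup\bk(\chi_0)$ in $L^\infty(0,T;L^1(\Omega))$ ``due to construction of $\bke$''. You have in effect supplied the justification the paper leaves implicit: the uniform-in-$\eta$ bound on $\partial_t\bke(\chieta)$ is indeed recorded in~\eqref{eta-regularization:uniform-stability-bound}, the splitting $\bke(\chieta)-\bk(\chi)=(\bke(\chieta)-\bk(\chieta))+(\bk(\chieta)-\bk(\chi))$ together with the pointwise identity $\bke(\chi)-\bk(\chi)=\eta\int_0^\chi\swk\,\pek'\,d\tilde\chi$ and $\mathrm{(A2)}$, $\mathrm{(ND2)}$ controls the $\eta$-term, and a.e.\ convergence plus the Legendre/Vitali argument (using $\Bke\geq\Bk$, which holds since $\bke-\bk$ is non-decreasing) handles the second; the passage to the limit in $\mathrm{(W2)}_\eta$ then uses the uniform $L^\infty(0,T;L^1(\Omega))$ bound together with the strong $L^1(Q_T)$ convergence to pair against $\partial_t q\in L^1(0,T;L^\infty(\Omega))$, as you indicate.
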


\begin{proof}
 The proof is analog to the proof of Lemma~\ref{lemma:regularization:convergence-dbdt}. We only stress that due to construction of $\bke$, one can show that if $\chieta \rightarrow \chi$ in $L^2(Q_T)$, it also holds
 \begin{alignat*}{2}
  \bke(\chi_0)  &\rightharpoonup \bk(\chi_0) &\ \  &\text{in }L^\infty(0,T;L^1(\Omega)),\\
  \bke(\chieta) &\rightharpoonup \bk(\chi)   &\ \ &\text{in }L^\infty(0,T;L^1(\Omega)),
 \end{alignat*}
 for $\eta\rightarrow 0$. Hence, (W2) can be deduced from~(W2)$_\eta$ for $\eta\rightarrow 0$.
\end{proof}

\begin{lemma}[Initial conditions of the mechanical displacement]\label{lemma:eta:initial-conditions-u}
 $\partial_t \DIV \u\in L^2(Q_T)$ satisfies~$\mathrm{(W3)}$.
\end{lemma}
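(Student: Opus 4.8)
The plan is to obtain $\mathrm{(W3)}$ for $\u$ from the corresponding property $\mathrm{(W3)}_\eta$ of the simply regularized solutions $(\ueta,\chieta)$ via a limit argument, reducing both to pointwise initial conditions. First, from the convergence $\ueta\rightharpoonup\u$ in $H^1(0,T;\V)$ established for $\eta\rightarrow0$ (cf.\ Lemma~\ref{lemma:convergence-eta-0}) and the fact that $\DIV:\V\rightarrow L^2(\Omega)$ is bounded and linear, one gets $\DIV\u\in H^1(0,T;L^2(\Omega))$, so in particular $\partial_t\DIV\u\in L^2(Q_T)$. Next I would note that, by the standard embedding $H^1(0,T;L^2(\Omega))\hookrightarrow C([0,T];L^2(\Omega))$ and integration by parts in time, for every $q\in H^1(0,T;L^2(\Omega))$ with $q(T)=0$ one has
\begin{align*}
 \int_0^T\llangle\partial_t\DIV\u,q\rrangle\,dt+\int_0^T\llangle\DIV\u-\DIV\u_0,\partial_t q\rrangle\,dt=\llangle\DIV\u_0-(\DIV\u)(0),q(0)\rrangle,
\end{align*}
so that $\mathrm{(W3)}$ is equivalent to the pointwise identity $(\DIV\u)(0)=\DIV\u_0$, which in turn follows from $\u(0)=\u_0$ (a meaningful statement since $\u\in H^1(0,T;\V)\hookrightarrow C([0,T];\V)$).

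It then remains to identify $\u(0)$. For each $\eta>0$ the pair $(\ueta,\chieta)$ satisfies $\mathrm{(W3)}_\eta$ (cf.\ Lemma~\ref{lemma:limit-satisfies-w1-w3-eps} together with Lemma~\ref{lemma:regularization:initial-conditions-u}); since $a(\cdot,\cdot)$ is equivalent to the $H^1(\Omega)^d$ inner product on $\V$ and $\partial_t\ueta\in L^2(0,T;\V)$, the same integration by parts applied to the elasticity form — testing with $\v(t)=(1-t/T)\w$ for $\w\in\V$ arbitrary — yields $\ueta\in C([0,T];\V)$ and $\ueta(0)=\u_0$ for every $\eta$. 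Finally, the evaluation map $v\mapsto v(0)$ is bounded and linear from $H^1(0,T;\V)$ into $\V$, hence weakly continuous, so $\ueta(0)\rightharpoonup\u(0)$ in $\V$ and therefore $\u(0)=\u_0$; applying the bounded operator $\DIV$ gives $(\DIV\u)(0)=\DIV\u_0$ and closes the argument. As a byproduct this establishes the stronger statement announced in the remark following Definition~\ref{definition:weak-solution}, namely $\u\in C([0,T];\V)$ with $\u(0)=\u_0$.

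This lemma is essentially bookkeeping, and I do not expect a genuine obstacle. The only point requiring a little care is that $\mathrm{(W3)}_\eta$ is phrased through the elasticity form $a(\cdot,\cdot)$ while $\mathrm{(W3)}$ is phrased through $\llangle\DIV\cdot,\cdot\rrangle$, so one cannot pass to the limit term by term; the detour through the pointwise traces $\ueta(0)$ and $\u(0)$ is precisely what makes the two formulations communicate. All remaining ingredients — the two time integrations by parts in Bochner spaces and the weak continuity of the trace map — are routine and need no assumptions beyond those already in force.
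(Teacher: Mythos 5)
Your proposal is correct and follows essentially the same route as the paper: both reduce $\mathrm{(W3)}$ to the pointwise identity $\u(0)=\u_0$ in $\V$, obtained via the Bochner embedding $H^1(0,T;\V)\hookrightarrow C([0,T];\V)$ and the elasticity form from $\mathrm{(W3)}_\eta$, and then apply $\DIV$. The only cosmetic difference is the order of operations — you extract $\ueta(0)=\u_0$ for each fixed $\eta$ and then invoke weak continuity of the trace map, whereas the paper first passes the variational identity $\mathrm{(W3)}_\eta$ itself to the limit $\eta\to0$ and then extracts $\u(0)=\u_0$ — but these are interchangeable steps of the same argument.
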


\begin{proof}
 The proof is almost identical to the proof of Lemma~\ref{lemma:regularization:initial-conditions-u}. Standard compactness arguments and (W3)$_\eta$ yield
 \begin{align*}
  \int_0^T a(\partial_t \u, \v) \, dt + \int_0^T a(\u - \u_0, \partial_t \v) \, dt = 0
 \end{align*}
 for all $\v\in H^1(0,T;\V)$ with $\v(T)=\bm{0}$. Hence, $\u(0) = \u_0$ in $\V$; note that $\u \in C(0,T;\V)$ by a Sobolev embedding. Therefore also $\DIV \u(0) = \DIV \u_0$ in $L^2(\Omega)$, which yields (W3).
\end{proof}

\subsection{Identifying a weak solution for $\eta\rightarrow 0$}

Finally, we prove the existence of a weak solution to the unsaturated poroelasticity model.

\begin{lemma}[Limit satisfies (W1)--(W4)]\label{lemma:limit-satisfies-w1-w3}
 The limit $(\u,\chi)$ is a weak solution of~\eqref{model:governing:kirchhoff:u}--\eqref{model:initial:kirchhoff:flow}, cf.\ Definition~\ref{definition:weak-solution}.
\end{lemma}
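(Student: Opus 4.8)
The plan is to obtain $(\u,\chi)$ simply by passing to the limit $\eta\to 0$ in the weak formulation $\mathrm{(W1)}_\eta$--$\mathrm{(W4)}_\eta$ of the simply regularized model, using the convergences already secured in Lemma~\ref{lemma:convergence-eta-0}, Lemma~\ref{lemma:eta:convergence-coupling}, Lemma~\ref{lemma:eta:convergence-dbdt} and Lemma~\ref{lemma:eta:initial-conditions-u}, together with the $\eta$-uniform bounds \eqref{eta-regularization:uniform-stability-bound} and $\|\partial_t\chieta\|_{L^2(Q_T)}\leq\etaStabilityDchiDt$. Since the test-function spaces appearing in $\mathrm{(W4)}_\eta$ and $\mathrm{(W4)}$ coincide (namely $L^2(0,T;\V)\times L^2(0,T;Q)$), no density argument is needed at this stage; the entire task is to check that each term converges to the intended limit.

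First I would dispatch $\mathrm{(W1)}$--$\mathrm{(W3)}$. Property $\mathrm{(W1)}$ is immediate: the weak convergence $\pek(\chieta)\rightharpoonup\pek(\chi)$ in $L^2(Q_T)$ from Lemma~\ref{lemma:eta:convergence-coupling} gives $\pek(\chi)\in L^2(Q_T)$, and $\swk(\chi)\in L^\infty(Q_T)$ because $\swk$ takes values in $(0,1]$ by $\mathrm{(A2)}$. Property $\mathrm{(W2)}$ — that $\bk(\chi)\in L^\infty(0,T;L^1(\Omega))$, $\partial_t\bk(\chi)\in L^2(0,T;\Q^\star)$, and the weak time-derivative identity holds — is precisely the content of Lemma~\ref{lemma:eta:convergence-dbdt}. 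Property $\mathrm{(W3)}$ follows from Lemma~\ref{lemma:eta:initial-conditions-u}, observing in addition that $\partial_t\DIV\u\in L^2(Q_T)$ is a consequence of $\partial_t\u\in L^2(0,T;\V)$ from Lemma~\ref{lemma:convergence-eta-0}.

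Next I would pass to the limit in $\mathrm{(W4)}_\eta$ with $\zeta=0$. Fix $(\v,\q)\in L^2(0,T;\V)\times L^2(0,T;Q)$. In the mechanics equation, $a(\ueta,\v)$ converges because $\ueta\rightharpoonup\u$ in $L^2(0,T;\V)$, and $\alpha\llangle\pek(\chieta),\DIV\v\rrangle$ converges by $\pek(\chieta)\rightharpoonup\pek(\chi)$ in $L^2(Q_T)$ (Lemma~\ref{lemma:eta:convergence-coupling}); the source $\fext$ is fixed. In the flow equation, the storage term passes to the limit by $\partial_t\bke(\chieta)\rightharpoonup\partial_t\bk(\chi)$ in $L^2(0,T;\Q^\star)$ (Lemma~\ref{lemma:eta:convergence-dbdt}); the nonlinear coupling term passes to the limit by $\swk(\chieta)\partial_t\DIV\ueta\rightharpoonup\swk(\chi)\partial_t\DIV\u$ in $L^2(Q_T)$ (Lemma~\ref{lemma:eta:convergence-coupling}); and for the diffusion term one uses that $\chieta\rightharpoonup\chi$ in $L^\infty(0,T;Q)$ entails $\GRAD\chieta\rightharpoonup\GRAD\chi$ in $L^2(Q_T)$, and multiplication by the fixed bounded field $\absolutepermeability$ (cf.\ $\mathrm{(A6)}$) preserves weak $L^2$-convergence, so $\int_0^T\llangle\absolutepermeability\GRAD\chieta,\GRAD\q\rrangle\,dt\to\int_0^T\llangle\absolutepermeability\GRAD\chi,\GRAD\q\rrangle\,dt$. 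Collecting these limits yields $\mathrm{(W4)}$, so $(\u,\chi)$ is a weak solution in the sense of Definition~\ref{definition:weak-solution}; this also completes the proof of Theorem~\ref{theorem:existence}.

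As for the main obstacle: within this lemma the argument is purely an assembly, since the genuinely delicate points have been resolved earlier. The crux is the identification $\partial_t\bke(\chieta)\rightharpoonup\partial_t\bk(\chi)$: because $\bk$ need not be Lipschitz and may be degenerate ($\bk'=0$ on a set of positive measure in the incompressible case), this cannot be read off from the $L^2$-bound on $\partial_t\chieta$ alone, and one must rely on the uniform $L^\infty(0,T;L^1(\Omega))$-bound on $\Bke(\chieta)$, the strong convergence $\chieta\to\chi$ in $L^2(Q_T)$, and the elementary fact that $\bke=\bk+\eta(\,\cdot\,)$ with the $\eta$-term vanishing uniformly on bounded sets, so that $\bke(\chieta)\to\bk(\chi)$ a.e.\ — this is exactly what Lemma~\ref{lemma:eta:convergence-dbdt} provides. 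Equally essential is the $\eta$-uniform bound on $\partial_t\chieta$ underpinning the strong compactness of $\{\chieta\}_\eta$; it is derived from the differentiated mechanics equation \eqref{result:regularization-eta-dt-mechanics} via an inf-sup argument and the Lipschitz condition $\mathrm{(ND2)}$, and would be unavailable without the regularity established in Lemma~\ref{regularization-eta-dt-mechanics}. The only routine check remaining here is the compatibility of all these a.e.\ convergences and weak--strong pairings with the test-function regularity prescribed in $\mathrm{(W2)}$ and $\mathrm{(W4)}$.
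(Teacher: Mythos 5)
Your proof is correct and takes essentially the same route as the paper's — the paper disposes of this lemma in a single sentence by referring to the earlier convergence lemmas plus the validity of the simply regularized problem, and your proposal is a faithful, term-by-term unpacking of exactly that assembly, including the observation that no density argument is needed because the test-function spaces in $\mathrm{(W4)}_\eta$ and $\mathrm{(W4)}$ already coincide.
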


\begin{proof}
 The proof follows directly from the convergence results in Lemma~\ref{lemma:convergence-eta-0} and Lemma~\ref{lemma:eta:initial-conditions-u} together with the validity of the regularized problem~\eqref{model:regularized:weak:kirchhoff:u}--\eqref{model:regularized:weak:kirchhoff:p} for $\zeta=0$. 
\end{proof}

\appendix

\section{Feasibility of assumptions}\label{appendix:feasibility-assumptions}

The analysis of this paper allows for arbitrary constitutive laws for $b$, $\porepressure$, $\sw$ and $\relativepermeability$, as long as they satisfy the conditions~(A0)--(A4), (ND1)--(ND3) and (A1$^\star$). In the following, we demonstrate the feasibility of those conditions for a prominent choice of models. Let $b$ as derived by~\cite{Lewis1998}
\begin{align*}
 b(\pw) = \phi_0 \sw(\pw) + c_\mathrm{w} \phi_0 \int_0^{\pw} \sw(p) \, dp  + \frac{1}{N} \int_0^{\pw} s_\mathrm{w}(p) \porepressure'(p) \, dp,
\end{align*}
with $\porepressure$ chosen as equivalent pore pressure~\cite{Coussy2004}
\begin{align*}
 \porepressure(\pw) &= \int_0^{\pw} \sw(p) \, dp,
\end{align*}
and the hydraulic properties $\sw$ and $\relativepermeability$ given by the van Genuchten-Mualem relations~\cite{vanGenuchten1980,Mualem1976}
\begin{align*}
\sw(\pw) &= \left\{ \begin{array}{ll} 
                    \left[ 1 + \left(- \vangenuchtenalpha \pw\right)^{\vangenuchtenn} \right]^{-\vangenuchtenm}, & \pw \leq 0, \\
                    1, & \pw \geq 0,
                   \end{array} \right. \\
 \relativepermeability(\sw) &= \sqrt{\sw} \left[1 - \left(1 - \sw^\frac{1}{\vangenuchtenm}\right)^\vangenuchtenm \right]^2.
\end{align*}
where $\vangenuchtenm\in(0,1)$, $\vangenuchtenn=(1-\vangenuchtenm)^{-1}$, and $\vangenuchtenalpha>0$ are constant fitting parameters.

\subsection{Checking (A0)}
By definition, it holds that $\sw(\pw)>0$ for all $\pw\in\mathbb{R}$ and $\relativepermeability(\sw)>0$ for all $\sw>0$. Hence,~(A0) is satisfied for the van Genuchten-Mualem relations.

\subsection{Checking (A1)--(A4) and (A1$^\star$)}
By definition, it follows directly, that $\sw$ is differentiable with a non-negative and uniformly bounded derivative $\sw'$, i.e., $\sw$ satisfies~(A2). Furthermore, $\porepressure'(\pw)=\sw(\pw)$, and hence, $\porepressure$ satisfies~(A3). We therefore only focus on~(A1),~(A1$^\star$) and~(A4).

\paragraph{(A1) Monotonicity of $\bk$.}

The function $\bk=\bk(\chi)$ is non-decreasing since
\begin{align}\label{van-genuchten-equivalent-pore-pressure-01}
 \bk'(\chi)
 =
 c_\mathrm{w} \phi_0 \frac{\swk(\chi)}{\relativepermeabilityk(\chi)}
 +
 \phi_0 \frac{\sw'(\pwk(\chi))}{\relativepermeabilityk(\chi)} + \frac{1}{N} \frac{\swk(\chi)^2}{\relativepermeabilityk(\chi)} \geq 0.
\end{align}

\paragraph{(A1$^\star$) Regularizing property of $\bke$.}
As $\bke$ is essentially equal to $\bk$ but with enhanced Biot Modulus, $\bke$ essentially satisfies (A1) with
\begin{align*}
 \bke'(\chi)
 =
 c_\mathrm{w} \phi_0 \frac{\swk(\chi)}{\relativepermeabilityk(\chi)}
 +
 \phi_0 \frac{\sw'(\pwk(\chi))}{\relativepermeabilityk(\chi)} + \left(\frac{1}{N} + \eta \right) \frac{\swk(\chi)^2}{\relativepermeabilityk(\chi)} \geq 0.
\end{align*}
In particular, it holds that
\begin{align*}
 \langle \bk(\chi_1) - \bk(\chi_2), \chi_1 - \chi_2 \rangle &\geq \left(c_\mathrm{w}\phi_0 \left\| \tfrac{\relativepermeability}{\sw} \right\|_\infty^{-1} + \left(\frac{1}{N}+\eta\right) \left\| \tfrac{\relativepermeability}{\sw^2} \right\|_\infty^{-1} \right) \| \chi_1 - \chi_2 \|^2.
\end{align*}
By l'H$\hat{\text{o}}$spital's rule (note $0<\vangenuchtenm<1$) it holds that
\begin{align*}
 \underset{\sw\rightarrow0}{\mathrm{lim}} \, \left(\frac{\relativepermeability(\sw)}{\sw} \right)^2
 &=
 \underset{\sw\rightarrow0}{\mathrm{lim}} \, 4  \left[1 - \left(1 - \sw^\frac{1}{\vangenuchtenm}\right)^\vangenuchtenm \right]^3 \left(1 - \sw^\frac{1}{\vangenuchtenm}\right)^{\vangenuchtenm-1} \sw^{1/\vangenuchtenm-1}= 0.
\end{align*}
and
\begin{align*}
 \underset{\sw\rightarrow0}{\mathrm{lim}} \, \left(\frac{\relativepermeability(\sw)}{\sw^2} \right)^{2/3}
 &=
 \underset{\sw\rightarrow0}{\mathrm{lim}} \,  \frac{4}{3} \left[1 - (1 - \sw^\frac{1}{\vangenuchtenm})^\vangenuchtenm \right]^{1/3}  \left(1 - \sw^\frac{1}{\vangenuchtenm}\right)^{\vangenuchtenm-1} \sw^{1/\vangenuchtenm-1} = 0.
\end{align*}
Hence, there exists a generic constant $c>0$, such that
\begin{align} 
\label{equivalent-pore-pressure-van-genuchten-relation-01a}
 \frac{\relativepermeability(\pw)}{\sw(\pw)}   &\in (0,c],\\
\label{equivalent-pore-pressure-van-genuchten-relation-01b}
 \frac{\relativepermeability(\pw)}{\sw(\pw)^2} &\in (0,c].
\end{align}
After all, it follows, for $\eta>0$, $\bke$ satisfies~(A1$^\star$). Furthermore, in the compressible case $\mathrm{max}\{c_\mathrm{w},\tfrac{1}{N}\}>0$, also $\bk$ satisfies~(A1$^\star$), cf.\ Remark~\ref{remark:consequence-compressible-system}.

\paragraph{(A4) Uniform growth of $\tfrac{\pek}{\swk}$.}
For all $\pw\in\mathbb{R}$, it holds that
\begin{align*}
 \frac{d}{d\pw} \left( \frac{\porepressure}{\sw} \right) &= 1 - \frac{\porepressure(\pw) \sw'(\pw)}{\sw(\pw)^2}\geq 1, \\
 \chi'(\pw)&=\relativepermeability(\sw(\pw))\leq 1.
\end{align*}
Hence, by using the chain rule, $\tfrac{\pek}{\swk}$ satisfies the uniform growth condition~(A4) with
\begin{align*}
 \frac{d}{d\chi} \left( \frac{\pek}{\swk} \right)
 \geq 
 1.
\end{align*}

\subsection{Checking (ND1)--(ND2)}

We demonstrate, that (ND1)--(ND2) hold assuming $\sw\geq s_\mathrm{min}$ for some minimal saturation value $s_\mathrm{min}>0$. It holds that
\begin{align*}
 \frac{\pek}{\swk \chi} \sim \frac{1}{\relativepermeabilityk} \quad \text{for} \quad \chi \rightarrow -\infty.
\end{align*}
Under above assumption, one can assume that $\relativepermeabilityk \geq \kappa_\mathrm{min}>0$, such that (ND1) holds. Furthermore, 
\begin{align*}
 \pek'(\chi) = \frac{\swk}{\relativepermeabilityk}.
\end{align*}
By~\eqref{equivalent-pore-pressure-van-genuchten-relation-01a}, $\pek'(\chi)$ is bounded from below by a constant independent of $\chi$. Assuming $\sw\geq s_\mathrm{min}$ for some minimal saturation value, also an upper bound is given. After all,~(ND2) holds.

\subsection{Discussion of~(ND3)}\label{appendix:condition-nd-3}
The condition (ND3) is equivalent with
\begin{align}
\label{appendix:nd3}
 \left( \frac{\swk(\chi)}{\pek'(\chi)} - 1 \right)^{-2} \frac{\bk'(\chi)}{\left(\pek'(\chi)\right)^2} > \frac{\alpha^2}{4 K_\mathrm{dr}}\qquad \text{for all } \chi\in\mathbb{R}.
\end{align}
First, we note that in the fully saturated regime condition, (ND3) is fulfilled since
\begin{align*}
 \frac{\swk(\chi)}{\pek'(\chi)} = 1,\quad \text{for all } \chi\geq 0.
\end{align*}
For the combination of the specific choices of $b$, $\sw$ and $\relativepermeability$ condition~\eqref{appendix:nd3} becomes
 \begin{align*}
  \left(\relativepermeability(\sw) - 1   \right)^{-2} \left( \phi_0 c_\mathrm{w} \frac{\relativepermeability(\sw)}{\sw} + \phi_0 \sw' \frac{\relativepermeability(\sw)}{\sw^2} + \frac{1}{N} \relativepermeability(\sw) \right) > \frac{\alpha^2}{4 K_\mathrm{dr}} \qquad \text{for all } \sw<1.
 \end{align*} 
We consider the more demanding case, the incompressible case with $c_\mathrm{w}=\frac{1}{N}=0$. The expression $\frac{\sw' \relativepermeability(\sw)}{ \sw^2 \left(1 - \relativepermeability(\sw) \right)^2}$ is increasing in $\pw$, see Figure~\ref{appendix:figure:increase} for two examples. Hence, there exists a minimal saturation value $s_\mathrm{min}$ such that~\eqref{appendix:condition-nd-3} holds in the regime $\sw\in[s_\mathrm{min},1]$. This value will depend on $\phi_0$, $\vangenuchtenalpha$, $\vangenuchtenn$, $\alpha$ and $K_\mathrm{dr}$. Assuming $\phi_0=0.1$ and $\alpha=1$, we compute $s_\mathrm{min}$ for a set of realistic parameters, see Table~\ref{appendix:table:sminvalues}. We observe, that the range of admissible saturation values becomes larger, the stiffer the system. Furthermore, for all parameters, $s_\mathrm{min}$ is relatively small. Hence, we can expect~(ND3) to hold for geotechnical applications, for which $K_\mathrm{dr}$ is typically large.


\begin{figure}[h!]
\centering
 \subfloat[$\vangenuchtenn=1.5$, $\vangenuchtenalpha=0.1$]{\includegraphics[width=0.46\textwidth]{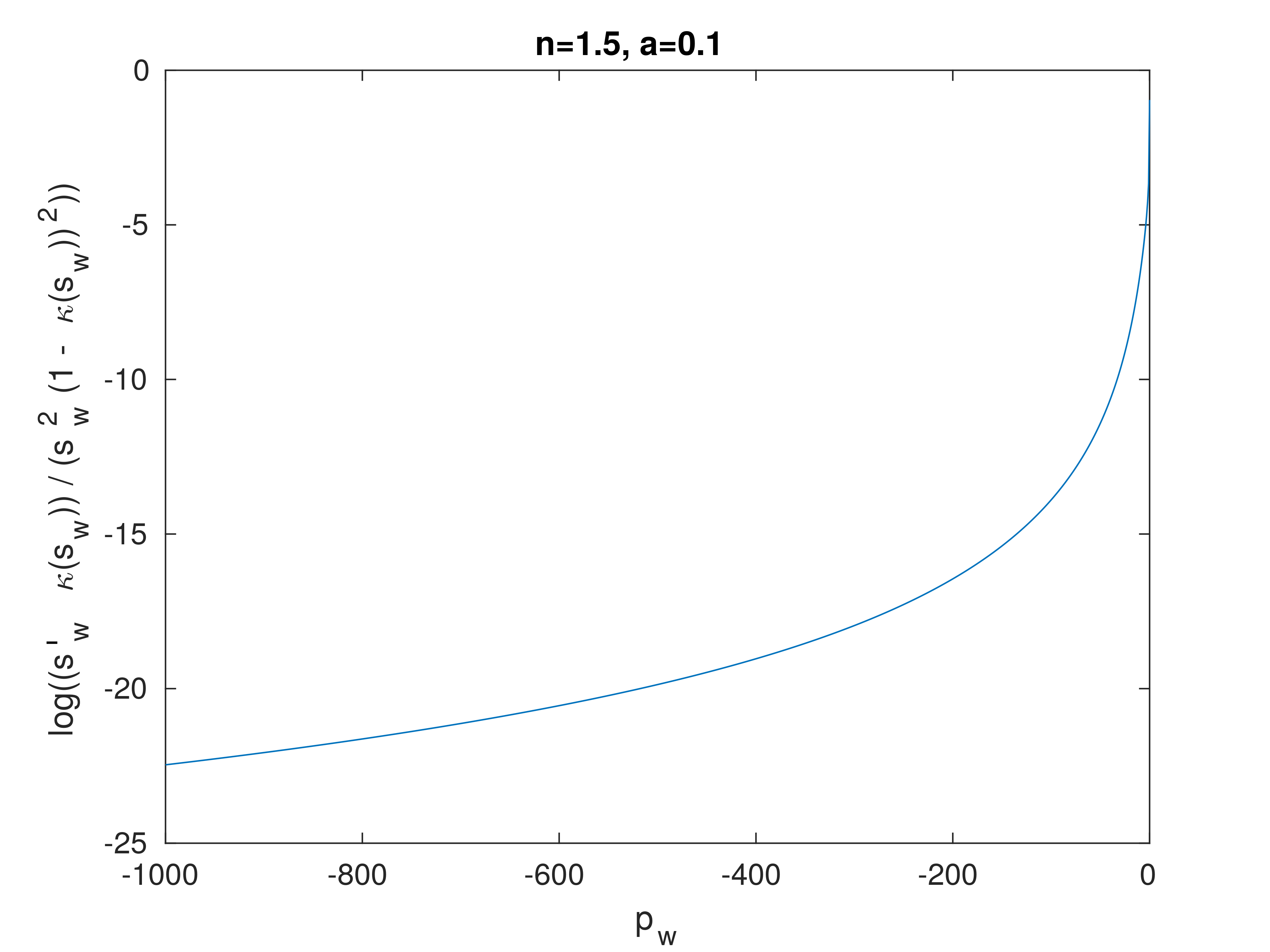}}
 \hspace{1cm}
 \subfloat[$\vangenuchtenn=2.5$, $\vangenuchtenalpha=2$]{\includegraphics[width=0.46\textwidth]{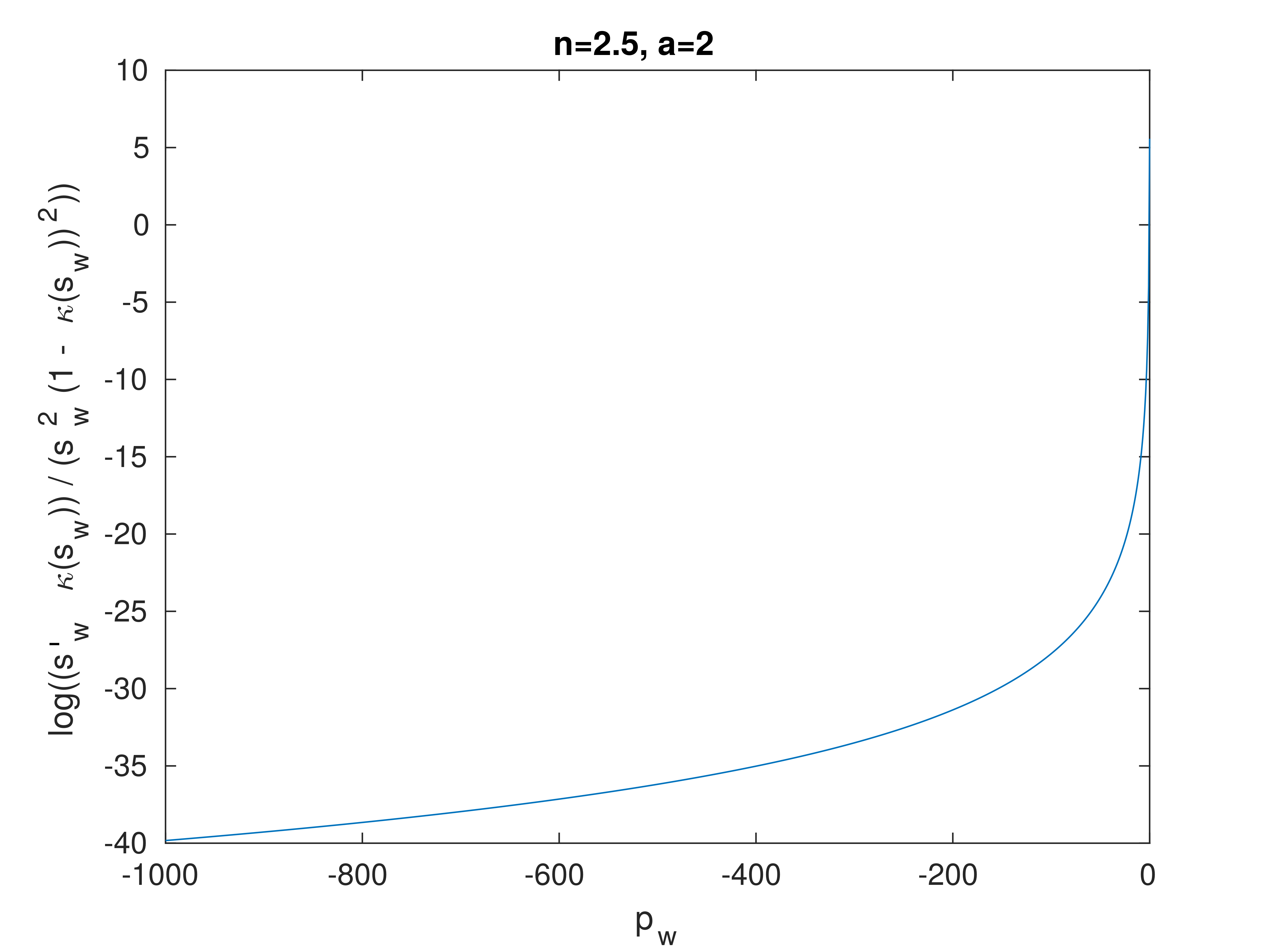}}
 \caption{\label{appendix:figure:increase} Increasing behavior of $\frac{\sw' \relativepermeability(\sw)}{ \sw^2 \left(1 - \relativepermeability(\sw) \right)^2}$ in the unsaturated regime.}
\end{figure}

\begin{table}[h!]
\begin{center}
\setlength{\tabcolsep}{0.6em}
\def\arraystretch{1.3}
\begin{tabular}{|c| c || c | c | c|}
 \hline
 $\vangenuchtenalpha$ & $\vangenuchtenn$ & $s_\mathrm{min}$ for $K_\mathrm{dr}=10^5$ & $s_\mathrm{min}$ for $K_\mathrm{dr}=10^8$ & $s_\mathrm{min}$ for $K_\mathrm{dr}=10^{11}$ \\
 \hline
 0.1 & 1.5 & 0.26 & 0.10 & 0.04 \\
 2   & 1.5 & 0.17 & 0.07 & 0.03 \\
 \hline
 0.1 & 2 & 0.08 & 0.02 & 0.004 \\
 2 & 2 & 0.04 & 0.009& 0.002 \\
 \hline
 0.1 & 2.5 & 0.03 & 0.004 & 0.0006 \\
 2 & 2.5 & 0.01 & 0.002 & 0.0003 \\
 \hline
\end{tabular}
\end{center}
\caption{\label{appendix:table:sminvalues} Minimal allowed saturation values for a set of realistic model parameters, assuming $\alpha=1$.}
\end{table}

\section{Useful results from literature}\label{appendix:literature-lemmas}

\begin{lemma}[Discrete Poincar\'e inequality~\cite{Eymard1999}]\label{appendix:lemma:discrete-poincare}
Let $\mathcal{T}$ be an admissible mesh, cf.\ Definition~\ref{definition:admissible-mesh}, and $u$ a piecewise constant function. Then there exists a constant $\COmegaDiscretePoincare\in (0,\mathrm{diam}(\Omega)]$ such that
\begin{align*}
 \| u \|_{L^2(\Omega)} \leq \COmegaDiscretePoincare \| u \|_{1,\mathcal{T}},
\end{align*}
where $\|\cdot\|_{1,\mathcal{T}}$ denotes the discrete $H^1_0(\Omega)$ norm, cf.\ Definition~\ref{definition:discrete-h1-norm}.
\end{lemma}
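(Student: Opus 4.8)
The final statement to prove is Lemma~\ref{appendix:lemma:discrete-poincare}, the discrete Poincaré inequality for piecewise constant functions on an admissible mesh. Since this is explicitly attributed to~\cite{Eymard1999}, the plan is essentially to reproduce the classical finite-volume argument rather than invent something new.

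\medskip

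\textbf{Plan of proof.} The strategy is the standard one from the finite volume literature. First I would extend $u$ by zero outside $\Omega$ and, for each face $\sigma\in\mathcal{E}$, introduce the characteristic-type function $\chi_\sigma$ on the line segments crossing $\sigma$: for a point $x\in\Omega$ and a fixed direction (say $e_1$), the ray from $x$ in direction $e_1$ crosses a certain set of faces, and one writes $u(x)$ as a telescoping sum of jumps $\pm\delta_\sigma(u)$ over exactly those faces $\sigma$ whose hyperplane separates $x$ from the exterior along that ray. Concretely, $|u(x)| \le \sum_{\sigma\in\mathcal{E}} \chi_\sigma(x)\,\delta_\sigma(u)$, where $\chi_\sigma(x)=1$ iff the segment $[x, x+\infty e_1]$ (or the relevant portion) meets $\sigma$. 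Squaring, using Cauchy--Schwarz in the sum over $\sigma$ with weights chosen as $d_\sigma$ (the face ``thickness'' in direction $e_1$, comparable to $d_\sigma\,|n_\sigma\cdot e_1|$), gives $|u(x)|^2 \le \big(\sum_\sigma \chi_\sigma(x) d_\sigma |n_\sigma\cdot e_1|\big)\big(\sum_\sigma \chi_\sigma(x)\frac{\delta_\sigma(u)^2}{d_\sigma |n_\sigma\cdot e_1|}\big)$. The first factor is bounded by $\mathrm{diam}(\Omega)$ since it telescopes the lengths of segments along a line contained in $\Omega$.

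\medskip

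Next I would integrate over $x\in\Omega$. Using Fubini, $\int_\Omega \chi_\sigma(x)\,dx$ is comparable to $|\sigma|\,|n_\sigma\cdot e_1|\cdot(\text{length along }e_1)$, and more precisely $\int_\Omega \chi_\sigma(x)\,dx \le \mathrm{diam}(\Omega)\,|\sigma|\,|n_\sigma\cdot e_1|$. Hence
\begin{align*}
 \|u\|_{L^2(\Omega)}^2 \le \mathrm{diam}(\Omega)\sum_{\sigma\in\mathcal{E}} \frac{\delta_\sigma(u)^2}{d_\sigma |n_\sigma\cdot e_1|}\int_\Omega \chi_\sigma(x)\,dx
 \le \mathrm{diam}(\Omega)^2 \sum_{\sigma\in\mathcal{E}} |\sigma|\,\frac{\delta_\sigma(u)^2}{d_\sigma} = \mathrm{diam}(\Omega)^2\,\|u\|_{1,\mathcal{T}}^2,
\end{align*}
recalling $\tau_\sigma = |\sigma|/d_\sigma$ from Definition~\ref{definition:admissible-mesh} and the definition of $\|\cdot\|_{1,\mathcal{T}}$ in Definition~\ref{definition:discrete-h1-norm}. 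Taking square roots yields the claim with $\COmegaDiscretePoincare \le \mathrm{diam}(\Omega)$, and positivity is clear.

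\medskip

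\textbf{Main obstacle.} The delicate point is the bookkeeping of the geometric factors $|n_\sigma\cdot e_1|$ and the verification that the ``ray-crossing'' sum $\sum_\sigma \chi_\sigma(x)\,d_\sigma|n_\sigma\cdot e_1|$ really telescopes to at most $\mathrm{diam}(\Omega)$ for admissible meshes, including how boundary faces in $\mathcal{E}_\mathrm{ext}$ enter (they contribute the jump $|{u_|}_K|$ to the terminal segment). One must be careful that the orthogonality property of the admissible mesh (the line $\overline{x_Kx_L}\perp K|L$) is what makes $d_\sigma$ the correct weight, and that faces nearly parallel to $e_1$ are handled — this is the reason the classical proof either averages over coordinate directions or invokes the regularity bound $\sum_{L\in\mathcal{N}(K)}|\sigma|d_\sigma \le C|K|$ from Definition~\ref{definition:admissible-mesh}. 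Since the result is quoted verbatim from~\cite{Eymard1999}, however, a fully rigorous proof is unnecessary here: it suffices to cite~\cite[Lemma~2 and its proof]{Eymard1999} (or the monograph~\cite{Eymard2000}) and note that the admissibility and regularity hypotheses in Definition~\ref{definition:admissible-mesh} are exactly those required, with the constant $\COmegaDiscretePoincare$ depending only on $\mathrm{diam}(\Omega)$ and the mesh regularity constant.
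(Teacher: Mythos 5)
The paper gives no proof of this lemma; it is stated in Appendix~\ref{appendix:literature-lemmas} (``Useful results from literature'') and simply cited to~\cite{Eymard1999}. Your reconstruction of the classical ray-crossing/telescoping argument is the one used in that reference, and your closing observation that a citation suffices here is exactly in line with the paper's treatment; the sketch is correct and adequately flags the one delicate point (the geometric weights and mesh regularity) that a full write-up would need to handle.
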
 
 
\begin{lemma}[Discrete trace inequality~\cite{Eymard2000}]\label{appendix:lemma:discrete-trace}
Let $\mathcal{T}$ be an admissible mesh, cf.\ Definition~\ref{definition:admissible-mesh}, and $u$ a piecewise constant function. Let $\gamma(u)$ denote the trace of $u$, defined by $\gamma(u)=u_K$ on $\sigma\in\mathcal{E}_\mathrm{ext}\cap\mathcal{E}_K$, $K\in\mathcal{T}$. Then there exists a constant $\CDiscreteTrace>0$ such that
\begin{align*}
 \| \gamma(u) \|_{L^2(\partial\Omega)} \leq \CDiscreteTrace \left( \| u \|_{1,\mathcal{T}} + \| u \|_{L^2(\Omega)} \right),
\end{align*}
where $\|\cdot\|_{1,\mathcal{T}}$ denotes the discrete $H^1_0(\Omega)$ norm, cf.\ Definition~\ref{definition:discrete-h1-norm}.
\end{lemma}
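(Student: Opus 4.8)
The statement is the classical finite-volume trace inequality for piecewise constant functions on an admissible mesh; since it is quoted from~\cite{Eymard2000} the plan is to recall the argument behind it rather than to reprove every auxiliary estimate. The idea is to transfer the boundary mass of $\gamma(u)$ into the interior by applying a discrete Green formula to a fixed, smooth vector field whose normal trace is bounded below on $\partial\Omega$, and then to control the resulting interior quantities by $\|u\|_{1,\mathcal{T}}$ and $\|u\|_{L^2(\Omega)}$ using the mesh-regularity hypothesis of Definition~\ref{definition:admissible-mesh}.

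\textbf{Key steps.} First I would fix a vector field $\Phi\in C^1(\bar\Omega)^d$ with $\Phi\cdot\n\geq 1$ on $\partial\Omega$; such a $\Phi$ exists because $\Omega$ is polygonal (for a Lipschitz domain it is obtained from a partition of unity subordinate to coordinate charts, and on a polygon it can be written down explicitly near each edge and corner). Next, using that $u$ is piecewise constant with value $u_K$ on $K\in\mathcal{T}$, I would apply the divergence theorem on each element and regroup the face integrals: an interior face $\sigma=K|L$ contributes a jump term $(u_K^2-u_L^2)\int_\sigma\Phi\cdot\n_{K,\sigma}\,ds$ (since $\n_{L,\sigma}=-\n_{K,\sigma}$), while a boundary face $\sigma\in\mathcal{E}_\mathrm{ext}\cap\mathcal{E}_K$ contributes $u_K^2\int_\sigma\Phi\cdot\n\,ds$. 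This gives the identity
\[
  \sum_{K\in\mathcal{T}}\ \sum_{\sigma\in\mathcal{E}_\mathrm{ext}\cap\mathcal{E}_K} u_K^2 \int_\sigma \Phi\cdot\n\,ds
  =
  \sum_{K\in\mathcal{T}} u_K^2 \int_K \mathrm{div}\,\Phi\,dx
  -
  \sum_{\sigma=K|L\in\mathcal{E}} (u_K^2-u_L^2)\int_\sigma \Phi\cdot\n_{K,\sigma}\,ds .
\]
By the choice of $\Phi$ the left-hand side is $\geq \sum_{\sigma\in\mathcal{E}_\mathrm{ext}}|\sigma|\,u_{K(\sigma)}^2 = \|\gamma(u)\|_{L^2(\partial\Omega)}^2$, so it remains to bound the two terms on the right. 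For the volume term I would just use $\|\mathrm{div}\,\Phi\|_{L^\infty}$ to get $\lesssim \|u\|_{L^2(\Omega)}^2$. For the jump term I would write $|u_K^2-u_L^2|\leq |u_K-u_L|(|u_K|+|u_L|)$, insert the weights $\tau_\sigma^{1/2}$ and $\tau_\sigma^{-1/2}$, and apply Cauchy--Schwarz: the factor carrying $\tau_\sigma^{1/2}|u_K-u_L|$ sums to $\|u\|_{1,\mathcal{T}}$, while the complementary factor, after using $\tau_\sigma^{-1/2}|\sigma|=|\sigma|^{1/2}d_\sigma^{1/2}$, the elementary bound $(|u_K|+|u_L|)^2\le 2(u_K^2+u_L^2)$, and the regularity assumption $\sum_{L\in\mathcal{N}(K)}|\sigma|d_\sigma\leq C|K|$, sums to $\lesssim \|u\|_{L^2(\Omega)}$. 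Hence the right-hand side is $\lesssim \|u\|_{L^2(\Omega)}^2+\|u\|_{1,\mathcal{T}}\|u\|_{L^2(\Omega)}\leq \big(\|u\|_{1,\mathcal{T}}+\|u\|_{L^2(\Omega)}\big)^2$, and taking square roots yields the claim with an explicit constant $\CDiscreteTrace$.

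\textbf{Main obstacle.} Everything after the discrete Green formula is bookkeeping; the only genuinely non-trivial ingredient is the construction of the vector field $\Phi$ with $\Phi\cdot\n\geq 1$ on $\partial\Omega$, which on a general polygonal domain requires either an explicit local construction near the edges and corners or an appeal to a standard regularization result for Lipschitz domains. An alternative that avoids $\Phi$ is the ``ray'' argument of Eymard--Gallou\"et--Herbin, in which each boundary value $u_K$ is expanded as a telescoping sum of face differences along a line of cells crossing $\partial\Omega$ transversally; this also works but is notationally heavier for the quadrilateral/hexahedral case, so I would prefer the vector-field route.
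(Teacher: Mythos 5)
The paper itself does not prove this lemma; it is stated in Appendix~B and attributed to~\cite{Eymard2000}, where the standard argument proceeds by integrating along a family of parallel lines crossing $\partial\Omega$ transversally and telescoping face differences — the ``ray'' argument you mention as an alternative at the end. Your vector-field route is a correct and self-contained substitute: the discrete Green identity is exact for piecewise constants tested against $u_K^2\Phi$ with $\Phi\in C^1(\bar\Omega)^d$; the interior-jump sum is controlled exactly as you say by writing $|u_K^2-u_L^2|\le|u_K-u_L|(|u_K|+|u_L|)$, inserting $\tau_\sigma^{\pm1/2}$, using $\tau_\sigma^{-1}|\sigma|^2=|\sigma|d_\sigma$, and invoking the regularity hypothesis $\sum_{L\in\mathcal N(K)}|\sigma|d_\sigma\le C|K|$; and a field with $\Phi\cdot\n\ge1$ exists on any Lipschitz (in particular polygonal) domain. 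So there is no gap, only a different, equally standard technique.

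One structural point worth flagging. With the norm $\|\cdot\|_{1,\mathcal{T}}$ as defined in Definition~\ref{definition:discrete-h1-norm} of this paper, the sum runs over \emph{all} faces, including $\sigma\in\mathcal{E}_\mathrm{ext}$, for which $\delta_\sigma(u)=|u_K|$ and $\tau_\sigma=|\sigma|/d_{K,\sigma}$. Since $d_{K,\sigma}\le\mathrm{diam}(\Omega)$ one has
\begin{align*}
 \|u\|_{1,\mathcal{T}}^2 \;\ge\; \sum_{\sigma\in\mathcal{E}_\mathrm{ext}\cap\mathcal{E}_K}\frac{|\sigma|}{d_{K,\sigma}}\,u_K^2 \;\ge\; \frac{1}{\mathrm{diam}(\Omega)}\,\|\gamma(u)\|_{L^2(\partial\Omega)}^2,
\end{align*}
so the stated inequality actually follows in one line from the definition of the discrete $H^1_0$ norm, without the $\|u\|_{L^2(\Omega)}$ term at all. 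Your argument — and the one in~\cite{Eymard2000} — is the one needed for the genuinely non-trivial variant where $\|\cdot\|_{1,\mathcal{T}}$ is the interior-only discrete $H^1$ seminorm (the natural statement in Neumann-type settings), and it implies the paper's claim a fortiori. Being aware of this distinction explains why the $\|u\|_{L^2(\Omega)}$ term appears in the general statement and why the heavier machinery is invoked.
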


\begin{lemma}[Stability of discrete gradients~\cite{Eymard1999}]\label{appendix:lemma:discrete-vs-continuous-gradients}
 Let $\mathcal{T}$ be an admissible mesh of some domain $\Omega$, cf.\ Definition~\ref{definition:admissible-mesh}, and $u\in H^1_0(\Omega)$. Define a piecewise constant function $\tilde{u}$ by
 \begin{align*}
  \tilde{u}(x) := \frac{1}{|K|} \int_K u(x) \, dx, \quad x \in K\in\mathcal{T}.
 \end{align*}
 Then there exists a constant $C>0$ (independent of $h$ for regular meshes) such that
 \begin{align*}
  \| \tilde{u} \|_{1,\mathcal{T}} \leq C \| u \|_{H^1(\Omega)}.
 \end{align*}
\end{lemma}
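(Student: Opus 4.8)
The statement is a standard finite-volume estimate, quoted verbatim from~\cite{Eymard1999}, and the plan is to reproduce its proof. The idea is to control the discrete $H^1(\Omega)$ seminorm of the cell-averaged function $\tilde u$ by a line-integral argument against $\GRAD u$, after first reducing to smooth data. For the reduction: on a fixed admissible mesh the map $u\mapsto\tilde u$ is linear and bounded from $L^2(\Omega)$ into the finite-dimensional space of $\mathcal{T}$-piecewise constant functions, on which $\|\cdot\|_{1,\mathcal{T}}$ is one of the (mutually equivalent) norms; hence $u\mapsto\|\tilde u\|_{1,\mathcal{T}}$ is continuous on $L^2(\Omega)$, a fortiori on $H^1(\Omega)$. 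Since $u\in H^1_0(\Omega)$, I would approximate it in $H^1(\Omega)$ by functions $u_n\in C_c^\infty(\Omega)$; if the asserted inequality holds for each $u_n$ it passes to the limit $n\to\infty$. Thus it suffices to prove the bound for $u\in C_c^\infty(\Omega)$, which I extend by $0$ to $\mathbb{R}^d$.

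For such $u$ and an interior face $\sigma=K|L$ I would write, for $x\in K$, $y\in L$, the identity $u(x)-u(y)=\int_0^1\GRAD u\big(ty+(1-t)x\big)\cdot(x-y)\,dt$ and average it over $K\times L$, representing the jump $\tilde u_K-\tilde u_L$ as a (normalized) triple integral of $\GRAD u$ over the region swept out by the segments joining $K$ to $L$. For a boundary face $\sigma\in\mathcal{E}_\mathrm{ext}\cap\mathcal{E}_K$ I would exploit that $u\equiv 0$ outside $\Omega$: prolonging, for $x\in K$, the segment from $x$ through its orthogonal projection onto $\sigma$ until it reaches a point $\bar y(x)$ where $u$ already vanishes yields an analogous representation of $\tilde u_K$ by an integral of $\GRAD u$ along a path of length comparable to $d_{K,\sigma}$. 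Applying the Cauchy--Schwarz inequality in these integrals, using the mesh regularity to bound $|x-y|\le C\,d_\sigma$ on $K\times L$ (respectively $|x-\bar y(x)|\le C\,d_{K,\sigma}$), multiplying by $\tau_\sigma=|\sigma|/d_\sigma$, and summing the interior and boundary contributions, I would arrive at a bound of the form $\|\tilde u\|_{1,\mathcal{T}}^2\le C\sum_{\sigma}\tfrac{|\sigma|\,d_\sigma}{|K||L|}\,(\text{segment integral of }|\GRAD u|^2)$.

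The core — and the main obstacle — is to convert this formal sum of segment integrals into a genuine estimate by $\Ltwo{\GRAD u}^2$. For each face $\sigma$ I would perform the change of variables $z=ty+(1-t)x$ in the inner integral and invoke the quantitative hypotheses of Definition~\ref{definition:admissible-mesh}: the orthogonality of $[x_K,x_L]$ to $\sigma$, the volume comparison $|\sigma|\,d_{K,\sigma}\le C|K|$, and $\mathrm{diam}(K)\le C\,d_{K,\sigma}$ for regular meshes. These ensure on one hand that the geometric prefactor times the Jacobian is uniformly bounded, and on the other that each point $z\in\Omega$ belongs to only a bounded number (independent of $h$) of the segment regions associated with the various faces. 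Summing the local bounds then gives $\|\tilde u\|_{1,\mathcal{T}}^2\le C\,\Ltwo{\GRAD u}^2\le C\,\Hone{u}^2$, with $C$ depending only on the mesh-regularity constant, which is the claim. In the write-up I would of course simply cite~\cite{Eymard1999}, since the statement is taken from there; the above is the self-contained route, with Step~three (the bounded-overlap/change-of-variables accounting) being the only nonroutine ingredient.
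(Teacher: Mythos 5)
The paper does not prove this lemma at all: it appears in Appendix~\ref{appendix:literature-lemmas} as a quoted auxiliary result with the proof delegated to~\cite{Eymard1999}, so there is no internal argument to compare against; your reconstruction is, in outline, the standard finite-volume proof (density reduction, averaged fundamental theorem of calculus over cell pairs, Cauchy--Schwarz, change of variables plus bounded overlap, zero extension for boundary faces, with the extra comparability assumptions $\mathrm{diam}(K)\leq C\,d_{K,\sigma}$ and $|\sigma|d_\sigma\leq C\,\min(|K|,|L|)$ correctly attributed to the ``regular meshes'' proviso).

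Two steps, however, do not hold as written. First, in the change of variables $z=ty+(1-t)x$ the claim that ``the geometric prefactor times the Jacobian is uniformly bounded'' is false: for fixed $y,t$ the Jacobian factor is $(1-t)^{-d}$ (resp.\ $t^{-d}$ if you integrate out $y$), which blows up as $t\to1$ (resp.\ $t\to0$). The standard repair is to split the $t$-integral at $t=\tfrac12$ and integrate out $x$ on $[0,\tfrac12]$ and $y$ on $[\tfrac12,1]$, which caps the Jacobian by $2^d$ and yields $\int_K\int_L\int_0^1|\GRAD u((1-t)x+ty)|^2\,dt\,dy\,dx\leq C\,(|K|+|L|)\int_{D_\sigma}|\GRAD u|^2\,dz$ with $D_\sigma$ a region of diameter $\lesssim d_\sigma$; only then do the factors $\tau_\sigma d_\sigma^2/(|K||L|)$ collapse to a constant and the bounded-overlap summation go through. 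Second, the boundary-face construction is not correct as described: the orthogonal projection of a generic $x\in K$ onto the hyperplane containing $\sigma$ need not land on $\sigma$, and the prolonged segment may stay inside $\Omega$ (hence inside the support of $u$) for a distance not controlled by $d_{K,\sigma}$, so the bound $|x-\bar y(x)|\leq C\,d_{K,\sigma}$ can fail. The clean fix is to use that the trace of $u\in C_c^\infty(\Omega)$ vanishes on $\sigma\subset\partial\Omega$ and to average $u(x)=u(x)-u(z)$ over $z\in\sigma$ (surface average): by convexity of $K$ the segments $[x,z]$ stay in $\overline{K}$, and the same split-the-parameter change of variables gives $\tau_\sigma|\tilde u_K|^2\leq C\int_K|\GRAD u|^2\,dz$. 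With these two repairs your argument closes and reproduces the cited result; in the paper itself one would, as you say, simply keep the citation to~\cite{Eymard1999}.
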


\begin{lemma}[Corollary of Brouwer's fixed point theorem~\cite{Ciarlet2013}] \label{appendix:lemma:brouwer}
 Let $\langle \cdot, \cdot \rangle$ denote the standard $\mathbb{R}^d$ scalar product and 
 let $\bm{F}:\mathbb{R}^d\rightarrow \mathbb{R}^d$ be a continuous function, satisfying
 \begin{align} \label{preliminaries-tools:brouwer-inequality}
  \langle \bm{F}(\bm{x}), \bm{x} \rangle \geq 0
 \end{align}
 for all $\bm{x}\in\mathbb{R}^d$ with $\langle \bm{x},\bm{x} \rangle \geq M$ for some fixed $M\in\mathbb{R}_+$.
 Then there exists a $\bm{x}^\star\in\mathbb{R}^d$ with $\llangle \bm{x}^\star, \bm{x}^\star \rrangle \leq M$ and $\bm{F}(\bm{x}^\star)=0$.
\end{lemma}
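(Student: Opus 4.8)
The plan is to prove this by contradiction, reducing the existence of a zero of $\bm{F}$ to an application of Brouwer's fixed point theorem on the closed ball. Set $R := \sqrt{M}$ and let $\bar{B} := \{ \bm{x}\in\mathbb{R}^d : \langle \bm{x},\bm{x}\rangle \leq M \}$, which is nonempty, convex and compact. Suppose, for contradiction, that $\bm{F}(\bm{x}) \neq 0$ for every $\bm{x}\in\bar{B}$. Then the map
\begin{align*}
 g : \bar{B} \rightarrow \bar{B}, \qquad g(\bm{x}) := -\frac{R}{|\bm{F}(\bm{x})|}\, \bm{F}(\bm{x}),
\end{align*}
is well-defined (the denominator never vanishes on $\bar{B}$) and continuous, being the composition of the continuous $\bm{F}$ with $\bm{y}\mapsto -R\bm{y}/|\bm{y}|$ on $\mathbb{R}^d\setminus\{0\}$; moreover $|g(\bm{x})| = R$, so $g$ indeed maps $\bar{B}$ into itself (in fact onto $\partial\bar{B}$).

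By Brouwer's fixed point theorem, $g$ has a fixed point $\bm{x}^\star\in\bar{B}$, i.e.\ $\bm{x}^\star = -R\,\bm{F}(\bm{x}^\star)/|\bm{F}(\bm{x}^\star)|$. In particular $\langle \bm{x}^\star,\bm{x}^\star\rangle = R^2 = M \geq M$, so the structural hypothesis~\eqref{preliminaries-tools:brouwer-inequality} applies at $\bm{x}^\star$ and yields $\langle \bm{F}(\bm{x}^\star), \bm{x}^\star\rangle \geq 0$. On the other hand, substituting the fixed-point identity,
\begin{align*}
 \langle \bm{F}(\bm{x}^\star), \bm{x}^\star\rangle
 = \Big\langle \bm{F}(\bm{x}^\star),\, -\frac{R}{|\bm{F}(\bm{x}^\star)|}\,\bm{F}(\bm{x}^\star) \Big\rangle
 = -R\, |\bm{F}(\bm{x}^\star)| < 0,
\end{align*}
using $\bm{F}(\bm{x}^\star)\neq 0$. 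This is a contradiction. Hence $\bm{F}$ must vanish somewhere on $\bar{B}$, and any such point is the desired $\bm{x}^\star$ with $\langle \bm{x}^\star,\bm{x}^\star\rangle \leq M$ and $\bm{F}(\bm{x}^\star)=0$.

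The only delicate point — the one I would state carefully — is the well-definedness and continuity of the retraction $g$, which hinges precisely on the contradiction hypothesis that $\bm{F}$ has no zero on $\bar{B}$; once $g$ is a legitimate continuous self-map of the ball, the rest is the standard Brouwer argument combined with the sign clash at the fixed point. (If one wishes to avoid the argument by contradiction, the same conclusion follows from a homotopy/degree argument: the hypothesis $\langle \bm{F}(\bm{x}),\bm{x}\rangle\geq 0$ on $\partial\bar{B}$ shows that $\bm{F}$ is homotopic to the identity on $\partial\bar{B}$ without vanishing, hence has nonzero topological degree relative to $\bar{B}$ and therefore a zero in the interior; the retraction proof above is, however, shorter and self-contained.)
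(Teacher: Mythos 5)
Your proof is correct and is the standard argument for this well-known corollary of Brouwer's theorem. The paper itself does not supply a proof — it simply cites Ciarlet~\cite{Ciarlet2013} — and the retraction-plus-sign-clash argument you give is precisely the one found there and throughout the literature (e.g.\ in the Galerkin-existence chapters of Temam or Lions). The only implicit restriction is $M>0$, needed so that $R=\sqrt{M}>0$ makes the final inequality $\langle \bm{F}(\bm{x}^\star),\bm{x}^\star\rangle = -R\,|\bm{F}(\bm{x}^\star)| < 0$ strict; since $\mathbb{R}_+$ is read as $(0,\infty)$ here and the constant $M$ is always chosen strictly positive in the paper's application (Lemma 5.1), this is harmless, but it would be cleaner to flag it. The degenerate case $M=0$ can be handled directly: if $\bm{F}(\bm{0})\neq\bm{0}$, then by continuity $\langle \bm{F}(-t\bm{F}(\bm{0})), -t\bm{F}(\bm{0})\rangle < 0$ for small $t>0$, contradicting the hypothesis applied at $\bm{x}=-t\bm{F}(\bm{0})$.
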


\begin{lemma}[Binomial identity]\label{appendix:lemma:binomial-identity}
 For $a,b\in\mathbb{R}$ it holds that
 \begin{align}
 \label{binomial-identity}
  a(a-b) = \frac{1}{2} \left( a^2 + (a - b)^2 - b^2 \right).
 \end{align}
\end{lemma}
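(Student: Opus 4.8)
The statement is a purely algebraic identity in two real variables, so the plan is simply to verify it by direct expansion of the right-hand side; no structural idea is needed. I would start from the elementary expansion $(a-b)^2 = a^2 - 2ab + b^2$, substitute it into $a^2 + (a-b)^2 - b^2$, and simplify.

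\textbf{Key steps.} First, compute
\begin{align*}
 a^2 + (a-b)^2 - b^2 = a^2 + \left(a^2 - 2ab + b^2\right) - b^2 = 2a^2 - 2ab.
\end{align*}
Second, factor the right-hand side as $2a^2 - 2ab = 2a(a-b)$. Third, divide both sides by $2$ to obtain $a(a-b) = \tfrac{1}{2}\left(a^2 + (a-b)^2 - b^2\right)$, which is exactly~\eqref{binomial-identity}. Since every step is an equality valid for all $a,b\in\mathbb{R}$, the chain of equalities establishes the claim.

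\textbf{Main obstacle.} There is essentially none: the only thing to be careful about is bookkeeping of signs in the expansion of $(a-b)^2$ and the cancellation of the $b^2$ terms. The identity is recorded here only because it is invoked repeatedly in the stability estimates (e.g. in the proofs of Lemmas~\ref{lemma:existence-discrete-solution}, \ref{lemma:discrete-solution-apriori-estimate-1}, \ref{lemma:discrete-solution-apriori-estimate-2}, and~\ref{lemma:stability-discrete-time-derivative-mechanics}) in the form $\langle a, a-b\rangle = \tfrac{1}{2}\left(\|a\|^2 + \|a-b\|^2 - \|b\|^2\right)$ for elements $a,b$ of an inner product space, which follows from the scalar identity by bilinearity; I would note this consequence at the end for convenience but it requires no further argument.
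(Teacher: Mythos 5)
Your proof is correct: the direct expansion of $(a-b)^2$ and cancellation of the $b^2$ terms is exactly the (routine) verification this identity requires, and the paper itself states the lemma without further argument. The closing remark about its use in inner-product form via bilinearity is accurate and matches how the identity is invoked in the stability estimates.
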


\begin{lemma}[Summation by parts]\label{appendix:lemma:summation-by-parts}
 Given two sequences $(a_k)_{k\in\mathbb{N}_0},\,(b_k)_{k\in\mathbb{N}_0}\subset\mathbb{R}$, for all $N\in\mathbb{N}$ it holds that
 \begin{align*}
  \sum_{n=1}^N a_n (b_n - b_{n-1}) = a_Nb_N - a_1b_0 - \sum_{n=1}^{N-1} b_n (a_{n+1} - a_n).
 \end{align*}
\end{lemma}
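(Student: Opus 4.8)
The plan is to establish this purely algebraic identity (Abel's summation by parts) by a direct index shift. First I would split the left-hand side as $\sum_{n=1}^N a_n(b_n-b_{n-1}) = \sum_{n=1}^N a_n b_n - \sum_{n=1}^N a_n b_{n-1}$. In the second sum I would perform the substitution $n\mapsto n+1$, rewriting it as $\sum_{n=0}^{N-1} a_{n+1} b_n$; this reindexing is the only manipulation requiring attention, since the summation bounds change accordingly.

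Next I would peel off the boundary contributions: extracting the term $n=N$ from $\sum_{n=1}^N a_n b_n$ and the term $n=0$ from $\sum_{n=0}^{N-1} a_{n+1} b_n$, so that the two remaining sums both run over $n=1,\dots,N-1$. Combining them yields $a_N b_N - a_1 b_0 + \sum_{n=1}^{N-1}(a_n - a_{n+1}) b_n$, which is exactly the claimed right-hand side after factoring out $-b_n$ in the last sum. This completes the argument; no use is made of any property of the sequences beyond $(a_k),(b_k)\subset\mathbb{R}$ and $N\in\mathbb{N}$.

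As an alternative, the statement can be proved by induction on $N$: the base case $N=1$ reads $a_1(b_1-b_0)=a_1 b_1 - a_1 b_0$ with the final sum empty, and the inductive step adds $a_{N+1}(b_{N+1}-b_N)$ to both sides of the identity for $N$ and regroups the terms involving $b_N$. I would favour the direct index-shift computation, as it is shorter and makes the telescoping structure transparent.

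I do not anticipate any real obstacle here: the only point of care is the off-by-one bookkeeping in the index substitution and in isolating the endpoint terms $a_N b_N$ and $a_1 b_0$. The identity is elementary and stated only for later reference (it is invoked, e.g., in Lemma~\ref{lemma:discrete-solution-apriori-estimate-1} and Lemma~\ref{lemma:convergence-dbdt} to move discrete time differences onto the test functions).
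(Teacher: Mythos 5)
Your index-shift argument is correct, and the bookkeeping (reindexing the second sum as $\sum_{n=0}^{N-1}a_{n+1}b_n$, then peeling off the $n=N$ and $n=0$ terms) lands exactly on the stated identity. The paper itself gives no proof of this lemma — it is recorded in the appendix as a known elementary identity — so there is nothing to compare against; either of your two proposed proofs would be a fine supplement.
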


\begin{lemma}[Discrete Gr\"onwall inequality~\cite{Clark1987}]\label{appendix:discrete-gronwall} 
 Let $(a_n)_n\subset\mathbb{R}_+$, $(\lambda_n)_n\subset\mathbb{R}_+$, $B\geq 0$. Assume for all $n\in\mathbb{N}$ it holds that 
 \begin{align*}
  a_n \leq B + \sum_{k=0}^{n-1} \lambda_k a_k.
 \end{align*}
 Then it follows
 \begin{align*}
  a_n \leq B \prod_{k=0}^{n-1}(1 + \lambda_k).
 \end{align*}
 In particular, if $\lambda_k = \frac{\lambda T}{N}$ for all $k\in\mathbb{N}$ for some $\lambda,T\in\mathbb{R}_+$ and $N\in\mathbb{N}$, it holds that
 \begin{align*}
  a_N \leq B\, \mathrm{exp}(\lambda T).
 \end{align*}
\end{lemma}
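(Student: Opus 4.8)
The plan is to prove the first implication by induction on $n$ and then to deduce the final assertion from the elementary bound $1+x \le \mathrm{exp}(x)$. First I would introduce the shorthand $P_m := \prod_{k=0}^{m-1}(1+\lambda_k)$, with the convention $P_0 = 1$ (the empty product), so that the claimed estimate reads $a_n \le B P_n$. The one computation worth isolating at the outset is the telescoping identity $\lambda_k P_k = P_{k+1} - P_k$, which is immediate from $P_{k+1} = (1+\lambda_k) P_k$; summing it over $k = 0,\dots,n-1$ yields $\sum_{k=0}^{n-1} \lambda_k P_k = P_n - 1$.

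For the induction, the base case $n=0$ is immediate, since the hypothesis then reduces to $a_0 \le B = B P_0$, the sum over $k=0,\dots,-1$ being empty. For the inductive step, assume $a_m \le B P_m$ for all $m \le n-1$. Using $\lambda_k \ge 0$, the hypothesis gives
\[
a_n \;\le\; B + \sum_{k=0}^{n-1} \lambda_k a_k \;\le\; B + \sum_{k=0}^{n-1} \lambda_k B P_k \;=\; B + B(P_n - 1) \;=\; B P_n,
\]
which closes the induction. The sign conditions $\lambda_k \ge 0$ (hence also $P_k \ge 1 > 0$) and $B \ge 0$ are exactly what make the replacement of $a_k$ by its upper bound $B P_k$ legitimate.

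For the concluding statement, taking $\lambda_k = \lambda T/N$ for all $k$ gives $P_N = (1 + \lambda T / N)^N$, and applying $1+x \le \mathrm{exp}(x)$ with $x = \lambda T/N$ yields $(1 + \lambda T/N)^N \le \mathrm{exp}(\lambda T)$, so that $a_N \le B\,\mathrm{exp}(\lambda T)$.

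I do not expect any real obstacle here: this is a standard discrete Gr\"onwall estimate whose proof is a short induction. The only step that is not completely mechanical is recognizing that $\prod_{k=0}^{n-1}(1+\lambda_k)$ emerges as the telescoping sum of the quantities $\lambda_k P_k$; once that identity is in hand, the inductive step collapses in a single line, and the remainder is bookkeeping with the convexity bound $1+x\le\mathrm{exp}(x)$.
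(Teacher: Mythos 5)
The paper does not supply a proof of this lemma; it is recorded in the appendix of ``useful results from literature'' with a citation to Clark (1987), so there is no in-paper argument to compare against. Your induction is correct and self-contained: the telescoping identity $\lambda_k P_k = P_{k+1}-P_k$ with $P_m=\prod_{k=0}^{m-1}(1+\lambda_k)$, the base case $a_0\le B$, the strong-induction step using $\lambda_k\ge 0$ and $B\ge 0$ to replace $a_k$ by $BP_k$, and the final passage via $1+x\le e^x$ all hold. This is the standard inductive route; Clark's cited ``short proof'' instead manipulates the cumulative sum $b_n=\sum_{k<n}\lambda_k a_k$ and shows that $(B+b_n)/P_n$ is non-increasing, which avoids the explicit induction hypothesis but lands on the same estimate. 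Both arguments have the same length in practice and deliver the identical bound; there is nothing missing or incorrect in yours. One small point worth being explicit about: the base case requires reading the hypothesis at $n=0$ as the vacuous inequality $a_0\le B$, i.e.\ taking $\mathbb{N}$ to include $0$ (consistent with the way the sequences are indexed as $(a_n)_{n}$ and $(\lambda_n)_n$ with $a_0,\lambda_0$ present); if one instead started the hypothesis at $n=1$, the quantity $a_0$ would be uncontrolled and the conclusion could fail, so that convention is load-bearing.
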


\begin{lemma}[Eberlein-$\check{\text{S}}$mulian theorem~\cite{Ciarlet2013}]\label{appendix:lemma:eberlein-smulian}
 Assume that $B$ is a reflexive Banach space and let $\{x_n\}_n\subset B$ be a bounded sequence in $B$. Then there exists a subsequence $\{x_{n_k}\}_k$ that converges weakly in $B$.
\end{lemma}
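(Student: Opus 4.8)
The plan is to reduce the general reflexive space $B$ to a separable reflexive subspace, where the weak topology is much better behaved, and then invoke the Banach--Alaoglu theorem together with the metrizability of the weak-$\ast$ topology on bounded sets. First I would set $Y := \overline{\mathrm{span}}\{x_n : n\in\mathbb{N}\}$, the closed linear span of the sequence in $B$. Then $Y$ is a closed subspace of the reflexive space $B$, hence itself reflexive, and by construction $Y$ is separable and contains every $x_n$. Since $Y$ is reflexive, its bidual is $Y^{\ast\ast}=J_Y(Y)$, which is separable; because a Banach space whose dual is separable is itself separable, applying this fact to $Y^{\ast}$ (whose dual $Y^{\ast\ast}$ is separable) shows that $Y^{\ast}$ is separable as well.

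Next I would apply the Banach--Alaoglu theorem to the Banach space $Y^{\ast}$: the closed ball $\{\,\xi\in Y^{\ast\ast} : \|\xi\|\le R\,\}$, with $R$ an upper bound for $\|x_n\|$, is compact in the weak-$\ast$ topology $\sigma(Y^{\ast\ast},Y^{\ast})$. Separability of $Y^{\ast}$ implies that this weak-$\ast$ topology, restricted to that bounded ball, is metrizable, and a compact metric space is sequentially compact. Viewing the bounded sequence $\{x_n\}$ as a sequence in $Y^{\ast\ast}=Y$, I extract a subsequence $\{x_{n_k}\}$ converging in $\sigma(Y^{\ast\ast},Y^{\ast})$ to some limit, which by reflexivity is represented by an element $x\in Y$; unwinding the identification $Y^{\ast\ast}=J_Y(Y)$, this convergence is precisely weak convergence $x_{n_k}\rightharpoonup x$ in $Y$. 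Finally I would promote weak convergence in $Y$ to weak convergence in $B$: for any $\psi\in B^{\ast}$ the restriction $\psi|_Y$ lies in $Y^{\ast}$, hence $\psi(x_{n_k})=\psi|_Y(x_{n_k})\to\psi|_Y(x)=\psi(x)$, so $x_{n_k}\rightharpoonup x$ in $B$, which is the claim.

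The main obstacle is not a single deep ingredient but the bookkeeping that keeps the weak limit inside $B$: one must pass to the closed span $Y$ so that reflexivity forces the weak-$\ast$ limit in $Y^{\ast\ast}$ to be represented by an element of $Y\subseteq B$, rather than an abstract functional living only in the bidual; without this reduction the general version of weak sequential compactness is exactly the nontrivial Eberlein--\v{S}mulian theorem. The supporting technical facts doing the real work are the stability of reflexivity and separability under passage to closed subspaces, the implication ``separable predual $\Rightarrow$ weak-$\ast$-metrizable bounded sets,'' and Banach--Alaoglu; each is standard, so the proof is essentially an exercise in assembling them in the right order.
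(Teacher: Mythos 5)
Your proof is correct and complete, but note that the paper supplies no proof of this lemma at all: it is stated in the appendix as a recalled result, with a citation to Ciarlet's textbook. What you have written is the standard argument for the reflexive-space corollary of Eberlein--\v{S}mulian, namely: pass to the closed separable subspace $Y$ spanned by the sequence; use that closed subspaces of reflexive spaces are reflexive and that separability of $Y^{\ast\ast}$ forces separability of $Y^{\ast}$; invoke Banach--Alaoglu together with weak-$\ast$ metrizability of bounded sets when the predual is separable to extract a weak-$\ast$ convergent subsequence in $Y^{\ast\ast}$; use reflexivity to realize the limit in $Y\subseteq B$; and restrict functionals to lift weak convergence from $Y$ to $B$. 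Each of those supporting facts is standard and correctly invoked, and the bookkeeping that keeps the limit inside $Y$ (rather than merely in the bidual) is exactly the point where reflexivity is used and the argument becomes elementary. Your closing remark is also accurate: what the paper calls the ``Eberlein--\v{S}mulian theorem'' is only the easy reflexive corollary, and your proof does not (and need not) touch the genuinely difficult general equivalence of weak compactness and weak sequential compactness.
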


\begin{lemma}[Relaxed Aubin-Lions lemma~\cite{Simon1986}]\label{appendix:lemma:aubin-lions}
 Let $\{f_n\}_n \subset L^p(0,T;B)$, $1\leq p < \infty$, $B$ a Banach space. $\{f_n\}_n$ is relatively compact in $L^p(0,T;B)$ if the following two are fulfilled:
 
 \begin{itemize}
  \item $\{f_n\}_n$ is uniformly bounded in $L^p(0,T;X)$, for $X\subset B$ with compact embedding.
  
  \item $\int_\tau^T \| f_n(t) - f_n(t-\tau) \|_B^p \,dt \leq \mathcal{O}(\tau)$, as $\tau \rightarrow 0$.
 \end{itemize}
 For the second property it is sufficient that $\{ \partial_t f_n \}_n$ is uniformly bounded in $L^p(0,T;B)$.
\end{lemma}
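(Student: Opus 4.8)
The plan is to obtain the claim from the Fr\'echet--Kolmogorov-type characterization of relative compactness in Bochner spaces due to Simon, by checking that its two hypotheses reduce to the two assumptions at hand; I will also indicate how that characterization itself would be proved, in case a self-contained argument is preferred. Recall the characterization: a family $\{f_n\}_n \subset L^p(0,T;B)$, $1\le p<\infty$, is relatively compact in $L^p(0,T;B)$ provided (a) for every $0<t_1<t_2<T$ the set $\{\int_{t_1}^{t_2} f_n(t)\,dt\}_n$ is relatively compact in $B$, and (b) $\sup_n \int_\tau^T \|f_n(t)-f_n(t-\tau)\|_B^p\,dt \to 0$ as $\tau\to 0^+$. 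Condition (b) is exactly the second assumption of the lemma (with its tacit uniformity in $n$). For (a), H\"older's inequality gives $\|\int_{t_1}^{t_2} f_n(t)\,dt\|_X \le (t_2-t_1)^{1-1/p}\,\|f_n\|_{L^p(0,T;X)}$, so $\{\int_{t_1}^{t_2} f_n\,dt\}_n$ is bounded in $X$ by the first assumption, hence relatively compact in $B$ since $X\hookrightarrow B$ is compact. Thus both conditions hold and $\{f_n\}_n$ is relatively compact in $L^p(0,T;B)$; note also that $\{f_n\}_n$ is bounded in $L^p(0,T;B)$, the embedding $X\hookrightarrow B$ being in particular continuous.

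For the final assertion of the lemma, suppose $\{\partial_t f_n\}_n$ is bounded in $L^p(0,T;B)$. Then $f_n(t)-f_n(t-\tau)=\int_{t-\tau}^t \partial_t f_n(s)\,ds$, so by H\"older in $s$ and Fubini in $(s,t)$,
\begin{align*}
 \int_\tau^T \|f_n(t)-f_n(t-\tau)\|_B^p\,dt
 &\le \tau^{p-1}\int_\tau^T\!\int_{t-\tau}^t \|\partial_t f_n(s)\|_B^p\,ds\,dt \\
 &\le \tau^{p}\,\|\partial_t f_n\|_{L^p(0,T;B)}^p,
\end{align*}
which is $\mathcal{O}(\tau^p)=\mathcal{O}(\tau)$ uniformly in $n$ because $p\ge 1$; hence (b) holds and the previous paragraph applies.

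It remains to say how the compactness characterization would be established, which is where the real work lies and the reason the paper cites Simon. The route is time-mollification: for $\delta\in(0,T/2)$ and $0<\varepsilon<\delta$ set $f_n^\varepsilon:=\rho_\varepsilon * f_n$ on $[\delta,T-\delta]$, with $\rho_\varepsilon$ a standard mollifier. Hypothesis (b) forces $\sup_n\|f_n^\varepsilon-f_n\|_{L^p(\delta,T-\delta;B)}\to 0$ as $\varepsilon\to 0$; for fixed $\varepsilon$, the family $\{f_n^\varepsilon\}_n$ is equicontinuous in $t$ (with modulus depending only on $\varepsilon$ and the uniform $L^p(0,T;B)$-bound) and, by (a) together with the compact embedding $X\hookrightarrow B$, takes all its values in one fixed relatively compact subset of $B$, so Arzel\`a--Ascoli yields relative compactness of $\{f_n^\varepsilon\}_n$ in $C([\delta,T-\delta];B)$, hence in $L^p(\delta,T-\delta;B)$; since a set lying within arbitrarily small uniform distance of a relatively compact set is totally bounded, $\{f_n\}_n$ is relatively compact in $L^p(\delta,T-\delta;B)$. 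The delicate point — which I expect to be the main obstacle in a fully self-contained write-up — is passing from the cut interval $[\delta,T-\delta]$ to $[0,T]$: one must show $\int_0^\delta \|f_n(t)\|_B^p\,dt$ and $\int_{T-\delta}^T \|f_n(t)\|_B^p\,dt$ are small uniformly in $n$ for $\delta$ small. This again follows from (b), crucially from its uniformity in $n$: estimating $\|f_n(t)\|_B$, for $t$ near the endpoint, by the average over a fixed number $M$ of shifts $\|f_n(t+m\delta)\|_B$ plus the corresponding translation differences, one first chooses $M$ large to absorb the averaged $L^p(0,T;B)$-bound and then $\delta$ small (depending on $M$) to make the averaged translation modulus small, which gives the required uniform smallness and completes the argument.
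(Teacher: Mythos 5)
The paper states this lemma without proof, merely citing Simon's 1986 paper in the appendix of recalled literature results, so there is no in-paper argument to compare against; what follows assesses your proposal on its own terms. Your reduction to Simon's Fr\'echet--Kolmogorov-type characterization (conditions (a) and (b) in your notation) is the standard route and is correct: the H\"older estimate $\|\int_{t_1}^{t_2} f_n\,dt\|_X \le (t_2-t_1)^{1-1/p}\|f_n\|_{L^p(0,T;X)}$ combined with the compact embedding $X\hookrightarrow B$ delivers (a), and (b) is the second hypothesis verbatim. The H\"older-in-$s$ plus Fubini computation for the ``sufficient condition'' is also correct, and you are right that the resulting bound is $\mathcal{O}(\tau^p)\subset\mathcal{O}(\tau)$ for $p\ge 1$; this is exactly the form in which the paper uses the lemma (via $\|\partial_t f_n\|_{L^p}$-type bounds in Lemmas \ref{lemma:convergence-u} and \ref{lemma:convergence-du-dt}, and via the time-translation estimates in Lemmas \ref{lemma:interpolants-apriori-estimate-1}, \ref{lemma:interpolants-apriori-estimate-2}, \ref{lemma:stability-interpolant:du-dt}).

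One point to flag in your sketch of Simon's characterization theorem itself: the assertion that, for fixed $\varepsilon$, the mollified family $\{f_n^\varepsilon\}_n$ ``takes all its values in one fixed relatively compact subset of $B$'' does not follow \emph{directly} from (a) as stated, because (a) concerns unnormalized integrals over fixed intervals while $f_n^\varepsilon(t)$ is a smoothly weighted average. The usual repair (which is precisely the delicate step in Simon's original proof) is to approximate the mollifying kernel by a step function, so that $f_n^\varepsilon(t)$ becomes a finite linear combination of interval averages $\frac{1}{|I|}\int_I f_n$, each lying in a relatively compact set by (a); the approximation error is then controlled uniformly by the $L^p(0,T;B)$ bound on $\{f_n\}$. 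Your endpoint argument (absorbing the tails via repeated shifts) is the other genuinely delicate point, and your sketch of it is sound in spirit, though as you acknowledge it would need a careful write-up to be airtight.
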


\begin{lemma}[Riesz-Frechet-Kolmogorov compactness criterion~\cite{Brezis2010}]\label{appendix:lemma:kolmogorov}
Let $F$ be a bounded set in $L^p(\mathbb{R}^N)$ with $1\leq p < \infty$, $N\in\mathbb{N}$. Assume that
\begin{align*}
 \underset{|h|\rightarrow 0}{\mathrm{lim}}\, \left\| f(\cdot + h) - f(\cdot) \right\|_{L^p(\mathbb{R}^N)} = 0\quad\text{uniformly in }f\in F.
\end{align*}
Then the closure of $F|_\Omega:=\left\{ f : \Omega \rightarrow \mathbb{R} \, | \, f \in F \right\}$ is compact for any measurable set $\Omega\subset \mathbb{R}^N$ with finite measure. 
\end{lemma}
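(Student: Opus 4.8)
The statement is the classical Riesz--Fr\'echet--Kolmogorov criterion (cf.\ \cite{Brezis2010}); the plan is to reproduce the standard mollification argument. The goal is to show that $F|_\Omega$ is \emph{totally bounded} in $L^p(\Omega)$; since $L^p(\Omega)$ is complete, this is equivalent to the relative compactness asserted. Fix once and for all a standard mollifier, i.e.\ $\rho\in C_c^\infty(\mathbb{R}^N)$ with $\rho\ge 0$, $\mathrm{supp}\,\rho\subset\{|x|\le 1\}$, $\int_{\mathbb{R}^N}\rho=1$, and set $\rho_\varepsilon(x):=\varepsilon^{-N}\rho(x/\varepsilon)$. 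Write $M:=\sup_{f\in F}\|f\|_{L^p(\mathbb{R}^N)}<\infty$. The argument then proceeds in three steps.

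\emph{Step 1 (uniform approximation by smooth functions).} For $f\in F$, Minkowski's integral inequality together with $\rho_\varepsilon\ge 0$, $\int\rho_\varepsilon=1$ gives
\[
\|\rho_\varepsilon * f - f\|_{L^p(\mathbb{R}^N)}\;\le\;\int_{|y|\le\varepsilon}\rho_\varepsilon(y)\,\|f(\cdot-y)-f\|_{L^p(\mathbb{R}^N)}\,dy\;\le\;\sup_{|y|\le\varepsilon}\|f(\cdot-y)-f\|_{L^p(\mathbb{R}^N)},
\]
and by the translation-equicontinuity hypothesis the right-hand side tends to $0$ as $\varepsilon\to 0$ \emph{uniformly in $f\in F$}. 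So, given $\delta>0$, one fixes $\varepsilon>0$ with $\|\rho_\varepsilon * f - f\|_{L^p(\mathbb{R}^N)}<\delta$ for every $f\in F$. \emph{Step 2 (compactness of the mollified family on a ball).} For this fixed $\varepsilon$, Young's (H\"older's) inequality yields the uniform bounds $\|\rho_\varepsilon * f\|_{L^\infty(\mathbb{R}^N)}\le\|\rho_\varepsilon\|_{L^{p'}}M$ and $\|\nabla(\rho_\varepsilon * f)\|_{L^\infty(\mathbb{R}^N)}=\|(\nabla\rho_\varepsilon)* f\|_{L^\infty(\mathbb{R}^N)}\le\|\nabla\rho_\varepsilon\|_{L^{p'}}M$, so the family $\{\rho_\varepsilon * f:f\in F\}$ is uniformly bounded and uniformly Lipschitz, hence equicontinuous. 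Since $|\Omega|<\infty$, pick $R$ so large that $\|\rho_\varepsilon\|_{L^{p'}}M\,|\Omega\setminus B_R|^{1/p}<\delta$; then $\|\rho_\varepsilon * f\|_{L^p(\Omega\setminus B_R)}<\delta$ for all $f\in F$. On the compact set $\overline{B_R}$, Arzel\`a--Ascoli shows $\{\rho_\varepsilon * f|_{\overline{B_R}}:f\in F\}$ is relatively compact in $C(\overline{B_R})$, hence totally bounded in $L^p(B_R\cap\Omega)$ because the embedding $C(\overline{B_R})\hookrightarrow L^p(B_R)$ is continuous.

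\emph{Step 3 (assembling a finite net).} Cover $\{\rho_\varepsilon * f|_{B_R\cap\Omega}:f\in F\}$ by finitely many $L^p(B_R\cap\Omega)$-balls of radius $\delta$ centred at $g_1,\dots,g_k$, and extend each $g_j$ by zero to $\tilde g_j\in L^p(\Omega)$. For arbitrary $f\in F$, choosing $j$ with $\|\rho_\varepsilon * f - g_j\|_{L^p(B_R\cap\Omega)}<\delta$ and splitting $\Omega=(B_R\cap\Omega)\cup(\Omega\setminus B_R)$,
\[
\|f-\tilde g_j\|_{L^p(\Omega)}\le\|f-\rho_\varepsilon * f\|_{L^p(\Omega)}+\|\rho_\varepsilon * f - g_j\|_{L^p(B_R\cap\Omega)}+\|\rho_\varepsilon * f\|_{L^p(\Omega\setminus B_R)}<3\delta .
\]
Thus $\{\tilde g_1,\dots,\tilde g_k\}$ is a finite $3\delta$-net for $F|_\Omega$ in $L^p(\Omega)$; since $\delta$ was arbitrary, $F|_\Omega$ is totally bounded, and completeness of $L^p(\Omega)$ gives that its closure is compact.

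The ingredients (Minkowski's integral inequality, Young's inequality, Arzel\`a--Ascoli, completeness of $L^p$) are entirely routine; the only points that require attention are that the smallness in Steps 1 and 2 is \emph{uniform over the whole family $F$} --- this is precisely where the translation-equicontinuity hypothesis and the finiteness $|\Omega|<\infty$ are used, the latter being what tames the ``tail'' $\Omega\setminus B_R$ via the $L^\infty$ bound on the mollified functions. (If $|\Omega|$ were allowed to be infinite, the identical argument would go through under the additional, commonly imposed assumption that $\|f\|_{L^p(\mathbb{R}^N\setminus\omega)}$ be uniformly small for a suitable bounded $\omega$.)
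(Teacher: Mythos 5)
The paper does not prove this lemma at all: it is quoted verbatim from the literature (Brezis) as an auxiliary tool in Appendix~B. Your mollification argument is the standard proof of exactly this statement and is correct as written -- the translation-equicontinuity hypothesis is used precisely where it must be (uniformity of $\|\rho_\varepsilon * f - f\|_{L^p}\to 0$ over $F$), the finite measure of $\Omega$ correctly tames the tail $\Omega\setminus B_R$ via the uniform $L^\infty$ bound on $\rho_\varepsilon * f$, and the Arzel\`a--Ascoli plus finite-net assembly is the usual route to total boundedness, so nothing further is needed.
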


\begin{lemma}[Standard inf-sup argument~\cite{Boffi2013}]\label{appendix:lemma:inf-sup}
 Let $V$ and $Q$ be Hilbert spaces, and let $B$ be a linear continuous operator from $V$ to $Q'$. Denote by $B^t$ the transposed operator of $B$. Then, the following two statements are equivalent:
\begin{itemize}
\item $B^t$ is bounding, i.e., there exists a $\gamma > 0$ such that $\left\|B^tq\right\|_{V'}\geq\gamma\left\|q\right\|_Q$ for all $q \in Q$.
\item There exists a $L_B \in \mathcal{L}\left(Q', V\right) $ such that $B\left(L_B\left(\xi\right)\right)=\xi$  for all $\xi\in Q'$ with $\left\|L_b\right\|=\dfrac{1}{\gamma}=:\COmegaInfSup$.
\end{itemize}

\end{lemma}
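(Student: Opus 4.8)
The final statement is the classical equivalence, between Hilbert spaces, of an inf-sup bound on a bounded operator's transpose and the surjectivity of the operator with a bounded right inverse of sharp norm. The plan is to deduce it from the closed range theorem together with elementary duality. Since $Q$ is reflexive, $B^t\in\mathcal{L}(Q,V')$ is well defined by $\langle B^t q,v\rangle_{V',V}=\langle Bv,q\rangle_{Q',Q}$ for $v\in V$, $q\in Q$, and the ``bounding'' property $\|B^t q\|_{V'}\geq\gamma\|q\|_Q$ for all $q\in Q$ is precisely the inf-sup inequality $\inf_{q}\sup_{v}\tfrac{\langle Bv,q\rangle}{\|v\|_V\|q\|_Q}\geq\gamma$. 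I would treat the two implications separately.

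For ``$B^t$ bounding $\Rightarrow$ existence of $L_B$'': a bounded-below operator is injective with closed range, so $\ker B^t=\{0\}$ and $\mathrm{range}(B^t)$ is closed; by the closed range theorem this is equivalent to $\mathrm{range}(B)$ being closed and dense in $Q'$, hence $\mathrm{range}(B)=Q'$, i.e.\ $B$ is onto. I would then establish the dual form of the inf-sup bound, $\|Bv\|_{Q'}\geq\gamma\|v\|_V$ for every $v\in(\ker B)^\perp$: via the Riesz isomorphism on $V$ such a $v$ is represented by $\psi\in V'$ annihilating $\ker B$, hence $\psi\in\mathrm{range}(B^t)$ (closed range theorem again), say $\psi=B^t q_0$, and then $\|v\|_V^2=\langle\psi,v\rangle=\langle Bv,q_0\rangle\leq\|Bv\|_{Q'}\|q_0\|_Q\leq\gamma^{-1}\|Bv\|_{Q'}\|v\|_V$. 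Now define $L_B\xi$ to be the unique element of $(\ker B)^\perp$ with $B(L_B\xi)=\xi$; linearity and continuity are immediate, and the dual bound gives $\|L_B\xi\|_V\leq\gamma^{-1}\|\xi\|_{Q'}$, so $\|L_B\|\leq\gamma^{-1}$. Sharpness of this norm will come out of the converse.

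For ``existence of $L_B$ with $\|L_B\|=\COmegaInfSup$ $\Rightarrow$ $B^t$ bounding'', the argument is a one-line duality estimate: for any $q\in Q$, using $B L_B=\mathrm{id}_{Q'}$ and the definition of $B^t$,
\[
\|q\|_Q=\sup_{0\neq\xi\in Q'}\frac{\langle\xi,q\rangle_{Q',Q}}{\|\xi\|_{Q'}}
=\sup_{0\neq\xi\in Q'}\frac{\langle B^t q,\,L_B\xi\rangle_{V',V}}{\|\xi\|_{Q'}}
\leq\|B^t q\|_{V'}\,\|L_B\|=\COmegaInfSup\,\|B^t q\|_{V'},
\]
so $\|B^t q\|_{V'}\geq\COmegaInfSup^{-1}\|q\|_Q=:\gamma\|q\|_Q$. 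Combining this with the bound $\|L_B\|\leq\gamma^{-1}$ obtained in the first part (now with $\gamma=\COmegaInfSup^{-1}$) pins down $\|L_B\|=\COmegaInfSup$ exactly, closing the loop.

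The duality manipulations are routine; the one genuinely nontrivial ingredient is the closed range theorem, which is what turns the quantitative bound on $B^t$ into surjectivity of $B$ and identifies $\mathrm{range}(B^t)$ with $(\ker B)^\perp$. That is the step I would expect to be the main obstacle in a fully self-contained treatment — everything else is standard Hilbert-space functional analysis — and it is precisely why the statement is quoted from the literature (cf.~\cite{Boffi2013}) rather than re-derived here.
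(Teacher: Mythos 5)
The paper does not prove this lemma at all; it is stated verbatim as a citation from Boffi, Brezzi, and Fortin, as you yourself note at the end. So there is no internal proof to compare against, and the only question is whether your argument is sound.

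It is. The chain you propose is the standard one: bounding $\Rightarrow$ injectivity and closed range of $B^t$, closed range theorem $\Rightarrow$ $B$ surjective and $\mathrm{range}(B^t)=(\ker B)^\circ$, Riesz representation to obtain the dual bound $\|Bv\|_{Q'}\geq\gamma\|v\|_V$ on $(\ker B)^\perp$, and then the restriction of $B$ to $(\ker B)^\perp$ furnishes $L_B$ with $\|L_B\|\leq\gamma^{-1}$. The converse is indeed the one-line duality estimate using $BL_B=\mathrm{id}_{Q'}$ and reflexivity of $Q$. One small caveat worth making explicit: your first direction gives only $\|L_B\|\leq\gamma^{-1}$, and the second gives $\gamma\geq\|L_B\|^{-1}$; these force $\|L_B\|=\gamma^{-1}$ only when $\gamma$ is the \emph{optimal} bounding constant. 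With a strictly smaller $\gamma$ the canonical $L_B$ (range in $(\ker B)^\perp$) has norm strictly less than $\gamma^{-1}$, and if $\ker B=\{0\}$ there is no other right inverse to take, so the statement as written implicitly assumes $\gamma$ is sharp. This is a standard imprecision inherited from the source and not a defect of your argument; you have already observed that the statement is quoted rather than proved, which is precisely why the authors did not worry about it.
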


\begin{lemma}[Properties of the Legendre transformation~\cite{Alt1983}]\label{appendix:lemma:legendre}
 Given $b:\mathbb{R}\rightarrow \mathbb{R}$ continuous and non-decreasing , we define its Legendre transformation
 \begin{align*}
  B(z) := \int_0^z (b(z) - b(s))\, ds \geq 0.
 \end{align*}
 It holds for all $x,y\in\mathbb{R}$ and for all $\delta >0$
 \begin{align*}
  0 &\leq B(x), \\
  B(x) - B(y) &\leq \left(b(x) - b(y)\right)x ,\\
  \left| b(x) \right| &\leq \delta \, B(x) + \underset{|y|\leq \delta^{-1}}{\mathrm{sup}}\, \left| b(y) \right|.
 \end{align*}
\end{lemma}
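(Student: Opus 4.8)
The plan is to prove the three assertions separately, each relying only on the continuity and monotonicity of $b$ together with the equivalent formula $B(z) = z\,b(z) - \int_0^z b(s)\,ds$, which follows at once from $\int_0^z (b(z)-b(s))\,ds = z\,b(z) - \int_0^z b(s)\,ds$. Continuity of $b$ ensures every integral below is well defined and, for the third inequality, that $M := \sup_{|y|\le \delta^{-1}} |b(y)|$ is finite as the supremum of a continuous function over the compact interval $[-\delta^{-1},\delta^{-1}]$. Throughout I would distinguish the sign of the argument, since the sign of the integrand $b(z)-b(s)$ flips with it.

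For $0 \le B(x)$: if $x \ge 0$, then for every $s \in [0,x]$ monotonicity gives $b(s) \le b(x)$, so the integrand is non-negative and $B(x) = \int_0^x (b(x)-b(s))\,ds \ge 0$. If $x < 0$, I would rewrite $B(x) = -\int_x^0 (b(x)-b(s))\,ds = \int_x^0 (b(s)-b(x))\,ds$, and now $s \ge x$ on $[x,0]$ forces $b(s)-b(x) \ge 0$, whence again $B(x) \ge 0$.

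For the Young-type inequality $B(x) - B(y) \le (b(x)-b(y))\,x$: subtracting the explicit formulas for $B(x)$ and $B(y)$ gives the identity $(b(x)-b(y))\,x - \bigl(B(x)-B(y)\bigr) = \int_y^x b(s)\,ds - (x-y)\,b(y)$, so it suffices to check $\int_y^x b(s)\,ds \ge (x-y)\,b(y)$. If $x \ge y$, then $b(s) \ge b(y)$ on $[y,x]$ and integrating yields the claim; if $x < y$, then $b(s) \le b(y)$ on $[x,y]$, so $-\int_x^y b(s)\,ds \ge -(y-x)\,b(y)$, which is again the claim. This is essentially the convexity of $B$ and is routine.

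For the growth bound $|b(x)| \le \delta B(x) + M$: if $|x| \le \delta^{-1}$ it is immediate since $|b(x)| \le M$ and $B(x) \ge 0$. If $x > \delta^{-1} > 0$ and $b(x) \ge 0$, I would restrict the non-negative integrand to $[0,\delta^{-1}] \subset [0,x]$ and use $b(s) \le M$ there to get $B(x) \ge \int_0^{\delta^{-1}} (b(x)-b(s))\,ds \ge \delta^{-1}\bigl(b(x)-M\bigr)$, i.e. $b(x) \le \delta B(x) + M$; if instead $b(x) < 0$, then $b(0) \le b(x) < 0$, so $|b(x)| = -b(x) \le -b(0) = |b(0)| \le M$. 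The case $x < -\delta^{-1}$ is the mirror image, using $B(x) = \int_x^0 (b(s)-b(x))\,ds$, restricting to $[-\delta^{-1},0]$ and $b(s) \ge -M$ there, plus the analogous sign sub-case $b(x) > 0 \Rightarrow |b(x)| \le |b(0)| \le M$. The only real obstacle in the whole lemma is the mild bookkeeping of these sign cases in the third inequality; no deeper idea is required.
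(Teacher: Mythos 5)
Your proof is correct: the sign-split verification of $B\ge 0$, the identity reducing the Young-type inequality to $\int_y^x b(s)\,ds \ge (x-y)\,b(y)$, and the case analysis for the growth bound (splitting on $|x|\lessgtr\delta^{-1}$ and on the sign of $b(x)$, using monotonicity to reduce the ``wrong-sign'' sub-cases to $|b(x)|\le|b(0)|\le M$) are all sound. Note that the paper itself gives no proof of this lemma --- it is quoted in the appendix as a known result from Alt and Luckhaus~\cite{Alt1983} --- so there is no in-paper argument to compare against; your elementary, self-contained verification is a perfectly adequate substitute.
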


\section*{Acknowledgments}
The authors would like to thank Florin A.\ Radu and Willi J\"ager for their useful discussions.
 
\bibliographystyle{ieeetr} 

\begin{thebibliography}{100}

\bibitem{deBoer2000}
R.~De~Boer, {\em Theory of porous media: highlights in historical development
  and current state}.
\newblock Springer Science \& Business Media, 2000.

\bibitem{Terzaghi1936b}
K.~v. Terzaghi, ``The shearing resistance of saturated soils and the angle
  between the planes of shear,'' in {\em First international conference on soil
  mechanics, 1936}, vol.~1, pp.~54--59, 1936.

\bibitem{Biot1941}
M.~Biot, ``General theory of three-dimensional consolidation,'' {\em Journal of
  applied physics}, vol.~12, no.~2, pp.~155--164, 1941.

\bibitem{Lewis1998}
R.~Lewis and B.~Schrefler, {\em {The finite element method in the static and
  dynamic deformation and consolidation of porous media}}.
\newblock Numerical methods in engineering, John Wiley, 1998.

\bibitem{Coussy2004}
O.~Coussy, {\em Poromechanics}.
\newblock Wiley, 2004.

\bibitem{Szymkiewicz2012}
A.~Szymkiewicz, {\em Modelling water flow in unsaturated porous media:
  accounting for nonlinear permeability and material heterogeneity}.
\newblock Springer Science \& Business Media, 2012.

\bibitem{Nordbotten2011}
J.~M. Nordbotten and M.~A. Celia, {\em Geological storage of CO2: modeling
  approaches for large-scale simulation}.
\newblock John Wiley \& Sons, 2011.

\bibitem{Auriault1977}
J.-L. Auriault and E.~Sanchez-Palencia, ``Etude de comportment macroscopique
  d'un milieu poreux sature deformable,'' {\em Journal de M\'ecanique},
  vol.~16, pp.~575--603, 1977.

\bibitem{Zenisek1984}
A.~Zenisek, ``The existence and uniquencess theorem in {B}iot's consolidation
  theory,'' {\em Aplikace matematiky}, vol.~29, no.~3, pp.~194--211, 1984.

\bibitem{Showalter2000}
R.~Showalter, ``Diffusion in {P}oro-{E}lastic {M}edia,'' {\em Journal of
  Mathematical Analysis and Applications}, vol.~251, no.~1, pp.~310 -- 340,
  2000.

\bibitem{Ferronato2010}
M.~Ferronato, N.~Castelletto, and G.~Gaolati, ``A fully coupled 3-{D} mixed
  finite element model of {B}iot consolidation,'' {\em Journal of Computational
  Physics}, vol.~229, no.~12, pp.~4813 -- 4830, 2010.

\bibitem{Haga2012}
J.~B. Haga, H.~Osnes, and H.~P. Langtangen, ``On the causes of pressure
  oscillations in low-permeable and low-compressible porous media,'' {\em
  International Journal for Numerical and Analytical Methods in Geomechanics},
  vol.~36, no.~12, pp.~1507--1522, 2012.

\bibitem{Wheeler2014}
M.~Wheeler, G.~Xue, and I.~Yotov, ``Coupling multipoint flux mixed finite
  element methods with continuous {G}alerkin methods for poroelasticity,'' {\em
  Computational Geosciences}, vol.~18, no.~1, pp.~57--75, 2014.

\bibitem{Nordbotten2016}
J.~M. Nordbotten, ``Stable cell-centered finite volume discretization for
  {B}iot equations,'' {\em SIAM Journal on Numerical Analysis}, vol.~54, no.~2,
  pp.~942--968, 2016.

\bibitem{Rodrigo2016}
C.~Rodrigo, F.~Gaspar, X.~Hu, and L.~Zikatanov, ``Stability and monotonicity
  for some discretizations of the {B}iot's consolidation model,'' {\em Computer
  Methods in Applied Mechanics and Engineering}, vol.~298, pp.~183 -- 204,
  2016.

\bibitem{White2016}
J.~A. White, N.~Castelletto, and H.~A. Tchelepi, ``Block-partitioned solvers
  for coupled poromechanics: {A} unified framework,'' {\em Computer Methods in
  Applied Mechanics and Engineering}, vol.~303, pp.~55 -- 74, 2016.

\bibitem{Castelletto2017}
N.~Castelletto, H.~Hajibeygi, and H.~A. Tchelepi, ``Multiscale finite-element
  method for linear elastic geomechanics,'' {\em Journal of Computational
  Physics}, vol.~331, pp.~337 -- 356, 2017.

\bibitem{Lee2017}
J.~Lee, K.~Mardal, and R.~Winther, ``Parameter-{R}obust {D}iscretization and
  {P}reconditioning of {B}iot's {C}onsolidation {M}odel,'' {\em SIAM Journal on
  Scientific Computing}, vol.~39, no.~1, pp.~1--24, 2017.

\bibitem{Kim2011}
J.~Kim, H.~Tchelepi, and R.~Juanes, ``Stability and convergence of sequential
  methods for coupled flow and geomechanics: {F}ixed-stress and fixed-strain
  splits,'' {\em Computer Methods in Applied Mechanics and Engineering},
  vol.~200, no.~13, pp.~1591 -- 1606, 2011.

\bibitem{Mikelic2013}
A.~Mikeli{\'{c}} and M.~F. Wheeler, ``Convergence of iterative coupling for
  coupled flow and geomechanics,'' {\em Computational Geosciences}, vol.~17,
  no.~3, pp.~455--461, 2013.

\bibitem{Both2017}
J.~W. Both, M.~Borregales, J.~M. Nordbotten, K.~Kumar, and F.~A. Radu, ``Robust
  fixed stress splitting for {B}iot's equations in heterogeneous media,'' {\em
  Applied Mathematics Letters}, vol.~68, pp.~101 -- 108, 2017.

\bibitem{Gaspar2017}
F.~J. Gaspar and C.~Rodrigo, ``On the fixed-stress split scheme as smoother in
  multigrid methods for coupling flow and geomechanics,'' {\em Computer Methods
  in Applied Mechanics and Engineering}, vol.~326, pp.~526 -- 540, 2017.

\bibitem{Borregales2019}
M.~A. {Borregales}, K.~{Kumar}, J.~M. {Nordbotten}, and F.~A. {Radu},
  ``{Iterative solvers for {B}iot model under small and large deformation},''
  {\em arxiv e-prints}, 2019.
\newblock arXiv:1905.12996 [math.NA].

\bibitem{Storvik2019}
E.~Storvik, J.~W. Both, K.~Kumar, J.~M. Nordbotten, and F.~A. Radu, ``On the
  optimization of the fixed-stress splitting for biot's equations,'' {\em
  International Journal for Numerical Methods in Engineering}, vol.~120, no.~2,
  pp.~179--194, 2019.

\bibitem{Kumar2018}
K.~{Kumar}, S.~{Matculevich}, J.~{Nordbotten}, and S.~{Repin}, ``{Guaranteed
  and computable bounds of approximation errors for the semi-discrete {B}iot
  problem},'' {\em arxiv e-prints}, 2018.
\newblock arXiv:1808.08036 [math.NA].

\bibitem{Ahmed2019}
E.~Ahmed, F.~A. Radu, and J.~M. Nordbotten, ``Adaptive poromechanics
  computations based on a posteriori error estimates for fully mixed
  formulations of {B}iot's consolidation model,'' {\em Computer Methods in
  Applied Mechanics and Engineering}, vol.~347, pp.~264 -- 294, 2019.

\bibitem{Ahmed2020}
E.~Ahmed, J.~M. Nordbotten, and F.~A. Radu, ``Adaptive asynchronous
  time-stepping, stopping criteria, and a posteriori error estimates for
  fixed-stress iterative schemes for coupled poromechanics problems,'' {\em
  Journal of Computational and Applied Mathematics}, vol.~364, p.~112312, 2020.

\bibitem{Mikelic2012}
A.~Mikeli{\'c} and M.~F. Wheeler, ``Theory of the dynamic {B}iot-{A}llard
  equations and their link to the quasi-static {B}iot system,'' {\em Journal of
  Mathematical Physics}, vol.~53, no.~12, p.~123702, 2012.

\bibitem{Showalter2005}
R.~E. Showalter, ``Poroelastic filtration coupled to {S}tokes flow,'' {\em
  Lecture Notes in Pure and Applied Mathematics}, vol.~242, p.~229, 2005.

\bibitem{Ambartsumyan2018}
I.~{Ambartsumyan}, V.~J. {Ervin}, T.~{Nguyen}, and I.~{Yotov}, ``{A nonlinear
  {S}tokes-{B}iot model for the interaction of a non-{N}ewtonian fluid with
  poroelastic media},'' {\em arxiv e-prints}, 2018.
\newblock arXiv:1803.00947 [math.NA].

\bibitem{Ambartsumyan2018c}
I.~Ambartsumyan, E.~Khattatov, I.~Yotov, and P.~Zunino, ``A lagrange multiplier
  method for a {S}tokes--{B}iot fluid--poroelastic structure interaction
  model,'' {\em Numerische Mathematik}, vol.~140, no.~2, pp.~513--553, 2018.

\bibitem{Tavakoli2013}
A.~Tavakoli and M.~Ferronato, ``On existence-uniqueness of the solution in a
  nonlinear {B}iot's model,'' {\em Appl. Math}, vol.~7, no.~1, pp.~333--341,
  2013.

\bibitem{Bociu2016}
L.~Bociu, G.~Guidoboni, R.~Sacco, and J.~T. Webster, ``Analysis of nonlinear
  poro-elastic and poro-visco-elastic models,'' {\em Archive for Rational
  Mechanics and Analysis}, vol.~222, no.~3, pp.~1445--1519, 2016.

\bibitem{Mikelic2015}
A.~Mikeli{\'c}, M.~F. Wheeler, and T.~Wick, ``Phase-field modeling of a
  fluid-driven fracture in a poroelastic medium,'' {\em Computational
  Geosciences}, vol.~19, no.~6, pp.~1171--1195, 2015.

\bibitem{Girault2016}
V.~Girault, K.~Kumar, and M.~F. Wheeler, ``Convergence of iterative coupling of
  geomechanics with flow in a fractured poroelastic medium,'' {\em
  Computational Geosciences}, vol.~20, no.~5, pp.~997--1011, 2016.

\bibitem{Berge2019}
R.~L. {Berge}, I.~{Berre}, E.~{Keilegavlen}, J.~M. {Nordbotten}, and
  B.~{Wohlmuth}, ``{Finite volume discretization for poroelastic media with
  fractures modeled by contact mechanics},'' {\em arxiv e-prints}, 2019.
\newblock arXiv:1904.11916 [math.NA].

\bibitem{Ucar2018}
E.~Ucar, E.~Keilegavlen, I.~Berre, and J.~M. Nordbotten, ``A finite-volume
  discretization for deformation of fractured media,'' {\em Computational
  Geosciences}, vol.~22, no.~4, pp.~993--1007, 2018.

\bibitem{Both2019b}
J.~W. {Both}, K.~{Kumar}, J.~M. {Nordbotten}, and F.~A. {Radu}, ``{The gradient
  flow structures of thermo-poro-visco-elastic processes in porous media},''
  {\em arxiv e-prints}, 2019.
\newblock arXiv:1907.03134 [math.NA].

\bibitem{Borregales2018}
M.~Borregales, F.~A. Radu, K.~Kumar, and J.~M. Nordbotten, ``Robust iterative
  schemes for non-linear poromechanics,'' {\em Computational Geosciences},
  vol.~22, no.~4, pp.~1021--1038, 2018.

\bibitem{VanDuijn2019b}
C.~J. Van~Duijn and A.~Mikelic, ``{Mathematical Theory of Nonlinear
  Single-Phase Poroelasticity},'' 2019.

\bibitem{VanDujin2019}
C.~van Duijn, A.~Mikelić, M.~F. Wheeler, and T.~Wick, ``Thermoporoelasticity
  via homogenization: {M}odeling and formal two-scale expansions,'' {\em
  International Journal of Engineering Science}, vol.~138, pp.~1 -- 25, 2019.

\bibitem{Brun2019}
M.~K. Brun, E.~Ahmed, J.~M. Nordbotten, and F.~A. Radu, ``Well-posedness of the
  fully coupled quasi-static thermo-poroelastic equations with nonlinear
  convective transport,'' {\em Journal of Mathematical Analysis and
  Applications}, vol.~471, no.~1, pp.~239 -- 266, 2019.

\bibitem{Brun2019b}
M.~{Kirkes{\ae}ther Brun}, E.~{Ahmed}, I.~{Berre}, J.~M. {Nordbotten}, and
  F.~A. {Radu}, ``{Monolithic and splitting based solution schemes for fully
  coupled quasi-static thermo-poroelasticity with nonlinear convective
  transport},'' {\em arxiv e-prints}, 2019.
\newblock arXiv:1902.05783 [math.NA].

\bibitem{Kim2018}
J.~Kim, ``Unconditionally stable sequential schemes for all-way coupled
  thermoporomechanics: {U}ndrained-adiabatic and extended fixed-stress
  splits,'' {\em Computer Methods in Applied Mechanics and Engineering},
  vol.~341, pp.~93 -- 112, 2018.

\bibitem{Hong2019}
Q.~Hong, J.~Kraus, M.~Lymbery, and F.~Philo, ``Conservative discretizations and
  parameter-robust preconditioners for {B}iot and multiple-network flux-based
  poroelasticity models,'' {\em Numerical Linear Algebra with Applications},
  vol.~26, no.~4, p.~2242, 2019.

\bibitem{Hong2018}
Q.~{Hong}, J.~{Kraus}, M.~{Lymbery}, and M.~{Fanett Wheeler},
  ``{Parameter-robust convergence analysis of fixed-stress split iterative
  method for multiple-permeability poroelasticity systems},'' {\em arxiv
  e-prints}, 2018.
\newblock arXiv:1812.11809 [math.NA].

\bibitem{Lee2019}
J.~Lee, E.~Piersanti, K.~Mardal, and M.~Rognes, ``A {M}ixed {F}inite {E}lement
  {M}ethod for {N}early {I}ncompressible {M}ultiple-{N}etwork
  {P}oroelasticity,'' {\em SIAM Journal on Scientific Computing}, vol.~41,
  no.~2, pp.~722--747, 2019.

\bibitem{Showalter2001}
R.~Showalter and N.~Su, ``Partially saturated flow in a poroelastic medium,''
  {\em Discrete and Continuous Dynamical Systems - Series B}, vol.~1, no.~4,
  pp.~403--420, 2001.

\bibitem{Both2019}
J.~W. Both, K.~Kumar, J.~M. Nordbotten, and F.~A. Radu, ``Anderson accelerated
  fixed-stress splitting schemes for consolidation of unsaturated porous
  media,'' {\em Computers \& Mathematics with Applications}, vol.~77, no.~6,
  pp.~1479--1502, 2019.

\bibitem{Kim2013}
J.~Kim, H.~A. Tchelepi, and R.~Juanes, ``Rigorous {C}oupling of {G}eomechanics
  and {M}ultiphase {F}low with {S}trong {C}apillarity,'' {\em Society of
  Petroleum Engineers}, 2013.

\bibitem{Jha2014}
B.~Jha and R.~Juanes, ``Coupled multiphase flow and poromechanics: {A}
  computational model of pore pressure effects on fault slip and earthquake
  triggering,'' {\em Water Resources Research}, vol.~50, no.~5, pp.~3776--3808,
  2014.

\bibitem{Bui2019}
Q.~M. Bui, D.~Osei-Kuffuor, N.~Castelletto, and J.~A. White, ``A {S}calable
  {M}ultigrid {R}eduction {F}ramework for {M}ultiphase {P}oromechanics of
  {H}eterogeneous {M}edia,'' {\em arxiv e-prints}, 2019.
\newblock arXiv:1904.05960 [math.NA].

\bibitem{Alt1983}
H.~Wilhelm~Alt and S.~Luckhaus, ``Quasilinear elliptic-parabolic differential
  equations,'' {\em Mathematische Zeitschrift}, vol.~183, no.~3, pp.~311--341,
  1983.

\bibitem{Castelletto2019}
N.~Castelletto, S.~Klevtsov, H.~Hajibeygi, and H.~A. Tchelepi, ``Multiscale
  two-stage solver for {B}iot's poroelasticity equations in subsurface media,''
  {\em Computational Geosciences}, vol.~23, no.~2, pp.~207--224, 2019.

\bibitem{Eymard1999b}
R.~Eymard, M.~Gutnic, and D.~Hilhorst, ``The finite volume method for
  {R}ichards equation,'' {\em Computational Geosciences}, vol.~3, no.~3-4,
  pp.~259--294, 1999.

\bibitem{Klausen2008}
R.~A. Klausen, F.~A. Radu, and G.~T. Eigestad, ``Convergence of {MPFA} on
  triangulations and for {R}ichards' equation,'' {\em International Journal for
  Numerical Methods in Fluids}, vol.~58, no.~12, pp.~1327--1351, 2008.

\bibitem{Cances2016}
C.~Canc{\`e}s and C.~Guichard, ``Convergence of a nonlinear entropy diminishing
  control volume finite element scheme for solving anisotropic degenerate
  parabolic equations,'' {\em Mathematics of Computation}, vol.~85, no.~298,
  pp.~549--580, 2016.

\bibitem{Ait2018}
A.~Ait Hammou~Oulhaj, C.~Canc{\`e}s, and C.~Chainais-Hillairet, ``Numerical
  analysis of a nonlinearly stable and positive control volume finite element
  scheme for {R}ichards equation with anisotropy,'' {\em ESAIM: Mathematical
  Modelling \& Numerical Analysis}, vol.~52, no.~4, 2018.

\bibitem{Arbogast1996}
T.~Arbogast and M.~Wheeler, ``A {N}onlinear {M}ixed {F}inite {E}lement {M}ethod
  for a {D}egenerate {P}arabolic {E}quation {A}rising in {F}low in {P}orous
  {M}edia,'' {\em SIAM Journal on Numerical Analysis}, vol.~33, no.~4,
  pp.~1669--1687, 1996.

\bibitem{Radu2008}
F.~A. Radu, I.~S. Pop, and P.~Knabner, ``Error estimates for a mixed finite
  element discretization of some degenerate parabolic equations,'' {\em
  Numerische Mathematik}, vol.~109, no.~2, pp.~285--311, 2008.

\bibitem{Saad2013}
B.~Saad and M.~Saad, ``Study of full implicit petroleum engineering
  finite-volume scheme for compressible two-phase flow in porous media,'' {\em
  SIAM Journal on Numerical Analysis}, vol.~51, no.~1, pp.~716--741, 2013.

\bibitem{Eymard1999}
R.~Eymard, T.~Gallou{\"e}t, and R.~Herbin, ``Convergence of finite volume
  schemes for semilinear convection diffusion equations,'' {\em Numerische
  Mathematik}, vol.~82, no.~1, pp.~91--116, 1999.

\bibitem{vanGenuchten1980}
{M.Th. van Genuchten}, ``{A closed-form equation for predicting the hydraulic
  conductivity of unsaturated soils},'' {\em Soil Science Society of America
  Journal}, vol.~44(5), pp.~892--898, 1980.

\bibitem{Eymard2000}
R.~Eymard, T.~Gallouët, and R.~Herbin, ``Finite volume methods,'' vol.~7,
  pp.~713 -- 1018, 2000.

\bibitem{Baranger1996}
{Baranger, Jacques}, {Maitre, Jean-Fran\c{c}ois}, and {Oudin, Fabienne},
  ``Connection between finite volume and mixed finite element methods,'' {\em
  ESAIM: M2AN}, vol.~30, no.~4, pp.~445--465, 1996.


\bibitem{Boffi2013}
D.~Boffi, F.~Brezzi, and M.~Fortin, {\em {Mixed {F}inite {E}lement {M}ethods
  and {A}pplications}}.
\newblock Springer Series in Computational Mathematics, Springer, 2013.


\bibitem{Evans2010}
L.~Evans, {\em Partial {D}ifferential {E}quations}.
\newblock Graduate studies in mathematics, American Mathematical Society, 2010.

\bibitem{Mualem1976}
Y.~Mualem, ``A new model for predicting the hydraulic conductivity of
  unsaturated porous media,'' {\em Water Resources Research}, vol.~12, no.~3,
  pp.~513--522.

\bibitem{Ciarlet2013}
P.~G. Ciarlet, {\em Linear and {N}onlinear {F}unctional {A}nalysis with
  {A}pplications}.
\newblock Philadelphia, PA, USA: Society for Industrial and Applied
  Mathematics, 2013.

\bibitem{Clark1987}
D.~S. Clark, ``Short proof of a discrete gronwall inequality,'' {\em Discrete
  Applied Mathematics}, vol.~16, no.~3, pp.~279 -- 281, 1987.

\bibitem{Simon1986}
J.~Simon, ``Compact sets in the space {$L^p(0,T; B)$},'' {\em Annali di
  Matematica Pura ed Applicata}, vol.~146, no.~1, pp.~65--96, 1986.

\bibitem{Brezis2010}
H.~Brezis, {\em Functional analysis, {S}obolev spaces and partial differential
  equations}.
\newblock Springer Science \& Business Media, 2010.

\end{thebibliography}

\end{document}